\setlist[enumerate]{nosep, label=(\arabic*)}
\xpatchcmd{\proof}{\itshape}{\normalfont\proofnamefont}{}{}
\xpatchcmd{\paragraph}{\normalfont}{{\normalfont\itshape}}{}{}
\newcommand\Tstrut{\rule{0pt}{2.6ex}}
\let\save@mathaccent\mathaccent
\newcommand*\if@single[3]{%
  \setbox0\hbox{${\mathaccent"0362{#1}}^H$}%
  \setbox2\hbox{${\mathaccent"0362{\kern0pt#1}}^H$}%
  \ifdim\ht0=\ht2 #3\else #2\fi
  }
\newcommand*\rel@kern[1]{\kern#1\dimexpr\macc@kerna}
\newcommand*\widebar[1]{\@ifnextchar^{{\wide@bar{#1}{0}}}{\wide@bar{#1}{1}}}
\newcommand*\wide@bar[2]{\if@single{#1}{\wide@bar@{#1}{#2}{1}}{\wide@bar@{#1}{#2}{2}}}
\newcommand*\wide@bar@[3]{%
  \begingroup
  \def\mathaccent##1##2{%
    \let\mathaccent\save@mathaccent
    \if#32 \let\macc@nucleus\first@char \fi
    \setbox\z@\hbox{$\macc@style{\macc@nucleus}_{}$}%
    \setbox\tw@\hbox{$\macc@style{\macc@nucleus}{}_{}$}%
    \dimen@\wd\tw@
    \advance\dimen@-\wd\z@
    \divide\dimen@ 3
    \@tempdima\wd\tw@
    \advance\@tempdima-\scriptspace
    \divide\@tempdima 10
    \advance\dimen@-\@tempdima
    \ifdim\dimen@>\z@ \dimen@0pt\fi
    \rel@kern{0.6}\kern-\dimen@
    \if#31
      \overline{\rel@kern{-0.6}\kern\dimen@\macc@nucleus\rel@kern{0.4}\kern\dimen@}%
      \advance\dimen@0.4\dimexpr\macc@kerna
      \let\final@kern#2%
      \ifdim\dimen@<\z@ \let\final@kern1\fi
      \if\final@kern1 \kern-\dimen@\fi
    \else
      \overline{\rel@kern{-0.6}\kern\dimen@#1}%
    \fi
  }%
  \macc@depth\@ne
  \let\math@bgroup\@empty \let\math@egroup\macc@set@skewchar
  \mathsurround\z@ \frozen@everymath{\mathgroup\macc@group\relax}%
  \macc@set@skewchar\relax
  \let\mathaccentV\macc@nested@a
  \if#31
    \macc@nested@a\relax111{#1}%
  \else
    \def\gobble@till@marker##1\endmarker{}%
    \futurelet\first@char\gobble@till@marker#1\endmarker
    \ifcat\noexpand\first@char A\else
      \def\first@char{}%
    \fi
    \macc@nested@a\relax111{\first@char}%
  \fi
  \endgroup
}
\newenvironment{mycenter}[1][\topsep]
  {\setlength{\topsep}{#1}\par\kern\topsep\centering}
  {\par\kern\topsep}
\newcommand{\proofnamefont}{\scshape}
\newtheoremstyle{break}
  {}
  {}
  {\itshape}
  {\parindent}
  {\scshape}
  {.}
  {\newline}
  {}
\theoremstyle{break}
\theoremstyle{plain}
\newtheorem{theorem}{Theorem}[section]
\newtheorem{proposition}[theorem]{Proposition}
\newtheorem{lemma}[theorem]{Lemma}
\newtheorem*{umain}{Main Theorem}
\newtheorem*{umainseries}{Main Theorem of the Series}
\theoremstyle{definition}
\newtheorem{definition}[theorem]{Definition}
\newtheorem{notation}[theorem]{Notation}
\newtheorem{axioms}[theorem]{Axioms}
\DeclareMathOperator{\inte}{int}
\DeclareMathOperator{\alte}{anti}
\DeclareMathOperator{\tre}{tr}
\DeclareMathOperator{\bde}{bd}
\DeclareMathOperator{\tye}{tp}
\DeclareMathOperator{\lcm}{lcm}
\newcommand{\mr}{\mathrm}
\newcommand{\mc}{\mathcal}
\newcommand{\mx}{\mathscr}
\newcommand{\eqpd}{\coloneqq}
\newcommand{\upp}[1]{\prescript{\filledsquare}{}{#1}}
\newcommand{\lop}[1]{\prescript{}{\filledsquare}{#1}}
\newcommand{\bup}[1]{\prescript{\square}{}{#1}}
\newcommand{\blo}[1]{\prescript{}{\square}{#1}}
\newcommand{\nnint}{{\mathbb{N}_0}}
\newcommand{\pint}{\mathbb{N}}
\newcommand{\integers}{\mathbb{Z}}
\newcommand{\Cp}{\mc{P}^{\circ\bullet}}
\newcommand{\Cpp}{\mc{P}^{\circ\bullet}_{2}}
\newcommand{\Cppnb}{\mc{P}^{\circ\bullet}_{2,\mathrm{nb}}}
\newcommand{\resbr}{\mx{B}_{\mathrm{qres}}}
\newcommand{\minbr}{\mx{B}_{\mathrm{qmin}}}
\newcommand{\pminbr}{\mx{B}_{\mathrm{pmin}}}
\newcommand{\tminbr}{\mx{B}_{\mathrm{min}}}
\newcommand{\dubr}{\prescript{\mathrm{co}}{}{\mx{B}}_{\mathrm{qdu}}}
\newcommand{\brt}[3]{\mathrm{Br}\left({#1}\mid {#3}\mid {#2}\right)}
\newcommand{\bbrt}[3]{\prescript{}{2}{\mathrm{Br}}\left({#1}\mid {#3}\mid {#2}\right)}
\newcommand{\bcbrt}[3]{\prescript{\mathrm{co}}{2}{\mathrm{Br}}\left({#1}\mid {#3}\mid {#2}\right)}
\newcommand{\qcbrt}[3]{\prescript{\mathrm{co}}{4}{\mathrm{Br}}\left({#1}\mid {#3}\mid {#2}\right)}
\newcommand{\brck}{\mathrm{Br}}
\newcommand{\trmw}{\mathrm{WIn}}
\newcommand{\trms}{\mathrm{SIn}}
\newcommand{\trmu}{\mathrm{UIn}}
\newcommand{\Arg}{\mathrm{Arg}}
\newcommand{\colors}{\{\circ,\bullet\}}
\newcommand{\toco}{\Sigma}
\newcommand{\brkt}[3]{\mathrm{Br}({#1}\mid {#3} \mid {#2})}
\newcommand{\idpt}[1]{\mathrm{Id}({#1})}
\newcommand{\spt}[1]{\mathrm{Si}({#1})}
\newcommand{\ltm}{\mathrm{LT}}
\newcommand{\rtm}{\mathrm{RT}}
\newcommand{\brts}{\mx{B}}
\newcommand{\bbrts}{\prescript{}{2}{\mx{B}}}
\newcommand{\mbrts}{\prescript{}{3+}{\mx{B}}}
\newcommand{\cbrts}{\prescript{\mathrm{co}}{}{\mx{B}}}
\newcommand{\bcbrts}{\prescript{\mathrm{co}}{2}{\mx{B}}}
\newcommand{\qcbrts}{\prescript{\mathrm{co}}{4}{\mx{B}}}
\newcommand{\clb}{\Lambda}
\newcommand{\cwb}{\Theta}
\newcommand{\cdf}{\Xi}
\newcommand{\tfs}{\mx{F}}
\newcommand{\nthbr}{\mathrm{O}}
\newcommand{\ntbr}{\mathrm{H}}
\newcommand{\ucbr}{\mathrm{G}}
\newcommand{\bir}{\mathrm{F}}
\newcommand{\cbid}{\mathrm{E}}
\newcommand{\cbir}{\mathrm{D}}
\newcommand{\szerox}{\mathcal{R}_0}
\newcommand{\embd}{\mathrm{Tr}}
\newcommand{\nhoc}{\mathsf{PCat}^{\circ\bullet}_{\mathrm{NHO}}}
\newcommand{\cotcp}{\mathsf{PCat}^{\circ\bullet}}
\newcommand{\dwi}[1]{\lsem{#1}\rsem}
\newcommand{\tightoverset}[2]{%
  \mathop{#2}\limits^{\vbox to -.5ex{\kern-0.75ex\hbox{$#1$}\vss}}}
\newcommand{\notastightoverset}[2]{%
  \mathop{#2}\limits^{\vbox to -.0ex{\kern-0.75ex\hbox{$#1$}\vss}}}
\newcommand{\PartBracketWWB}{%

}
\begin{document}
\title[Non-Hyperoctahedral Categories, Part~II]{Non-Hyperoctahedral Categories\\of Two-Colored Partitions\\Part~II: All Possible Parameter Values} 
\author{Alexander Mang}
\author{Moritz Weber}
\address{Saarland University, Fachbereich Mathematik, 
	66041 Saarbrücken, Germany}
\email{s9almang@stud.uni-saarland.de, weber@math.uni-sb.de}
\thanks{The first author was supported by an IRTG scholarship of the SFB-TRR~195. The second author was supported by the SFB-TRR~195, and by the DFG project \emph{Quantenautomorphismen von Graphen}. This work was part of the first author's Master's thesis.}

\date{\today}
\subjclass[2010]{05A18 (Primary),  20G42 (Secondary)}
\keywords{quantum group, unitary easy quantum group, unitary group,  half-liberation, tensor category, two-colored partition, partition of a set, category of partitions, Brauer algebra}

\begin{abstract}

This article is part of a series with the aim of classifying all non-hy\-per\-oc\-ta\-he\-dral categories of two-colored partitions. Those  constitute by some Tannaka-Krein type result the co-representation categories of a specific class of quantum groups. However, our series of articles is purely combinatorial. In Part~I we introduced a class of parameters which gave rise to many new non-hy\-per\-oc\-ta\-he\-dral categories of partitions. In the present article we show that this class actually contains all possible parameter values of all non-hy\-per\-oc\-ta\-he\-dral categories of partitions. This is an important step towards the classification of all non-hy\-per\-oc\-ta\-he\-dral categories. 
\end{abstract}
\maketitle
\section{Introduction} 
\label{section:introduction}

Co-representations of compact quantum groups  correspond to certain involutive monoidal categories (\cite{Wo87}, \cite{Wo88}, \cite{Wo98}). Banica and Speicher showed how to construct examples of such categories by taking rows of points as objects and partitions of two such rows as morphisms  (\cite{BaSp09}). By additionally painting the points different colors,  Freslon, Tarrago and the second author (\cite{FrWe14}, \cite{TaWe15a}, \cite{TaWe15b}) extended this construction to produce even more categories. For two (mutually inverse) colors, one obtains quantum subgroups of the free unitary quantum group $U_n^+$ of Wang's (\cite{Wa95}). For the precise definitions of two-col\-ored partitions and their categories the reader is referred to \cite[Sec\-tions~2 and~3]{MWNHO1}. See also \cite{TaWe15a} for more details and examples.
\par
In \cite{TaWe15a} Tarrago and the second author initiated a program to classify all  categories of two-col\-ored partitions. Different subclasses have since been indexed (\cite{TaWe15a}, \cite{Gr18}, \cite{MaWe18a}, \cite{MaWe18b}) by various contributors. The present article is the second part of a series aiming to determine and describe all so-called non-hy\-per\-oc\-ta\-he\-dral categories, i.e., all categories $\mc C\subseteq \Cp$ with $\PartSinglesWBTensor\in \mc C$ or $\PartFourWBWB\notin\mc C$.
  \par
In this regard the first article \cite{MWNHO1} and the present one pursue complementary approaches for detecting whether a given set of partitions is a non-hy\-per\-oc\-ta\-he\-dral category: Part~I gave sufficient conditions for being a non-hy\-per\-octa\-he\-dral category, Part~II now provides necessary ones. 
\par
\vspace{0.5em}
Let us take a closer look at the findings of Part~I, \cite{MWNHO1}.
Every two-col\-ored partition can be equipped with two natural structures on its set of points:  a measure-like one, the \emph{color sum}, and  a metric-like one, the \emph{color distance}. Both \cite{MWNHO1} and the present article study tuples of six properties of any given partition:
\begin{enumerate}[label=(\arabic*)]
\item the set of block sizes,
  \item the set of block  color sums,
  \item the color sum of the set of all points, 
  \item the set of color distances between subsequent legs of the same block with identical (normalized) colors,
  \item the set of color distances between subsequent legs of the same block with different (normalized) colors and
  \item the set of color distances between legs belonging to crossing blocks.
  \end{enumerate}
  By forming unions, one can aggregate these data over a given set of partitions. This information extracted from a set $\mc S\subseteq \Cp$ of partitions was called $Z(\mc S)$ in \cite{MWNHO1}.
  \par
  There it was shown that one can give constraints on the above six  properties which are preserved under category operations: A partially ordered set $(\mathsf Q,\leq)$ of parameters was introduced to prove that the sets of the form
  \begin{IEEEeqnarray*}{rCl}
    \mc R_Q\eqpd\{p\in \Cp\mid Z(\{p\})\leq Q\}\quad\text{for }Q\in\mathsf Q
  \end{IEEEeqnarray*}
  form non-hy\-per\-octa\-he\-dral categories.
  \par
  The current article now shows that these constraints encoded in $Z$ and $(\mathsf Q,\leq)$ are natural in the following sense. (See also Sec\-tion~\ref{section:reminder} for the definitions.) 
  \begin{umain}{\normalfont [{Theorem~\ref{theorem:main}}]}
Given any non-hy\-per\-octa\-he\-dral category $\mc C\subseteq \Cp$ of two-col\-ored partitions, we have $Z(\mc C)\in\mathsf Q$.
  \end{umain}
  The importance of this result comes from its role in the overall program of the article series. On the one hand, it will be crucial to proving the main assertions of the ensuing articles. On the other hand, once those have been established, it will combine with them to show the final result of the entire series, roughly:
  \begin{umainseries}[Excerpt]
      $Z$ restricts to a one-to-one correspondence between  the set $\nhoc$ of non-hy\-per\-oc\-ta\-he\-dral categories of two-col\-ored partitions and the parameter set $\mathsf Q$. 
  \end{umainseries}
  The proof will go as follows: By Part~I of the series, $\mc R_Q\subseteq \nhoc$ for every $Q\in \mathsf Q$. Conversely, by the above Main Theorem of Part~II, $Z(\mc C)\in \mathsf Q$ for any $\mc C\in\nhoc$. In the subsequent articles we will define a set $\mc G_{Z(\mc C)}\subseteq \Cp$ and show
      \begin{IEEEeqnarray*}{rCl}
      \mc G_{Z(\mc C)}\subseteq \mc C\subseteq \langle \mc G_{Z(\mc C)}\rangle  \quad\text{and} \quad \mc G_{Z(\mc R_{Z(\mc C)})}\subseteq \mc R_{Z(\mc C)}\subseteq \langle \mc G_{Z(\mc R_{Z(\mc C)})}\rangle.
    \end{IEEEeqnarray*}
    Proving $Z(\mc R_{Z(\mc C)})=Z(\mc C)$ will then let us conclude $\mc C=\langle \mc G_{Z(\mc C)}\rangle=\langle \mc G_{Z(\mc R_{Z(\mc C)})}\rangle=\mc R_{Z(\mc C)}$.

\section{Reminder on Definitions from Part~I}
\label{section:reminder}
For the convenience of the reader we briefly repeat those definitions from \cite[Sec\-tions~3--5]{MWNHO1} which are relevant to the current article. For definitions of partitions and categories of partitions see \cite[Sec\-tions~3.1 and~4.2]{MWNHO1}. Throughout this article we will use the notations and definitions from \cite[Sec\-tions~3--5]{MWNHO1}.
\begin{notation}
  For every set $S$ denote its power set by $\mathfrak P(S)$.
\end{notation}
\begin{definition}{{\normalfont\cite[Definition~5.2]{MWNHO1}}}
The \emph{parameter domain}  $\mathsf L$ is the sixfold Cartesian product of  $\mathfrak{P}(\integers)$.  
\end{definition}

\begin{definition}{{\normalfont\cite[Definition~5.3]{MWNHO1}}}
  \label{definition:Z}
  Using the notation from \cite[Sec\-tions~3--5]{MWNHO1}, we define the \emph{analyzer}  $Z: \, \mathfrak{P}(\Cp)\to \mathsf L$ by
	\begin{align*}
	Z\eqpd (\, F,\, V,\,  \toco,\,  L,\,  K,\,  X\,)
	\end{align*}
	where, for all $\mc S\subseteq \Cp$,
	\begin{enumerate}[label=(\alph*)]
		\item  \(F(\mc S)\eqpd \{\, |B| \mid p\in \mc S,\, B\text{ block of } p\}\) is the set of block sizes,
		\item  \(V(\mc S)\eqpd  \{\,\sigma_p(B)\mid p\in \mc S,\, B\text{ block of }p\}\) is the set of block color sums,
		\item \(\toco(\mc S)\eqpd \{\,\toco(p)\mid p\in \mc S\}\) is the set of total color sums,
		\item $\begin{aligned}[t]
		L(\mc S)\eqpd \{\,\delta_p(\alpha_1,\alpha_2)\mid&\, p\in \mc S, \, B\text{ block of }p,\, \alpha_1,\alpha_2\in B,\, \alpha_1\neq \alpha_2,\\
		&\, ]\alpha_1,\alpha_2[_p\cap B=\emptyset,\, \sigma_p(\{\alpha_1,\alpha_2\})\neq 0\}
		\end{aligned}$ \\
		is the set of color distances between any two subsequent legs of the \emph{same} block having the \emph{same} normalized color,
		\item $\begin{aligned}[t]
		K(\mc S)\eqpd \{\,\delta_p(\alpha_1,\alpha_2)\mid\,& p\in \mc S, \, B\text{ block of }p,\, \alpha_1,\alpha_2\in B,\, \alpha_1\neq \alpha_2,\\
		&\, ]\alpha_1,\alpha_2[_p\cap B=\emptyset,\, \sigma_p(\{\alpha_1,\alpha_2\})= 0\}
		\end{aligned}$\\
		is the set of color distances between any two subsequent legs of the \emph{same} block having \emph{different} normalized colors and
		\item $\begin{aligned}[t]
		X(\mc S)\eqpd \{\,\delta_p(\alpha_1,\alpha_2) \mid \, & p\in \mc S,\, B_1,B_2\text{ blocks of }p, \, B_1\text{ crosses } B_2,\\
		&\,  \alpha_1\in B_1,\,\alpha_2\in B_2\}
		\end{aligned}$ \\
		is the set of color distances between any two legs belonging to two \emph{crossing} blocks.
	\end{enumerate}
\end{definition}     
   \begin{notation}
     \label{integer-notations}
     \begin{enumerate}[label=(\alph*)]
            \item\label{integer-notations-2} For all $x,y\in \integers$ and $A,B\subseteq \integers$ write
         \begin{align*}
           xA+yB\eqpd \{xa+yb\mid a\in A, \,b\in B\}.
         \end{align*}
         Moreover, put $xA-yB\eqpd xA+(-y)B$. Per $A=\{1\}$ expressions like $x+yB$  are defined as well, and per $x=1$ so are such like $A+yB$. 
       \item\label{integer-notations-3} Let  $\pm S\eqpd S\cup(-S)$ for all sets $S\subseteq \integers$.
       \item\label{integer-notations-4} For all $m\in \integers$ and $D\subseteq \integers$ define
         \begin{align*}
           D_m\eqpd (D\cup(m\!-\! D))+m\integers \quad\text{and}\quad D_m'\eqpd (D\cup(m\!-\!D)\cup \{0\})+m\integers.
         \end{align*}
                \item\label{integer-notations-5}   Use the abbreviations $\dwi{0}\eqpd\emptyset$ and $\dwi{k}\eqpd \{1,\ldots,k\}$ for all $k\in \pint$.
     \end{enumerate}
   \end{notation}

   \begin{definition}[{\cite[Definition~5.7]{MWNHO1}}]
     \label{definition:Q}
    Define the \emph{parameter range} $\mathsf Q$ as the subset of $\mathsf L$ comprising all tuples $(f,v,s,l,k,x)$
    listed below, where  $u\in\{0\}\cup \pint$, where $m\in \pint$, where $D\subseteq \{0\}\cup\dwi{\lfloor\frac{m}{2}\rfloor}$,  where $E\subseteq \{0\}\cup \pint$ and where $N$ is a sub\-se\-mi\-group of $(\pint,+)$:
                    \begin{align*}
                      \begin{matrix}
                        f&v&s&l&k& x  \\ \hline \\[-0.85em]
                        \{2\} & \pm\{0, 2\} & 2um\integers & m\integers & m\integers & \integers\\
                      \{2\} & \pm\{0, 2\} & 2um\integers & m\hspace{-2.5pt}+\hspace{-2.5pt}2m\integers & 2m\integers & \integers \\
                        \{2\} & \pm \{0, 2\} & 2um\integers & m\hspace{-2.5pt}+\hspace{-2.5pt}2m\integers & 2m\integers & \integers\backslash m\integers \\
                      \{2\} & \{0\} & \{0\} & \emptyset & m\integers & \integers\\
                      \{2\} & \pm\{0, 2\} & \{0\} & \{0\} & \{0\} &  \integers\backslash N_0 \\
                      \{2\} & \{0\} & \{0\} & \emptyset & \{0\} & \integers\backslash N_0 \\
                      \{2\} & \{0\} & \{0\} & \emptyset & \{0\} &\integers\backslash N_0' \\
                         \{1,2\}&\pm\{0, 1, 2\} & um\integers & m\integers & m\integers & \integers\backslash D_m\\
                                                 \{1,2\}&\pm\{0, 1, 2\} & 2um\integers & m\hspace{-2.5pt}+\hspace{-2.5pt}2m\integers & 2m\integers & \integers\backslash D_m\\
                         \{1,2\}&\pm \{0, 1\} & um\integers & \emptyset & m\integers & \integers\backslash D_m \\
                        \{1,2\}&\pm\{0, 1, 2\} & \{0\} & \{0\} & \{0\} & \integers\backslash E_0 \\
                      \{1,2\}&\pm\{0,  1\} & \{0\} & \emptyset & \{0\} & \integers\backslash E_0\\
                        \pint & \integers & um\integers & m\integers & m\integers & \integers\backslash D_m\\
                        \pint & \integers & \{0\} & \{0\} & \{0\} & \integers\backslash E_0\\
                    \end{matrix}
                    \end{align*}
                  \end{definition}

The goal of this article, as sketched in the introduction, is to prove that $Z$ restricts to a map $\nhoc\to \mathsf Q$ (see Theorem~\ref{theorem:main}). Evidently, $\mathsf Q$ is not a Cartesian product; the  six entries of the tuples cannot vary independently. Rather, only very special tuples of sets are allowed. Hence, if the claim $Z:\nhoc\to \mathsf Q$ is to be true, then it is not enough to study the components of $Z$ individually. We must also investigate the relations between them. In consequence, the argument follows a winding path, taking components into and out of consideration underway as required or convenient. 
\section{Tools: Equivalence and Projection}
\label{section:equivalence}
We introduce an equivalence relation on pairs of partitions and consecutive sets therein by which to compare partitions locally (cf.\ \cite[Definition~6.2]{MaWe18a}).

\begin{definition}
  For all $i\in \{1,2\}$, let $P_{p_i}$ denote the set of all points of $p_i\in \Cp$ and let $S_i\subseteq P_{p_i}$ be consecutive. We call $(p_1,S_1)$ and $(p_2,S_2)$ \emph{equivalent} if $S_1=S_2=\emptyset$ or if the following  is true:
There exist $n\in \mathbb N$ and for each $i\in \{1,2\}$ pairwise distinct points $\gamma_{i,1},\ldots,\gamma_{i,n}$ in $p_i$ such that $(\gamma_{i,1},\ldots, \gamma_{i,n})$ is ordered in $p_i$ and $S_i=\{\gamma_{i,1},\ldots,\gamma_{i,n}\}$ and such that for all $j,j'\in \{1,\ldots,n\}$ (possibly $j=j'$) the following are true:
  \begin{enumerate}[label=(\arabic*)]
    \item The normalized colors of $\gamma_{1,j}$ in $p_1$ and $\gamma_{2,j}$ in $p_2$ agree.
    \item The points $\gamma_{1,j}$ and $\gamma_{1,j'}$ both belong to a block $B_1$ of $p_1$ with $B_1\subseteq S_1$ if and only if $\gamma_{2,j}$ and $\gamma_{2,j'}$ both belong to a block $B_2$ of $p_2$ with $B_2\subseteq S_2$.
          \item The points $\gamma_{1,j}$ and $\gamma_{1,j'}$ both belong to a block $B_1$ of $p_1$ with $B_1\not\subseteq S_1$ if and only if $\gamma_{2,j}$ and $\gamma_{2,j'}$ both belong to a block $B_2$ of $p_2$ with $B_2\not\subseteq S_2$.
          \end{enumerate}
          \begin{mycenter}[1em]
                \begin{tikzpicture}[baseline=2.5cm*0.666]
    \def\scp{0.666}
    \def\linksize{\scp*0.075cm}
    \def\pointsize{\scp*0.25cm}
    \def\dd{\scp*0.5cm}
    \def\dx{\scp*1cm}
    \def\cx{\scp*0.3cm}
    \def\txu{6*\dx}    
    \def\txl{6*\dx}
    \def\dy{\scp*1cm}
    \def\cy{\scp*0.3cm}
    \def\ty{5*\dy}
    \tikzset{whp/.style={circle, inner sep=0pt, text width={\pointsize}, draw=black, fill=white}}
    \tikzset{blp/.style={circle, inner sep=0pt, text width={\pointsize}, draw=black, fill=black}}
    \tikzset{lk/.style={regular polygon, regular polygon sides=4, inner sep=0pt, text width={\linksize}, draw=black, fill=black}}
    \draw[dotted] ({0-\dd},{0}) -- ({\txl+\dd},{0});
    \draw[dotted] ({0-\dd},{\ty}) -- ({\txl+\dd},{\ty});
    \node [whp] (l1) at ({0+0*\dx},{0+0*\ty}) {};
    \node [blp] (l2) at ({0+1*\dx},{0+0*\ty}) {};
    \node [whp] (l3) at ({0+2*\dx},{0+0*\ty}) {};
    \node [whp] (l4) at ({0+3*\dx},{0+0*\ty}) {};
    \node [blp] (l5) at ({0+4*\dx},{0+0*\ty}) {};
    \node [blp] (l6) at ({0+5*\dx},{0+0*\ty}) {};
    \node [whp] (l7) at ({0+6*\dx},{0+0*\ty}) {};        
    \node [blp] (u1) at ({0+0*\dx},{0+1*\ty}) {};
    \node [whp] (u2) at ({0+1*\dx},{0+1*\ty}) {};
    \node [blp] (u3) at ({0+2*\dx},{0+1*\ty}) {};
    \node [whp] (u4) at ({0+3*\dx},{0+1*\ty}) {};
    \node [whp] (u5) at ({0+4*\dx},{0+1*\ty}) {};
    \node [whp] (u6) at ({0+5*\dx},{0+1*\ty}) {};
    \node [lk, yshift={2*\dy}] at (l1) {};
    \node [lk, yshift={2*\dy}] at (l5) {};
    \node [lk, yshift={2*\dy}] at (l6) {};
    \node [lk, yshift={-1*\dy}] at (u1) {};
    \node [lk, yshift={-1*\dy}] at (u3) {};
    \node [lk, yshift={-2*\dy}] at (u2) {};
    \node [lk, yshift={-2*\dy}] at (u4) {};
    \node [lk, yshift={-2*\dy}] at (u5) {};
    \node [lk, yshift={3*\dy}] at (l7) {};                
    \draw [->] (l4) --++ (0,{\dy});
    \draw [->] (u6) --++ (0,{-\dy});    
    \draw (l2) -- (u2);
    \draw (l3) -- (u3);    
    \draw (l1) --++(0,{2*\dy}) -| (l5);
    \draw (l5) --++(0,{2*\dy}) -| (l6);
    \draw (u1) --++(0,{-1*\dy}) -| (u3);
    \draw (u2) --++(0,{-2*\dy}) -| (u4);
    \draw (u4) --++(0,{-2*\dy}) -| (u5);
    \draw (u5) --++(0,{-2*\dy}) -| (l7);
        \draw [densely dashed] ({-\dd},{\cy}) -- ($(l4)+({\cx},{\cy})$) --++ (0,{-2*\cy}) -- ({-\dd},{-\cy});
        \draw [densely dashed] ({-\dd},{\cy+\ty}) -- ($(u3)+({\cx},{\cy})$) --++ (0,{-2*\cy}) -- ({-\dd},{-\cy+\ty});
            \node at ({\txl+\dx},{0.5*\ty}) {$p$};
    \node (lab1) at ({-2*\dx},{0.5*\ty}) {$S$};
    \draw [densely dotted, ->, shorten >=5pt] (lab1.east) -- ({-\dd},0);
    \draw [densely dotted, ->, shorten >=5pt] (lab1.east) -- ({-\dd},{\ty});
  \end{tikzpicture}\quad\quad            
                \begin{tikzpicture}[baseline=2cm*0.666]
    \def\scp{0.666}
    \def\linksize{\scp*0.075cm}
    \def\pointsize{\scp*0.25cm}
    \def\dd{\scp*0.5cm}
    \def\dx{\scp*1cm}
    \def\cx{\scp*0.3cm}
    \def\txu{6*\dx}    
    \def\txl{6*\dx}
    \def\dy{\scp*1cm}
    \def\cy{\scp*0.3cm}
    \def\ty{4*\dy}
    \tikzset{whp/.style={circle, inner sep=0pt, text width={\pointsize}, draw=black, fill=white}}
    \tikzset{blp/.style={circle, inner sep=0pt, text width={\pointsize}, draw=black, fill=black}}
    \tikzset{lk/.style={regular polygon, regular polygon sides=4, inner sep=0pt, text width={\linksize}, draw=black, fill=black}}
    \draw[dotted] ({0-\dd},{0}) -- ({\txl+\dd},{0});
    \draw[dotted] ({0-\dd},{\ty}) -- ({\txl+\dd},{\ty});
    \node [whp] (l1) at ({0+0*\dx},{0+0*\ty}) {};
    \node [whp] (l2) at ({0+1*\dx},{0+0*\ty}) {};
    \node [whp] (l3) at ({0+2*\dx},{0+0*\ty}) {};
    \node [whp] (l4) at ({0+3*\dx},{0+0*\ty}) {};
    \node [whp] (l5) at ({0+4*\dx},{0+0*\ty}) {};
    \node [blp] (l6) at ({0+5*\dx},{0+0*\ty}) {};
    \node [whp] (l7) at ({0+6*\dx},{0+0*\ty}) {};        
    \node [blp] (u1) at ({0+0*\dx},{0+1*\ty}) {};
    \node [blp] (u2) at ({0+1*\dx},{0+1*\ty}) {};
    \node [whp] (u3) at ({0+2*\dx},{0+1*\ty}) {};
    \node [blp] (u4) at ({0+3*\dx},{0+1*\ty}) {};
    \node [lk, yshift={-1*\dy}] at (u3) {};
    \node [lk, yshift={3*\dy}] at (l6) {};
    \node [lk, yshift={2*\dy}] at (l5) {};
    \node [lk, yshift={2*\dy}] at (l7) {};
    \node [lk, yshift={2*\dy}] at (l2) {};    
    \draw [->] (u1) --++ (0,{-\dy});
    \draw (l4) -- (u4);
    \draw (l3) --++(0,{3*\dy});
    \draw (l1) --++(0,{1*\dy}) -| (l2);
    \draw (l5) --++(0,{2*\dy}) -| (l7);
    \draw (l5) --++(0,{2*\dy}) -| (u2);
    \draw (l6) --++(0,{3*\dy}) -| (u3);
    \draw [densely dashed] ({\txl+\dd},{\cy}) -- ($(l5)+({-\cx},{\cy})$) --++ (0,{-2*\cy}) -- ({\txl+\dd},{-\cy});
    \draw [densely dashed] ({\txu+\dd},{\cy+\ty}) -- ($(u1)+({-\cx},{\cy})$) --++ (0,{-2*\cy}) -- ({\txl+\dd},{-\cy+\ty});
    \node at ({-\dx},{0.5*\ty}) {$p'$};
    \node (lab1) at ({\txl+2*\dx},{0.5*\ty}) {$S'$};
    \draw [densely dotted, ->, shorten >=5pt] (lab1.west) -- ({\txl+\dd},0);
    \draw [densely dotted, ->, shorten >=5pt] (lab1.west) -- ({\txl+\dd},{\ty});
  \end{tikzpicture}
\\[1em]
                  \begin{tikzpicture}
    \def\scp{0.666}
    \def\linksize{\scp*0.075cm}
    \def\pointsize{\scp*0.25cm}
    \def\dd{\scp*0.5cm}
    \def\dx{\scp*1cm}
    \def\cx{\scp*0.3cm}
    \def\txu{6*\dx}    
    \def\txl{6*\dx}
    \def\dy{\scp*1cm}
    \def\cy{\scp*0.3cm}
    \def\ty{4*\dy}
    \tikzset{whp/.style={circle, inner sep=0pt, text width={\pointsize}, draw=black, fill=white}}
    \tikzset{blp/.style={circle, inner sep=0pt, text width={\pointsize}, draw=black, fill=black}}
    \tikzset{lk/.style={regular polygon, regular polygon sides=4, inner sep=0pt, text width={\linksize}, draw=black, fill=black}}
    \draw[dotted] ({0-\dd},{0}) -- ({\txl+\dd},{0});
    %
    \node [whp] (l1) at ({0+0*\dx},{0+0*\ty}) {};
    \node [blp] (l2) at ({0+1*\dx},{0+0*\ty}) {};
    \node [whp] (l3) at ({0+2*\dx},{0+0*\ty}) {};
    \node [whp] (l4) at ({0+3*\dx},{0+0*\ty}) {};
    \node [blp] (l5) at ({0+4*\dx},{0+0*\ty}) {};
    \node [whp] (l6) at ({0+5*\dx},{0+0*\ty}) {};
    \node [whp] (l7) at ({0+6*\dx},{0+0*\ty}) {};        
    \node [lk, yshift={2*\dy}] at (l1) {};
    \node [lk, yshift={2*\dy}] at (l3) {};
    \node [lk, yshift={2*\dy}] at (l6) {};
    \node [lk, yshift={1*\dy}] at (l2) {};
    \node [lk, yshift={1*\dy}] at (l5) {};    
    \draw [->] (l7) --++ (0,{\dy});
    \draw (l2) --++(0,{3*\dy});
    \draw (l4) --++(0,{3*\dy});
    \draw (l5) --++(0,{3*\dy});    
    \draw (l1) --++(0,{2*\dy}) -| (l3);
    \draw (l3) --++(0,{2*\dy}) -| (l6);
    \draw (l2) --++(0,{1*\dy}) -| (l5);    
    \node at ({0.5*\txl},{-\dy}) {class of $(p,S)\cong (p',S')$};
  \end{tikzpicture}
  \end{mycenter}
\end{definition}
If $(p_1,S_1)$ and $(p_2,S_2)$ are equivalent, then $S_1$ and $S_2$ agree in size and normalized coloring up to a rotation $\varrho$ and the induced partitions $\{B_1\cap S_1\,\vert\, B_1\text{ block of }p_1\}$ of $S_1$ and $\{B_2\cap S_2\,\vert\, B_2\text{ block of }p_2\}$ of $S_2$ concur up to $\varrho$. However, this is only a necessary condition. Equivalence further requires that a block $B_1\cap S_1$ of the restriction of $p_1$ stems from a block $B_1$ of $p_1$ which has legs outside $S_1$ if and only if the corresponding statement $B_2\not\subseteq S_2$ is true for the block $B_2$ of $p_2$ which $B_1$ is mapped to under $\varrho$.
\par
We define and construct special representatives of the classes of this equivalence relation. Recall that a partition $p\in \Cp$ is called \emph{projective} if $p$ is self-adjoint, i.e., $p=p^\ast$, and idempotent, i.e., the pair $(p,p)$ is composable and $pp=p$.

\begin{definition}
  For every consecutive set $S$ in $p\in \Cp$ we call the unique projective partition $q$ with lower row $M$ such that $(q,M)$ and $(p,S)$ are equivalent the \emph{projection} $P(p,S)$ of $(p,S)$. 
\begin{mycenter}[0.5em]
                  \begin{tikzpicture}[baseline=1.5cm*0.666]
    \def\scp{0.666}
    \def\linksize{\scp*0.075cm}
    \def\pointsize{\scp*0.25cm}
    \def\dd{\scp*0.5cm}
    \def\dx{\scp*1cm}
    \def\cx{\scp*0.3cm}
    \def\txu{6*\dx}    
    \def\txl{6*\dx}
    \def\dy{\scp*1cm}
    \def\cy{\scp*0.3cm}
    \def\ty{4*\dy}
    \tikzset{whp/.style={circle, inner sep=0pt, text width={\pointsize}, draw=black, fill=white}}
    \tikzset{blp/.style={circle, inner sep=0pt, text width={\pointsize}, draw=black, fill=black}}
    \tikzset{lk/.style={regular polygon, regular polygon sides=4, inner sep=0pt, text width={\linksize}, draw=black, fill=black}}
    \draw[dotted] ({0-\dd},{0}) -- ({\txl+\dd},{0});
    %
    \node [whp] (l1) at ({0+0*\dx},{0+0*\ty}) {};
    \node [blp] (l2) at ({0+1*\dx},{0+0*\ty}) {};
    \node [whp] (l3) at ({0+2*\dx},{0+0*\ty}) {};
    \node [whp] (l4) at ({0+3*\dx},{0+0*\ty}) {};
    \node [blp] (l5) at ({0+4*\dx},{0+0*\ty}) {};
    \node [whp] (l6) at ({0+5*\dx},{0+0*\ty}) {};
    \node [whp] (l7) at ({0+6*\dx},{0+0*\ty}) {};        
    \node [lk, yshift={2*\dy}] at (l1) {};
    \node [lk, yshift={2*\dy}] at (l3) {};
    \node [lk, yshift={2*\dy}] at (l6) {};
    \node [lk, yshift={1*\dy}] at (l2) {};
    \node [lk, yshift={1*\dy}] at (l5) {};    
    \draw [->] (l7) --++ (0,{\dy});
    \draw (l2) --++(0,{3*\dy});
    \draw (l4) --++(0,{3*\dy});
    \draw (l5) --++(0,{3*\dy});    
    \draw (l1) --++(0,{2*\dy}) -| (l3);
    \draw (l3) --++(0,{2*\dy}) -| (l6);
    \draw (l2) --++(0,{1*\dy}) -| (l5);    
    \node at ({0.5*\txl},{-2*\dy}) {class of $(p,S)\cong (q,M)$};
  \end{tikzpicture}\quad\quad
                    \begin{tikzpicture}[baseline=2.5cm*0.666]
    \def\scp{0.666}
    \def\linksize{\scp*0.075cm}
    \def\pointsize{\scp*0.25cm}
    \def\dd{\scp*0.5cm}
    \def\dx{\scp*1cm}
    \def\cx{\scp*0.3cm}
    \def\txu{6*\dx}    
    \def\txl{6*\dx}
    \def\dy{\scp*1cm}
    \def\cy{\scp*0.3cm}
    \def\ty{5*\dy}
    \tikzset{whp/.style={circle, inner sep=0pt, text width={\pointsize}, draw=black, fill=white}}
    \tikzset{blp/.style={circle, inner sep=0pt, text width={\pointsize}, draw=black, fill=black}}
    \tikzset{lk/.style={regular polygon, regular polygon sides=4, inner sep=0pt, text width={\linksize}, draw=black, fill=black}}
    \draw[dotted] ({0-\dd},{0}) -- ({\txl+\dd},{0});
    \draw[dotted] ({0-\dd},{\ty}) -- ({\txl+\dd},{\ty});
    \node [whp] (l1) at ({0+0*\dx},{0+0*\ty}) {};
    \node [blp] (l2) at ({0+1*\dx},{0+0*\ty}) {};
    \node [whp] (l3) at ({0+2*\dx},{0+0*\ty}) {};
    \node [whp] (l4) at ({0+3*\dx},{0+0*\ty}) {};
    \node [blp] (l5) at ({0+4*\dx},{0+0*\ty}) {};
    \node [whp] (l6) at ({0+5*\dx},{0+0*\ty}) {};
    \node [whp] (l7) at ({0+6*\dx},{0+0*\ty}) {};
    \node [whp] (u1) at ({0+0*\dx},{0+1*\ty}) {};
    \node [blp] (u2) at ({0+1*\dx},{0+1*\ty}) {};
    \node [whp] (u3) at ({0+2*\dx},{0+1*\ty}) {};
    \node [whp] (u4) at ({0+3*\dx},{0+1*\ty}) {};
    \node [blp] (u5) at ({0+4*\dx},{0+1*\ty}) {};
    \node [whp] (u6) at ({0+5*\dx},{0+1*\ty}) {};
    \node [whp] (u7) at ({0+6*\dx},{0+1*\ty}) {};            
    \node [lk, yshift={2*\dy}] at (l1) {};
    \node [lk, yshift={2*\dy}] at (l3) {};
    \node [lk, yshift={2*\dy}] at (l6) {};
    \node [lk, yshift={1*\dy}] at (l2) {};
    \node [lk, yshift={1*\dy}] at (l5) {};
    \node [lk, yshift={-2*\dy}] at (u1) {};
    \node [lk, yshift={-2*\dy}] at (u3) {};
    \node [lk, yshift={-2*\dy}] at (u6) {};
    \node [lk, yshift={-1*\dy}] at (u2) {};
    \node [lk, yshift={-1*\dy}] at (u5) {};    
    \draw [->] (l7) --++ (0,{\dy});
    \draw [->] (u7) --++ (0,{-\dy});    
    \draw (l2) -- (u2);
    \draw (l4) -- (u4);
    \draw (l5) -- (u5);
    \draw (l1) --++(0,{2*\dy}) -| (l3);
    \draw (l3) --++(0,{2*\dy}) -| (l6);
    \draw (l2) --++(0,{1*\dy}) -| (l5);
    \draw (u1) --++(0,{-2*\dy}) -| (u3);
    \draw (u3) --++(0,{-2*\dy}) -| (u6);
    \draw (u2) --++(0,{-1*\dy}) -| (u5);        
    \node at ({0.5*\txl},{-\dy}) {$P(p,S)=q$};
    \node at ({\txl+\dx},0) {$M$};
  \end{tikzpicture}
\end{mycenter}
\end{definition}
In truth, of course, for any consecutive set $S$ in $p\in \Cp$ the projection $P(p,S)$ depends only on the equivalence class of $(p,S)$. The following lemma constitutes a generalization of \cite[Lem\-ma~6.4]{MaWe18a}.
\begin{lemma}
  \label{lemma:projection}
  $P(p,S)\in \langle p\rangle$ for any consecutive set $S$ in any $p\in \Cp$.
\end{lemma}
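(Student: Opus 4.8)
The plan is to realize $P(p,S)$ as a composition $r^\ast r$ for a suitable rotation $r$ of $p$, and then to invoke that $\langle p\rangle$, being a category, is closed under rotations, under the involution and under composition. The degenerate case $S=\emptyset$ is immediate, since then $P(p,S)$ is the empty partition, which belongs to every category; so we assume $S\neq\emptyset$.

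First we reduce to a convenient shape by rotation. As $S$ is consecutive, it forms a contiguous arc in the cyclic order on the points of $p$, so by a sequence of rotations one obtains a partition $r$, still lying in $\langle p\rangle$, whose lower row is a copy of $S$ and whose upper row consists of all the remaining points of $p$. Here one uses that normalized colors are rotation invariant, so that the lower row of $r$ matches $S$ leg for leg, and that rotation neither splits nor joins blocks, so that a block of $p$ is contained in $S$ exactly when the corresponding block of $r$ is contained in its lower row. Writing $M$ for the lower row of $r$, it follows that $(r,M)$ is equivalent to $(p,S)$, whence $P(p,S)=P(r,M)$.

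Next we identify $P(r,M)$ with $r^\ast r$ — by which we mean the composition of $r$ and $r^\ast$ having lower row $M$; it is well defined because the upper row of $r$ and the row of $r^\ast$ glued to it carry identical colors. This partition is self-adjoint and idempotent up to loop removal, hence projective, and its lower row is $M$. To see that it is the projection, we trace the blocks: after the two copies are glued along the upper row of $r$, a block of $r$ contained in $M$ survives unchanged in the lower row, its mirror image appearing as a separate block in the upper row; a block $B$ of $r$ with legs in both rows is fused with its mirror into a single through-block whose lower legs are exactly $B\cap M$; and a block of $r$ contained in the upper row of $r$ gives rise only to a closed component, which is discarded. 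Because distinct blocks of $r$ are disjoint on the glued row, no blocks are merged unintentionally and no block staying inside $M$ is absorbed into a through-block. Comparison with the definition of the projection then shows that $(r^\ast r,M)$ is equivalent to $(r,M)$, and uniqueness gives $P(r,M)=r^\ast r$. Since $r\in\langle p\rangle$ implies $r^\ast\in\langle p\rangle$ and hence $r^\ast r\in\langle p\rangle$, we conclude $P(p,S)=r^\ast r\in\langle p\rangle$.

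The step that we expect to be most delicate is the block bookkeeping in the previous paragraph: one must verify that $r^\ast r$ converts precisely the blocks of $r$ leaving $S$ into through-blocks — neither merging several of them nor linking them to blocks that stay inside $S$ — and that the dichotomy ``contained in $S$'' versus ``not contained in $S$'' is transferred faithfully to $r^\ast r$. This is a routine but somewhat tedious case analysis according to the position of a block of $r$ relative to its two rows, and it is the natural generalization of the corresponding argument in \cite[Lem\-ma~6.4]{MaWe18a}, carried out now without any hypothesis on $p$.
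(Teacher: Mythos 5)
Your proof is correct and follows essentially the same route as the paper's: after handling $S=\emptyset$, rotate so that $S$ becomes the lower row, form the composition of the rotated partition with its adjoint glued along the upper row (the paper's $pp^\ast$, your $r^\ast r$ with the orientation you specify), and verify via the block bookkeeping — in particular that the two copies induce the same partition on the glued row, so no unintended merging occurs — that the result is the projection. The only remark is notational: with the usual composition convention the partition you describe would be written $rr^\ast$ rather than $r^\ast r$, but since you state explicitly which rows are glued and that the lower row is $M$, the argument is unambiguous.
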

\begin{proof}
As $S=\emptyset$ implies $P(p,S)=\emptyset\in \langle p\rangle$, let $S\neq \emptyset$. By rotation we can assume that $S$ is the lower row of $p$. Then $S$ has the same size and coloring in $p$ as in $q\eqpd pp^*$. We show $q=P(p,S)$. By the nature of composition the blocks of $p$ which are contained in $S$ are blocks of $q$ as well. We  only need to care about the other blocks of $q$. If we identify the upper row of $p$ and the lower row of $p^\ast$, the same partition $s$ is induced there by $p$ and $p^\ast$. Consequently, the meet of the two induced partitions is identical with $s$ as well. That means that every block $D$ of $s$ intersects exactly one block $B$ of $p$ and exactly one block of $p^*$, namely the mirror image of $B$. The block of $q$ resulting from $D$ therefore contains exactly the restriction of $B$ to the lower row and the mirror image of that set on the upper row. That means $q=P(p,S)$, which proves the claim.
\end{proof}
                  
        \section{Step~1: Component $F$ in Isolation}
        \label{section:block-sizes}
We now take our first step towards proving the main result that the analyzer $Z$ from Definition~\ref{definition:Z} restricts to a map $\nhoc\to \mathsf Q$ (see Theorem~\ref{theorem:main}). Namely, we verify (see Proposition~\ref{proposition:result-F}) that, for every non-hy\-per\-octa\-he\-dral category $\mc C\subseteq \Cp$, the set
\begin{align*}
	F(\mc C)\eqpd \{|B| \mid p\in\mc C,\, B\text{ block of }p\}
\end{align*}
of block sizes appearing in $\mc C$ can only be one of the three sets of integers admissible as a first component for tuples in $\mathsf Q$ by Definition~\ref{definition:Q}. 

\begin{lemma}{\normalfont \cite[Lem\-mata~1.3 (b), 2.1 (a)]{TaWe15a}}
  \label{lemma:singletons}
  Let $\mc C\subseteq \Cp$ be a category.
  \begin{enumerate}[label=(\alph*)]
  \item\label{lemma:singletons-1}
          \(\langle \PartSinglesWBTensor\rangle =\langle \PartSinglesBWTensor\rangle=\langle \PartSinglesProjW\rangle=\langle \PartSinglesProjB\rangle\).
  \item\label{lemma:singletons-2} The following statements are equivalent:
    \begin{enumerate}[label=(\arabic*)]
    \item There exists in $\mc C$ a partition with a singleton block.
    \item $\PartSinglesWBTensor\in \mc C$.
    \end{enumerate}
    \item\label{lemma:singletons-3} If $\PartSinglesWBTensor\in \mc C$, then $\mc C$ is closed under disconnecting points from their blocks.
  \end{enumerate}
\end{lemma}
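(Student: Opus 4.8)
\emph{Proof plan.} The plan is to treat the three parts separately, using the standard category operations on $\Cp$ --- tensor product, composition (which deletes loop blocks), involution $p\mapsto p^{*}$, and rotation of a single point from one row to the other, this last inverting the colour of the rotated point --- together with Lemma~\ref{lemma:projection}.

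For part~(a) I would close a cycle of four inclusions, each witnessed by one rotation. Rotating the right-hand (black) point of $\PartSinglesWBTensor=\PartSingleW\otimes\PartSingleB$ down to the lower row recolours it white and yields $\PartSinglesProjW$, so $\PartSinglesProjW\in\langle\PartSinglesWBTensor\rangle$; rotating the lower (white) point of $\PartSinglesProjW$ up recolours it black and yields $\PartSinglesBWTensor$; the two analogous rotations with the colours exchanged give $\PartSinglesProjB\in\langle\PartSinglesBWTensor\rangle$ and $\PartSinglesWBTensor\in\langle\PartSinglesProjB\rangle$. As each of these four partitions then lies in the category generated by its predecessor, the four generated categories coincide.

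For part~(b), the implication $(2)\Rightarrow(1)$ is immediate since $\PartSinglesWBTensor$ itself has singleton blocks. For $(1)\Rightarrow(2)$, let $p\in\mc C$ possess a singleton block $\{s\}$, of colour $c$ say. A one-point set is consecutive, so Lemma~\ref{lemma:projection} applies to $S=\{s\}$, giving $P(p,\{s\})\in\langle p\rangle\subseteq\mc C$. Now $P(p,\{s\})$ is, by definition, the projective partition equivalent to $(p,\{s\})$; since the block of $s$ equals $\{s\}$ and hence is contained in $S$, unwinding the definition of equivalence forces this projective partition to consist of one isolated lower and one isolated upper point of colour $c$ --- that is, $P(p,\{s\})$ is $\PartSinglesProjW$ or $\PartSinglesProjB$. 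Part~(a) then delivers $\PartSinglesWBTensor\in\mc C$.

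For part~(c), assume $\PartSinglesWBTensor\in\mc C$, so that $\PartSinglesProjW,\PartSinglesProjB\in\mc C$ by part~(a). To disconnect a point $\alpha$ of $p\in\mc C$ from its block I would --- after first rotating $\alpha$ onto the lower row if it is not already there, and undoing that rotation at the end --- glue beneath $p$, along $p$'s lower row, the tensor product of identity partitions with the colour-matched singleton projection ($\PartSinglesProjW$ or $\PartSinglesProjB$) sitting in $\alpha$'s column. Every column other than $\alpha$'s is threaded through an identity and so is unaffected, whereas in $\alpha$'s column the upper strand of the projection severs $\alpha$ from its block while its lower strand contributes a fresh isolated point; the composite is thus precisely $p$ with $\alpha$ disconnected, and it belongs to $\mc C$. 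The one point needing care is the check in part~(b) that $P(p,\{s\})$ is genuinely a singleton projection and not an identity pair; this relies on $\{s\}$ being simultaneously a block of $p$ and all of $S$. (Were Lemma~\ref{lemma:projection} unavailable, the delicate step would be $(1)\Rightarrow(2)$ itself, where the surplus of $p$ cannot simply be discarded but must be folded into loops --- e.g.\ by rotating $s$ onto the upper row and composing the outcome with its adjoint.)
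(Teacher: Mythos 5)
Your proposal is correct and follows essentially the same route as the paper: part (a) by rotations, part (b) by projecting onto the singleton via Lemma~\ref{lemma:projection} and invoking (a), and part (c) by rotation plus composition from below with the colour-matched singleton projection. The only cosmetic difference is in (c), where you tensor the projection with identities to act in one column, whereas the paper first rotates so that the point to be disconnected is the sole lower point; both compositions yield the same partition.
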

\begin{proof}
  \begin{enumerate}[wide,label=(\alph*)]
    \item All transformations can be achieved by  basic and cyclic rotations.
    \item Projecting to a singleton block produces $\PartSinglesProjW$ or $\PartSinglesProjB$. Hence, Part~\ref{lemma:singletons-1} and Lem\-ma~\ref{lemma:projection}  prove the claim.
      \item Rotate a given partition such that the leg to disconnect from its block is the only lower point. Composing from below with $\PartSinglesProjW$ or $\PartSinglesProjB$, depending on the color of the leg, and reversing the rotation achieves what is claimed. Hence, Part~\ref{lemma:singletons-1} concludes the proof.  \qedhere
  \end{enumerate}
\end{proof}

\begin{lemma}{\normalfont \cite[Lem\-mata~1.3 (d), 2.1 (b)]{TaWe15a}}
  \label{lemma:companies}
  Let $\mc C\subseteq \Cp$ be a category.
  \begin{enumerate}[label=(\alph*)]
  \item  \label{lemma:companies-1}
          \(\langle \PartFourWBWB\rangle =\langle \PartFourBWBW\rangle=\langle \PartFourProjWB\rangle=\langle \PartFourProjBW\rangle\).
  \item  \label{lemma:companies-2} The following statements are equivalent:
    \begin{enumerate}[label=(\arabic*)]
    \item There exists in $\mc C$ a partition with a block with at least three legs.
    \item $\PartFourWBWB\in \mc C$.
    \end{enumerate}
    \item  \label{lemma:companies-3} If $\PartFourWBWB\in \mc C$, then $\mc C$ is closed under connecting the two points in any turn.
  \end{enumerate}
\end{lemma}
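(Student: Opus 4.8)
The plan is to follow, almost verbatim, the template of Lemma~\ref{lemma:singletons}, now with blocks of at least three legs in the role of singleton blocks and with the projective four\-blocks $\PartFourProjWB$, $\PartFourProjBW$ in the role of $\PartSinglesProjW$, $\PartSinglesProjB$.

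For Part~\ref{lemma:companies-1} I would simply note that each of the four partitions has a single block and that one passes between any two of them by basic and cyclic rotations: rotating two legs of $\PartFourWBWB$ to the opposite row produces $\PartFourProjWB$, a full cyclic rotation turns $\PartFourWBWB$ into $\PartFourBWBW$, and so on; hence the generated categories coincide. For Part~\ref{lemma:companies-3}, given a turn of some $p\in\mc C$ with points $\alpha,\beta$, I would rotate $\beta$ onto the row of $\alpha$ so that the two become neighbours on, say, the lower row (the colours of a turn are such that the pair then reads $\circ\bullet$ or $\bullet\circ$), rotate so that $\alpha,\beta$ constitute the whole lower row, and compose from below with the matching rotation of $\PartFourProjWB$; this is legitimate because $\PartFourProjWB\in\langle\PartFourWBWB\rangle\subseteq\mc C$ by Part~\ref{lemma:companies-1}, and its effect is exactly to merge the blocks of $\alpha$ and $\beta$, so that reversing the rotations finishes the argument. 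The implication $(2)\Rightarrow(1)$ of Part~\ref{lemma:companies-2} is immediate since the block of $\PartFourWBWB$ has four legs.

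The substance is the implication $(1)\Rightarrow(2)$ of Part~\ref{lemma:companies-2}. I would start from $p\in\mc C$ with a block $B$ of size $|B|\geq 3$, rotate $p$, and apply Lemma~\ref{lemma:projection} to a consecutive set $S$ meeting $B$ in at least three but not all of its legs; this produces a projective partition $q=P(p,S)\in\langle p\rangle\subseteq\mc C$ carrying a block whose legs straddle both rows. Then I would shrink this block: a pair of legs of a block that are consecutive on the boundary and carry normalized colours $\circ$ and $\bullet$ can be removed by rotating them to the lower row and composing from below with the appropriate rotation of the identity (present in every category as a pair partition), after which Lemma~\ref{lemma:projection} restores projective form; iterating this — and, where necessary, first correcting the parity of the block size by composing $q$ with a rotation of itself along a single shared leg — should leave a single four\-block of alternating colour, i.e.\ $\PartFourProjWB$ up to rotation, whence $\PartFourWBWB\in\mc C$ by Part~\ref{lemma:companies-1}.

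The hard part is exactly this last reduction. The cappings must be organised so as to retain control of the colouring (and of the parity) of the shrinking block, so that one is guaranteed not to be left with a four\-block of a different colour type, such as $\PartFourWWBB$, which generates a strictly different category. Arranging the colour bookkeeping so that the procedure terminates, unavoidably, at a \emph{rotation of $\PartFourWBWB$} is the crux; everything else is routine manipulation of rotations, projections and pair partitions.
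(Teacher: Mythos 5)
Parts~\ref{lemma:companies-1} and~\ref{lemma:companies-3} and the implication (2)$\Rightarrow$(1) of Part~\ref{lemma:companies-2} are argued exactly as in the paper and are fine. The problem is the implication (1)$\Rightarrow$(2), where your argument has a genuine gap — one you yourself flag as ``the crux'' without closing it. Two concrete objections. First, your starting move already fails in the extremal case: if the block $B$ has exactly three legs, there is no consecutive set $S$ meeting $B$ in at least three but not all of its legs, so the projection you want to form does not exist. Second, and more seriously, the entire reduction from the resulting large block down to an \emph{alternatingly colored} four-block is precisely the content of the claim, and capping off pairs of legs with pair partitions does not obviously preserve the data you need: without an explicit invariant you cannot rule out terminating at $\PartFourWWBB$ (or worse), and you give no such invariant. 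Announcing that the bookkeeping ``should'' work is not a proof of the one nontrivial assertion in the lemma.

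The paper's argument avoids all of this by choosing the consecutive set to contain only \emph{two} legs of $B$. Pick $\alpha,\beta\in B$ with $\alpha\neq\beta$ and $]\alpha,\beta[_p\cap B=\emptyset$, and set $q\eqpd P(p,[\alpha,\beta]_p)$. Since $B$ has a third leg outside $[\alpha,\beta]_p$, the block $B$ is not contained in that set, so by the explicit description of the projection ($q=pp^*$ after rotation) the block of $q$ through $\alpha$ consists of exactly $\alpha$, $\beta$ and their two mirror images on the upper row — a four-block straddling both rows, with no capping required. A second application of Lemma~\ref{lemma:projection} to the two-point consecutive set $T$ formed by the first lower and the first upper point of $q$ (mirror images of one another, hence automatically of opposite normalized colors, and contained in a block not contained in $T$) produces $\PartFourProjWB$ or $\PartFourProjBW$ on the nose. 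The color alternation you were struggling to engineer is thus automatic, because mirror images under $p\mapsto pp^*$ carry inverse colors; no shrinking procedure and no parity correction are needed.
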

\begin{proof}
  \begin{enumerate}[label=(\alph*),wide]
  \item Once again, by basic and cyclic rotations we can transform the partitions into each other. 
  \item Suppose $B$ is a block in $p\in \mc C$ with at least three legs, $\alpha,\beta\in B$, $\alpha\neq \beta$ and $]\alpha,\beta[_p\cap B=\emptyset$.  Let $T$ be the set of the first lower and the first upper point of  $P(p,[\alpha,\beta]_p)$. The partition $P(P(p,[\alpha,\beta]_p),T)$ is either $\PartFourProjWB$ or $\PartFourProjBW$. Thus follows the claim by Part~\ref{lemma:companies-1} and Lem\-ma~\ref{lemma:projection}.
    \item Let $T$ be the turn in $p\in \mc C$ whose points we want to connect. By rotation we can assume that $T$ is the upper row of $p$. By composing $p$ from above with $\PartFourProjWB$ or $\PartFourProjBW$, depending on the sequence of colors in $T$, and reversing the initial rotation we achieve exactly what is claimed. So, Part~\ref{lemma:companies-1} implies the assertion.\qedhere
  \end{enumerate}
\end{proof}
Recall the cases $\mc O$, $\mc B$, $\mc S$ from \cite[Definition~4.1]{MWNHO1}.
\begin{proposition}
  \label{proposition:result-F}
  Let $\mc C\subseteq \Cp$ be a non-hy\-per\-oc\-ta\-hed\-ral category.
  \begin{enumerate}[label=(\alph*)]
    \item \label{proposition:result-F-0} The set $F(\mc C)$ is given by $\{2\}$, $\{1,2\}$ or $\pint$.
  \item\label{proposition:result-F-1} If $\mc C$ is case~$\mc O$, then $F(\mc C)=\{2\}$.
  \item\label{proposition:result-F-2} If $\mc C$ is case~$\mc B$, then $F(\mc C)=\{1,2\}$.
  \item\label{proposition:result-F-3} If $\mc C$ is case~$\mc S$, then $F(\mc C)=\pint$.        
  \end{enumerate}
\end{proposition}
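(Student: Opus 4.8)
The plan is to deduce everything from Lemmas~\ref{lemma:singletons} and~\ref{lemma:companies}, which encode the two possible ``size defects'' of a partition --- having a singleton block, respectively a block with three or more legs --- via membership of the distinguished partitions $\PartSinglesWBTensor$ and $\PartFourWBWB$ in $\mc C$, and which in addition provide the closure properties I will need: closure under disconnecting a leg from its block when $\PartSinglesWBTensor\in\mc C$, and closure under connecting the two points of a turn when $\PartFourWBWB\in\mc C$. First I would record the trivial observation that every category of two-coloured partitions contains the identity partition $\PartIdenW$, whose single block has size~$2$, so that $2\in F(\mc C)$ always, while trivially $F(\mc C)\subseteq\pint$. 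Since, by \cite[Definition~4.1]{MWNHO1}, every non-hyperoctahedral category falls into exactly one of the cases $\mc O$ (neither $\PartSinglesWBTensor$ nor $\PartFourWBWB$ lies in $\mc C$), $\mc B$ ($\PartSinglesWBTensor\in\mc C$ but $\PartFourWBWB\notin\mc C$) and $\mc S$ ($\PartFourWBWB\in\mc C$), it will be enough to prove parts~\ref{proposition:result-F-1}--\ref{proposition:result-F-3}; part~\ref{proposition:result-F-0} then follows.

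Parts~\ref{proposition:result-F-1} and~\ref{proposition:result-F-2} are immediate. In case~$\mc O$, reading Lemma~\ref{lemma:singletons}\ref{lemma:singletons-2} and Lemma~\ref{lemma:companies}\ref{lemma:companies-2} contrapositively shows that no partition in $\mc C$ has a singleton block and none has a block with at least three legs; hence $1\notin F(\mc C)$ and $F(\mc C)\cap\{3,4,\ldots\}=\emptyset$, which together with $2\in F(\mc C)$ forces $F(\mc C)=\{2\}$. Case~$\mc B$ runs identically, except that $\PartSinglesWBTensor\in\mc C$ --- a partition with singleton blocks --- now contributes $1\in F(\mc C)$, so that $F(\mc C)=\{1,2\}$.

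The content is in part~\ref{proposition:result-F-3}. In case~$\mc S$ we have $\PartFourWBWB\in\mc C$ and, $\mc C$ being non-hyperoctahedral, also $\PartSinglesWBTensor\in\mc C$ (otherwise $\mc C$ would be hyperoctahedral). Thus $1,2,4\in F(\mc C)$ at the outset: a singleton of $\PartSinglesWBTensor$, the block of the identity, and the four-legged block of $\PartFourWBWB$. I would then show that $F(\mc C)$ is closed under the operation $m\mapsto m+2$: given $p\in\mc C$ with a block $B$ of size $m$, rotate $p$ so that some leg of $B$ becomes the rightmost point of the upper row, tensor a copy of $\PartIdenW$ onto the right of $p$ so that the upper point of the new pair becomes the cyclic successor of that leg --- and hence forms a turn with it --- and invoke Lemma~\ref{lemma:companies}\ref{lemma:companies-3} to connect this turn; the resulting partition lies in $\mc C$, and its block through $B$ has absorbed the new pair, so it has size $m+2$. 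Iterating this from $1$ and $2$ yields every positive integer, and so $F(\mc C)=\pint$. The only step that needs care is this last one: I must check that, after the rotation and the tensoring, the new pair really does occupy a turn in the exact sense demanded by Lemma~\ref{lemma:companies}\ref{lemma:companies-3} --- two cyclically consecutive points --- and that connecting it merges the block of $B$ with the pair rather than collapsing anything; both become clear once the cyclic order of the points of $p\otimes\PartIdenW$ is spelled out. (One could alternatively produce the odd block sizes by disconnecting a leg from an even block, using Lemma~\ref{lemma:singletons}\ref{lemma:singletons-3}, but the uniform ``$+2$'' argument seems cleaner.)
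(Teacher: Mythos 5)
Your proof is correct and, for parts~\ref{proposition:result-F-0}--\ref{proposition:result-F-2}, coincides with the paper's argument (same use of Lemmata~\ref{lemma:singletons} and~\ref{lemma:companies}, same reduction of part~\ref{proposition:result-F-0} to the three exhaustive cases). In part~\ref{proposition:result-F-3} you take a mildly different route to the same end: the paper produces a block of size $n$ in one step by connecting the first $n$ singletons of $(\PartSinglesWBTensor)^{\otimes \lceil n/2\rceil}$ via Lemma~\hyperref[lemma:companies-3]{\ref*{lemma:companies}~\ref*{lemma:companies-3}}, whereas you grow an existing block by $2$ at a time by tensoring on an identity strand and connecting the resulting turn; both constructions rest on exactly the same lemma, so the difference is cosmetic rather than conceptual. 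One point to tighten in your $+2$ step: a turn is not merely a pair of cyclically consecutive points but a consecutive pair whose normalized colors are \emph{opposite} (the lemma's proof composes with $\PartFourProjWB$ or $\PartFourProjBW$, both of which require a neutral pair). Since the upper point of $\PartIdenW$ and the chosen leg of $B$ may well carry the same normalized color, you must choose between $\PartIdenW$ and $\PartIdenB$ according to the color of that leg; with that adjustment the new upper point and the leg of $B$ do form a turn, connecting it merges $B$ with the fresh pair, and the argument goes through.
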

\begin{proof}
  By definition of a category, $\PartIdenLoWB\in \mc C$ and thus $\{2\}\subseteq  F(\mc C)$.
  \begin{enumerate}[label=(\alph*),wide]
    \item The first claim follows from the other three.
  \item Because $\PartSinglesWBTensor\notin \mc C$ and $\PartFourWBWB\notin \mc C$, Lem\-mata~\hyperref[lemma:singletons-2]{\ref*{lemma:singletons}~\ref*{lemma:singletons-2}} and~\hyperref[lemma:companies-2]{\ref*{lemma:companies}~\ref*{lemma:companies-2}} show that every block in every partition of $\mc C$ has exactly two legs, i.e., $F(\mc C)=\{2\}$.
  \item The assumption $\PartFourWBWB\notin \mc C$ implies by Lem\-ma~\hyperref[lemma:companies-2]{\ref*{lemma:companies}~\ref*{lemma:companies-2}} that no partition of $\mc C$ has blocks with more than two legs: $F(\mc C)\subseteq \{1,2\}$. Because $\PartSinglesWBTensor\in \mc C$, it is clear that $\{1\}\subseteq F(\mc C)$. Thus, $F(\mc C)=\{1,2\}$ has been proven. 
    \item  It suffices to show $\pint\subseteq F(\mc C)$. Let $n\in \pint$ be arbitrary. Then, \[p\eqpd (\PartSinglesWBTensor)^{\otimes \left\lceil \frac{n}{2}\right\rceil}\in \mc C.\] Thanks to $\PartFourWBWB\in \mc C$ we can, by Lem\-ma~\hyperref[lemma:companies-3]{\ref*{lemma:companies}~\ref*{lemma:companies-3}},  connect the first $n$ points in $p$ to produce a partition in $\mc C$ containing a block with $n$ points, proving $\{n\}\subseteq F(\mc C)$. \qedhere
  \end{enumerate}
\end{proof}

\section{Step~2: Component $V$ and its Relation to $F$ and $L$}
\label{section:block-color-sums}
The next objective is to narrow down the range of the component $V$ of $Z$ over $\nhoc$. Given a non-hy\-per\-octa\-he\-dral category $\mc C\subseteq \Cp$, we show that the set
\begin{align*}
	V(\mc C)\eqpd \{\sigma_p(B)\mid p\in \mc C,\, B\text{ block of }p\}
\end{align*}
of block color sums occurring in $\mc C$
can only be one of the five sets allowed as second components for tuples of $\mathsf Q$ by Definition~\ref{definition:Q}. Beyond that, we can use Proposition~\ref{proposition:result-F} to show a result about the three parameters $V(\mc C)$, $F(\mc C)$ and
\begin{align*}
	L(\mc C)\eqpd \{\,\delta_p(\alpha_1,\alpha_2)\mid&\, p\in \mc C, \, B\text{ block of }p,\, \alpha_1,\alpha_2\in B,\, \alpha_1\neq \alpha_2,\\
	&\, ]\alpha_1,\alpha_2[_p\cap B=\emptyset,\, \sigma_p(\{\alpha_1,\alpha_2\})\neq 0\},  
\end{align*}
the set of color distances between legs of the same block with identical normalized colors appearing in $\mc C$: Viewed together as $(F,V,L)(\mc C)$, they  satisfy the conditions necessary for $Z(\mc C)$ to be element of $\mathsf Q$ by Definition~\ref{definition:Q}.
    \begin{proposition}
      \label{proposition:result-V}
      Let $\mc C\subseteq \Cp$ be a non-hy\-per\-oc\-ta\-hed\-ral category.
      \begin{enumerate}[label=(\alph*)]
        \item\label{proposition:result-V-0} The set $V(\mc C)$ is given by $\{0\}$, $\pm\{0,2\}$, $\pm \{0,1\}$, $\pm \{0,1,2\}$ or $\integers$.
      \item\label{proposition:result-V-1} If $\mc C$ is case $\mc O$, then
        \begin{align*}
          V(\mc C)=
          \begin{cases}
            \pm\{0,2\}&\text{if }L(\mc C)\neq \emptyset,\\
            \phantom{\pm}\{0\}&\text{otherwise.}
          \end{cases}
        \end{align*}
      \item\label{proposition:result-V-2} If $\mc C$ is case $\mc B$, then
\begin{align*}
          V(\mc C)=
          \begin{cases}
            \pm\{0,1,2\}&\text{if }L(\mc C)\neq \emptyset,\\
            \pm \{0,1\}&\text{otherwise.}
          \end{cases}
        \end{align*}
        \item\label{proposition:result-V-3} If $\mc C$ is case $\mc S$, then $L(\mc C)\neq \emptyset$ and $V(\mc C)=\integers$.
      \end{enumerate}      
    \end{proposition}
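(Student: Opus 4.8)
The plan is to obtain part~(a) as an immediate consequence of parts~(b)--(d), and to prove those by treating separately the three cases $\mc O$, $\mc B$, $\mc S$ which — exactly as in the proof of Proposition~\ref{proposition:result-F} — correspond respectively to ``$\PartSinglesWBTensor\notin\mc C$ and $\PartFourWBWB\notin\mc C$'', to ``$\PartSinglesWBTensor\in\mc C$ and $\PartFourWBWB\notin\mc C$'', and to ``$\PartSinglesWBTensor\in\mc C$ and $\PartFourWBWB\in\mc C$''. Two observations will be used throughout. First, $\PartIdenLoWB\in\mc C$ contributes a two-leg block of color sum $0$, so $0\in V(\mc C)$ in every case. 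Second, closure of $\mc C$ under the involution $p\mapsto p^\ast$, which negates every block color sum, gives $V(\mc C)=-V(\mc C)$. Hence in each case it suffices to decide which of $1$ and $2$ lie in $V(\mc C)$.

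In cases $\mc O$ and $\mc B$, Proposition~\ref{proposition:result-F} tells us that every block of every partition of $\mc C$ has at most two legs (exactly two in case $\mc O$), so $V(\mc C)\subseteq\pm\{0,2\}$ in case $\mc O$ and $V(\mc C)\subseteq\pm\{0,1,2\}$ in case $\mc B$; moreover in case $\mc B$ Lemma~\ref{lemma:singletons} puts $\PartSinglesProjW,\PartSinglesProjB\in\mc C$, whence $\pm 1\in V(\mc C)$. The remaining point is the equivalence $\pm 2\in V(\mc C)\iff L(\mc C)\neq\emptyset$, which I would prove by noting that a two-leg block has color sum $\pm 2$ precisely when its two legs — which are automatically subsequent within the block, the block having no other legs — carry the same normalized color, and that, since here no block has three or more legs, this is the only mechanism producing an element of $L$. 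Combined with $V(\mc C)=-V(\mc C)$ this pins $V(\mc C)$ down to the value asserted in~(b) and~(c).

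In case $\mc S$ we have $\PartSinglesWBTensor\in\mc C$ and $\PartFourWBWB\in\mc C$, so by Lemmas~\ref{lemma:singletons} and~\ref{lemma:companies} the category contains $\PartSinglesProjW$ and is closed under connecting the two points of a turn; in particular $\pm1\in V(\mc C)$. The plan is to construct, for every $n\in\pint$, a partition $q_n\in\mc C$ carrying a block with $n$ legs all of normalized color $\circ$ — hence of color sum $n$. Take $q_1\eqpd\PartSinglesProjW$ with distinguished block its upper singleton; given $q_k$, form $q_k\otimes\PartSinglesWBTensor\in\mc C$, observe that one of the two new singletons, say $s$, has normalized color $\circ$ and color sum $+1$, rotate the partition so that $s$ and one leg of the distinguished block constitute a turn, and connect that turn. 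Since merging two blocks adds their color sums and rotation preserves both normalized colors and color sums, the enlarged block has $k+1$ legs, still all of normalized color $\circ$. This gives $\pint\subseteq V(\mc C)$, and with $V(\mc C)=-V(\mc C)$ and $0\in V(\mc C)$ we conclude $V(\mc C)=\integers$. Finally $q_2$ displays a two-leg block both of whose (subsequent) legs have normalized color $\circ$, so $L(\mc C)\neq\emptyset$, completing~(d); and~(a) follows by collecting the possibilities from~(b)--(d).

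I expect the only genuinely delicate step to be the inductive construction in case $\mc S$: one must verify that a suitable rotation of $q_k\otimes\PartSinglesWBTensor$ really does bring a leg of the distinguished block and the new singleton $s$ into a common turn with matching colors so that Lemma~\ref{lemma:companies}\,(c) applies — rotating a leg of the block to the opposite row beforehand if necessary — and that rotation-invariance of the color sum makes each step contribute exactly $+1$. Everything else is read off directly from Proposition~\ref{proposition:result-F}, from $\PartIdenLoWB\in\mc C$, and from the involution.
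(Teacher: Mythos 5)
Parts (a)--(c) and your two preliminary observations match the paper's argument and are correct; in particular, replacing the verticolor reflection by the adjoint $p\mapsto p^\ast$ to obtain $V(\mc C)=-V(\mc C)$ is harmless, since the adjoint also inverts all normalized colors and hence negates every block color sum.

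The case-$\mc S$ induction, however, breaks down at exactly the step you flag as delicate. Lemma~\ref{lemma:companies}~(c) only permits connecting the two points of a \emph{turn}, i.e.\ two cyclically adjacent points of \emph{inverse} normalized colors (so $\sigma_p(T)=0$). Your distinguished block has all legs of normalized color $\circ$, and the new singleton $s$ you want to attach also has normalized color $\circ$; since rotations preserve normalized colors and cyclic adjacency, no rotation of $q_k\otimes\PartSinglesWBTensor$ can turn $s$ and a leg of that block into a turn, so the lemma never applies. Nor can the step be justified by any other means: connecting two adjacent points of \emph{equal} normalized color would force $1\in L(\mc C)$, whereas there are case-$\mc S$ categories with $L(\mc C)=m\integers$ for $m\geq 2$, so this is genuinely not a closure operation. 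With the induction gone, your witness $q_2$ for $L(\mc C)\neq\emptyset$ disappears as well. The repair is to give up on an all-$\circ$ block: disconnect one leg of $\PartFourWBWB$ via Lemma~\ref{lemma:singletons}~(c) to get $p=\PartSpecThreeCoSingleWBWB\in\mc C$, whose three-leg block has legs of normalized colors $\circ,\circ,\bullet$ and color sum $+1$ (and already exhibits two subsequent legs of equal normalized color, so $L(\mc C)\neq\emptyset$). In $p^{\otimes n}$ the $\bullet$-leg of each copy is adjacent, with inverse normalized color, to the first $\circ$-leg of the next copy; these genuine turns may be connected by Lemma~\ref{lemma:companies}~(c), merging the $n$ three-leg blocks into a single block of color sum $n$. (A minor additional slip: the distinguished block of $q_1=\PartSinglesProjW$ would have to be the lower singleton; the upper one has normalized color $\bullet$ and color sum $-1$.)
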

    \begin{proof} 
      Two general facts about $V(\mc C)$ in advance: In any case, $0\in V(\mc C)$ since $V(\{\PartIdenLoWB\})=\{0\}$. And \cite[Lem\-ma~6.4]{MWNHO1}, using the fact that $p\in \mc C$ implies $\tilde p\in \mc C$, showed $V(\mc C)=-V(\mc C)$.
      \par
      \begin{enumerate}[label=(\alph*),wide]
      \item  Claim~\ref{proposition:result-V-0} follows from the other three.
      \item      A pair block $B$ in $p\in \mc C$ satisfies $\sigma_p(B)=0$ if and only if that block has no two (necessarily subsequent) legs of the same normalized colors. Otherwise it has color sum $-2$ or $2$.
      \item  And a singleton block always has color sums $-1$ or $1$. The rest follows from the proof of Part~\ref{proposition:result-V-1}.
        \item If $\mc C$ is case~$\mc S$, then $\PartFourWBWB\in \mc C$  and $\PartSinglesWBTensor\in \mc C$. Hence, we can use $\PartSinglesWBTensor$ to disconnect the left black point in $\PartFourWBWB$ by Lem\-ma~\hyperref[lemma:singletons-3]{\ref*{lemma:singletons}~\ref*{lemma:singletons-3}} to obtain $p\eqpd\PartSpecThreeCoSingleWBWB\in \mc C$ with $V(\{p\})=\{-1, 1\}$. Given any $n\in \pint$, we use $\PartFourWBWB$ to connect in $p^{\otimes n}\in \mc C$ all the $n$ many three-leg blocks together (leaving the disconnected singletons alone) in accordance with Lem\-ma~\hyperref[lemma:companies-3]{\ref*{lemma:companies}~\ref*{lemma:companies-3}}. That procedure results in the partition $q\in \mc C$ with $V(\{q\})=\{-1,n\}$. By $V(\mc C)=-V(\mc C)$ it then follows $V(\mc C)=\integers$ as claimed.\qedhere
      \end{enumerate}
    \end{proof}

    \section{\texorpdfstring{Step~3: Component $\toco$ in Isolation}{Step~3: Parameter Sigma in Isolation}}
    \label{section:total-color-sums}
Easily, we can confirm that for all non-hy\-per\-octa\-he\-dral categories $\mc C\subseteq \Cp$ the set
\begin{align*}
	\toco(\mc C)\eqpd \{\toco(p)\mid p\in \mc C\}
\end{align*}
of all total color sums appearing in $\mc C$
is within the range of allowed third entries of tuples in $\mathsf Q$ by Definition~\ref{definition:Q}.
The following proposition contains a generalization of \cite[Lem\-ma~2.6]{TaWe15a} and \cite[Proposition~2.7]{TaWe15a}.
\begin{proposition}
  \label{proposition:result-S}
  For every category $\mc C\subseteq \Cp$ the set $\toco(\mc C)$ is a subgroup of $\integers$.
\end{proposition}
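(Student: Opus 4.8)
The plan is to check directly that $\toco(\mc C)$ is a non-empty subset of $\integers$ which is closed under subtraction; any such set is automatically a subgroup. First I would note that $\mc C$, being a category, contains at least one partition—for instance $\PartIdenLoWB$, as used in the proof of Proposition~\ref{proposition:result-V}—and that $\toco(\PartIdenLoWB)=0$. Hence $0\in\toco(\mc C)$ and in particular $\toco(\mc C)\neq\emptyset$.

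The substance of the argument consists of two elementary properties of the total color sum: it is additive under the tensor product, $\toco(p\otimes q)=\toco(p)+\toco(q)$, and it changes sign under the adjoint, $\toco(p^\ast)=-\toco(p)$. Both are immediate once one unwinds the definition of $\toco$ as a sum of $\pm 1$ contributions attached to the individual points of a partition and compares it with the definitions of $\otimes$ and of $(\,\cdot\,)^\ast$: placing $q$ to the right of $p$ adds the two sums, while reflecting $p$ about a horizontal axis flips the sign of the contribution of every point. (The sign-change property can alternatively be extracted from the behaviour of $\toco$ under the operation $\tilde{\,\cdot\,}$ in the spirit of \cite[Lem\-ma~6.4]{MWNHO1}, as in the proof of Proposition~\ref{proposition:result-V}.) This is the only place where any computation enters, and it is entirely routine.

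With these facts in hand the conclusion is a short formality: given $a,b\in\toco(\mc C)$, choose $p,q\in\mc C$ with $a=\toco(p)$ and $b=\toco(q)$; since $\mc C$ is closed under adjoints and under tensor products, $p\otimes q^\ast\in\mc C$, and $\toco(p\otimes q^\ast)=\toco(p)+\toco(q^\ast)=a-b$, so $a-b\in\toco(\mc C)$. Thus $\toco(\mc C)$ is a non-empty subset of $\integers$ closed under subtraction, hence a subgroup of $\integers$. I do not anticipate any real obstacle here; the role of the proposition is merely to record, in the generality of arbitrary categories, the group structure already isolated in more restricted settings by \cite[Lem\-ma~2.6]{TaWe15a} and \cite[Proposition~2.7]{TaWe15a}, the only substantive change being that tensor products replace compositions, so that no composability hypothesis is needed and every pair of partitions in $\mc C$ may be combined freely.
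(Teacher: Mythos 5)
Your proof is correct and follows essentially the same route as the paper: exhibit $0\in\toco(\mc C)$ via the identity pair partition, get closure under addition from additivity of $\toco$ over tensor products, and get closure under negation from a sign-reversing involution (you use the adjoint $p^\ast$, the paper invokes the verticolor reflection via \cite[Lem\-ma~6.4]{MWNHO1}; both flip every normalized color and hence the total color sum). The only difference is that you verify the two elementary properties directly rather than citing the corresponding lemmata of Part~I, which is a matter of presentation, not substance.
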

\begin{proof}
   \cite[Lem\-ma~6.5~(c)]{MWNHO1} implies $\toco(\mc C)+\toco(\mc C)\subseteq \toco(\mc C)$. And $-\toco(\mc C)\subseteq \toco(\mc C)$ was shown in \cite[Lem\-ma~6.4]{MWNHO1}. As also $\toco(\PartIdenLoWB)=0$ and $\PartIdenLoWB\in \mc C$  by definition, the set $\toco(\mc C)$ is indeed a subgroup of $\integers$.
\end{proof}

\section{\texorpdfstring{Step~4: General Relations between $\toco$, $L$, $K$ and $X$}{Step~4: General Relations between Sigma, L, K and X}}
\label{section:color-lengths}
The goal remains proving that $Z$ (see Definition~\ref{definition:Z}) maps the set $\nhoc$ of non-hy\-per\-octa\-he\-dral categories to $\mathsf Q$ (see Definition~\ref{definition:Q}). So far, we have tackled this problem, more or less, one component of $Z$ at a time. In that way, what we have managed to show is, mostly, that the values over $\nhoc$ of each of the three maps $F$, $V$ and $\toco$, viewed individually, are confined to the range of parameters allowed by $\mathsf Q$ as corresponding entries of its elements. To complete this picture, we would also like to see that for any non-hy\-per\-octa\-he\-dral category $\mc C\subseteq \Cp$ the three sets $L(\mc C)$,
\begin{align*}
	K(\mc C)\eqpd \{\,\delta_p(\alpha_1,\alpha_2)\mid\,& p\in \mc C, \, B\text{ block of }p,\, \alpha_1,\alpha_2\in B,\, \alpha_1\neq \alpha_2,\\
	&\, ]\alpha_1,\alpha_2[_p\cap B=\emptyset,\, \sigma_p(\{\alpha_1,\alpha_2\})= 0\}\\	
\text{and}\quad	X(\mc C)\eqpd \{\,\delta_p(\alpha_1,\alpha_2) \mid \, & p\in \mc C,\, B_1,B_2\text{ blocks of }p, \, B_1\text{ crosses } B_2,\\
&\,  \alpha_1\in B_1,\,\alpha_2\in B_2\},	
\end{align*}
too, can only be of the kinds allowed as fourth, fifth and sixth components of tuples in $\mathsf Q$, respectively, by Definition~\ref{definition:Q}. However, due to the strong interdependences between these three components of $Z$, it is not even possible to prove this basic claim about the ranges of the individual maps by studying them one at a time. Instead, now, the reasonable thing to do is to consider the tuple $(\toco, L, K,X)$ and make inferences about its range over $\nhoc$. That will give us (see Proposition~\ref{lemma:result-s-l-k-x}) the claim about the individual ranges of $L$, $K$ and $X$ but also many more of the relations between them (and $\toco$), which we need to verify  the main result.

\subsection{Abstract Arithmetic Lemma}
As a first step, it is best to study the relationship between the $\toco$-, $L$-, $K$- and $X$-components of $Z$ in an abstract context, merely talking about arbitrary subsets of $\integers$ subject to certain axioms. Our goal for this sub\-sec\-tion is to prove the \hyperref[lemma:arithmetic]{Arithmetic Lem\-ma (\ref*{lemma:arithmetic})}: Assuming certain axioms (\ref{axioms:arithmetic}), we may deduce a certain parameter range. We will show in Sub\-sec\-tion~\ref{section:verifying-the-axioms} that for non-hy\-per\-oc\-ta\-he\-dral categories $\mc C\subseteq \Cp$ our sets $\toco(\mc C)$, $L(\mc C)$, $K(\mc C)$ and $X(\mc C)$ satisfy these axioms. Recall $\overline\bullet\eqpd\circ$ and $\overline\circ\eqpd\bullet$.
\begin{axioms}
  \label{axioms:arithmetic}
	Let $\sigma$ as well as $\kappa_{c_1,c_2}$ and $\xi_{c_1,c_2}$  for all $c_1,c_2\in\colors$ be subsets of $\integers$.
	Throughout this sub\-sec\-tion, make the following assumptions:
	\begin{enumerate}[label=(\roman*)]
		\item\label{lemma:gradedsets-condition-0}  $\sigma$ is a subgroup of $\integers$.
	\end{enumerate}
For all $(\omega_{c_1,c_2})_{c_1,c_2\in \colors}\in \{(\kappa_{c_1,c_2})_{c_1,c_2\in \colors},(\xi_{c_1,c_2})_{c_1,c_2\in \colors}\}$ and for all $c_1,c_2\in\colors$:
\begin{enumerate}[label=(\roman*), start=2]
	\item\label{lemma:gradedsets-condition-1} $\omega_{c_1,c_2}+\sigma\subseteq \omega_{c_1,c_2}$.
	\item\label{lemma:gradedsets-condition-3} $\omega_{c_1,c_2}\subseteq - \omega_{\overline{c_2},\overline{c_1}}$.
	\item\label{lemma:gradedsets-condition-2} $\omega_{c_1,c_2}\subseteq -\omega_{c_2,c_1}+\sigma$.       
\end{enumerate}
For all $c_1,c_2,c_3\in \colors$:
\begin{enumerate}[label=(\roman*), start=5]
	\item\label{lemma:gradedsets-condition-6} $\xi_{c_1,c_2}\subseteq \xi_{c_1,\overline{c_2}}\cup \left(-\xi_{c_2,\overline{c_1}}+\sigma\right)$.         
	\item\label{lemma:gradedsets-condition-4} $0\in \kappa_{\circ\bullet}\cap \kappa_{\bullet\circ}$.
	\item\label{lemma:gradedsets-condition-5} $\kappa_{c_1,c_2}+\kappa_{\overline{c_2},c_3}\subseteq \kappa_{c_1,c_3}$.
	\item\label{lemma:gradedsets-condition-7} $\kappa_{c_1,c_2}+\xi_{\overline{c_2},c_3}\subseteq \xi_{c_1,c_3}$.
\end{enumerate}
\end{axioms}
Let us first study how much $\kappa_{c_1,c_2}$ and $\xi_{c_1,c_2}$ depend on $c_1,c_2\in\colors$.

\begin{lemma} 
\label{lemma:omegas}
	For any $(\omega_{c_1,c_2})_{c_1,c_2\in \colors}\in \{(\kappa_{c_1,c_2})_{c_1,c_2\in \colors},(\xi_{c_1,c_2})_{c_1,c_2\in \colors}\}$:
	\begin{enumerate}[label=(\alph*)]
		\item\label{lemma:omegas-1} $\omega_{\circ\circ}=\omega_{\bullet\bullet}$ and $\omega_{\circ\circ}=-\omega_{\circ\circ}=\omega_{\circ\circ}+\sigma$.
		 \item\label{lemma:omegas-2} $\omega_{\circ\bullet}=\omega_{\bullet\circ}$ and $\omega_{\circ\bullet}=-\omega_{\circ\bullet}=\omega_{\circ\bullet}+\sigma$.
	\end{enumerate}
\end{lemma}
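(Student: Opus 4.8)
The plan is to derive everything from a small number of the axioms, exploiting the symmetry between the two color families $(\kappa_{c_1,c_2})$ and $(\xi_{c_1,c_2})$. Since axioms~\ref{lemma:gradedsets-condition-1}, \ref{lemma:gradedsets-condition-3} and~\ref{lemma:gradedsets-condition-2} hold uniformly for both families, it suffices to argue once for a generic family $(\omega_{c_1,c_2})_{c_1,c_2\in\colors}$. First I would establish the $+\sigma$-invariance claims: axiom~\ref{lemma:gradedsets-condition-1} gives $\omega_{c_1,c_2}+\sigma\subseteq\omega_{c_1,c_2}$, and applying it repeatedly together with the fact that $\sigma$ is a subgroup (axiom~\ref{lemma:gradedsets-condition-0}, so $\sigma+\sigma=\sigma$ and $0\in\sigma$) yields $\omega_{c_1,c_2}+\sigma=\omega_{c_1,c_2}$ for every pair $c_1,c_2$; in particular this covers the $\omega_{\circ\circ}+\sigma=\omega_{\circ\circ}$ and $\omega_{\circ\bullet}+\sigma=\omega_{\circ\bullet}$ assertions at once.

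Next I would handle the equalities between differently-colored copies and the symmetry under negation. Observe that $\overline{\circ}=\bullet$ and $\overline{\bullet}=\circ$, so axiom~\ref{lemma:gradedsets-condition-3} with $(c_1,c_2)=(\circ,\circ)$ reads $\omega_{\circ\circ}\subseteq-\omega_{\bullet\bullet}$, and with $(c_1,c_2)=(\bullet,\bullet)$ it reads $\omega_{\bullet\bullet}\subseteq-\omega_{\circ\circ}$; combining the two gives $\omega_{\circ\circ}\subseteq-\omega_{\bullet\bullet}\subseteq\omega_{\circ\circ}$, hence $\omega_{\circ\circ}=-\omega_{\bullet\bullet}$ and also $\omega_{\bullet\bullet}=-\omega_{\circ\circ}$. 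Similarly, axiom~\ref{lemma:gradedsets-condition-2} with $(c_1,c_2)=(\circ,\circ)$ gives $\omega_{\circ\circ}\subseteq-\omega_{\circ\circ}+\sigma$, and the reverse inclusion follows by applying the same axiom to the set $-\omega_{\circ\circ}$ rewritten via the first inclusion together with $\sigma$-invariance; chasing these gives $\omega_{\circ\circ}=-\omega_{\circ\circ}+\sigma=-\omega_{\circ\circ}$ (using $\omega_{\circ\circ}+\sigma=\omega_{\circ\circ}$). Feeding $\omega_{\circ\circ}=-\omega_{\circ\circ}$ back into $\omega_{\circ\circ}=-\omega_{\bullet\bullet}$ yields $\omega_{\circ\circ}=\omega_{\bullet\bullet}$, completing part~\ref{lemma:omegas-1}. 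Part~\ref{lemma:omegas-2} is proved by the identical manipulations with $(c_1,c_2)=(\circ,\bullet)$ and $(\bullet,\circ)$ in axioms~\ref{lemma:gradedsets-condition-3} and~\ref{lemma:gradedsets-condition-2}: axiom~\ref{lemma:gradedsets-condition-3} gives $\omega_{\circ\bullet}\subseteq-\omega_{\circ\bullet}$ and $\omega_{\bullet\circ}\subseteq-\omega_{\bullet\circ}$ (since $\overline{\bullet}=\circ$, $\overline{\circ}=\bullet$), hence $\omega_{\circ\bullet}=-\omega_{\circ\bullet}$ and $\omega_{\bullet\circ}=-\omega_{\bullet\circ}$, while axiom~\ref{lemma:gradedsets-condition-2} gives $\omega_{\circ\bullet}\subseteq-\omega_{\bullet\circ}+\sigma$ and the reverse, so $\omega_{\circ\bullet}=-\omega_{\bullet\circ}+\sigma=\omega_{\bullet\circ}$.

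The only mild subtlety — which I expect to be the "hard part," though it is still routine — is making the double-inclusion arguments for the $-\omega+\sigma$ statements airtight: one must be careful that from $A\subseteq -A+\sigma$ alone one cannot conclude $A=-A+\sigma$ without also invoking $A+\sigma=A$ and applying the same axiom to the negated set. The clean way is to note that negating $A\subseteq -A+\sigma$ gives $-A\subseteq A-\sigma=A+\sigma=A$ (using that $\sigma$ is a subgroup and $A$ is $\sigma$-invariant), hence $-A\subseteq A$, and symmetrically $A\subseteq-A$, so $A=-A$; then $-A+\sigma=A+\sigma=A$. Everything else is a direct substitution of the two colors into the three relevant axioms. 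I would present the argument for the generic family $(\omega_{c_1,c_2})$ and remark that it applies verbatim to both $(\kappa_{c_1,c_2})$ and $(\xi_{c_1,c_2})$, since only axioms~\ref{lemma:gradedsets-condition-0}--\ref{lemma:gradedsets-condition-2} are used.
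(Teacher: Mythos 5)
Your proposal is correct and follows essentially the same route as the paper's proof: both first upgrade Axiom~(ii) to the equality $\omega_{c_1,c_2}=\omega_{c_1,c_2}+\sigma$ using $0\in\sigma$, then use $\sigma=-\sigma$ to strip the ``$+\sigma$'' from Axiom~(iv), and finally combine Axioms~(iii) and~(iv) (with their roles swapped between the $\circ\circ/\bullet\bullet$ and $\circ\bullet/\bullet\circ$ cases) to get the negation-symmetry and color-swap identities. The minor reordering of which identity is derived first is immaterial, and your ``clean way'' paragraph handles the only subtle point exactly as the paper does.
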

\begin{proof}
			Because $0\in \sigma$ by Assumption~\ref{lemma:gradedsets-condition-0}, the Assumption~\ref{lemma:gradedsets-condition-1} actually means
			\begin{align}
			 \label{eq:gradedsets-1-alternative}
			 	\omega_{c_1,c_2}=\omega_{c_1,c_2}+\sigma\tag{ii'}
			\end{align}
			 for all $c_1,c_2\in \colors$. And with this new identity we can, for all $c_1,c_2\in \colors$, refine  Assumption~\ref{lemma:gradedsets-condition-2} to 
			 \begin{align}
			 	\label{eq:gradedsets-2-alternative}
			 	\omega_{c_1,c_2}\subseteq -\omega_{c_2,c_1}\tag{iv'}
			 \end{align}
                         as $-\omega_{c_2,c_1}+\sigma=-(\omega_{c_2,c_1}-\sigma)=-(\omega_{c_2,c_1}+ \sigma)=-\omega_{c_2,c_1}$ due to $\sigma=-\sigma$.
	\begin{enumerate}[wide, label=(\alph*)]
		\item Version~\eqref{eq:gradedsets-1-alternative} of Assumption~\ref{lemma:gradedsets-condition-1} yields $\omega_{\circ\circ}=\omega_{\circ\circ}+\sigma$ as claimed. 
 And Assumption~\ref{lemma:gradedsets-condition-2} in the form of \eqref{eq:gradedsets-2-alternative} proves
		\begin{align*}
		\omega_{\circ\circ}\overset{\ref{lemma:gradedsets-condition-2}}{\subseteq }-\omega_{\circ\circ} \overset{\ref{lemma:gradedsets-condition-2}}{\subseteq }\omega_{\circ\circ} \quad \text{and}\quad        \omega_{\bullet\bullet}\overset{\ref{lemma:gradedsets-condition-2}}{\subseteq }-\omega_{\bullet\bullet} \overset{\ref{lemma:gradedsets-condition-2}}{\subseteq }\omega_{\bullet\bullet},
		\end{align*}
		thus verifying $\omega_{\circ\circ}=-\omega_{\circ\circ}$ and $\omega_{\bullet\bullet}=-\omega_{\bullet\bullet}$.
		Now, if we apply Assumption~\ref{lemma:gradedsets-condition-3}  to conclude
		\begin{align*}
		\omega_{\circ\circ}\overset{\ref{lemma:gradedsets-condition-3}}{\subseteq }-\omega_{\bullet\bullet}\overset{\ref{lemma:gradedsets-condition-3}}{\subseteq } \omega_{\circ\circ},
		\end{align*}
		we can infer $\omega_{\circ\circ}=\omega_{\bullet\bullet}$. 
		That proves the remainder of the claims about $\omega_{\circ\circ}$ and $\omega_{\bullet\bullet}$.
		\item Here also, Version~\eqref{eq:gradedsets-1-alternative} of Assumption~\ref{lemma:gradedsets-condition-1} implies $\omega_{\circ\bullet}=\omega_{\circ\bullet}+\sigma$. 
		Now, though, for $\omega_{\circ\bullet}$ and $\omega_{\bullet\circ}$ the roles of Assumptions~\ref{lemma:gradedsets-condition-3} and~\ref{lemma:gradedsets-condition-2} reverse. First, we apply the former to conclude
		\begin{align*}
		\omega_{\circ\bullet}\overset{\ref{lemma:gradedsets-condition-3}}{\subseteq }-\omega_{\circ\bullet}\overset{\ref{lemma:gradedsets-condition-3}}{\subseteq } \omega_{\circ\bullet}\quad\text{and}\quad        \omega_{\bullet\circ}\overset{\ref{lemma:gradedsets-condition-3}}{\subseteq }-\omega_{\bullet\circ}\overset{\ref{lemma:gradedsets-condition-3}}{\subseteq } \omega_{\bullet\circ},
		\end{align*}
		which shows the claims $\omega_{\circ\bullet}=-\omega_{\circ\bullet}$ and $\omega_{\bullet\circ}=-\omega_{\bullet\circ}$. Then, it is the refined version \eqref{eq:gradedsets-2-alternative} of As\-sump\-tion~\ref{lemma:gradedsets-condition-2} that yields
		\begin{align*}
		\omega_{\circ\bullet}\overset{\ref{lemma:gradedsets-condition-2}}{\subseteq }-\omega_{\bullet\circ}\overset{\ref{lemma:gradedsets-condition-2}}{\subseteq }\omega_{\circ\bullet},
		\end{align*}
		implying $\omega_{\circ\bullet}=\omega_{\bullet\circ}$ and thus completing the proof.\qedhere
	\end{enumerate}
\end{proof}
In the case of $(\omega_{c_1,c_2})_{c_1,c_2\in\colors}=(\xi_{c_1,c_2})_{c_1,c_2\in\colors}$ of Lem\-ma~\ref{lemma:omegas} we can go even further and combine the objects of Parts~\ref{lemma:omegas-1} and~\ref{lemma:omegas-2}.

\begin{lemma}
  \label{lemma:xi-well-defined}
	$\xi_{\circ\circ}=\xi_{\circ\bullet}$.
\end{lemma}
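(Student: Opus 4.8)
The plan is to obtain both inclusions $\xi_{\circ\circ}\subseteq\xi_{\circ\bullet}$ and $\xi_{\circ\bullet}\subseteq\xi_{\circ\circ}$ directly from the union axiom~\ref{lemma:gradedsets-condition-6}, feeding it the right pairs of colors and simplifying each term of the form $-\,(\cdot)+\sigma$ by means of the symmetry and $\sigma$-absorption already recorded in Lemma~\ref{lemma:omegas} for $(\omega_{c_1,c_2})_{c_1,c_2\in\colors}=(\xi_{c_1,c_2})_{c_1,c_2\in\colors}$.

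First I would collect the identities that do all the work. By Lemma~\ref{lemma:omegas}\,\ref{lemma:omegas-2} we have $-\xi_{\circ\bullet}=\xi_{\circ\bullet}$ and $\xi_{\circ\bullet}+\sigma=\xi_{\circ\bullet}$, hence $-\xi_{\circ\bullet}+\sigma=\xi_{\circ\bullet}$; similarly, by Lemma~\ref{lemma:omegas}\,\ref{lemma:omegas-1} we have $-\xi_{\circ\circ}+\sigma=\xi_{\circ\circ}$, together with $\xi_{\bullet\bullet}=\xi_{\circ\circ}$, and from Lemma~\ref{lemma:omegas}\,\ref{lemma:omegas-2} also $\xi_{\bullet\circ}=\xi_{\circ\bullet}$.

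For $\xi_{\circ\circ}\subseteq\xi_{\circ\bullet}$ I would apply Assumption~\ref{lemma:gradedsets-condition-6} with $c_1=c_2=\circ$, giving $\xi_{\circ\circ}\subseteq\xi_{\circ\bullet}\cup(-\xi_{\circ\bullet}+\sigma)=\xi_{\circ\bullet}\cup\xi_{\circ\bullet}=\xi_{\circ\bullet}$. For the reverse inclusion the point is to choose the colors so that the ``diagonal'' sets $\xi_{\bullet\bullet}$ and $\xi_{\circ\circ}$ show up on the right: applying Assumption~\ref{lemma:gradedsets-condition-6} with $c_1=\bullet$ and $c_2=\circ$ yields $\xi_{\bullet\circ}\subseteq\xi_{\bullet\bullet}\cup(-\xi_{\circ\circ}+\sigma)$, which by the identities above reads $\xi_{\circ\bullet}\subseteq\xi_{\circ\circ}\cup\xi_{\circ\circ}=\xi_{\circ\circ}$. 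Combining the two inclusions gives $\xi_{\circ\circ}=\xi_{\circ\bullet}$.

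There is essentially no obstacle beyond bookkeeping. The only subtlety worth flagging is that Assumption~\ref{lemma:gradedsets-condition-6} with $c_1=\circ$, $c_2=\bullet$ produces merely the tautology $\xi_{\circ\bullet}\subseteq\xi_{\circ\circ}\cup\xi_{\circ\bullet}$, so to extract the reverse inclusion one must instead feed it $c_1=\bullet$, $c_2=\circ$; and one must remember to invoke Lemma~\ref{lemma:omegas} to collapse every term $-\,(\cdot)+\sigma$ on the right-hand side of~\ref{lemma:gradedsets-condition-6} before taking the union.
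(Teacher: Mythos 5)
Your proof is correct and follows essentially the same route as the paper: two applications of Assumption~\ref{lemma:gradedsets-condition-6} collapsed via the symmetries of Lemma~\ref{lemma:omegas}, the only difference being that the paper uses $(c_1,c_2)=(\circ,\bullet)$ for the reverse inclusion where you use $(\bullet,\circ)$ — both work. Your closing remark that $(c_1,c_2)=(\circ,\bullet)$ yields only a tautology is a miscomputation: there the right-hand side is $\xi_{\circ\circ}\cup(-\xi_{\bullet\bullet}+\sigma)=\xi_{\circ\circ}\cup\xi_{\circ\circ}=\xi_{\circ\circ}$ (since $\xi_{\bullet\bullet}=\xi_{\circ\circ}=-\xi_{\circ\circ}=\xi_{\circ\circ}+\sigma$), not $\xi_{\circ\circ}\cup\xi_{\circ\bullet}$ — but this aside does not affect the validity of the argument you actually give.
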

\begin{proof}
	Since $\xi_{c_2,\overline{c_1}}=\xi_{c_2,\overline{c_1}}+\sigma$ for all $c_1,c_2\in \colors$ by Version~\eqref{eq:gradedsets-1-alternative} of Axiom~\ref{lemma:gradedsets-condition-1}, our Assumption~\ref{lemma:gradedsets-condition-6} actually spells 
	\begin{align}
	\label{eq:gradedsets-6-alternative}
		\xi_{c_1,c_2}\subseteq \xi_{c_1,\overline{c_2}}\cup (-\xi_{c_2,\overline{c_1}})\tag{v'}
	\end{align}
	 for all $c_1,c_2\in \colors$ as $\sigma=- \sigma$. Using this version of the assumption twice, we conclude
	\begin{align*}
	\xi_{\circ\circ}\overset{\ref{lemma:gradedsets-condition-6}}\subseteq \xi_{\circ\bullet}\cup (-\xi_{\circ\bullet})=\xi_{\circ\bullet}\overset{\ref{lemma:gradedsets-condition-6}}{\subseteq}\xi_{\circ\circ}\cup(-\xi_{\bullet\bullet})=\xi_{\circ\circ},
	\end{align*}
	where we have used the results $\xi_{\circ\bullet}=-\xi_{\circ\bullet}$ and $\xi_{\circ\circ}=-\xi_{\bullet\bullet}$ of Lem\-ma~\ref{lemma:omegas}. It follows that  indeed  $\xi_{\circ\circ}=\xi_{\circ\bullet}$. 
\end{proof}

\begin{definition}
  \label{definition:xi}
  \label{definition:lambda-kappa}
  	Write $\lambda\eqpd \kappa_{\circ\circ}=\kappa_{\bullet\bullet}$ and $\kappa\eqpd \kappa_{\circ\bullet}=\kappa_{\bullet\circ}$ and
 $\xi\eqpd \xi_{\circ\circ}=\xi_{\bullet\bullet}=\xi_{\circ\bullet}=\xi_{\bullet\circ}$.
\end{definition}
Our next step is to show that the pair $(\lambda,\kappa)$ is of a very simple form (Lem\-ma~\ref{lemma:lambda-kappa-summary}).
\begin{definition}
  \label{definition:d-l}
	Define the non-negative integers
	\begin{align*}
	d\eqpd
	\begin{cases}
	\min\left(\kappa\cap \pint\right) &\text{if }\kappa\cap \pint\neq \emptyset,\\
	0&\text{otherwise},
	\end{cases}
	\quad\text{and}\quad
	l\eqpd
	\begin{cases}
	\min\left(\lambda\cap \pint\right) &\text{if }\lambda\cap \pint\neq \emptyset,\\
	0&\text{otherwise}.
	\end{cases}
	\end{align*}
\end{definition}
\begin{lemma}
\label{lemma:lambda-kappa}
\phantomsection
\begin{enumerate}[label=(\alph*)]
\item\label{lemma:lambda-kappa-1} $\kappa=d\integers$.
\item\label{lemma:lambda-kappa-2} If $\lambda\neq \emptyset$, then $l\in\lambda$ and $\lambda-l\supseteq \kappa$.
\item\label{lemma:lambda-kappa-3} $\lambda-l\subseteq \kappa$.
\item\label{lemma:lambda-kappa-4} If $\lambda\neq \emptyset$ and $d\neq 0$, then $l\leq d$.
\item\label{lemma:lambda-kappa-5} If $\lambda\neq \emptyset$ and $d\neq 0$, then $l\neq 0$.
\item\label{lemma:lambda-kappa-6} If $\lambda\neq \emptyset$, then $2l\integers\subseteq d\integers$.
\item\label{lemma:lambda-kappa-7} If $\lambda\neq \emptyset$, then $d=l$ or $d=2l$.
\end{enumerate}
\end{lemma}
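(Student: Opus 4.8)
The strategy is to first reduce the eight axioms (together with Lemmata~\ref{lemma:omegas} and~\ref{lemma:xi-well-defined}) to a tiny list of facts about $\lambda$ and $\kappa$ alone, and then to derive from them a single structural identity out of which all seven parts fall by elementary arithmetic.

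\emph{Reduction.} Axiom~\ref{lemma:gradedsets-condition-4} together with Definition~\ref{definition:lambda-kappa} gives $0\in\kappa$, and Lemma~\ref{lemma:omegas} gives $\lambda=-\lambda$ and $\kappa=-\kappa$. Feeding the four sign patterns of $c_1,c_2,c_3\in\colors$ into Axiom~\ref{lemma:gradedsets-condition-5} (recalling $\kappa_{xy}=\lambda$ precisely when $x=y$) produces exactly the three inclusions
\[
  \kappa+\kappa\subseteq\kappa,\qquad \lambda+\kappa\subseteq\lambda,\qquad \lambda+\lambda\subseteq\kappa.
\]
No further axiom, and neither $\sigma$ nor $\xi$, is needed. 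So the lemma becomes a finite combinatorial exercise using only these three inclusions, the symmetry of $\kappa$ and $\lambda$, and $0\in\kappa$.

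\emph{Parts (a)--(c).} For (a): if $\kappa\cap\pint=\emptyset$, then symmetry and $0\in\kappa$ force $\kappa=\{0\}=d\integers$; otherwise $d=\min(\kappa\cap\pint)\ge1$, closure of $\kappa$ under addition (with symmetry and $0\in\kappa$) gives $d\integers\subseteq\kappa$, and dividing an arbitrary element of $\kappa$ by $d$ with remainder, against the minimality of $d$, gives $\kappa\subseteq d\integers$. Next note $l\in\lambda$ whenever $\lambda\neq\emptyset$: either $l=\min(\lambda\cap\pint)$, or $\lambda\cap\pint=\emptyset$, in which case symmetry forces $\lambda=\{0\}$ and $l=0$. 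Now $\lambda+\kappa\subseteq\lambda$ yields $l+\kappa\subseteq\lambda$, i.e.\ $\kappa\subseteq\lambda-l$, which is (b); and $-l\in\lambda$ together with $\lambda+\lambda\subseteq\kappa$ yields $\lambda-l\subseteq\kappa$, which is (c) (trivial when $\lambda=\emptyset$). Combining (b) and (c): $\lambda=\kappa+l=d\integers+l$ whenever $\lambda\neq\emptyset$; this identity is the engine for the remaining parts.

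\emph{Parts (d)--(g).} Assume throughout $\lambda\neq\emptyset$, so $\lambda=d\integers+l$. (d): we may assume $l\ge1$, so $l=\min(\lambda\cap\pint)$; if $d\neq0$ and $l>d$ then $l-d=(-d)+l\in\lambda\cap\pint$ lies strictly below $l$, absurd. (e): if $l=0$ then $\lambda=\{0\}$, hence $\kappa=\lambda-l=\{0\}$ and $d=0$, contradicting $d\neq0$. (f): $2l=l+l\in\lambda+\lambda\subseteq\kappa=d\integers$, so $2l\integers\subseteq d\integers$. (g): if $d=0$ then $\lambda=\{l\}$ is symmetric, forcing $l=0=d$; if $d\ge1$ then (d) and (e) give $1\le l\le d$ while (f) gives $d\mid 2l$, so $2l$ is a multiple of $d$ lying in $[2,2d]$ and therefore equals $d$ or $2d$ (the subcase $d=1$ degenerating to $l=d=1$). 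The argument has no deep obstacle; the only thing demanding care is the order of operations --- establish $\kappa=d\integers$ and the identity $\lambda=d\integers+l$ first --- and the handful of degenerate cases ($\lambda=\emptyset$, $\kappa=\{0\}$, $d\in\{0,1\}$), each of which must be treated separately so that the generic reasoning (smallest positive element, division with remainder) does not misfire.
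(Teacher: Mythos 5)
Your proposal is correct and follows essentially the same route as the paper: extract from Axiom~\ref{lemma:gradedsets-condition-5} the three inclusions $\kappa+\kappa\subseteq\kappa$, $\lambda+\kappa\subseteq\lambda$, $\lambda+\lambda\subseteq\kappa$, combine them with $0\in\kappa$ and the symmetry $\lambda=-\lambda$, $\kappa=-\kappa$ to get $\kappa=d\integers$ and $\lambda=l+d\integers$, and then finish (d)--(g) by the same minimality and divisibility arguments. The only differences are organizational (you isolate the three inclusions up front, whereas the paper invokes the axiom inside each part), not mathematical.
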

\begin{proof}
	\begin{enumerate}[label=(\alph*),wide]
		\item Of course, $0\in \kappa$ by Assumption~\ref{lemma:gradedsets-condition-4}. And $-\kappa= \kappa$ was established in Lem\-ma~\hyperref[lemma:omegas-2]{\ref*{lemma:omegas}~\ref*{lemma:omegas-2}}. And with the choices  $c_1=\circ$, $c_2=c_3=\bullet$, Assumption~\ref{lemma:gradedsets-condition-5} implies  that
		\begin{align*}
		\kappa+\kappa=\kappa_{\circ\bullet}+\kappa_{\circ\bullet}\overset{\ref{lemma:gradedsets-condition-5}}{\subseteq} \kappa_{\circ\bullet}=\kappa.
		\end{align*}
		Hence, $\kappa$ is indeed a subgroup of $\integers$. The definition of $d$ makes $d$ a generator of $\kappa$, implying $\kappa=d\integers$. 
		\item 
		As $\lambda=-\lambda$ by Lem\-ma~\hyperref[lemma:omegas-1]{\ref*{lemma:omegas}~\ref*{lemma:omegas-1}}, assuming $\lambda\neq \emptyset$ ensures $\lambda\cap (\{0\}\cup\pint)\neq \emptyset$. Hence, under this assumption, $l\in \lambda$ by definition of $l$.
		If we choose $c_1=c_3=\circ$ and $c_2=\bullet$ in As\-sump\-tion~\ref{lemma:gradedsets-condition-5}, it follows that
		\begin{align*}
		\kappa+\lambda=\kappa_{\circ\bullet}+\kappa_{\circ\circ}\overset{\ref{lemma:gradedsets-condition-5}}{\subseteq} \kappa_{\circ\circ}=\lambda.
		\end{align*}
		Since $l\in \lambda$, we can specialize the $\lambda$ on the left hand side of that inclusion to $l$ and then subtract $l$ on both sides. We obtain $\kappa\subseteq \lambda-l$.
		\item If $\lambda= \emptyset$, there is nothing to prove. Hence, let $\lambda\neq \emptyset$, implying $l\in \lambda$ by Part~\ref{lemma:lambda-kappa-2}. Using Assumption~\ref{lemma:gradedsets-condition-5} once more, this time with the choices $c_1=c_2=\circ$ and $c_3=\bullet$, yields
		\begin{align*}
		\lambda-\lambda=\lambda+\lambda=\kappa_{\circ\circ}+\kappa_{\bullet\bullet}\overset{\ref{lemma:gradedsets-condition-5}}{\subseteq} \kappa_{\circ\bullet}=\kappa,
		\end{align*}
		where we have used  $\lambda=-\lambda$ (Lem\-ma~\hyperref[lemma:omegas-1]{\ref*{lemma:omegas}~\ref*{lemma:omegas-1}}) in the first step. Specializing on the left hand side the second instance of $\lambda$ to $l$  yields $\lambda-l\subseteq \kappa$.
		\item Actually, we show the contraposition. Hence, suppose $\lambda\neq \emptyset$ and  $l>d$. Since $\lambda=l+d\integers$ by Parts~\ref{lemma:lambda-kappa-1}--\ref{lemma:lambda-kappa-3}, it then follows that $l-d\in \lambda\cap \pint$. The definition of $l$ consequently requires $l\leq l-d$, i.e.\ $d\leq 0$. As $d\geq 0$ by definition, $d=0$ is the only possibility.
		\item We prove the contraposition indirectly. As $\lambda=l+d\integers$ by Parts~\ref{lemma:lambda-kappa-1}--\ref{lemma:lambda-kappa-3}, supposing $l=0$ entails $\lambda=d\integers$. Thus, if $d\neq 0$ were true, then $\emptyset\neq d\integers\cap \pint = \lambda\cap \pint$ would yield the contradiction $0<\min(\lambda\cap \pint)=l=0$ by definition of $l$.
		\item In the proof of Part~\ref{lemma:lambda-kappa-3} we saw $\lambda+\lambda\subseteq \kappa$. Specializing therein  both instances of $\lambda$ on the left hand side to $l$  (which we can do due to $\lambda\neq \emptyset$ by Part~\ref{lemma:lambda-kappa-2}) yields $2l\in \kappa=d\integers$. It follows $2l\integers\subseteq d\integers$ as asserted. 
		
		\item
		From $2l\integers\subseteq d\integers$, as shown in Part~\ref{lemma:lambda-kappa-6}, it is immediate that, if $d=0$, then $l=0=d$ as claimed. If $d\neq 0$, we know, firstly, $l\leq d$ by Part~\ref{lemma:lambda-kappa-4}, secondly, $l\neq 0$ by Part~\ref{lemma:lambda-kappa-5} and, thirdly, $2l\integers\subseteq d\integers$ by Part~\ref{lemma:lambda-kappa-6}. That is only possible if $d=l$ or $d=2l$: Indeed, if $c\in \integers$ is such that $2l=cd$, then $l> 0$ and $d\geq 0$ ensure $c> 0$. Moreover, $l\leq d$ implies $2l\leq 2d$, i.e., $cd\leq 2d$. We infer $c\leq 2$ by  $d> 0$. Hence, $c\in \{1,2\}$ by $c>0$.\qedhere
	\end{enumerate}
\end{proof}

\begin{lemma}
\label{lemma:lambda-kappa-summary}
	\begin{enumerate}[label=(\alph*)]
		\item\label{lemma:lambda-kappa-summary-1} If $\lambda=\emptyset$, then $(\lambda,\kappa)=(\emptyset,d\integers)$.
		\item\label{lemma:lambda-kappa-summary-2} If $\lambda\neq \emptyset$, then $(\lambda,\kappa)$ is equal to $(l\!+\!2l\integers,2l\integers)$ or $(l\integers,l\integers)$.
	\end{enumerate}
\end{lemma}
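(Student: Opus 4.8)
The plan is to derive both statements purely from the structural facts already collected in Lemma~\ref{lemma:lambda-kappa}, so that essentially no new computation is needed; the present lemma is just a repackaging of those results.

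For part~\ref{lemma:lambda-kappa-summary-1}, assume $\lambda=\emptyset$. Then Part~\ref{lemma:lambda-kappa-1} of Lemma~\ref{lemma:lambda-kappa} gives $\kappa=d\integers$ outright, hence $(\lambda,\kappa)=(\emptyset,d\integers)$, which is exactly the assertion; nothing further needs to be checked.

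For part~\ref{lemma:lambda-kappa-summary-2}, assume $\lambda\neq\emptyset$. First I would combine Parts~\ref{lemma:lambda-kappa-2} and~\ref{lemma:lambda-kappa-3} of Lemma~\ref{lemma:lambda-kappa}: the former yields $\kappa\subseteq\lambda-l$ and the latter $\lambda-l\subseteq\kappa$, so $\lambda-l=\kappa$. Invoking Part~\ref{lemma:lambda-kappa-1} this rewrites as $\lambda=l+\kappa=l+d\integers$. Next I would appeal to Part~\ref{lemma:lambda-kappa-7}, which forces $d=l$ or $d=2l$, and split into these two cases. If $d=l$, substitution gives $\kappa=l\integers$ and $\lambda=l+l\integers=l\integers$, hence $(\lambda,\kappa)=(l\integers,l\integers)$. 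If $d=2l$, the same substitution gives $\kappa=2l\integers$ and $\lambda=l+2l\integers$, hence $(\lambda,\kappa)=(l+2l\integers,2l\integers)$. Since these two cases are exhaustive, the claim follows.

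I do not expect any genuine obstacle here: every piece of arithmetic is already contained in Lemma~\ref{lemma:lambda-kappa}, and the proof is pure bookkeeping. The only point worth a line of comment is the degenerate case $l=0$: by Parts~\ref{lemma:lambda-kappa-4} and~\ref{lemma:lambda-kappa-5} (equivalently by Part~\ref{lemma:lambda-kappa-7}) this happens precisely when $d=0$, and then both of the asserted forms collapse to $(\{0\},\{0\})$, so no separate treatment is needed.
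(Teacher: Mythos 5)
Your proof is correct and follows exactly the same route as the paper: part (a) is Lemma~\ref{lemma:lambda-kappa}~(a), and part (b) combines Lemma~\ref{lemma:lambda-kappa}~(a)--(c) to get $\lambda=l+d\integers$ and then Lemma~\ref{lemma:lambda-kappa}~(g) to split into $d=l$ and $d=2l$. The extra remark on the degenerate case $l=0$ is harmless but not needed.
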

\begin{proof}
	In Lem\-ma~\ref{lemma:lambda-kappa} we established that $\kappa=d\integers$ (Part~\ref{lemma:lambda-kappa-1}) and that $\lambda=\emptyset$ or $\lambda=l+d\integers$ (Parts~\ref{lemma:lambda-kappa-2} and~\ref{lemma:lambda-kappa-3}), where $d=l$ or $d=2l$ (Part~\ref{lemma:lambda-kappa-7}). In other words, we have proven that $(\lambda,\kappa)$ is of the asserted form.
      \end{proof}
      We can immediately relate $\sigma$ to $\kappa$.
      \begin{definition}
        \label{definition:k}
	Define
	\begin{align*}
	k\eqpd \begin{cases}
	\min\left(\sigma\cap \pint\right) &\text{if }\sigma\cap \pint\neq \emptyset,\\
	0&\text{otherwise}.
	\end{cases}
	\end{align*}
\end{definition}

\begin{lemma}
	\label{lemma:sigma}
	$\sigma=k\integers\subseteq d\integers=\kappa$.
\end{lemma}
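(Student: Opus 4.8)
The plan is to put together three ingredients that are already in place. The first, $\sigma=k\integers$, is immediate from Axiom~\ref{lemma:gradedsets-condition-0}: a subgroup of $(\integers,+)$ is cyclic, so if $\sigma=\{0\}$ then $k=0$ and $k\integers=\{0\}=\sigma$, while otherwise $k=\min(\sigma\cap\pint)$ is the least positive element of $\sigma$ and hence generates it. The second, $\kappa=d\integers$, is exactly Lemma~\ref{lemma:lambda-kappa}~\ref{lemma:lambda-kappa-1}. So all that remains is the middle inclusion $\sigma\subseteq\kappa$, equivalently $d\mid k$.

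For that inclusion I would invoke Axiom~\ref{lemma:gradedsets-condition-1} with $(\omega_{c_1,c_2})_{c_1,c_2\in\colors}=(\kappa_{c_1,c_2})_{c_1,c_2\in\colors}$ and $c_1=\circ$, $c_2=\bullet$, which gives $\kappa+\sigma=\kappa_{\circ\bullet}+\sigma\subseteq\kappa_{\circ\bullet}=\kappa$. Since $0\in\kappa$ by Axiom~\ref{lemma:gradedsets-condition-4}, we obtain $\sigma=0+\sigma\subseteq\kappa+\sigma\subseteq\kappa$. Chaining this with the two equalities from the previous paragraph yields $k\integers=\sigma\subseteq\kappa=d\integers$, which is the assertion.

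There is no genuine obstacle here: the lemma is a bookkeeping corollary of the axioms together with Lemma~\ref{lemma:lambda-kappa}. The only point needing a word of care is the degenerate case $\sigma=\{0\}$ (and likewise $\kappa=\{0\}$), where $k$, respectively $d$, is set to $0$ by convention; there the relevant inclusions hold trivially, so the argument goes through unchanged.
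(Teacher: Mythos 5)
Your proposal is correct and follows essentially the same route as the paper: $\sigma=k\integers$ from Axiom~(i), $\kappa=d\integers$ from Lemma~\hyperref[lemma:lambda-kappa-1]{\ref*{lemma:lambda-kappa}~\ref*{lemma:lambda-kappa-1}}, and the middle inclusion from $\kappa+\sigma\subseteq\kappa$ together with $0\in\kappa$. The only cosmetic difference is that the paper cites the derived identity $\kappa=\kappa+\sigma$ of Lemma~\hyperref[lemma:omegas-2]{\ref*{lemma:omegas}~\ref*{lemma:omegas-2}} where you invoke Axiom~\ref{lemma:gradedsets-condition-1} directly.
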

\begin{proof}
Because $\sigma$ is a subgroup of $\integers$, the definition of $k$ implies $\sigma=k\integers$. Moreover, we know $\kappa=\kappa+\sigma$ by Lem\-ma~\hyperref[lemma:omegas-2]{\ref*{lemma:omegas}~\ref*{lemma:omegas-2}}. Hence Assumption~\ref{lemma:gradedsets-condition-4}, namely  $0\in \kappa$, implies $k\integers=\sigma\subseteq \kappa+\sigma\subseteq \kappa=d\integers$. 
\end{proof}

Let us now turn to the description of $\xi$.
\begin{lemma}
\label{lemma:xi}
	\begin{enumerate}[label=(\alph*)]
		\item\label{lemma:xi-1} $\xi=\xi+d\integers$.
		\item\label{lemma:xi-2} If $\lambda\neq \emptyset$, then $\xi=\xi+l\integers$.
	\end{enumerate}
\end{lemma}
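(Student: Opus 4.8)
The plan is to deduce both parts directly from Axiom~\ref{lemma:gradedsets-condition-7}, the only axiom coupling $\kappa$-type and $\xi$-type sets, by substituting suitable colours and then reading off the consequences via Definition~\ref{definition:xi} together with the description of $\kappa$ and $\lambda$ already obtained in Lemmata~\ref{lemma:omegas} and~\ref{lemma:lambda-kappa}.

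For Part~\ref{lemma:xi-1}, I would specialise Axiom~\ref{lemma:gradedsets-condition-7} to $c_1=c_3=\circ$ and $c_2=\bullet$, so that $\overline{c_2}=\circ$. This gives $\kappa_{\circ\bullet}+\xi_{\circ\circ}\subseteq\xi_{\circ\circ}$, i.e.\ $\kappa+\xi\subseteq\xi$ by Definition~\ref{definition:xi}. Since $\kappa=d\integers$ by Lem\-ma~\hyperref[lemma:lambda-kappa-1]{\ref*{lemma:lambda-kappa}~\ref*{lemma:lambda-kappa-1}} is a subgroup of $\integers$, and in particular $0\in d\integers$, we obtain $\xi\subseteq d\integers+\xi\subseteq\xi$, whence $\xi=\xi+d\integers$.

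For Part~\ref{lemma:xi-2}, suppose $\lambda\neq\emptyset$. This time I would put $c_1=c_2=c_3=\circ$, so that $\overline{c_2}=\bullet$, and Axiom~\ref{lemma:gradedsets-condition-7} yields $\kappa_{\circ\circ}+\xi_{\bullet\circ}\subseteq\xi_{\circ\circ}$, that is $\lambda+\xi\subseteq\xi$ by Definition~\ref{definition:xi}. By Lem\-ma~\hyperref[lemma:lambda-kappa-2]{\ref*{lemma:lambda-kappa}~\ref*{lemma:lambda-kappa-2}} we have $l\in\lambda$, and $\lambda=-\lambda$ by Lem\-ma~\hyperref[lemma:omegas-1]{\ref*{lemma:omegas}~\ref*{lemma:omegas-1}}, so $-l\in\lambda$ as well. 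Hence $l+\xi\subseteq\xi$ and $-l+\xi\subseteq\xi$; iterating these two inclusions gives $nl+\xi\subseteq\xi$ for every $n\in\integers$, i.e.\ $l\integers+\xi\subseteq\xi$. As $0\in l\integers$ this forces $\xi=\xi+l\integers$.

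I do not expect a genuine obstacle here; the argument is essentially bookkeeping around Axiom~\ref{lemma:gradedsets-condition-7}. The only points requiring a little care are choosing the colour substitutions so that exactly a $\kappa_{\circ\bullet}$ (respectively a $\kappa_{\circ\circ}$) together with a matching $\xi$-term appears on the left, and — in Part~\ref{lemma:xi-2} — passing from $l+\xi\subseteq\xi$ to invariance under the whole group $l\integers$, which is precisely where the symmetry $\lambda=-\lambda$ (equivalently $\xi=-\xi$, also from Lem\-ma~\ref{lemma:omegas}) is needed.
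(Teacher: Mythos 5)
Your proof is correct and rests on the same mechanism as the paper's: both parts come from Axiom~\ref{lemma:gradedsets-condition-7} with suitable colour substitutions, giving $\kappa+\xi\subseteq\xi$ and $\lambda+\xi\subseteq\xi$, after which one upgrades a single translation invariance to invariance under a whole subgroup. The only genuine divergence is in Part~\ref{lemma:xi-2}: to get the backward translation $\xi-l\subseteq\xi$, the paper invokes $d=l$ or $d=2l$ (Lem\-ma~\hyperref[lemma:lambda-kappa-7]{\ref*{lemma:lambda-kappa}~\ref*{lemma:lambda-kappa-7}}) together with Part~\ref{lemma:xi-1} to obtain $\xi-2l\subseteq\xi$ and hence $\xi-l=(\xi+l)-2l\subseteq\xi$, whereas you observe that $-l\in\lambda$ (from $l\in\lambda$ and $\lambda=-\lambda$, Lem\-ma~\hyperref[lemma:omegas-1]{\ref*{lemma:omegas}~\ref*{lemma:omegas-1}}) and feed $-l$ back into $\lambda+\xi\subseteq\xi$ directly. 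Your route is marginally more self-contained, since it needs neither Part~\ref{lemma:xi-1} nor the relation between $d$ and $l$; both arguments are valid.
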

\begin{proof}
	\begin{enumerate}[label=(\alph*), wide]
	\item Picking $c_1=\circ$, $c_2=c_3=\bullet$, As\-sump\-tion~\ref{lemma:gradedsets-condition-7} implies the inclusion
	\begin{align*}
	\kappa+\xi=\kappa_{\circ\bullet}+\xi_{\circ\bullet}\overset{\ref{lemma:gradedsets-condition-7}}{\subseteq}\xi_{\circ\bullet}=\xi.
	\end{align*}
	As the reverse inclusion is trivially true by $0\in \kappa$ (Assumption~\ref{lemma:gradedsets-condition-4}), we have thus verified our claim $\xi=\xi+d\integers$ by Lem\-ma~\hyperref[lemma:lambda-kappa-1]{\ref*{lemma:lambda-kappa}~\ref*{lemma:lambda-kappa-1}}.
	\item
	Assumption~\ref{lemma:gradedsets-condition-7}, applied a second time, now with $c_1=c_2=c_3=\circ$, allows us to conclude
	\begin{align*}
	\lambda+\xi=\kappa_{\circ\circ}+\xi_{\bullet\circ}\overset{\ref{lemma:gradedsets-condition-7}}{\subseteq} \xi_{\circ\circ}=\xi.
	\end{align*}
	If $\lambda\neq \emptyset$, then  $l\in \lambda$ by Lem\-ma~\hyperref[lemma:lambda-kappa-2]{\ref*{lemma:lambda-kappa}~\ref*{lemma:lambda-kappa-2}}. Hence, the above inclusion shows in particular $\xi+l\subseteq \xi$. Using this, induction proves $\xi+l\pint\subseteq \xi$. Lem\-ma~\hyperref[lemma:lambda-kappa-7]{\ref*{lemma:lambda-kappa}~\ref*{lemma:lambda-kappa-7}} established that $d=l$ or $d=2l$. Either way,  $\xi=\xi+d\integers$, as seen in Part~\ref{lemma:xi-1}, then ensures $\xi-2l\subseteq \xi$. Combining this conclusion with $\xi+l\subseteq \xi$ lets us infer $\xi-l=(\xi+l)-2l\subseteq \xi$. Again, it follows $\xi-l\pint\subseteq \xi$ by induction. Hence, altogether we have shown $\xi+l\integers=(\xi-l\pint)\cup \xi\cup (\xi+l\pint)\subseteq \xi$. Of course, the converse inclusion is true as well because $0\in \integers$, proving $\xi=\xi+l\integers$ as claimed. \qedhere
	\end{enumerate}
\end{proof}
In order to obtain a refined understanding of $\xi$ we need the following preparatory lemma.
\begin{lemma}
	\label{lemma:chi}
	Let $\chi\subseteq \integers$ and $m\in  \pint$ satisfy $\chi=-\chi=\chi+m\integers$.
	\begin{enumerate}[label=(\alph*)]
		\item\label{lemma:chi-1} $\chi=(\chi\cap (\{0\}\cup \dwi{m\!-\!1}))_m$. 
		\item\label{lemma:chi-2} $\chi\cap \dwi{m\!-\! 1}=m-(\chi\cap \dwi{m\!-\!1})$.
		\item\label{lemma:chi-3} $\chi=(\chi\cap (\{0\}\cup \dwi{\lfloor\frac{m}{2}\rfloor}))_m$.
		\item\label{lemma:chi-4} $\chi=\integers\backslash D_m$ for $D=(\{0\}\cup \dwi{\lfloor\frac{m}{2}\rfloor})\backslash \chi$.
	\end{enumerate}
\end{lemma}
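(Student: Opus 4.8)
The plan is to unpack the two standing hypotheses: $\chi=\chi+m\integers$ says that $\chi$ is a union of residue classes modulo $m$, and $\chi=-\chi$ says that this union is moreover stable under negation, hence under $r\mapsto m-r$. Once this is noted, everything reduces to elementary bookkeeping with residues, and I would treat the parts essentially in the order stated.

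For \ref{lemma:chi-1}, set $D\eqpd\chi\cap(\{0\}\cup\dwi{m-1})$ and prove $D_m=\chi$ by two inclusions. For ``$\subseteq$'': $D\subseteq\chi$, and $m-D\subseteq-D+m\integers\subseteq-\chi+m\integers=\chi$, so $D_m=(D\cup(m-D))+m\integers\subseteq\chi+m\integers=\chi$. For ``$\supseteq$'': any $x\in\chi$ has a class representative $r\in\{0,\ldots,m-1\}$ with $r\in x+m\integers\subseteq\chi$, so $r\in D$ and $x\in D+m\integers\subseteq D_m$. Part \ref{lemma:chi-2} is then immediate from the hypotheses alone: for $r\in\dwi{m-1}$ also $m-r\in\dwi{m-1}$, and $r\in\chi$ iff $-r\in\chi$ (by $\chi=-\chi$) iff $m-r\in\chi$ (by $\chi=\chi+m\integers$, since $m-r=-r+m$); running over all such $r$ yields $\chi\cap\dwi{m-1}=m-(\chi\cap\dwi{m-1})$.

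For \ref{lemma:chi-3}, put $D\eqpd\chi\cap(\{0\}\cup\dwi{\lfloor m/2\rfloor})$. Since $D$ is contained in the set used in \ref{lemma:chi-1}, monotonicity of $E\mapsto E_m$ together with Part \ref{lemma:chi-1} gives $D_m\subseteq\chi$. Conversely, take $x\in\chi$ with class representative $r\in\{0,\ldots,m-1\}$, so $r\in\chi$. If $r\leq\lfloor m/2\rfloor$ then $r\in D$ and $x\in D+m\integers\subseteq D_m$; if $r\geq\lfloor m/2\rfloor+1$ then $1\leq m-r\leq\lfloor m/2\rfloor$ (checked by distinguishing the parities of $m$) and $m-r\in\chi$ by \ref{lemma:chi-2}, hence $m-r\in D$, so $r\in m-D$ and $x\in(m-D)+m\integers\subseteq D_m$.

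For \ref{lemma:chi-4}, the key observation is that $\integers\setminus\chi$ satisfies the same three hypotheses as $\chi$: it is symmetric because $\chi$ is, and $(\integers\setminus\chi)+m\integers=\integers\setminus\chi$ because $\chi+m\integers=\chi$. Applying Part \ref{lemma:chi-3} to $\integers\setminus\chi$ then gives $\integers\setminus\chi=\bigl((\integers\setminus\chi)\cap(\{0\}\cup\dwi{\lfloor m/2\rfloor})\bigr)_m=D_m$ for $D=(\{0\}\cup\dwi{\lfloor m/2\rfloor})\setminus\chi$, which is exactly the claim. I expect nothing here to be deep; the only genuinely delicate points are the elementary inequality $m-r\leq\lfloor m/2\rfloor$ when $r\geq\lfloor m/2\rfloor+1$ (where the parity of $m$ enters) and the degenerate case $m=1$ with $\dwi{0}=\emptyset$, together with the need to remember that $E\mapsto E_m$ is many-to-one, so the set equalities must be verified at the level of residue classes rather than by comparing ``representative'' sets directly.
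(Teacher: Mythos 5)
Your proposal is correct and follows essentially the same route as the paper: part (a) by comparing residue representatives, part (b) from the stability of $\chi$ under $r\mapsto m-r$, part (c) by folding representatives $r>\lfloor\frac{m}{2}\rfloor$ down to $m-r$ via (b) with the same parity check, and part (d) by applying (c) to the complement $\integers\backslash\chi$, which inherits the hypotheses. The only cosmetic differences are that the paper packages the monotonicity and idempotency of $S\mapsto S_m$ as an explicit closure-operator remark and proves (b) by one inclusion plus a substitution $\chi\rightsquigarrow m-\chi$ rather than your equivalence chain.
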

\begin{proof}
	The mapping $S\mapsto S_m\eqpd (S\cup (m-S))+m\integers$ of subsets $S\subseteq \integers$ is a closure operator with respect to $\subseteq$, i.e.,  for all $S,T\subseteq \integers$ with $S\subseteq T$ we have $S\subseteq S_m$ and $S_m\subseteq T_m$ and $(S_m)_m=S_m$. In particular $S=S_m$ if and only if $S=-S=S+m\integers$.
	\begin{enumerate}[label=(\alph*), wide]
		\item The assumption $\chi=-\chi=\chi+m\integers$ implies $\chi=\chi_m$.  Hence,  $\chi=\chi_m\supseteq (\chi\cap (\{0\}\cup \dwi{m\!-\!1}))_m$ is clear by monotonicity of $S\mapsto S_m$. We show the converse: If $x\in \chi$, we find $x'\in \{0\}\cup\dwi{m\!-\!1}$ such that $x'-x\in m\integers$. Consequently, $x'\in x+m\integers\subseteq \chi+m\integers\subseteq \chi$ by assumption. We conclude $x\in x'+m\integers\subseteq (\chi\cap (\{0\}\cup \dwi{m\!-\!1})) +m\integers\subseteq (\chi\cap (\{0\}\cup \dwi{m\!-\!1}))_m$, which is what we needed to show.
		\item We further deduce from $\chi=-\chi=\chi+m\integers$ that $m-\chi\subseteq \chi$. Naturally, $m-\left(\chi\cap \dwi{m\!-\!1}\right)\subseteq m-\dwi{m\!-\!1} =\dwi{m\!-\!1}$. Combining this with $m-\left(\chi\cap \dwi{m\!-\!1}\right)\subseteq m-\chi\subseteq \chi$ yields $m-\left(\chi\cap\dwi{m\!-\!1}\right)\subseteq \chi\cap \dwi{m\!-\!1}$. We conclude $\chi\cap\dwi{m\!-\!1}=m-\left(m-\left(\chi\cap\dwi{m\!-\!1}\right)\right)\subseteq m-\left(\chi\cap\dwi{m\!-\!1}\right)$, which proves one inclusion.
		\par
		Now, the converse. From $\chi=-\chi=\chi+m\integers$ we can infer $m-\chi=-(m-\chi)=(m-\chi)+m\integers$. In consequence we can apply the inclusion we just proved to the set $m-\chi$ in the role of $\chi$. Since $m-\dwi{m\!-\!1}=\dwi{m\!-\!1}$, the resulting inclusion $(m-\chi)\cap \dwi{m\!-\!1}\subseteq m-\left((m-\chi)\cap\dwi{m\!-\!1}\right)$ actually spells $m-(\chi\cap \dwi{m\!-\!1})\subseteq \chi\cap \dwi{m\!-\!1}$. That is just what we had to show.
		\item Due to the monotonicity and idempotency of the mapping $S\mapsto S_m$, it suffices by Part~\ref{lemma:chi-1} to prove $\chi\cap (\{0\}\cup \dwi{m\!-\!1})\subseteq (\chi\cap (\{0\}\cup \dwi{\lfloor\frac{m}{2}\rfloor}))_m$. Let $x\in \chi\cap (\{0\}\cup \dwi{m\!-\!1})$ be arbitrary. If $x\leq \lfloor\frac{m}{2}\rfloor$, then, naturally, $x \in \chi\cup (\{0\}\cup \dwi{\lfloor\frac{m}{2}\rfloor})\subseteq (\chi\cap (\{0\}\cup \dwi{\lfloor\frac{m}{2}\rfloor}))_m$. Hence, we can assume $x>\lfloor\frac{m}{2}\rfloor$. By Part~\ref{lemma:chi-2} we know $m-x\in \chi$. By assumption, $m-x<m-\lfloor\frac{m}{2}\rfloor$. If $m$ is even, then this inequality says $m-x<m-\frac{m}{2}=\frac{m}{2}=\lfloor\frac{m}{2}\rfloor$. Should $m$ be odd instead, it means $m-x<m-\frac{m-1}{2}=\frac{m+1}{2}$, which implies $m-x\leq \frac{m+1}{2}-1=\frac{m-1}{2}=\lfloor\frac{m}{2}\rfloor$. Thus, $m-x\leq \lfloor\frac{m}{2}\rfloor$ in all cases. Hence we have shown $m-x\in \chi\cap (\{0\}\cup \dwi{\lfloor \frac{m}{2}\rfloor})$. It follows $x=m-(m-x)\in m-(\chi\cap (\{0\}\cup \dwi{\lfloor \frac{m}{2}\rfloor}))\subseteq (\chi(\{0\}\cup \dwi{\lfloor \frac{m}{2}\rfloor}))_m$. That is what we needed to see.
		\item The assumption $\chi=-\chi=\chi+m\integers$ implies $\integers\backslash \chi=-(\integers\backslash \chi )=(\integers\backslash \chi)+m\integers$. Hence, we can apply Part~\ref{lemma:chi-3} to the set $\integers\backslash \chi$ in the role of $\chi$ and obtain $\integers\backslash \chi=((\integers\backslash \chi)\cap (\{0\}\cup \dwi{\lfloor\frac{m}{2}\rfloor}))_m$. Since $(\integers\backslash \chi)\cap (\{0\}\cup \dwi{\lfloor\frac{m}{2}\rfloor})=(\{0\}\cup \dwi{\lfloor\frac{m}{2}\rfloor})\backslash \chi=D$ we have shown $\integers\backslash \chi=D_m$. It follows $\chi=\integers\backslash D_m$ as claimed. \qedhere
	\end{enumerate}
\end{proof}
\begin{lemma}
	\label{lemma:xi-summary}
	\begin{enumerate}[label=(\alph*)]
		\item\label{lemma:xi-summary-1} If $d=0$, then $\xi=\integers\backslash E_0$ for $E=(\{0\}\cup\pint)\backslash \xi$.
		\item\label{lemma:xi-summary-2} If $d\geq 1$ and $\lambda\neq \emptyset$, then $\xi=\integers\backslash D_l$ for $D=(\{0\}\cup \dwi{\lfloor\frac{l}{2}\rfloor})\backslash \xi$.
		\item\label{lemma:xi-summary-3} If $d\geq 1$ and $\lambda= \emptyset$, then $\xi=\integers\backslash D_d$ for $D=(\{0\}\cup \dwi{\lfloor\frac{d}{2}\rfloor})\backslash \xi$. 		
	\end{enumerate}
\end{lemma}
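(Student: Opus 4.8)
The plan is to treat the three cases of the statement separately, in each case reducing the determination of $\xi$ to Lemma~\ref{lemma:chi} — except for the case $d=0$, where Lemma~\ref{lemma:chi} does not apply (it assumes a strictly positive modulus) and a direct argument is needed instead. The one property of $\xi$ common to all three parts is its symmetry $\xi=-\xi$, which is immediate from Lemma~\ref{lemma:omegas}~\ref{lemma:omegas-2} applied to $(\xi_{c_1,c_2})_{c_1,c_2\in\colors}$ together with Definition~\ref{definition:xi}, since $\xi=\xi_{\circ\bullet}=-\xi_{\circ\bullet}$.

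For Part~\ref{lemma:xi-summary-1}, with $d=0$, I would argue directly from symmetry. Because $\xi=-\xi$, the complement $\integers\backslash\xi$ is symmetric as well, hence equals $\pm\bigl((\integers\backslash\xi)\cap\nnint\bigr)$. Since $(\integers\backslash\xi)\cap\nnint=(\{0\}\cup\pint)\backslash\xi=E$ and, by Notation~\ref{integer-notations}~\ref{integer-notations-4} with $m=0$, $E_0=E\cup(-E)$, this says $\integers\backslash\xi=E_0$, i.e.\ $\xi=\integers\backslash E_0$. (The hypothesis $d=0$ serves only to place us in this branch of the trichotomy.)

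For Part~\ref{lemma:xi-summary-2}, with $d\geq1$ and $\lambda\neq\emptyset$, I would first note that $l\neq0$ by Lemma~\ref{lemma:lambda-kappa}~\ref{lemma:lambda-kappa-5}, so $l\in\pint$ since $l\geq0$ by Definition~\ref{definition:d-l}. By Lemma~\ref{lemma:xi}~\ref{lemma:xi-2}, $\xi=\xi+l\integers$; combining this with $\xi=-\xi$ and invoking Lemma~\ref{lemma:chi}~\ref{lemma:chi-4} with $\chi\eqpd\xi$ and $m\eqpd l$ gives exactly $\xi=\integers\backslash D_l$ for $D=(\{0\}\cup\dwi{\lfloor\frac{l}{2}\rfloor})\backslash\xi$. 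Part~\ref{lemma:xi-summary-3}, with $d\geq1$ (so $d\in\pint$) and $\lambda=\emptyset$, is handled the same way, but using $\xi=\xi+d\integers$ from Lemma~\ref{lemma:xi}~\ref{lemma:xi-1} and applying Lemma~\ref{lemma:chi}~\ref{lemma:chi-4} with $\chi\eqpd\xi$ and $m\eqpd d$.

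The argument poses no genuine obstacle; it is essentially bookkeeping around Lemma~\ref{lemma:chi}. The two points deserving care are that the value $d=0$ must be excluded from any use of Lemma~\ref{lemma:chi} and handled by hand, and that in Part~\ref{lemma:xi-summary-2} one must verify that the modulus $l$ is positive — which is precisely the content of Lemma~\ref{lemma:lambda-kappa}~\ref{lemma:lambda-kappa-5}.
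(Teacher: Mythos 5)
Your proof is correct and follows essentially the same route as the paper: part (a) by hand from $\xi=-\xi$ and $E_0=E\cup(-E)$, and parts (b), (c) by feeding $\xi=-\xi=\xi+l\integers$ (resp.\ $\xi+d\integers$) into Lemma~\ref{lemma:chi}~\ref{lemma:chi-4}. The only cosmetic difference is that you obtain $l\geq 1$ in part (b) from Lemma~\ref{lemma:lambda-kappa}~\ref{lemma:lambda-kappa-5}, whereas the paper deduces it from $d\in\{l,2l\}$ via Lemma~\ref{lemma:lambda-kappa}~\ref{lemma:lambda-kappa-7}; both are valid.
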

\begin{proof}
\begin{enumerate}[label=(\alph*),wide]
\item 
The defining equations $E=(\{0\}\cup \pint)\backslash \xi$ and $E_0=E\cup (-E)$ imply $E_0=((\{0\}\cup \pint) \backslash \xi)\cup ((-(\{0\}\cup \pint))\backslash (-\xi))$.  Hence, $\xi=-\xi$ (by Lem\-ma~\ref{lemma:omegas}) shows $E_0=\integers\backslash \xi$ and thus the claim $\xi=\integers\backslash E_0$. 
\item Because $\lambda \neq \emptyset$,  Lem\-ma~\hyperref[lemma:lambda-kappa-7]{\ref*{lemma:lambda-kappa}~\ref*{lemma:lambda-kappa-7}} guarantees $d=l$ or $d=2l$. Hence, the assumption $d\geq 1$ implies $l\geq 1$. Moreover, Lem\-ma~\hyperref[lemma:xi-2]{\ref*{lemma:xi}~\ref*{lemma:xi-2}} assures us that $\xi=\xi+l\integers$. And, we already know $\xi=-\xi$ by Lem\-ma~\ref{lemma:omegas}. Hence, Lem\-ma~\hyperref[lemma:chi-4]{\ref*{lemma:chi}~\ref*{lemma:chi-4}} yields the claim. 
\item Still, $\xi=-\xi$, of course. And $\xi=\xi+d\integers$ by Lem\-ma~\hyperref[lemma:xi-1]{\ref*{lemma:xi}~\ref*{lemma:xi-1}} as $d\geq 1$. Thus, once more, Lem\-ma~\hyperref[lemma:chi-4]{\ref*{lemma:chi}~\ref*{lemma:chi-4}} proves the claim. \qedhere
\end{enumerate}
\end{proof}
In conclusion we have shown the following auxiliary result.
\begin{lemma}[Arithmetic Lem\-ma]
  \label{lemma:arithmetic}
  If the nine sets of integers $\sigma$ and  $\kappa_{c_1,c_2}$, $\xi_{c_1,c_2}$ for $c_1,c_2\in \colors$  satisfy Axioms~\ref{axioms:arithmetic}, then
  \begin{align*}
     \kappa_{\circ\circ}=\kappa_{\bullet\bullet}=\colon \lambda,\quad   \kappa_{\circ\bullet}=\kappa_{\bullet\circ}=\colon \kappa \quad\text{and}\quad  \xi_{\circ\circ}=\xi_{\bullet\bullet}=\xi_{\circ\bullet}=\xi_{\bullet\circ}=\colon \xi
  \end{align*}
  and there exist  $u\in\{0\}\cup \pint$, $m\in \pint$, $D\subseteq \{0\}\cup\dwi{\lfloor\frac{m}{2}\rfloor}$ and $E\subseteq \{0\}\cup \pint$ such that the tuple $(\sigma,\lambda,\kappa,\xi)$ is given by one of the following:
	\begin{align*}
	\begin{matrix}
	\sigma& \lambda&\kappa& \xi  \\ \hline \\[-0.85em]
	um\integers & m\integers & m\integers & \integers\backslash D_m\\
	2um\integers & m\!+\!2m\integers & 2m\integers & \integers\backslash D_m\\
	um\integers & \emptyset & m\integers & \integers\backslash D_m\\                        
	\{0\} & \{0\} & \{0\} & \integers\backslash E_0 \\
	\{0\} & \emptyset & \{0\} & \integers\backslash E_0
	\end{matrix}
	\end{align*}
      \end{lemma}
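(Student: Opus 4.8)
The plan is to obtain the lemma simply by assembling the results already proved in this subsection and running a short case distinction; no genuinely new argument is needed. First I would observe that the three equalities in the opening display are precisely the content of Definition~\ref{definition:lambda-kappa}, which is legitimate because Lemma~\ref{lemma:omegas} gives $\kappa_{\circ\circ}=\kappa_{\bullet\bullet}$, $\kappa_{\circ\bullet}=\kappa_{\bullet\circ}$, $\xi_{\circ\circ}=\xi_{\bullet\bullet}$ and $\xi_{\circ\bullet}=\xi_{\bullet\circ}$, while Lemma~\ref{lemma:xi-well-defined} adds $\xi_{\circ\circ}=\xi_{\circ\bullet}$; hence $\lambda$, $\kappa$, $\xi$ are well-defined and the first assertion is immediate.

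Next I would marshal the structural input. By Lemma~\ref{lemma:lambda-kappa-summary} the pair $(\lambda,\kappa)$ equals $(\emptyset,d\integers)$ when $\lambda=\emptyset$ and equals $(l+2l\integers,2l\integers)$ or $(l\integers,l\integers)$ when $\lambda\neq\emptyset$; by Lemma~\ref{lemma:sigma} we have $\sigma=k\integers\subseteq d\integers=\kappa$; and by Lemma~\ref{lemma:xi-summary} the set $\xi$ has the form $\integers\backslash E_0$ with $E=(\{0\}\cup\pint)\backslash\xi$ when $d=0$, the form $\integers\backslash D_l$ with $D=(\{0\}\cup\dwi{\lfloor\frac{l}{2}\rfloor})\backslash\xi$ when $d\geq 1$ and $\lambda\neq\emptyset$, and the form $\integers\backslash D_d$ with $D=(\{0\}\cup\dwi{\lfloor\frac{d}{2}\rfloor})\backslash\xi$ when $d\geq 1$ and $\lambda=\emptyset$. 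In any row of the target table at least one of the parameters $m$, $D$, $E$ does not occur among the four listed columns; for those I would simply make a trivial choice (say $m=1$, $D=\emptyset$, $E=\emptyset$), so it suffices to produce the parameters that actually appear.

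The case analysis then runs as follows. \emph{If $\lambda=\emptyset$ and $d=0$:} then $\kappa=\{0\}$, so $\sigma\subseteq\{0\}$ forces $\sigma=\{0\}$, and $\xi=\integers\backslash E_0$ — the fifth row. \emph{If $\lambda=\emptyset$ and $d\geq 1$:} set $m\eqpd d$; since $\sigma=k\integers$ is a subgroup of $m\integers$ with $k\geq 0$, write $k=um$ with $u\in\{0\}\cup\pint$, so $\sigma=um\integers$, $\kappa=m\integers$, $\xi=\integers\backslash D_m$ — the third row. \emph{If $\lambda\neq\emptyset$ and $d=0$:} Lemma~\ref{lemma:lambda-kappa}~\ref{lemma:lambda-kappa-7} ($d=l$ or $d=2l$) forces $l=0$, whence $\lambda=l+\kappa=\{0\}=\kappa=\sigma$ and $\xi=\integers\backslash E_0$ — the fourth row. \emph{If $\lambda\neq\emptyset$ and $d\geq 1$:} then $l\geq 1$ by Lemma~\ref{lemma:lambda-kappa}~\ref{lemma:lambda-kappa-5} and $d\in\{l,2l\}$ by Lemma~\ref{lemma:lambda-kappa}~\ref{lemma:lambda-kappa-7}; put $m\eqpd l\geq 1$. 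If $d=l$, then $\kappa=\lambda=m\integers$ and $\sigma\subseteq m\integers$ gives $\sigma=um\integers$; if $d=2l$, then $\kappa=2m\integers$, $\lambda=m+2m\integers$ and $\sigma\subseteq 2m\integers$ gives $k\in 2m\integers$, hence $k=2um$ and $\sigma=2um\integers$. In both sub-cases $\xi=\integers\backslash D_m$ with $D=(\{0\}\cup\dwi{\lfloor\frac{m}{2}\rfloor})\backslash\xi$ by Lemma~\ref{lemma:xi-summary} — the first and second rows respectively. Since every possibility lands in the table, the lemma follows.

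I expect no serious obstacle; the work is pure bookkeeping, and the argument reduces to citing the already-established lemmas in the right order. The only points that warrant a moment's care are the degenerate branch $\lambda\neq\emptyset$, $d=0$ (one must first notice that $d=l$ or $d=2l$ forces $l=0$ before concluding $\lambda=\{0\}$), the verification that in the branches where $\xi$ is written as $\integers\backslash D_m$ the chosen modulus $m$ is genuinely a positive integer so that Lemma~\ref{lemma:xi-summary} applies with exactly that modulus, and the check that $\sigma\subseteq 2m\integers$ in the $d=2l$ sub-case, which is what makes the coefficient $2u$ a well-defined non-negative integer matching the second row.
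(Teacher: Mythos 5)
Your proposal is correct and follows essentially the same route as the paper's own proof: well-definedness from Lemmata~\ref{lemma:omegas} and~\ref{lemma:xi-well-defined}, then a case split driven by Lemmata~\ref{lemma:lambda-kappa-summary}, \ref{lemma:sigma} and~\ref{lemma:xi-summary}, landing in the five rows. The only cosmetic difference is that you branch on $(\lambda,d)$ where the paper branches on $(\lambda,l)$, which is equivalent since $d\in\{l,2l\}$ forces $d=0\Leftrightarrow l=0$ when $\lambda\neq\emptyset$.
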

      \begin{proof}
        That $\lambda$, $\kappa$ and $\xi$ are well-defined was shown in Lem\-mata~\ref{lemma:omegas} and~\ref{lemma:xi-well-defined}. Hence, we can let $k$, $d$ and $l$ be as in Definitions~\ref{definition:k} and  \ref{definition:d-l}. We distinguish five cases in total. 
        \par
        \textbf{Case~1:} First, suppose that $\lambda=\emptyset$. Then, $\kappa=d\integers$. By Lem\-ma~\hyperref[lemma:lambda-kappa-summary-1]{\ref*{lemma:lambda-kappa-summary}~\ref*{lemma:lambda-kappa-summary-1}}. There are now two possibilities depending on the value of $d\in\{0\}\cup\pint$.
        \par
        \textbf{Case~1.1:} If $d=0$, which is to say $\kappa=\{0\}$, then Lem\-ma~\hyperref[lemma:xi-summary-1]{\ref*{lemma:xi-summary}~\ref*{lemma:xi-summary-1}} yields $\xi=\integers\backslash E_0$ for $E\eqpd(\{0\}\cup \pint)\backslash \xi$. And Lem\-ma~\ref{lemma:sigma} proves $\sigma=k\integers\subseteq  d\integers=\{0\}$, implying $k=0$ and thus $\sigma=\{0\}$. As, naturally, $E\subseteq \{0\}\cup\pint$, the tuple $(\sigma,\lambda,\kappa,\xi)$ is indeed as claimed in the fifth row of the table.
        \par
        \textbf{Case~1.2:} Should $d\geq 1$ on the other hand, then by Lem\-ma~\hyperref[lemma:xi-summary-3]{\ref*{lemma:xi-summary}~\ref*{lemma:xi-summary-3}} we infer $\xi=\integers\backslash D_d$ for $D\eqpd(\{0\}\cup \dwi{\lfloor\frac{d}{2}\rfloor})\backslash \xi$. Since $\sigma=k\integers\subseteq  d\integers$ by Lem\-ma~\ref{lemma:sigma}, if we put $u\eqpd\frac{k}{d}$, then $\sigma=ud\integers$. Recognizing $D\subseteq \{0\}\cup \dwi{\lfloor\frac{d}{2}\rfloor}$ and defining $m\eqpd d$ thus  proves that $(\sigma,\lambda,\kappa,\xi)$ is as asserted by the third row of the table.
        \par
        \textbf{Case~2:} Now, let $\lambda\neq \emptyset$ instead. Then, $(\lambda,\kappa)=(l\!+\!2l\integers,2l\integers)$ or $(\lambda,\kappa)=(l\integers,l\integers)$ by Lem\-ma~\hyperref[lemma:lambda-kappa-summary-1]{\ref*{lemma:lambda-kappa-summary}~\ref*{lemma:lambda-kappa-summary-1}}. Respectively, $d=2l$ or $d=l$. We now distinguish two cases based on the value of $l\in\{0\}\cup \pint$.
        \par
        \textbf{Case~2.1:} Assuming $l=0$ lets us conclude $l\integers=2l\integers=l\!+\!2l\integers=\{0\}$, which implies  $(\lambda,\kappa)=(\{0\},\{0\})$. Lem\-ma~\ref{lemma:sigma} gives $\sigma=k\integers\subseteq \kappa=\{0\}$ and thus $k=0$ and $\sigma=\{0\}$. Because $d=l=2l=0$ we can infer 
        $\xi=\integers\backslash E_0$ for $E\eqpd(\{0\}\cup \pint)\backslash \xi$ by Lem\-ma~\hyperref[lemma:xi-summary-1]{\ref*{lemma:xi-summary}~\ref*{lemma:xi-summary-1}}. As $E\subseteq \{0\}\cup\pint$, the tuple $(\sigma,\lambda,\kappa,\xi)$ is hence given by the fourth row of the table.
        \par
        \textbf{Case~2.2:} Finally, let $l\geq 0$. Then, also $d\geq 0$, no matter whether $d=l$ or $d=2l$. In conclusion,  $\xi=\integers\backslash D_l$ for $D\eqpd(\{0\}\cup \dwi{\lfloor\frac{l}{2}\rfloor})\backslash \xi$ by Lem\-ma~\hyperref[lemma:xi-summary-3]{\ref*{lemma:xi-summary}~\ref*{lemma:xi-summary-3}}. 
        \par
        \textbf{Case~2.2.1:} If $(\lambda,\kappa)=(l\!+\!2l\integers,2l\integers)$, i.e., $d=2l$, then the implication $\sigma=k\integers\subseteq d\integers=2l\integers$ of Lem\-ma~\ref{lemma:sigma} lets us define $u\in\{0\}\cup \pint$ by $u\eqpd \frac{k}{2l}$ and obtain $\sigma=2ul\integers$. Hence, choosing $m\eqpd l$ proves that $(\sigma,\lambda,\kappa,\xi)$ fits the second row of the table.
        \par
        \textbf{Case~2.2.2:} If instead,  $(\lambda,\kappa)=(l\integers,l\integers)$, i.e., $d=l$, then Lem\-ma~\ref{lemma:sigma} yields $\sigma=k\integers\subseteq d\integers=l\integers$, thus permitting us to define $u\in\{0\}\cup \pint$ by $u\eqpd \frac{k}{l}$ and obtain $\sigma=ul\integers$. The choice $m\eqpd l$ hence shows $(\sigma,\lambda,\kappa,\xi)$ to be given by the first row.
      \end{proof}
As mentioned before, our goal will be to show (Sec\-tion~\ref{section:verifying-the-axioms}) that for every non-hy\-per\-oc\-ta\-he\-dral category $\mc C\subseteq \Cp$ the tuple $(\toco,L,K,X)(\mc C)$ is of the form given in the table of the \hyperref[lemma:arithmetic]{Arithmetic Lem\-ma}.
\subsection{Reduction to Singleton and Pair Blocks} Let us return to categories of partitions. To elucidate the ranges of $K$, $L$ and $X$ over $\nhoc$ and central relations between $\toco(\mc C)$, $K(\mc C)$, $L(\mc C)$ and $X(\mc C)$ for non-hy\-per\-octa\-he\-dral categories $\mc C\subseteq \Cp$,  we must consider certain decompositions of $K$, $L$ and $X$ according to leg colors.
\begin{definition}
	Let $\mc S\subseteq \Cp$ and $c_1,c_2\in\colors$ be abitrary. Then, define
	\begin{align}
	K_{c_1,c_2}(\mc S)\eqpd \{\,\delta_p(\alpha_1,\alpha_2)\mid &\;p\in \mc S, \, B\text{ block of }p,\, \alpha_1,\alpha_2\in B,\, \alpha_1\neq \alpha_2,\tag{a}\\
	&\; ]\alpha_1,\alpha_2[_p\cap B=\emptyset,\, \forall i=1,2: \alpha_i\text{ of normalized color }c_i\},\notag\\
	X_{c_1,c_2}(\mc S)\eqpd \{\,\delta_p(\alpha_1,\alpha_2) \mid &\; p\in \mc S,\, B_1,B_2\text{ blocks of }p,\, B_1 \text{ and }B_2 \text{ cross}, \tag{b}\\
	&\; \alpha_1\in B_1,\,\alpha_2\in B_2,\, \forall i=1,2: \alpha_i\text{ of normalized color }c_i\}.\notag
	\end{align}
      \end{definition}
 $L$, $K$ and $X$ can then be written as, where the union occurs pointwise,      
      \begin{IEEEeqnarray*}{rCl}
        L=\bigcup_{\substack{c_1,c_2\in \colors\\c_1= c_2}} K_{c_1,c_2},\quad K=\bigcup_{\substack{c_1,c_2\in \colors\\c_1\neq c_2}} K_{c_1,c_2},\quad\text{and}\quad X=\bigcup_{c_1,c_2\in \colors}X_{c_1,c_2}.
      \end{IEEEeqnarray*}

      \par
 Recall that $\Cp_{\leq 2}$ denotes the set of all partitions with block sizes one or two and that it is a category (see \cite[Lem\-ma~4.4~(a)]{MWNHO1}). By the next lemma we may always restrict to partitions in $\Cp_{\leq 2}$ when studying $K_{c_1,c_2}$ and $X_{c_1,c_2}$. This is trivial in cases~$\mc O$ and~$\mc B$, while for case~$\mc S$ this basically follows from Lem\-ma~\hyperref[lemma:singletons-3]{\ref*{lemma:singletons}~\ref*{lemma:singletons-3}}.
 \begin{lemma}
   \label{lemma:simplification-k-x}
   For all non-hy\-per\-octa\-he\-dral categories $\mc C\subseteq \Cp$ and  $c_1,c_2\in \colors$:
   \begin{enumerate}[label=(\alph*)]
   \item $K_{c_1,c_2}(\mc C)=K_{c_1,c_2}(\mc C\cap\Cp_{\leq 2})$.
   \item $X_{c_1,c_2}(\mc C)=X_{c_1,c_2}(\mc C\cap\Cp_{\leq 2})$.
   \end{enumerate}
 \end{lemma}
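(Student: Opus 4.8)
The plan is to prove each identity by establishing the two inclusions separately. The inclusion $\supseteq$ in both (a) and (b) is immediate: we have $\mc C\cap\Cp_{\leq 2}\subseteq\mc C$, and each of the maps $K_{c_1,c_2}$ and $X_{c_1,c_2}$ is monotone under set inclusion, being a union of sets of color distances indexed over the partitions contained in its argument. So the entire content of the lemma is the inclusion $\subseteq$, which I would prove by distinguishing two cases according to whether $\PartFourWBWB\in\mc C$.

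First, suppose $\PartFourWBWB\notin\mc C$; this covers cases $\mc O$ and $\mc B$. Then by Lemma~\hyperref[lemma:companies-2]{\ref*{lemma:companies}~\ref*{lemma:companies-2}} no partition of $\mc C$ has a block with three or more legs, so every block of every partition of $\mc C$ has size one or two, i.e.\ $\mc C\subseteq\Cp_{\leq 2}$. Hence $\mc C\cap\Cp_{\leq 2}=\mc C$ and there is nothing left to show.

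Second, suppose $\PartFourWBWB\in\mc C$; this is case $\mc S$. Since $\mc C$ is non-hyperoctahedral and $\PartFourWBWB\in\mc C$, we must have $\PartSinglesWBTensor\in\mc C$, so by Lemma~\hyperref[lemma:singletons-3]{\ref*{lemma:singletons}~\ref*{lemma:singletons-3}} the category $\mc C$ is closed under disconnecting points from their blocks. The point I would record is that disconnecting a point from its block changes neither the number of points of a partition, nor their cyclic arrangement, nor their normalized colors; consequently it leaves unchanged the color distance $\delta_{(\cdot)}(\alpha,\beta)$ between any two points $\alpha,\beta$ and the relation of two prescribed sets of points being crossing. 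Now, for (a), given $d=\delta_p(\alpha_1,\alpha_2)\in K_{c_1,c_2}(\mc C)$ witnessed by $p\in\mc C$, a block $B$ with $\alpha_1,\alpha_2\in B$, $\alpha_1\neq\alpha_2$, $]\alpha_1,\alpha_2[_p\cap B=\emptyset$ and $\alpha_i$ of normalized color $c_i$, I would disconnect from its block every point of $p$ other than $\alpha_1$ and $\alpha_2$; the resulting $p'$ lies in $\mc C$, has $\{\alpha_1,\alpha_2\}$ as its unique non-singleton block, hence lies in $\mc C\cap\Cp_{\leq 2}$, and still witnesses $d\in K_{c_1,c_2}$. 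For (b), the same strategy applies after a brief combinatorial preparation: if $B_1$ crosses $B_2$ and $\alpha_1\in B_1$, then, since $B_1$ meets at least two of the arcs into which the legs of $B_2$ divide the cyclic order of the points, there is a leg $\gamma_1\in B_1$ lying in a different such arc than $\alpha_1$, and then the pair $\{\alpha_1,\gamma_1\}$ already crosses $B_2$; applying this once more, now with $B_2$ and $\alpha_2$ in the roles of $B_1$ and $\alpha_1$, yields $\gamma_2\in B_2$ with $\{\alpha_1,\gamma_1\}$ crossing $\{\alpha_2,\gamma_2\}$, where the four points $\alpha_1,\gamma_1,\alpha_2,\gamma_2$ are pairwise distinct because $B_1\cap B_2=\emptyset$ and $\gamma_i\neq\alpha_i$. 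Disconnecting from their blocks all points of $p$ other than these four then gives a partition $p'\in\mc C\cap\Cp_{\leq 2}$ whose only non-singleton blocks are $\{\alpha_1,\gamma_1\}$ and $\{\alpha_2,\gamma_2\}$; these still cross, $\alpha_1$ and $\alpha_2$ still carry colors $c_1$ and $c_2$, and $\delta_{p'}(\alpha_1,\alpha_2)=d$, so $d\in X_{c_1,c_2}(\mc C\cap\Cp_{\leq 2})$.

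I expect the only step that is not pure bookkeeping to be the localization statement used in (b): that a crossing between two blocks can always be witnessed by a crossing between a two-element subset of the first block, one of whose elements may be prescribed arbitrarily, and the second block. This is a routine fact about the arc structure a block induces on the cyclic order, but it is the one place where a short argument, rather than a mere appeal to invariance under disconnecting, is needed. All remaining verifications rely only on the observation that disconnecting points preserves positions, colors, color distances, and the crossing relation among the retained points.
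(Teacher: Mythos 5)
Your proposal is correct, and part~(a) proceeds exactly as in the paper: the cases $\mc O$ and $\mc B$ are dispatched by $\mc C\subseteq\Cp_{\leq 2}$ (the paper cites Proposition~\ref{proposition:result-F}, which rests on the same Lemma~\hyperref[lemma:companies-2]{\ref*{lemma:companies}~\ref*{lemma:companies-2}} you invoke), and in case~$\mc S$ one disconnects all superfluous points via Lemma~\hyperref[lemma:singletons-3]{\ref*{lemma:singletons}~\ref*{lemma:singletons-3}}, noting that this alters neither normalized colors nor color distances.

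In part~(b) you take a genuinely different route at the one non-trivial step, the localization of the crossing. The paper reduces to the ``worst case'' $\alpha_1\notin\{\beta_1,\gamma_1\}$, $\alpha_2\notin\{\beta_2,\gamma_2\}$ and then exhibits, in a table of all $20$ cyclic arrangements of the six points compatible with $(\beta_1,\beta_2,\gamma_1,\gamma_2)$ being ordered, an explicit choice of two legs to retain in each block. You instead argue abstractly: the legs of $B_2$ cut the cyclic order into arcs, $B_1$ crosses $B_2$ iff it meets two distinct arcs, so some $\gamma_1\in B_1$ lies in a different arc than $\alpha_1$ and already $\{\alpha_1,\gamma_1\}$ crosses $B_2$; a second application with the roles of the blocks exchanged produces $\gamma_2\in B_2$ with $\{\alpha_2,\gamma_2\}$ crossing $\{\alpha_1,\gamma_1\}$, and disjointness of $B_1$ and $B_2$ gives the four points pairwise distinct. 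This is a valid argument (the second application is legitimate even though $\{\alpha_1,\gamma_1\}$ is not a block, since the crossing criterion is purely a statement about point sets in the cyclic order), and it is shorter and less error-prone than the exhaustive table; what the paper's enumeration buys in exchange is complete explicitness, each case being checkable by inspection without any appeal to the arc characterization of crossings. Either way the conclusion is the same, so your proof is sound.
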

 \begin{proof}
   \begin{enumerate}[label=(\alph*),wide]
   \item If $\mc C$ is case~$\mc O$ or case~$\mc B$, i.e., if $\mc C\subseteq \Cp_{\leq 2}$ by Proposition~\ref{proposition:result-F}, there is nothing to show. Hence, suppose that $\mc C$ is case~$\mc S$ and let $c_1,c_2\in \colors$. We only need to prove $K_{c_1,c_2}(\mc C)\subseteq K_{c_1,c_2}(\mc C\cap \Cp_{\leq 2})$. Let $\alpha_1$ and $\alpha_2$ with $\alpha_1\neq \alpha_2$ be points in $p\in \mc C$ such that $\alpha_i$ is of normalized color $c_i$ for every $i\in \{1,2\}$ and such that $\alpha_1,\alpha_2\in B$ and $]\alpha_1,\alpha_2[_p\cap B=\emptyset$ for some block $B$ in $p$. Because $\mc C$ is case~$\mc S$, by Lem\-ma~\hyperref[lemma:singletons-3]{\ref*{lemma:singletons}~\ref*{lemma:singletons-3}} we do not violate the assumption $p\in \mc C$ by assuming that every block other than $B$ is a singleton. In the same way we can assume that $\alpha_1$ and $\alpha_2$ are the only legs of $B$. None of these assumptions affect $\delta_p(\alpha_1,\alpha_2)$ or the normalized colors of $\alpha_1$ or $\alpha_2$. As they ensure $p\in \mc C\cap \Cp_{\leq 2}$ though, we have shown $\delta_p(\alpha_1,\alpha_2)\in K_{c_1,c_2}(\mc C\cap \Cp_{\leq 2})$, which is what we needed to see.
   \item  Again, all that we need to prove is that $X_{c_1,c_2}(\mc C)\subseteq X_{c_1,c_2}(\mc C\cap \Cp_{\leq 2})$ if $\mc C$ is case~$\mc S$ and if $c_1,c_2\in \colors$.  Let the points $\alpha_1$ of normalized color $c_1$ and $\alpha_2$ of normalized color $c_2$ in  $p\in \mc C$ belong to the blocks $B_1$ and $B_2$, respectively, and suppose that $B_1$ and $B_2$ cross. Because $\mc C$ is case~$\mc S$ we can, by Lem\-ma~\hyperref[lemma:singletons-3]{\ref*{lemma:singletons}~\ref*{lemma:singletons-3}}, assume that all other blocks of $p$ besides $B_1$ and $B_2$ are singletons. Now the only thing standing in the way of $p\in \mc C\cap \Cp_{\leq 2}$ is the possibility of at least one of $B_1$ and $B_2$ having more than two legs. We would like to assume that $B_1$ and $B_2$ have only two legs each and still maintain all the other assumptions including $\alpha_1\in B_1$ and $\alpha_2\in B_2$ and, of course, not alter  $\delta_p(\alpha_1,\alpha_2)$. 
      By Lem\-ma~\hyperref[lemma:singletons-3]{\ref*{lemma:singletons}~\ref*{lemma:singletons-3}}, we can always  remove surplus legs of $B_1$ and $B_2$. But it is not immediately clear that we can remove legs without affecting the other assumptions. A priori, the crossing between $B_1$ and $B_2$ only implies that we can find points $\beta_1,\gamma_1\in B_1$ and $\beta_2,\gamma_2\in B_2$ such that $(\beta_1,\beta_2,\gamma_1,\gamma_2)$ is ordered in $p$. If now $\alpha_1\in \{\beta_1,\gamma_1\}$ and $\alpha_2\in \{\beta_2,\gamma_2\}$, then we can certainly remove all legs except $\{\beta_i,\gamma_i\}$ from $B_i$ for all $i\in \{1,2\}$ and still maintain the other assumptions. In fact, we can do so in general as well:
      \par
      Let us only consider the \enquote{worst case} that $\alpha_1\notin \{\beta_1,\gamma_1\}$ and $\alpha_2\notin\{\beta_2,\gamma_2\}$. There are $20$ possible arrangements of the points $\{\alpha_1,\beta_1,\gamma_1,\alpha_2,\beta_2,\gamma_2\}$ relative to each other with respect to the cyclic order respecting that $(\beta_1,\beta_2,\gamma_1,\gamma_2)$ is ordered.
  \begin{align*} 
    \begin{array}{ c | c | c | c }
      \overset{\downarrow}{\alpha_1}\beta_1\beta_2\gamma_1\gamma_2 & \beta_1\overset{\downarrow}{\alpha_1}\beta_2\gamma_1\gamma_2 &
      \beta_1\beta_2\overset{\downarrow}{\alpha_1}\gamma_1\gamma_2 &
      \beta_1\beta_2\gamma_1\overset{\downarrow}{\alpha_1}\gamma_2\\ [0.1em]\hline\Tstrut
            \underline{\alpha_2\alpha_1}\beta_1\underline{\beta_2\gamma_1}\gamma_2 & \underline{\alpha_2}\beta_1\underline{\alpha_1\beta_2\gamma_1}\gamma_2 &
      \underline{\alpha_2\beta_1\beta_2\alpha_1}\gamma_1\gamma_2 &
 \underline{\alpha_2\beta_1\beta_2}\gamma_1\underline{\alpha_1}\gamma_2\\
      \underline{\alpha_1\alpha_2\beta_1\beta_2}\gamma_1\gamma_2 &
 \beta_1\underline{\alpha_2\alpha_1\beta_2\gamma_1}\gamma_2 &
            \underline{\beta_1\alpha_2}\beta_2\underline{\alpha_1}\gamma_1\underline{\gamma_2} &
 \underline{\beta_1\alpha_2}\beta_2\gamma_1\underline{\alpha_1\gamma_2} \\[0.1em]
            \underline{\alpha_1}\beta_1\underline{\alpha_2}\beta_2\underline{\gamma_1\gamma_2} &
 \beta_1\underline{\alpha_1\alpha_2}\beta_2\underline{\gamma_1\gamma_2} &
            \underline{\beta_1}\beta_2\underline{\alpha_2\alpha_1}\gamma_1\underline{\gamma_2} &
 \underline{\beta_1}\beta_2\underline{\alpha_2}\gamma_1\underline{\alpha_1\gamma_2} \\[0.1em]
        \underline{\alpha_1}\beta_1\beta_2\underline{\alpha_2\gamma_1\gamma_2} &
 \beta_1\underline{\alpha_1}\beta_2\underline{\alpha_2\gamma_1\gamma_2} &
            \beta_1\beta_2\underline{\alpha_1\alpha_2\gamma_1\gamma_2} &
 \underline{\beta_1}\beta_2\gamma_1\underline{\alpha_2\alpha_1\gamma_2} \\[0.1em]
              \underline{\alpha_1}\beta_1\underline{\beta_2\gamma_1\alpha_2}\gamma_2 &
 \beta_1\underline{\alpha_1\beta_2\gamma_1\alpha_2}\gamma_2 &
            \underline{\beta_1\beta_2\alpha_1}\gamma_1\underline{\alpha_2}\gamma_2 &
\underline{\beta_1\beta_2}\gamma_1\underline{\alpha_1\alpha_2}\gamma_2
    \end{array}
  \end{align*}
We remove all legs of $B_1$ and $B_2$ except for the underlined ones. Then the above table shows that we can always turn $B_1$ and $B_2$ into crossing pair blocks containing $\alpha_1$ and $\alpha_2$, respectively. That concludes the proof.\qedhere
   \end{enumerate}
 \end{proof}

 \subsection{Verifying the Axioms}
 \label{section:verifying-the-axioms}
We want to apply the \hyperref[lemma:arithmetic]{Arithmetic Lem\-ma~\ref*{lemma:arithmetic}} to the sets $\sigma\eqpd \toco(\mc C)$, $\kappa_{c_1,c_2}\eqpd K_{c_1,c_2}(\mc C)$ and $\xi_{c_1,c_2}\eqpd X_{c_1,c_2}(\mc C)$ for $c_1,c_2\in \colors$ and non-hy\-per\-octa\-he\-dral categories $\mc C\subseteq \Cp$. In order to be able to do so, we, of course, need to show that these sets actually satisfy the prerequisite Axioms~\ref{axioms:arithmetic}. Proving that will crucially utilize the reduction to singleton and pair blocks from Lem\-ma~\ref{lemma:simplification-k-x}.

\begin{lemma}
  \label{lemma:verifying-axioms-1}
  For every non-hy\-per\-octa\-he\-dral category $\mc C\subseteq \Cp$, the set $\sigma\eqpd \toco(\mc C)$ satisfies Axiom~\ref{lemma:gradedsets-condition-0} of \ref{axioms:arithmetic}: $\sigma$ is a subgroup of $\integers$.
\end{lemma}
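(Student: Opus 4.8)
The plan is straightforward: this lemma is nothing but a restatement of Proposition~\ref{proposition:result-S}, which was already established above, with the set $\toco(\mc C)$ relabelled as $\sigma$. Concretely, Axiom~\ref{lemma:gradedsets-condition-0} of~\ref{axioms:arithmetic} demands exactly that $\sigma$ be a subgroup of $\integers$, and Proposition~\ref{proposition:result-S} asserts precisely that $\toco(\mc C)$ is a subgroup of $\integers$ for \emph{every} category $\mc C\subseteq \Cp$ --- in particular for non-hyperoctahedral ones. So the proof amounts to a single citation, and no new work is required.

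For completeness one may recall the ingredients of Proposition~\ref{proposition:result-S}: the identity partition $\PartIdenLoWB$ lies in every category and has total color sum $0$, so $0\in \toco(\mc C)$; closure of $\mc C$ under the relevant category operations together with \cite[Lem\-ma~6.5~(c)]{MWNHO1} gives $\toco(\mc C)+\toco(\mc C)\subseteq \toco(\mc C)$; and \cite[Lem\-ma~6.4]{MWNHO1}, via the fact that $p\in \mc C$ forces the reflected partition to lie in $\mc C$ as well, yields $-\toco(\mc C)\subseteq \toco(\mc C)$. These three properties make $\toco(\mc C)$ a (necessarily cyclic) subgroup of $\integers$.

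There is no real obstacle here: the content of the statement has already been proven. The only thing to check is that the hypotheses match, i.e.\ that no use of non-hyperoctahedrality is secretly needed --- and indeed Proposition~\ref{proposition:result-S} is stated for arbitrary categories, so the non-hyperoctahedral hypothesis is not even invoked. The proof will therefore read simply: \enquote{This is the statement of Proposition~\ref{proposition:result-S}.}

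\begin{proof}
  This is precisely the assertion of Proposition~\ref{proposition:result-S} applied to $\mc C$, rephrased with $\sigma$ in place of $\toco(\mc C)$.
\end{proof}
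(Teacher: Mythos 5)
Your proposal is correct and matches the paper exactly: the paper's proof of this lemma is the one-line citation \enquote{That was shown in Proposition~\ref{proposition:result-S}.} Your additional recap of the three ingredients ($0\in\toco(\mc C)$ via $\PartIdenLoWB$, closure under addition via \cite[Lem\-ma~6.5~(c)]{MWNHO1}, closure under negation via \cite[Lem\-ma~6.4]{MWNHO1}) correctly reproduces the proof of that proposition, and you are right that non-hyperoctahedrality is not needed here.
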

\begin{proof}
  That was shown in Proposition~\ref{proposition:result-S}.
\end{proof}

\begin{lemma}
  \label{lemma:verifying-axioms-2}
  For every non-hy\-per\-octa\-he\-dral category $\mc C\subseteq \Cp$, the sets  $\sigma\eqpd \toco(\mc C)$ and $\kappa_{c_1,c_2}\eqpd K_{c_1,c_2}(\mc C)$ for $c_1,c_2\in \colors$ satisfy Axioms~\ref{lemma:gradedsets-condition-1}--\ref{lemma:gradedsets-condition-2} of \ref{axioms:arithmetic}:
  \begin{IEEEeqnarray*}{rCCrCCrC}
   \text{(ii)}\hspace{0.5em}& \kappa_{c_1,c_2}+\sigma\subseteq \kappa_{c_1,c_2}, &\hspace{1.5em} &\text{(iii)}\hspace{0.5em}& \kappa_{c_1,c_2}\subseteq - \kappa_{\overline{c_2},\overline{c_1}}, &\hspace{1.5em} &\text{(iv)}\hspace{0.5em}& \kappa_{c_1,c_2}\subseteq - \kappa_{c_2,c_1}+\sigma
 \end{IEEEeqnarray*}
 for all $c_1,c_2\in\colors$.
\end{lemma}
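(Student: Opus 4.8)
The plan is to derive each of the three inclusions from one elementary partition operation, after a preliminary reduction. By Lem\-ma~\ref{lemma:simplification-k-x} every element of $\kappa_{c_1,c_2}=K_{c_1,c_2}(\mc C)$ is already realized inside $\mc C\cap\Cp_{\leq 2}$, and there the block $B$ carrying two subsequent legs $\alpha_1,\alpha_2$ must be the pair $\{\alpha_1,\alpha_2\}$ itself; hence the cyclic boundary of such a $p$ is split by $\alpha_1$ and $\alpha_2$ into exactly the two arcs $]\alpha_1,\alpha_2[_p$ and $]\alpha_2,\alpha_1[_p$, a fact I will exploit for~(iv). Throughout I write $\delta=\delta_p(\alpha_1,\alpha_2)$ for a generic element of $\kappa_{c_1,c_2}$, realized by such a $p$.

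For Axiom~(ii), $\kappa_{c_1,c_2}+\sigma\subseteq\kappa_{c_1,c_2}$, I would pick $s\in\sigma$ realized by $q\in\mc C$ with $\toco(q)=s$, rotate $q$ so that all its points sit on a single row (the result is still in $\mc C$ and still of total color sum $s$), and rotate $p$ so that the arc $]\alpha_1,\alpha_2[_p$ occupies an entire row on its own, with $\alpha_1$ and $\alpha_2$ the two outermost points of the opposite row. Tensoring then places the points of $q$ immediately after $\alpha_1$ in the cyclic order, i.e.\ strictly inside the gap between $\alpha_1$ and $\alpha_2$, without touching the block $\{\alpha_1,\alpha_2\}$ or the normalized colors $c_1,c_2$ and without destroying the subsequency of $\alpha_1$ and $\alpha_2$. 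Since rotation and tensoring preserve normalized colors, the color distance of this new partition between $\alpha_1$ and $\alpha_2$ equals $\delta$ enlarged by the color sum of $q$'s points, namely $\delta+s$, so $\delta+s\in\kappa_{c_1,c_2}$. (The same insertion can also be realized by composing $p$ with $\mathrm{id}\otimes q\otimes\mathrm{id}$.)

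For Axiom~(iii), $\kappa_{c_1,c_2}\subseteq-\kappa_{\overline{c_2},\overline{c_1}}$, I would apply the symmetry $p\mapsto\tilde p$ of \cite[Lem\-ma~6.4]{MWNHO1}, which keeps $p$ in $\mc C$, reverses the cyclic boundary order, and inverts every normalized color without altering the block structure. It turns $(\alpha_1,\alpha_2)$ into a pair of subsequent legs of normalized colors $(\overline{c_2},\overline{c_1})$ whose separating arc is the image of $]\alpha_1,\alpha_2[_p$; as that operation negates color sums, the new color distance is $-\delta$, giving $-\delta\in\kappa_{\overline{c_2},\overline{c_1}}$. For Axiom~(iv), $\kappa_{c_1,c_2}\subseteq-\kappa_{c_2,c_1}+\sigma$, I would instead read off the complementary arc $]\alpha_2,\alpha_1[_p$ of the pair block $\{\alpha_1,\alpha_2\}$: it too is disjoint from $B$, and its bordering legs $\alpha_2,\alpha_1$ carry the normalized colors $c_2,c_1$, so $\delta':=\delta_p(\alpha_2,\alpha_1)\in\kappa_{c_2,c_1}$; since the two arcs together with $\alpha_1,\alpha_2$ exhaust all points of $p$, the definition of the color distance in Part~I yields $\delta+\delta'=\toco(p)\in\sigma$, hence $\delta=\toco(p)-\delta'\in-\kappa_{c_2,c_1}+\sigma$.

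What requires care, rather than being a genuine obstacle, is the bookkeeping of normalized colors and of the \emph{signs} of color distances under rotation, under $p\mapsto\tilde p$, and under tensoring, together with checking in~(ii) that the inserted points really land strictly between $\alpha_1$ and $\alpha_2$ in the cyclic order. The reduction to pair blocks provided by Lem\-ma~\ref{lemma:simplification-k-x} is precisely what makes the additivity relation $\delta+\delta'=\toco(p)$ in~(iv) hold with no correction term, so that the argument also covers the case $c_1=c_2$, where the two legs have equal normalized color.
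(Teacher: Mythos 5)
Your proposal is correct and follows essentially the same route as the paper: Axiom~(ii) by tensoring with $q$ so that its points land in the arc $]\alpha_1,\alpha_2[$, Axiom~(iii) by verticolor reflection, and Axiom~(iv) by passing to the complementary arc of a pair block after the reduction of Lem\-ma~\ref{lemma:simplification-k-x}. The only cosmetic difference is that you invoke the reduction to $\Cp_{\leq 2}$ globally, whereas the paper needs it only for~(iv); your exact identity $\delta+\delta'=\toco(p)$ is a slightly sharper form of the congruence the paper cites from Part~I.
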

\begin{proof}
  Let $c_1,c_2\in \colors$ be arbitrary and let $\alpha_1$ and $\alpha_2$ be distinct points of the same block $B$ in $p\in \mc C$ such that $]\alpha_1,\alpha_2[_p\cap B=\emptyset$ and such that $\alpha_i$ has normalized color $c_i$ for every $i\in \{1,2\}$. In other words, let  $\delta_{p}(\alpha_1,\alpha_2)$ be a generic element of $K_{c_1,c_2}(\mc C)=\kappa_{c_1,c_2}$.
                    \par
                    \emph{Axiom~\ref{lemma:gradedsets-condition-1}:} Let $q\in \mc C$ be arbitrary. None of the assumptions about $p$, $\alpha_1$, $\alpha_2$ and $\delta_p(\alpha_2,\alpha_2)$ are impacted by assuming that $p$ is rotated in such a way that $\alpha_1$ is the rightmost lower point of $p$. Then, $B$ is a block of $p\otimes q\in \mc C$ as well and $]\alpha_1,\alpha_2[_{p\otimes q}\cap B=\emptyset$.
   \begin{mycenter}[0.5em]
\begin{tikzpicture}[baseline=0.666*1cm-0.25em]
    \def\scp{0.666}
    \def\linksize{\scp*0.075cm}
    \def\pointsize{\scp*0.25cm}
    \def\dd{\scp*0.5cm}
    \def\dx{\scp*1cm}
    \def\cx{\scp*0.3cm}
    \def\txu{3*\dx}    
    \def\txl{4*\dx}
    \def\dy{\scp*1cm}
    \def\cy{\scp*0.3cm}
    \def\ty{2*\dy}
    \def\fy{\scp*0.2cm}
    \def\fx{\scp*0.2cm}
    \def\gy{\scp*0.4cm}
    \def\gx{\scp*0.4cm}      
    \tikzset{whp/.style={circle, inner sep=0pt, text width={\pointsize}, draw=black, fill=white}}
    \tikzset{blp/.style={circle, inner sep=0pt, text width={\pointsize}, draw=black, fill=black}}
    \tikzset{lk/.style={regular polygon, regular polygon sides=4, inner sep=0pt, text width={\linksize}, draw=black, fill=black}}
    \tikzset{vp/.style={circle, inner sep=0pt, text width={1.5*\pointsize}, fill=white}}
    \tikzset{sstr/.style={shorten <= 5pt, shorten >= 5pt}}    
    \draw[dotted] ({0-\dd},{0}) -- ({\txl+\dd},{0});
    \draw[dotted] ({0-\dd},{\ty}) -- ({\txu+\dd},{\ty});
    \node[vp] (l1) at ({0+4*\dx},{0+0*\ty}) {$c_1$};
    \node[vp] (u1) at ({0+1*\dx},{0+1*\ty}) {$\overline{c_2}$};    
    \draw[sstr] (l1) --++(0,{0.5*\ty}) -| (u1);
    \draw[dashed] ($(u1)+(0,{-0.5*\ty})$) -- ++ ({-1.25*\dx},0);
    \draw [draw=gray, pattern = north east lines, pattern color = lightgray] ({0+0*\dx-\fx},{0+0*\dy-\fy}) -- ++ ({3*\dx+2*\fx},0) |- ({0+0*\dx-\fx},{0+0*\dy+\fy});
    \draw [draw=gray, pattern = north east lines, pattern color = lightgray] ({0+0*\dx-\fx},{\ty+0*\dy-\fy}) -- ++ ({0*\dx+2*\fx},0) |- ({0+0*\dx-\fx},{\ty+0*\dy+\fy});
    \draw [draw=gray, pattern = north east lines, pattern color = lightgray] ({\txu+0*\dx+\fx},{\ty+0*\dy-\fy}) -- ++ ({-1*\dx-2*\fx},0) |- ({\txu+0*\dx+\fx},{\ty+0*\dy+\fy});
    \node [below = {\cx} of  l1] {$\alpha_1$};
    \node [above ={\cx} of  u1] {$\alpha_2$};
    \draw [draw=darkgray, densely dashed] ({\txu+0*\dx+\gx},{\ty+0*\dy-\gy}) -- ++ ({-2*\dx-2*\gx},0) |- ({\txu+0*\dx+\gx},{\ty+0*\dy+\gy});
    \draw [draw=darkgray, densely dashed] ({\txl+0*\dx+\gx},{0+0*\dy-\gy}) -- ++ ({-0*\dx-2*\gx},0) |- ({\txl+0*\dx+\gx},{0+0*\dy+\gy});
    \node at (-\dx,{0.5*\ty}) {$p$};
  \end{tikzpicture}
\qquad$\to$\qquad
\begin{tikzpicture}[baseline=0.666*1cm-0.25em]
    \def\scp{0.666}
    \def\linksize{\scp*0.075cm}
    \def\pointsize{\scp*0.25cm}
    \def\dd{\scp*0.5cm}
    \def\dx{\scp*1cm}
    \def\cx{\scp*0.3cm}
    \def\txu{6*\dx}    
    \def\txl{7*\dx}
    \def\dy{\scp*1cm}
    \def\cy{\scp*0.3cm}
    \def\ty{2*\dy}
    \def\fy{\scp*0.2cm}
    \def\fx{\scp*0.2cm}
    \def\gy{\scp*0.4cm}
    \def\gx{\scp*0.4cm}      
    \tikzset{whp/.style={circle, inner sep=0pt, text width={\pointsize}, draw=black, fill=white}}
    \tikzset{blp/.style={circle, inner sep=0pt, text width={\pointsize}, draw=black, fill=black}}
    \tikzset{lk/.style={regular polygon, regular polygon sides=4, inner sep=0pt, text width={\linksize}, draw=black, fill=black}}
    \tikzset{vp/.style={circle, inner sep=0pt, text width={1.5*\pointsize}, fill=white}}
    \tikzset{sstr/.style={shorten <= 5pt, shorten >= 5pt}}    
    \draw[dotted] ({0-\dd},{0}) -- ({\txl+\dd},{0});
    \draw[dotted] ({0-\dd},{\ty}) -- ({\txu+\dd},{\ty});
    \node[vp] (l1) at ({0+4*\dx},{0+0*\ty}) {$c_1$};
    \node[vp] (u1) at ({0+1*\dx},{0+1*\ty}) {$\overline{c_2}$};    
    \draw[sstr] (l1) --++(0,{0.5*\ty}) -| (u1);
    \draw[dashed] ($(u1)+(0,{-0.5*\ty})$) -- ++ ({-1.25*\dx},0);
    \draw [draw=gray, pattern = north east lines, pattern color = lightgray] ({0+0*\dx-\fx},{0+0*\dy-\fy}) -- ++ ({3*\dx+2*\fx},0) |- ({0+0*\dx-\fx},{0+0*\dy+\fy});
    \draw [draw=gray, pattern = north east lines, pattern color = lightgray] ({0+0*\dx-\fx},{\ty+0*\dy-\fy}) -- ++ ({0*\dx+2*\fx},0) |- ({0+0*\dx-\fx},{\ty+0*\dy+\fy});
    \draw [draw=gray, pattern = north east lines, pattern color = lightgray] ({3*\dx+0*\dx+\fx},{\ty+0*\dy-\fy}) -- ++ ({-1*\dx-2*\fx},0) |- ({3*\dx+0*\dx+\fx},{\ty+0*\dy+\fy}) -- cycle;
    \node [below = {\cx} of  l1] {$\alpha_1$};
    \node [above ={\cx} of  u1] {$\alpha_2$};
    \draw [draw=darkgray, densely dashed] ({\txu+0*\dx+\gx},{\ty+0*\dy-\gy}) -- ++ ({-6*\dx-2*\gx},0) |- ({\txu+0*\dx+\gx},{\ty+0*\dy+\gy});
    \draw [draw=darkgray, densely dashed] ({\txl+0*\dx+\gx},{0+0*\dy-\gy}) -- ++ ({-3*\dx-2*\gx},0) |- ({\txl+0*\dx+\gx},{0+0*\dy+\gy});
    \node at ({-1.5*\dx},{0.5*\ty}) {$p\otimes q$};
    %
    \draw [draw=gray, fill = lightgray] ({\txl+0*\dx+\fx},{0*\dy-\fy}) -- ++ ({-2*\dx-2*\fx},0) |- ({\txl+0*\dx+\fx},{0*\dy+\fy});
    \draw [draw=gray, fill = lightgray] ({\txu+0*\dx+\fx},{\ty+0*\dy-\fy}) -- ++ ({-2*\dx-2*\fx},0) |- ({\txu+0*\dx+\fx},{\ty+0*\dy+\fy});
  \end{tikzpicture}
\end{mycenter}
\noindent
Now, because all points stemming from $q$ lie within $]\alpha_1,\alpha_2[_{p\otimes q}$, \[\delta_{p\otimes q}(\alpha_1,\alpha_2)=\delta_p(\alpha_1,\alpha_2)+\toco(q).\]
                     That proves $\delta_{p}(\alpha_1,\alpha_2)+\toco(q)\in K_{c_1,c_2}(\mc C)=\kappa_{c_1,c_2}$, which is what we needed to see.
                    \par
                    \emph{Axiom~\ref{lemma:gradedsets-condition-3}:} The verticolor reflection $\tilde p$ of $p$ belongs to $\mc C$. The set $]\alpha_1,\alpha_2]_p$ in $p$ is mapped by the reflection $\rho$  to the set $[\rho(\alpha_2),\rho(\alpha_1)[_{\tilde p}$ in $\tilde p$. As the operation of ver\-ti\-col\-or re\-flec\-tion inverts normalized colors, $\sigma_{p}(S)=-\sigma_{\tilde p}(\rho(S))$ for any set $S$ of points in $p$.
   \begin{mycenter}[0.5em]
\begin{tikzpicture}[baseline=0.666*1cm-0.25em]
    \def\scp{0.666}
    \def\linksize{\scp*0.075cm}
    \def\pointsize{\scp*0.25cm}
    \def\dd{\scp*0.5cm}
    \def\dx{\scp*1cm}
    \def\cx{\scp*0.3cm}
    \def\txu{5*\dx}    
    \def\txl{4*\dx}
    \def\dy{\scp*1cm}
    \def\cy{\scp*0.3cm}
    \def\ty{2*\dy}
    \def\fy{\scp*0.2cm}
    \def\fx{\scp*0.2cm}
    \def\gy{\scp*0.4cm}
    \def\gx{\scp*0.4cm}      
    \tikzset{whp/.style={circle, inner sep=0pt, text width={\pointsize}, draw=black, fill=white}}
    \tikzset{blp/.style={circle, inner sep=0pt, text width={\pointsize}, draw=black, fill=black}}
    \tikzset{lk/.style={regular polygon, regular polygon sides=4, inner sep=0pt, text width={\linksize}, draw=black, fill=black}}
    \tikzset{vp/.style={circle, inner sep=0pt, text width={1.5*\pointsize}, fill=white}}
    \tikzset{sstr/.style={shorten <= 5pt, shorten >= 5pt}}    
    \draw[dotted] ({0-\dd},{0}) -- ({\txl+\dd},{0});
    \draw[dotted] ({0-\dd},{\ty}) -- ({\txu+\dd},{\ty});
    \node[vp] (l1) at ({0+2*\dx},{0+0*\ty}) {$c_2$};    
    \node[vp] (u1) at ({0+4*\dx},{0+1*\ty}) {$\overline {c_1}$};
    \draw[sstr] (u1) --++(0,{-0.5*\ty}) -| (l1);
    \draw[dashed] ($(u1)+(0,{-0.5*\ty})$) -- ++ ({1.25*\dx},0);
    \draw [draw=gray, pattern = north east lines, pattern color = lightgray] ({0+0*\dx-\fx},{0+0*\dy-\fy}) -- ++ ({1*\dx+2*\fx},0) |- ({0+0*\dx-\fx},{0+0*\dy+\fy});
    \draw [draw=gray, pattern = north east lines, pattern color = lightgray]
({\txl+0*\dx+\fx},{0*\dy-\fy}) -- ++ ({-1*\dx-2*\fx},0) |- ({\txl+0*\dx+\fx},{0*\dy+\fy});
    \draw [draw=gray, pattern = north east lines, pattern color = lightgray] ({0+0*\dx-\fx},{\ty+0*\dy-\fy}) -- ++ ({3*\dx+2*\fx},0) |- ({0+0*\dx-\fx},{\ty+0*\dy+\fy});
    \draw [draw=gray, pattern = north east lines, pattern color = lightgray] ({\txu+0*\dx+\fx},{\ty+0*\dy-\fy}) -- ++ ({-0*\dx-2*\fx},0) |- ({\txu+0*\dx+\fx},{\ty+0*\dy+\fy});
    \node [above = {\cx} of  u1] {$\alpha_1$};
    \node [below ={\cx} of  l1] {$\alpha_2$};
    %
    \draw [draw=darkgray, densely dashed] ({0+0*\dx-\gx},{0+0*\dy-\gy}) -- ++ ({2*\dx+2*\gx},0) |- ({0+0*\dx-\gx},{0+0*\dy+\gy}); 
({\txl+0*\dx+\gx},{0*\dy-\gy}) -- ++ ({-1*\dx-2*\gx},0) |- ({\txl+0*\dx+\gx},{0*\dy+\gy});  
    \draw [draw=darkgray, densely dashed] ({0+0*\dx-\gx},{\ty+0*\dy-\gy}) -- ++ ({4*\dx+2*\gx},0) |- ({0+0*\dx-\gx},{\ty+0*\dy+\gy}); 
    %
    \node at (-\dx,{0.5*\ty}) {$p$};
  \end{tikzpicture}
\qquad$\to$\qquad
\begin{tikzpicture}[baseline=0.666*1cm-0.25em]
    \def\scp{0.666}
    \def\linksize{\scp*0.075cm}
    \def\pointsize{\scp*0.25cm}
    \def\dd{\scp*0.5cm}
    \def\dx{\scp*1cm}
    \def\cx{\scp*0.3cm}
    \def\txu{5*\dx}    
    \def\txl{4*\dx}
    \def\dy{\scp*1cm}
    \def\cy{\scp*0.3cm}
    \def\ty{2*\dy}
    \def\fy{\scp*0.2cm}
    \def\fx{\scp*0.2cm}
    \def\gy{\scp*0.4cm}
    \def\gx{\scp*0.4cm}      
    \tikzset{whp/.style={circle, inner sep=0pt, text width={\pointsize}, draw=black, fill=white}}
    \tikzset{blp/.style={circle, inner sep=0pt, text width={\pointsize}, draw=black, fill=black}}
    \tikzset{lk/.style={regular polygon, regular polygon sides=4, inner sep=0pt, text width={\linksize}, draw=black, fill=black}}
    \tikzset{vp/.style={circle, inner sep=0pt, text width={1.5*\pointsize}, fill=white}}
    \tikzset{sstr/.style={shorten <= 5pt, shorten >= 5pt}}    
    \draw[dotted] ({0-\dd},{0}) -- ({\txl+\dd},{0});
    \draw[dotted] ({0-\dd},{\ty}) -- ({\txu+\dd},{\ty});
    \node[vp] (l1) at ({0+2*\dx},{0+0*\ty}) {$\overline{c_2}$};    
    \node[vp] (u1) at ({0+1*\dx},{0+1*\ty}) {$c_1$};
    \draw[sstr] (u1) --++(0,{-0.5*\ty}) -| (l1);
    \draw[dashed] ($(u1)+(0,{-0.5*\ty})$) -- ++ ({-1.25*\dx},0);
    \draw [draw=gray, pattern = north west lines, pattern color = darkgray] ({0+0*\dx-\fx},{0+0*\dy-\fy}) -- ++ ({1*\dx+2*\fx},0) |- ({0+0*\dx-\fx},{0+0*\dy+\fy});
    \draw [draw=gray, pattern = north west lines, pattern color = darkgray]
({\txl+0*\dx+\fx},{0*\dy-\fy}) -- ++ ({-1*\dx-2*\fx},0) |- ({\txl+0*\dx+\fx},{0*\dy+\fy});
    \draw [draw=gray, pattern = north west lines, pattern color = darkgray] ({0+0*\dx-\fx},{\ty+0*\dy-\fy}) -- ++ ({0*\dx+2*\fx},0) |- ({0+0*\dx-\fx},{\ty+0*\dy+\fy});
    \draw [draw=gray, pattern = north west lines, pattern color = darkgray] ({\txu+0*\dx+\fx},{\ty+0*\dy-\fy}) -- ++ ({-3*\dx-2*\fx},0) |- ({\txu+0*\dx+\fx},{\ty+0*\dy+\fy});
    \node [above = {\cx} of  u1] {$\rho(\alpha_1)$};
    \node [below ={\cx} of  l1] {$\rho(\alpha_2)$};
    %
    \draw [draw=darkgray, densely dashed]
({\txl+0*\dx+\gx},{0*\dy-\gy}) -- ++ ({-2*\dx-2*\gx},0) |- ({\txl+0*\dx+\gx},{0*\dy+\gy});  
    \draw [draw=darkgray, densely dashed] ({\txu+0*\dx+\gx},{\ty+0*\dy-\gy}) -- ++ ({-4*\dx-2*\gx},0) |- ({\txu+0*\dx+\gx},{\ty+0*\dy+\gy}); 
    \node at (-\dx,{0.5*\ty}) {$\tilde p$};
  \end{tikzpicture}
\end{mycenter}  

                   Using the case distinction free formula for $\delta_p(\alpha_1,\alpha_2)$ given in the proof of \cite[Lem\-ma~3.1~(b)]{MWNHO1}, we thus compute
        \begin{align*}
      \delta_{p}(\alpha_1,\alpha_2)&=\sigma_{p}(]\alpha_1,\alpha_2]_{ p})+{\textstyle\frac{1}{2}}(\sigma_{ p}(\alpha_1)-\sigma_{p}(\alpha_2))  \\
                                                      &=-\sigma_{\tilde p}([\rho(\alpha_2),\rho(\alpha_1)[_{\tilde p})-{\textstyle\frac{1}{2}}(\sigma_{\tilde  p}(\rho(\alpha_1))-\sigma_{\tilde p}(\rho(\alpha_2)))  \\
                                   &=-\sigma_{\tilde p}(]\rho(\alpha_2),\rho(\alpha_1)]_{\tilde p})-\sigma_{\tilde p}(\rho(\alpha_2))+\sigma_{\tilde p}(\rho (\alpha_1)) \\
          &\phantom{{}={}}-{\textstyle\frac{1}{2}}(\sigma_{\tilde  p}(\rho(\alpha_1))-\sigma_{\tilde p}(\rho(\alpha_2)))  \\
      &=-\sigma_{\tilde p}(]\rho(\alpha_2),\rho(\alpha_1)]_{\tilde p})-{\textstyle\frac{1}{2}}(\sigma_{\tilde  p}(\rho (\alpha_2))-\sigma_{\tilde p}(\rho(\alpha_1)))  \\
                                                             &=-\delta_{\tilde p}(\rho(\alpha_2),\rho(\alpha_1)).
        \end{align*}
        Because, for every $i\in \{1,2\}$, the point $\rho(\alpha_i)$ has normalized color $\overline{c_i}$ in $\tilde p$ and because $\rho(B)$ is a block of $\tilde p$ with $]\rho(\alpha_2),\rho(\alpha_1)[_{\tilde p}\cap \rho(B)=\emptyset$, we conclude $\delta_p(\alpha_1,\alpha_2)\in -K_{\overline{c_2},\overline{c_1}}(\mc C)=-\kappa_{\overline{c_2},\overline{c_1}}$. And that is what we had to show.
                    \par
                    \emph{Axiom~\ref{lemma:gradedsets-condition-2}:} So far, we have not made use of Lem\-ma~\ref{lemma:simplification-k-x}. Now, though, we employ it to additionally assume $p\in \mc C\cap \Cp_{\leq 2}$. In particular, then, $B= \{\alpha_1,\alpha_2\}$ is a pair block. Consequently, not only $]\alpha_1,\alpha_2[_p\cap B=\emptyset$ but also $]\alpha_2,\alpha_1[_p\cap B=\emptyset$.
   \begin{mycenter}[0.5em]
\begin{tikzpicture}[baseline=0.666*1cm-0.25em]
    \def\scp{0.666}
    \def\linksize{\scp*0.075cm}
    \def\pointsize{\scp*0.25cm}
    \def\dd{\scp*0.5cm}
    \def\dx{\scp*1cm}
    \def\cx{\scp*0.3cm}
    \def\txu{5*\dx}    
    \def\txl{4*\dx}
    \def\dy{\scp*1cm}
    \def\cy{\scp*0.3cm}
    \def\ty{2*\dy}
    \def\fy{\scp*0.2cm}
    \def\fx{\scp*0.2cm}
    \def\gy{\scp*0.4cm}
    \def\gx{\scp*0.4cm}      
    \tikzset{whp/.style={circle, inner sep=0pt, text width={\pointsize}, draw=black, fill=white}}
    \tikzset{blp/.style={circle, inner sep=0pt, text width={\pointsize}, draw=black, fill=black}}
    \tikzset{lk/.style={regular polygon, regular polygon sides=4, inner sep=0pt, text width={\linksize}, draw=black, fill=black}}
    \tikzset{vp/.style={circle, inner sep=0pt, text width={1.5*\pointsize}, fill=white}}
    \tikzset{sstr/.style={shorten <= 5pt, shorten >= 5pt}}    
    \draw[dotted] ({0-\dd},{0}) -- ({\txl+\dd},{0});
    \draw[dotted] ({0-\dd},{\ty}) -- ({\txu+\dd},{\ty});
    \node[vp] (l1) at ({0+2*\dx},{0+0*\ty}) {$c_2$};    
    \node[vp] (u1) at ({0+4*\dx},{0+1*\ty}) {$\overline {c_1}$};
    \draw[sstr] (u1) --++(0,{-0.5*\ty}) -| (l1);
    \draw [draw=gray, pattern = north east lines, pattern color = lightgray] ({0+0*\dx-\fx},{0+0*\dy-\fy}) -- ++ ({1*\dx+2*\fx},0) |- ({0+0*\dx-\fx},{0+0*\dy+\fy});
    \draw [draw=gray, pattern = north east lines, pattern color = lightgray]
({\txl+0*\dx+\fx},{0*\dy-\fy}) -- ++ ({-1*\dx-2*\fx},0) |- ({\txl+0*\dx+\fx},{0*\dy+\fy});
    \draw [draw=gray, pattern = north east lines, pattern color = lightgray] ({0+0*\dx-\fx},{\ty+0*\dy-\fy}) -- ++ ({3*\dx+2*\fx},0) |- ({0+0*\dx-\fx},{\ty+0*\dy+\fy});
    \draw [draw=gray, pattern = north east lines, pattern color = lightgray] ({\txu+0*\dx+\fx},{\ty+0*\dy-\fy}) -- ++ ({-0*\dx-2*\fx},0) |- ({\txu+0*\dx+\fx},{\ty+0*\dy+\fy});
    \node [above = {\cx} of  u1] {$\alpha_1$};
    \node [below ={\cx} of  l1] {$\alpha_2$};
    %
    \draw [draw=darkgray, densely dashed] ({0+0*\dx-\gx},{0+0*\dy-\gy}) -- ++ ({2*\dx+2*\gx},0) |- ({0+0*\dx-\gx},{0+0*\dy+\gy}); 
({\txl+0*\dx+\gx},{0*\dy-\gy}) -- ++ ({-1*\dx-2*\gx},0) |- ({\txl+0*\dx+\gx},{0*\dy+\gy});  
    \draw [draw=darkgray, densely dashed] ({0+0*\dx-\gx},{\ty+0*\dy-\gy}) -- ++ ({4*\dx+2*\gx},0) |- ({0+0*\dx-\gx},{\ty+0*\dy+\gy}); 
    %
    \node at (-\dx,{0.5*\ty}) {$p$};
  \end{tikzpicture}
  \qquad$\to$\qquad
\begin{tikzpicture}[baseline=0.666*1cm-0.25em]
    \def\scp{0.666}
    \def\linksize{\scp*0.075cm}
    \def\pointsize{\scp*0.25cm}
    \def\dd{\scp*0.5cm}
    \def\dx{\scp*1cm}
    \def\cx{\scp*0.3cm}
    \def\txu{5*\dx}    
    \def\txl{4*\dx}
    \def\dy{\scp*1cm}
    \def\cy{\scp*0.3cm}
    \def\ty{2*\dy}
    \def\fy{\scp*0.2cm}
    \def\fx{\scp*0.2cm}
    \def\gy{\scp*0.4cm}
    \def\gx{\scp*0.4cm}      
    \tikzset{whp/.style={circle, inner sep=0pt, text width={\pointsize}, draw=black, fill=white}}
    \tikzset{blp/.style={circle, inner sep=0pt, text width={\pointsize}, draw=black, fill=black}}
    \tikzset{lk/.style={regular polygon, regular polygon sides=4, inner sep=0pt, text width={\linksize}, draw=black, fill=black}}
    \tikzset{vp/.style={circle, inner sep=0pt, text width={1.5*\pointsize}, fill=white}}
    \tikzset{sstr/.style={shorten <= 5pt, shorten >= 5pt}}    
    \draw[dotted] ({0-\dd},{0}) -- ({\txl+\dd},{0});
    \draw[dotted] ({0-\dd},{\ty}) -- ({\txu+\dd},{\ty});
    \node[vp] (l1) at ({0+2*\dx},{0+0*\ty}) {$c_2$};    
    \node[vp] (u1) at ({0+4*\dx},{0+1*\ty}) {$\overline {c_1}$};
    \draw[sstr] (u1) --++(0,{-0.5*\ty}) -| (l1);
    \draw [draw=gray, pattern = north east lines, pattern color = lightgray] ({0+0*\dx-\fx},{0+0*\dy-\fy}) -- ++ ({1*\dx+2*\fx},0) |- ({0+0*\dx-\fx},{0+0*\dy+\fy});
    \draw [draw=gray, pattern = north east lines, pattern color = lightgray]
({\txl+0*\dx+\fx},{0*\dy-\fy}) -- ++ ({-1*\dx-2*\fx},0) |- ({\txl+0*\dx+\fx},{0*\dy+\fy});
    \draw [draw=gray, pattern = north east lines, pattern color = lightgray] ({0+0*\dx-\fx},{\ty+0*\dy-\fy}) -- ++ ({3*\dx+2*\fx},0) |- ({0+0*\dx-\fx},{\ty+0*\dy+\fy});
    \draw [draw=gray, pattern = north east lines, pattern color = lightgray] ({\txu+0*\dx+\fx},{\ty+0*\dy-\fy}) -- ++ ({-0*\dx-2*\fx},0) |- ({\txu+0*\dx+\fx},{\ty+0*\dy+\fy});
    \node [above = {\cx} of  u1] {$\alpha_1$};
    \node [below ={\cx} of  l1] {$\alpha_2$};
    %
    \draw [draw=darkgray, densely dashed]
({\txl+0*\dx+\gx},{0*\dy-\gy}) -- ++ ({-2*\dx-2*\gx},0) |- ({\txl+0*\dx+\gx},{0*\dy+\gy});  
    \draw [draw=darkgray, densely dashed] ({\txu+0*\dx+\gx},{\ty+0*\dy-\gy}) -- ++ ({-1*\dx-2*\gx},0) |- ({\txu+0*\dx+\gx},{\ty+0*\dy+\gy}); 
    \node at (-\dx,{0.5*\ty}) {$p$};
  \end{tikzpicture}  
\end{mycenter}
\noindent
                    By \cite[Lem\-ma~2.1~(b)]{MWNHO1} we infer \[\delta_p(\alpha_1,\alpha_2)= -\delta_{p}(\alpha_2,\alpha_1)\mod \toco(p).\] As $\toco(p)\in \toco(\mc C)$, it follows $\delta_p(\alpha_1,\alpha_2)\in  -K_{c_2,c_1}(\mc C)+\toco(\mc C)=-\kappa_{c_2,c_1}+\sigma$, which is what we wanted to see.                  
\end{proof}

\begin{lemma}
  \label{lemma:verifying-axioms-3}
  For every non-hy\-per\-octa\-he\-dral category $\mc C\subseteq \Cp$, the sets $\sigma\eqpd \toco(\mc C)$ and $\xi_{c_1,c_2}\eqpd X_{c_1,c_2}(\mc C)$ for $c_1,c_2\in \colors$ satisfy Axioms~\ref{lemma:gradedsets-condition-1}--\ref{lemma:gradedsets-condition-2} of \ref{axioms:arithmetic}:
  \begin{IEEEeqnarray*}{rCCrCCrC}
   \text{(ii)}\hspace{0.5em}& \xi_{c_1,c_2}+\sigma\subseteq \xi_{c_1,c_2}, &\hspace{1.5em} &\text{(iii)}\hspace{0.5em}& \xi_{c_1,c_2}\subseteq - \xi_{\overline{c_2},\overline{c_1}}, &\hspace{1.5em} &\text{(iv)}\hspace{0.5em}& \xi_{c_1,c_2}\subseteq - \xi_{c_2,c_1}+\sigma
 \end{IEEEeqnarray*}
 for all $c_1,c_2\in\colors$.
\end{lemma}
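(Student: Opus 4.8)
The plan is to transcribe, almost verbatim, the proof of Lemma~\ref{lemma:verifying-axioms-2}, treating Axioms~\ref{lemma:gradedsets-condition-1}--\ref{lemma:gradedsets-condition-2} in turn. Throughout one fixes a generic element $\delta_p(\alpha_1,\alpha_2)$ of $X_{c_1,c_2}(\mc C)=\xi_{c_1,c_2}$: points $\alpha_1\in B_1$ of normalized color $c_1$ and $\alpha_2\in B_2$ of normalized color $c_2$, where $B_1$ and $B_2$ are crossing blocks of some $p\in\mc C$. The structural simplification over the $\kappa$-case is that the definition of $X_{c_1,c_2}$ carries \emph{no} interval condition on the pair $\alpha_1,\alpha_2$; consequently the reduction to singleton and pair blocks (Lemma~\ref{lemma:simplification-k-x}) is not needed here.

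\emph{Axiom~\ref{lemma:gradedsets-condition-1}:} For arbitrary $q\in\mc C$, rotate $p$ so that $\alpha_1$ is the rightmost lower point; this affects neither $\delta_p(\alpha_1,\alpha_2)$ nor the normalized colors of $\alpha_1,\alpha_2$. In $p\otimes q\in\mc C$ the blocks $B_1$ and $B_2$ still cross, because inserting all points of $q$ at a single site in each row leaves the cyclic order of any quadruple of witnesses $(\beta_1,\beta_2,\gamma_1,\gamma_2)$ with $\beta_i,\gamma_i\in B_i$ intact; and every point of $q$ lies in $]\alpha_1,\alpha_2[_{p\otimes q}$, so $\delta_{p\otimes q}(\alpha_1,\alpha_2)=\delta_p(\alpha_1,\alpha_2)+\toco(q)$. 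As $\toco(q)$ exhausts $\sigma=\toco(\mc C)$, this yields $\xi_{c_1,c_2}+\sigma\subseteq\xi_{c_1,c_2}$.

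\emph{Axiom~\ref{lemma:gradedsets-condition-3}:} Pass to the verticolor reflection $\tilde p\in\mc C$ with its reflection map $\rho$; it inverts normalized colors and satisfies $\sigma_p(S)=-\sigma_{\tilde p}(\rho(S))$ for every point set $S$, and it sends $]\alpha_1,\alpha_2]_p$ to $[\rho(\alpha_2),\rho(\alpha_1)[_{\tilde p}$. The same computation as in Lemma~\ref{lemma:verifying-axioms-2}, based on the case-distinction-free formula for $\delta$ from \cite[Lemma~3.1~(b)]{MWNHO1}, then gives $\delta_p(\alpha_1,\alpha_2)=-\delta_{\tilde p}(\rho(\alpha_2),\rho(\alpha_1))$. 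Since $\rho$ reverses the cyclic order, $\rho(B_1)$ and $\rho(B_2)$ are again crossing blocks of $\tilde p$, with $\rho(\alpha_1)\in\rho(B_1)$ of normalized color $\overline{c_1}$ and $\rho(\alpha_2)\in\rho(B_2)$ of normalized color $\overline{c_2}$; hence $\delta_{\tilde p}(\rho(\alpha_2),\rho(\alpha_1))\in X_{\overline{c_2},\overline{c_1}}(\mc C)$, so that $\delta_p(\alpha_1,\alpha_2)\in-\xi_{\overline{c_2},\overline{c_1}}$.

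\emph{Axiom~\ref{lemma:gradedsets-condition-2}:} By \cite[Lemma~2.1~(b)]{MWNHO1} one has $\delta_p(\alpha_1,\alpha_2)\equiv-\delta_p(\alpha_2,\alpha_1)\pmod{\toco(p)}$. Crossing of blocks being symmetric, $\delta_p(\alpha_2,\alpha_1)\in X_{c_2,c_1}(\mc C)=\xi_{c_2,c_1}$ directly — unlike in the $\kappa$-case, no interval hypothesis has to be re-verified — and $\toco(p)\integers\subseteq\toco(\mc C)=\sigma$ because $\sigma$ is a subgroup containing $\toco(p)$; therefore $\delta_p(\alpha_1,\alpha_2)\in-\xi_{c_2,c_1}+\sigma$. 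I expect no serious obstacle: the only delicate points are the persistence of a crossing of two blocks under right tensoring and under verticolor reflection, each disposed of by tracking the cyclic position of a quadruple of witness points, while everything else is a verbatim adaptation of the arguments already given for $\kappa_{c_1,c_2}$.
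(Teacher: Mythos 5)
Your proposal matches the paper's proof essentially verbatim: the same three verifications via tensoring with $q$, verticolor reflection, and the symmetry of the crossing relation combined with $\delta_p(\alpha_1,\alpha_2)\equiv-\delta_p(\alpha_2,\alpha_1)\bmod\toco(p)$. You also correctly identify the one structural difference from the $\kappa$-case, namely that Lemma~\ref{lemma:simplification-k-x} is not needed for Axiom~(iv), exactly as the paper remarks.
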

\begin{proof}
The proof is similar to that of Lem\-ma~\ref{lemma:verifying-axioms-2}.
  Let $c_1,c_2\in \colors$, let $B_1$ and $B_2$ be crossing blocks of $p\in \mc C$ and let $\alpha_1\in B_1$ and $\alpha_2\in B_2$ have normalized colors $c_1$ and $c_2$, respectively. That makes $\delta_{p}(\alpha_1,\alpha_2)$ a generic element of $X_{c_1,c_2}(\mc C)=\xi_{c_1,c_2}$.
                  \par
                  \emph{Axiom~\ref{lemma:gradedsets-condition-1}:} Just like in the proof of Lem\-ma~\ref{lemma:verifying-axioms-2}, we can assume that $\alpha_1$ is the rightmost lower point. Given arbitrary $q\in \mc C$, the sets $B_1$ and $B_2$ are crossing blocks of $p\otimes q$ as well, 
                     \begin{mycenter}[0.5em]

\end{mycenter}
\noindent
and because, \(\delta_p(\alpha_1,\alpha_2)\equiv -\delta_{p}(\alpha_2,\alpha_1)\mod \toco(p)\) (by \cite[Lem\-ma~2.1~(b)]{MWNHO1}), we can immediately conclude $\delta_p(\alpha_1,\alpha_2)\in -X_{c_2,c_1}(\mc C)+\toco(\mc C)$. Thus, $\xi_{c_1,c_2}\subseteq -\xi_{c_2,c_1}+\sigma$. Differently from 
Lem\-ma~\ref{lemma:verifying-axioms-2}, we did not need Lem\-ma~\ref{lemma:simplification-k-x} to see this.
\end{proof}

\begin{lemma}
  \label{lemma:verifying-axioms-4}
  For every non-hy\-per\-octa\-he\-dral category $\mc C\subseteq \Cp$, the sets $\sigma\eqpd \toco(\mc C)$ and $\xi_{c_1,c_2}\eqpd X_{c_1,c_2}(\mc C)$ for $c_1,c_2\in \colors$ satisfy Axiom~\ref{lemma:gradedsets-condition-6} of~\ref{axioms:arithmetic}:  For all $c_1,c_2\in\colors$,         
  \begin{IEEEeqnarray*}{rCl}
    \xi_{c_1,c_2}\subseteq \xi_{c_1,\overline{c_2}}\cup \left(-\xi_{c_2,\overline{c_1}}+\sigma\right).
  \end{IEEEeqnarray*}
\end{lemma}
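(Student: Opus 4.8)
The plan is to reduce the statement to \emph{pair} blocks and then to run a short case analysis driven by the cyclic positions of the four relevant legs, using verticolor reflection and the arithmetic of color distances. First I would invoke Lemma~\ref{lemma:simplification-k-x} to assume $p\in\mc C\cap\Cp_{\leq 2}$, so that the crossing blocks $B_1,B_2$ are pair blocks, say $B_1=\{\alpha_1,\beta_1\}$ and $B_2=\{\alpha_2,\beta_2\}$, and the four legs $\alpha_1,\beta_1,\alpha_2,\beta_2$ occur in alternating cyclic order; after a rotation of $p$ their clockwise order is either $(\alpha_1,\alpha_2,\beta_1,\beta_2)$ or $(\alpha_1,\beta_2,\beta_1,\alpha_2)$. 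Since $B_i$ is a pair block, the normalized color of $\beta_i$ equals $\overline{c_i}$ exactly when $\sigma_p(B_i)=0$ and equals $c_i$ otherwise; together with the two crossing patterns this produces the cases to treat. Throughout I would work with the case-free formula $\delta_p(\alpha,\gamma)=\sigma_p(]\alpha,\gamma[_p)+\tfrac12(\sigma_p(\alpha)+\sigma_p(\gamma))$ from the proof of \cite[Lemma~3.1~(b)]{MWNHO1}, its additivity $\delta_p(\alpha,\gamma)=\delta_p(\alpha,\beta)+\delta_p(\beta,\gamma)$ whenever $\beta$ lies clockwise strictly between $\alpha$ and $\gamma$, the antisymmetry $\delta_p(\alpha,\gamma)=-\delta_p(\gamma,\alpha)+\toco(p)$ of \cite[Lemma~2.1~(b)]{MWNHO1}, and the verticolor-reflection identity $\delta_p(\alpha,\gamma)=-\delta_{\tilde p}(\rho(\gamma),\rho(\alpha))$ already exploited in the proof of Lemma~\ref{lemma:verifying-axioms-3}, where $\rho$ inverts all normalized colors.

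In the cases in which $B_2$ carries a leg of normalized color $\overline{c_2}$ — that is, $\sigma_p(B_2)=0$, so $\beta_2$ itself has normalized color $\overline{c_2}$ — the target is the first summand $\xi_{c_1,\overline{c_2}}$. Here $\delta_p(\alpha_1,\beta_2)$ visibly lies in $X_{c_1,\overline{c_2}}(\mc C)=\xi_{c_1,\overline{c_2}}$, and by additivity it differs from $\delta_p(\alpha_1,\alpha_2)$ only by the within-block term $\pm\delta_p(\alpha_2,\beta_2)=\pm\sigma_p(]\alpha_2,\beta_2[_p)$. One then repositions $\beta_2$ — it may be moved, by a category operation applied to $p$ inside $\mc C$, through any point of the open arc on which $B_1$ and $B_2$ keep crossing, and this does not affect $\delta_p(\alpha_1,\alpha_2)$ since $\beta_2$ is not $\alpha_1$ or $\alpha_2$ and lies outside $]\alpha_1,\alpha_2[_p$ in that crossing pattern — choosing the new position so that the correction term vanishes; when the correction cannot be driven to $0$ from the relevant side (and in the second crossing pattern, and when passing to $\tilde p$ is needed), one instead routes $\delta_p(\alpha_1,\alpha_2)$ through $\delta_p(\alpha_1,\alpha_2)=-\delta_p(\alpha_2,\alpha_1)+\toco(p)$ and uses the leg $\beta_1$ (of normalized color $\overline{c_1}$, when $\sigma_p(B_1)=0$) to land in the second summand $-\xi_{c_2,\overline{c_1}}+\sigma$. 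The disjunction in the statement is precisely what accommodates this bifurcation. In the residual case, where neither block has a leg of the flipped color ($\sigma_p(B_1)\neq 0$ and $\sigma_p(B_2)\neq 0$), one first replaces $p$ by $\tilde p$, which turns every normalized color into its opposite while preserving the crossing, and then repeats the analysis there, transporting the conclusion back via the reflection identity.

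The step I expect to be the main obstacle is the exact bookkeeping of the integer value. Each natural device for producing a leg of the flipped normalized color — passing to the partner leg $\beta_2$ (or $\beta_1$), or applying $\tilde p$ — shifts the color distance by a correction term lying a priori only in $K(\mc C)$ or in $\toco(\mc C)$, whereas the target $\xi_{c_1,\overline{c_2}}\cup(-\xi_{c_2,\overline{c_1}}+\sigma)$ tolerates only a $\toco(\mc C)$-shift, and only on its second summand. Proving that in every configuration one can either annihilate the correction by the repositioning surgery (using that crossing is a purely cyclic condition) or absorb it into $\toco(\mc C)$ through the antisymmetry identity, and — crucially — that at least one of the two target summands is always reached, is the heart of the argument; as in the proof of Lemma~\ref{lemma:simplification-k-x}, the \enquote{worst case} comes down to a short but unavoidable enumeration of the cyclic arrangements of the four legs.
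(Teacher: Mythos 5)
There is a genuine gap, and you in fact flag it yourself without closing it. Your plan is to compare $\delta_p(\alpha_1,\alpha_2)$ with $\delta_p(\alpha_1,\beta_2)$, where $\beta_2$ is the existing partner of $\alpha_2$ in the pair block $B_2$; the discrepancy is the within-block term $\pm\delta_p(\alpha_2,\beta_2)$, which a priori lies only in $K(\mc C)\cup L(\mc C)$. But the target set $\xi_{c_1,\overline{c_2}}\cup(-\xi_{c_2,\overline{c_1}}+\sigma)$ only tolerates shifts by $\sigma=\toco(\mc C)$, and by Proposition~\ref{lemma:result-s-l-k-x} one can have, e.g., $K(\mc C)=m\integers$ while $\toco(\mc C)=\{0\}$, so the correction cannot in general be absorbed. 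Your proposed fix --- ``repositioning $\beta_2$ \dots by a category operation'' to a location where the correction vanishes --- is not an operation the category axioms provide: composition, tensor, rotation, reflection and erasing turns do not let you slide one leg of a block to an arbitrary point of an arc while staying inside $\mc C$. Likewise, your residual case via $\tilde p$ only reproduces Axiom~(iii), $\xi_{c_1,c_2}\subseteq-\xi_{\overline{c_2},\overline{c_1}}$, which does not yield the mixed-color terms $\xi_{c_1,\overline{c_2}}$ or $\xi_{c_2,\overline{c_1}}$ appearing in Axiom~(v). So the ``heart of the argument,'' as you call it, is exactly what is missing.

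The paper's device is different and sidesteps the bookkeeping entirely. In the ordered case $(\alpha_1,\alpha_2,\beta_1,\beta_2)$ one applies the projection $P(\cdot,\cdot)$ of Lemma~\ref{lemma:projection} \emph{twice}: first $p'=P(p,[\alpha_1,\beta_1]_p)$, which replaces $B_2$ by a through pair joining $\alpha_2$ to its mirror image $\beta_2'$, and then $p''=P(p',[\beta_2',\alpha_2])$. In $p''$ the images of $\beta_2'$ and $\alpha_2$ are the two endpoints of the lower row and form a block, so the cyclic successor $\gamma''$ of $\alpha_2''$ (the rightmost \emph{upper} point) has normalized color $\overline{c_2}$, belongs to a block crossing that of $\alpha_1''$, and satisfies $\delta_{p''}(\alpha_2'',\gamma'')=0$ because it is the adjacent point of opposite color. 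Hence $\delta_p(\alpha_1,\alpha_2)=\delta_{p''}(\alpha_1'',\gamma'')\in\xi_{c_1,\overline{c_2}}$ with \emph{no} correction term at all; the other ordering is then handled exactly as you suggest, via $\delta_p(\alpha_1,\alpha_2)\equiv-\delta_p(\alpha_2,\alpha_1)\bmod\toco(p)$, landing in $-\xi_{c_2,\overline{c_1}}+\sigma$. The distinction you draw according to whether $\sigma_p(B_i)=0$ plays no role in the actual proof.
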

\begin{proof}
  Let $B_1$ and $B_2$ be crossing blocks in $p\in \mc C\cap \Cp_{\leq 2}$ and let $\alpha_1\in B_1$ and $\alpha_2\in B_2$ have normalized colors $c_1\in \colors$ and $c_2\in \colors$, respectively. According to Lem\-ma~\ref{lemma:simplification-k-x} then, every element  of $\xi_{c_1,c_2}=X_{c_1,c_2}(\mc C)=X_{c_1,c_2}(\mc C\cap \Cp_{\leq 2})$ is of the form  $\delta_p(\alpha_1,\alpha_2)$. Because $p\in \Cp_{\leq 2}$, the blocks $B_1$ and $B_2$ are pairs. Hence, the crossing between these blocks means that we find points $\beta_1\in B_1$ and $\beta_2\in B_2$ with $\alpha_1\neq \beta_1$ and $\alpha_2\neq \beta_2$ such that either $(\alpha_1,\alpha_2,\beta_1,\beta_2)$ or $(\alpha_2,\alpha_1,\beta_2,\beta_1)$ is ordered.
                    \par
                    \emph{Case~1:} First, we suppose that $(\alpha_1,\alpha_2,\beta_1,\beta_2)$ is ordered and show $\delta_p(\alpha_1,\alpha_2)\in X_{c_1,\overline{c_2}}(\mc C)$. We can assume that $\alpha_1$ is the leftmost and $\beta_1$ the rightmost lower  point.
                       \begin{mycenter}[0.5em]
\begin{tikzpicture}[baseline=0.666*1cm-0.25em]
    \def\scp{0.666}
    \def\linksize{\scp*0.075cm}
    \def\pointsize{\scp*0.25cm}
    \def\dd{\scp*0.5cm}
    \def\dx{\scp*1cm}
    \def\cx{\scp*0.3cm}
    \def\txu{8*\dx}    
    \def\txl{7*\dx}
    \def\dy{\scp*1cm}
    \def\cy{\scp*0.3cm}
    \def\ty{3*\dy}
    \def\fy{\scp*0.2cm}
    \def\fx{\scp*0.2cm}
    \def\gy{\scp*0.4cm}
    \def\gx{\scp*0.4cm}      
    \tikzset{whp/.style={circle, inner sep=0pt, text width={\pointsize}, draw=black, fill=white}}
    \tikzset{blp/.style={circle, inner sep=0pt, text width={\pointsize}, draw=black, fill=black}}
    \tikzset{lk/.style={regular polygon, regular polygon sides=4, inner sep=0pt, text width={\linksize}, draw=black, fill=black}}
    \tikzset{cc/.style={cross out, minimum size={1.5*\pointsize-\pgflinewidth}, inner sep=0pt, outer sep=0pt,  draw=black}}    
    \tikzset{vp/.style={circle, inner sep=0pt, text width={1.5*\pointsize}, fill=white}}
    \tikzset{sstr/.style={shorten <= 3pt, shorten >= 3pt}}    
    \draw[dotted] ({0-\dd},{0}) -- ({\txl+\dd},{0});
    \draw[dotted] ({0-\dd},{\ty}) -- ({\txu+\dd},{\ty});
    \node[vp] (l1) at ({0+0*\dx},{0+0*\ty}) {$c_1$};
    \node[vp] (l2) at ({0+3*\dx},{0+0*\ty}) {$c_2$};
    \node[cc] (l3) at ({0+7*\dx},{0+0*\ty}) {};    
    \node[cc] (u1) at ({0+4*\dx},{0+1*\ty}) {};    
    \draw[sstr] (l1) --++(0,{\ty /3}) -| (l3);
    \draw[sstr] (l2) --++(0,{\ty *2/3}) -| (u1);    
    %
    \draw [draw=gray, pattern = north east lines, pattern color = lightgray] ({1*\dx-\fx},{-\fy}) rectangle ({2*\dx+\fx},{\fy});
    \draw [draw=gray, pattern = north east lines, pattern color = lightgray] ({4*\dx-\fx},{-\fy}) rectangle ({6*\dx+\fx},{\fy});    
    \draw [draw=gray, pattern = north east lines, pattern color = lightgray] ({0*\dx-\fx},{\ty-\fy}) rectangle ({3*\dx+\fx},{\ty+\fy});
    \draw [draw=gray, pattern = north east lines, pattern color = lightgray] ({5*\dx-\fx},{\ty-\fy}) rectangle ({8*\dx+\fx},{\ty+\fy});    
    \node [below = {\cx} of  l1] {$\alpha_1$};
    \node [below ={\cx} of  l2] {$\alpha_2$};
    \node [below ={\cx} of  l3] {$\beta_1$};
    \node [above ={\cx} of  u1] {$\beta_2$};        
    %
    \draw [draw=darkgray, densely dashed] ({0*\dx-\gx},{-\gy}) rectangle ({3*\dx+\gx},{\gy});            
    %
    \node at (-\dx,{0.5*\ty}) {$p$};
  \end{tikzpicture}
  \end{mycenter}
  By Lem\-ma~\ref{lemma:projection}, the partition $p'\eqpd P(p,[\alpha_1,\beta_1]_p)$ belongs to $\mc C$. The definition of the projection operation has the following consequences: The three lower points $\alpha_1,\alpha_2$ and $\beta_1$ of $p$, also points of $p'$, all retain their normalized colors in $p'$; the set $B_1=\{\alpha_1,\beta_1\}$ is still a block of $p'$; the point $\alpha_2$ is now connected to its counterpart $\beta_2'$ on the upper row of $p'$, implying in particular that the blocks of $\alpha_1$ and $\alpha_2$ still cross in $p'$; and it holds \[\delta_{p'}(\alpha_1,\alpha_2)=\delta_p(\alpha_1,\alpha_2).\]
                       \begin{mycenter}[0.5em]
\begin{tikzpicture}[baseline=0.666*1cm-0.25em]
    \def\scp{0.666}
    \def\linksize{\scp*0.075cm}
    \def\pointsize{\scp*0.25cm}
    \def\dd{\scp*0.5cm}
    \def\dx{\scp*1cm}
    \def\cx{\scp*0.3cm}
    \def\txu{7*\dx}    
    \def\txl{7*\dx}
    \def\dy{\scp*1cm}
    \def\cy{\scp*0.3cm}
    \def\ty{3*\dy}
    \def\fy{\scp*0.2cm}
    \def\fx{\scp*0.2cm}
    \def\gy{\scp*0.4cm}
    \def\gx{\scp*0.4cm}      
    \tikzset{whp/.style={circle, inner sep=0pt, text width={\pointsize}, draw=black, fill=white}}
    \tikzset{blp/.style={circle, inner sep=0pt, text width={\pointsize}, draw=black, fill=black}}
    \tikzset{lk/.style={regular polygon, regular polygon sides=4, inner sep=0pt, text width={\linksize}, draw=black, fill=black}}
    \tikzset{cc/.style={cross out, minimum size={1.5*\pointsize-\pgflinewidth}, inner sep=0pt, outer sep=0pt,  draw=black}}    
    \tikzset{vp/.style={circle, inner sep=0pt, text width={1.5*\pointsize}, fill=white}}
    \tikzset{sstr/.style={shorten <= 3pt, shorten >= 3pt}}    
    \draw[dotted] ({0-\dd},{0}) -- ({\txl+\dd},{0});
    \draw[dotted] ({0-\dd},{\ty}) -- ({\txu+\dd},{\ty});
    \node[vp] (l1) at ({0+0*\dx},{0+0*\ty}) {$c_1$};
    \node[vp] (l2) at ({0+3*\dx},{0+0*\ty}) {$c_2$};
    \node[cc] (l3) at ({0+7*\dx},{0+0*\ty}) {};    
    \node[vp] (u1) at ({0+0*\dx},{0+1*\ty}) {$c_1$};
    \node[vp] (u2) at ({0+3*\dx},{0+1*\ty}) {$c_2$};
    \node[cc] (u3) at ({0+7*\dx},{0+1*\ty}) {};    
    \draw[sstr] (l1) --++(0,{\dy}) -| (l3);
    \draw[sstr] (l2) to (u2);
    \draw[sstr] (u1) --++(0,{-\dy}) -| (u3);    
    %
    \draw [draw=gray, pattern = north east lines, pattern color = lightgray] ({1*\dx-\fx},{-\fy}) rectangle ({2*\dx+\fx},{\fy});
    \draw [draw=gray, pattern = north east lines, pattern color = lightgray] ({4*\dx-\fx},{-\fy}) rectangle ({6*\dx+\fx},{\fy});    
    \draw [draw=gray, pattern = north east lines, pattern color = lightgray] ({1*\dx-\fx},{\ty-\fy}) rectangle ({2*\dx+\fx},{\ty+\fy});
    \draw [draw=gray, pattern = north east lines, pattern color = lightgray] ({4*\dx-\fx},{\ty-\fy}) rectangle ({6*\dx+\fx},{\ty+\fy});    
    \node [below = {\cx} of  l1] {$\alpha_1$};
    \node [below ={\cx} of  l2] {$\alpha_2$};
    \node [below ={\cx} of  l3] {$\beta_1$};
    \node [above ={\cx} of  u2] {$\beta_2'$};        
    %
     \draw [draw=darkgray, densely dotted] ({0+0*\dx-\gx},{0+0*\dy-\gy}) --++ ({3*\dx+2*\gx},0) |- ({0+0*\dx-\gx},{0+0*\dy+\gy}); 
     \draw [draw=darkgray, densely dotted] ({0+0*\dx-\gx},{\ty+0*\dy-\gy}) --++ ({3*\dx+2*\gx},0) |- ({0+0*\dx-\gx},{\ty+0*\dy+\gy}); 
    %
     \node at ({\txl+\dx},{0.5*\ty}) {$p'$};
     \node (lab1) at ({-2.5*\dx},{0.5*\ty}) {$[\beta_2',\alpha]_{p'}$};
     \draw[dotted] (lab1) -- ({-\dd-\cx},0);
     \draw[dotted] (lab1) -- ({-\dd-\cx},{\ty});     
  \end{tikzpicture}
  \end{mycenter}
  We  apply Lem\-ma~\ref{lemma:projection} a second time to infer $p''\eqpd P(p',[\beta_2',\alpha_2])\in \mc C$. Denote the images of the points $\beta_2'$, $\alpha_1$ and $\alpha_2$ of $p'$ in $p''$ by $\beta_2''$, $\alpha_1''$ and $\alpha_2''$, respectively. Now, $\beta_2''$ is the leftmost lower point and $\alpha_2''$ the rightmost lower point of $p''$ and the two form a block; the point $\alpha_1''\in [\beta_2'',\alpha_2'']_{p''}$ is connected to its counterpart on the upper row; and \[\delta_{p''}(\alpha_1'',\alpha_2'')=\delta_{p'}(\alpha_1,\alpha_2)=\delta_p(\alpha_1,\alpha_2).\]

                       \begin{mycenter}[0.5em]
\begin{tikzpicture}[baseline=0.666*1cm-0.25em]
    \def\scp{0.666}
    \def\linksize{\scp*0.075cm}
    \def\pointsize{\scp*0.25cm}
    \def\dd{\scp*0.5cm}
    \def\dx{\scp*1cm}
    \def\cx{\scp*0.3cm}
    \def\txu{7*\dx}    
    \def\txl{7*\dx}
    \def\dy{\scp*1cm}
    \def\cy{\scp*0.3cm}
    \def\ty{3*\dy}
    \def\fy{\scp*0.2cm}
    \def\fx{\scp*0.2cm}
    \def\gy{\scp*0.4cm}
    \def\gx{\scp*0.4cm}      
    \tikzset{whp/.style={circle, inner sep=0pt, text width={\pointsize}, draw=black, fill=white}}
    \tikzset{blp/.style={circle, inner sep=0pt, text width={\pointsize}, draw=black, fill=black}}
    \tikzset{lk/.style={regular polygon, regular polygon sides=4, inner sep=0pt, text width={\linksize}, draw=black, fill=black}}
    \tikzset{cc/.style={cross out, minimum size={1.5*\pointsize-\pgflinewidth}, inner sep=0pt, outer sep=0pt,  draw=black}}    
    \tikzset{vp/.style={circle, inner sep=0pt, text width={1.5*\pointsize}, fill=white}}
    \tikzset{sstr/.style={shorten <= 3pt, shorten >= 3pt}}    
    \draw[dotted] ({0-\dd},{0}) -- ({\txl+\dd},{0});
    \draw[dotted] ({0-\dd},{\ty}) -- ({\txu+\dd},{\ty});
    \node[vp] (l1) at ({0+0*\dx},{0+0*\ty}) {$\overline{c_2}$};
    \node[vp] (l2) at ({0+3*\dx},{0+0*\ty}) {$\overline{c_1}$};
    \node[vp] (l3) at ({0+4*\dx},{0+0*\ty}) {$c_1$};
    \node[vp] (l4) at ({0+7*\dx},{0+0*\ty}) {$c_2$};        
    \node[vp] (u1) at ({0+0*\dx},{0+1*\ty}) {$\overline{c_2}$};
    \node[vp] (u2) at ({0+3*\dx},{0+1*\ty}) {$\overline{c_1}$};
    \node[vp] (u3) at ({0+4*\dx},{0+1*\ty}) {$c_1$};
    \node[vp] (u4) at ({0+7*\dx},{0+1*\ty}) {$c_2$};        
    \draw[sstr] (l1) --++(0,{\dy}) -| (l4);
    \draw[sstr] (l2) to (u2);
    \draw[sstr] (l3) to (u3);    
    \draw[sstr] (u1) --++(0,{-\dy}) -| (u4);    
    %
    \draw [draw=gray, pattern = north west lines, pattern color = darkgray] ({1*\dx-\fx},{-\fy}) rectangle ({2*\dx+\fx},{\fy});
    \draw [draw=gray, pattern = north east lines, pattern color = lightgray] ({5*\dx-\fx},{-\fy}) rectangle ({6*\dx+\fx},{\fy});    
    \draw [draw=gray, pattern = north west lines, pattern color = darkgray] ({1*\dx-\fx},{\ty-\fy}) rectangle ({2*\dx+\fx},{\ty+\fy});    
    \draw [draw=gray, pattern = north east lines, pattern color = lightgray] ({5*\dx-\fx},{\ty-\fy}) rectangle ({6*\dx+\fx},{\ty+\fy});
    %
    \node [below = {\cx} of  l1] {$\beta_2''$};
    \node [below ={\cx} of  l3] {$\alpha_1''$};
    \node [below ={\cx} of  l4] {$\alpha_2''$};
    \node [above ={\cx} of  u4] {$\gamma''$};        
    %
     \draw [draw=darkgray, densely dashed] ({\txl+0*\dx+\gx},{0*\dy-\gy}) --++ ({-3*\dx-2*\gx},0) |- ({\txl+0*\dx+\gx},{0*\dy+\gy});
     \draw [draw=darkgray, densely dashed] ({\txu+0*\dx+\gx},{\ty+0*\dy-\gy}) --++ ({-0*\dx-2*\gx},0) |- ({\txu+0*\dx+\gx},{\ty+0*\dy+\gy});

    %
     \node at ({-\dx},{0.5*\ty}) {$p''$};
  \end{tikzpicture}
  \end{mycenter}
  There are two crucial observations to make about the successor $\gamma''$ of $\alpha_2''$ in $p''$, the rightmost upper point of $p''$. Firstly, $\gamma''$ has the inverse normalized color $\overline{c_2}$ of $\alpha_2''$, which in particular implies that $\delta_{p''}(\alpha_2'',\gamma'')=0$. Secondly, $\gamma''$ forms a block of $p''$ together with the leftmost upper point of $p''$, which entails that its block crosses the block of $\alpha_1''$ in $p''$.  Hence, $\delta_{p''}(\alpha_1'',\gamma'')\in X_{c_1,\overline{c_2}}(\mc C)$ and
                    \begin{align*}
                      \delta_{p''}(\alpha_1'',\gamma'')=\delta_{p''}(\alpha_1'',\alpha_2'')+\delta_{p''}(\alpha_2'',\gamma'')=\delta_p(\alpha_1,\alpha_2)
                    \end{align*}
                    together show $\delta_p(\alpha_1,\alpha_2)\in X_{c_1,\overline{c_2}}(\mc C)= \xi_{c_1,\overline{c_2}}$, which is what we set out to prove.
                    \par
                    \emph{Case~2:} Now, let $(\alpha_2,\alpha_1,\beta_2,\beta_1)$ be ordered instead. By Case~1 then, $\delta_p(\alpha_2,\alpha_1)\in X_{c_2,\overline{c_1}}(\mc C)$.   \cite[Lem\-ma~2.1~(b)]{MWNHO1} shows $\delta_p(\alpha_2,\alpha_1)\equiv -\delta_p(\alpha_1,\alpha_2)\mod \toco(p)$. That implies $\delta_p(\alpha_1,\alpha_2)\in -X_{c_2,\overline{c_1}}(\mc C)+\toco(\mc C)=-\xi_{c_2,\overline{c_1}}+ \sigma$, which is what we needed to see.
                  \end{proof}

                  \begin{lemma}
                    \label{lemma:verifying-axioms-5}
                    For every non-hy\-per\-octa\-he\-dral category $\mc C\subseteq \Cp$, the sets $\sigma\eqpd \toco(\mc C)$ and $\kappa_{c_1,c_2}\eqpd K_{c_1,c_2}(\mc C)$ for $c_1,c_2\in \colors$ satisfy Axiom~\ref{lemma:gradedsets-condition-4} of~\ref{axioms:arithmetic}: $0\in\kappa_{\circ\bullet}\cap \kappa_{\bullet\circ}$.
                  \end{lemma}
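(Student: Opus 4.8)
The plan is to exhibit one explicit partition which lies in every category and which witnesses both required memberships at once; no category operation will be needed. First I would take $p\eqpd\PartIdenLoWB$, which belongs to $\mc C$ simply by the definition of a category (this is exactly the partition invoked at the beginning of the proof of Proposition~\ref{proposition:result-F}). Write $\alpha_1$ for its left and $\alpha_2$ for its right lower point; their normalized colors are $\circ$ and $\bullet$, respectively, and $B\eqpd\{\alpha_1,\alpha_2\}$ is the unique block of $p$. Since $p$ has no point strictly between $\alpha_1$ and $\alpha_2$ in either cyclic direction, $\alpha_1$ and $\alpha_2$ are subsequent legs of $B$ with $]\alpha_1,\alpha_2[_p\cap B=\emptyset$ and $]\alpha_2,\alpha_1[_p\cap B=\emptyset$. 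Finally, $\sigma_p(B)=0$ — the block color sum of $\PartIdenLoWB$ having been recorded already in the proof of Proposition~\ref{proposition:result-V}. Consequently $\delta_p(\alpha_1,\alpha_2)\in K_{\circ\bullet}(\mc C)=\kappa_{\circ\bullet}$ and $\delta_p(\alpha_2,\alpha_1)\in K_{\bullet\circ}(\mc C)=\kappa_{\bullet\circ}$.

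The remaining step is to evaluate these two distances. Here I would use the case-distinction-free formula for $\delta$ from \cite[Lem\-ma~3.1~(b)]{MWNHO1} that has been employed repeatedly above. Because $]\alpha_1,\alpha_2]_p=\{\alpha_2\}$, it gives
\begin{align*}
  \delta_p(\alpha_1,\alpha_2)=\sigma_p(]\alpha_1,\alpha_2]_p)+{\textstyle\frac12}\bigl(\sigma_p(\alpha_1)-\sigma_p(\alpha_2)\bigr)={\textstyle\frac12}\bigl(\sigma_p(\alpha_1)+\sigma_p(\alpha_2)\bigr)={\textstyle\frac12}\sigma_p(B)=0,
\end{align*}
and the analogous computation with $]\alpha_2,\alpha_1]_p=\{\alpha_1\}$ yields $\delta_p(\alpha_2,\alpha_1)={\textstyle\frac12}\sigma_p(B)=0$ as well (one may instead deduce this from $\delta_p(\alpha_2,\alpha_1)\equiv-\delta_p(\alpha_1,\alpha_2)\bmod\toco(p)$ by \cite[Lem\-ma~2.1~(b)]{MWNHO1} together with $\toco(p)=0$). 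Hence $0\in\kappa_{\circ\bullet}\cap\kappa_{\bullet\circ}$, which is Axiom~\ref{lemma:gradedsets-condition-4}.

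I expect no real obstacle: in contrast to the previous axioms this one only asserts membership of a single canonical element, delivered directly by $\PartIdenLoWB$ without recourse to verticolor reflection, tensor products, or the reduction to pair blocks of Lemma~\ref{lemma:simplification-k-x}. The only point needing a little care is the normalized-color bookkeeping; but if the convention should assign normalized color $\bullet$ to the left lower point of $\PartIdenLoWB$ and $\circ$ to the right one, then the same two computations merely interchange the roles of $\kappa_{\circ\bullet}$ and $\kappa_{\bullet\circ}$, leaving the conclusion $0\in\kappa_{\circ\bullet}\cap\kappa_{\bullet\circ}$ intact.
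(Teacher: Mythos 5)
Your proposal is correct and follows exactly the paper's argument: the paper also takes the generator $\PartIdenLoWB\in\mc C$ and observes $K_{\circ\bullet}(\{\PartIdenLoWB\})=K_{\bullet\circ}(\{\PartIdenLoWB\})=\{0\}$, which you merely make explicit by computing $\delta_p(\alpha_1,\alpha_2)=\frac12\sigma_p(B)=0$. No differences of substance.
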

                  \begin{proof}
                    Since $\PartIdenLoWB\in \mc C$ and $K_{\circ\bullet}(\{\PartIdenLoWB\})=K_{\bullet\circ}(\{\PartIdenLoWB\})=\{0\}$, this is clear.
                  \end{proof}

                  \begin{lemma}
                    \label{lemma:verifying-axioms-6}
                    For every non-hy\-per\-octa\-he\-dral category $\mc C\subseteq \Cp$, the sets $\sigma\eqpd \toco(\mc C)$ and $\kappa_{c_1,c_2}\eqpd K_{c_1,c_2}(\mc C)$ for $c_1,c_2\in \colors$ satisfy Axiom~\ref{lemma:gradedsets-condition-5} of~\ref{axioms:arithmetic}:
                    \begin{IEEEeqnarray*}{rCl}
                      \kappa_{c_1,c_2}+\kappa_{\overline{c_2},c_3}\subseteq \kappa_{c_1,c_3}
                    \end{IEEEeqnarray*}
                    for all $c_1,c_2,c_3\in\colors$.
                  \end{lemma}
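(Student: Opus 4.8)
The plan is to realize a generic element of the left-hand side as a sum $\delta_p(\alpha_1,\alpha_2)+\delta_q(\beta_1,\beta_2)$ and then to produce a single partition in $\mc C$ in which $\alpha_1$ and $\beta_2$ become subsequent legs of one block at exactly that distance. Concretely: let $\delta_p(\alpha_1,\alpha_2)\in\kappa_{c_1,c_2}$ and $\delta_q(\beta_1,\beta_2)\in\kappa_{\overline{c_2},c_3}$ with $\alpha_1,\alpha_2$ (resp.\ $\beta_1,\beta_2$) subsequent legs of a common block of $p$ (resp.\ $q$) carrying the indicated normalized colors. By Lem\-ma~\ref{lemma:simplification-k-x} I may assume $p,q\in\mc C\cap\Cp_{\leq 2}$, and then — since the two legs in each case are distinct — those blocks are forced to be the pairs $\{\alpha_1,\alpha_2\}$ and $\{\beta_1,\beta_2\}$. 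Rotating $p$ and $q$ (which changes neither their membership in $\mc C$ nor any value of $\delta$), I would arrange that $\alpha_2$ is the only point on the upper row of $p$ and $\beta_1$ the only point on the lower row of $q$. Since $\alpha_2$ is now an upper point of normalized color $c_2$, its actual color is $\overline{c_2}$, which is exactly the actual color of the lower point $\beta_1$; hence $p$ and $q$ are composable (stack $q$ on top of $p$, gluing $\alpha_2$ to $\beta_1$), and the composite $r$ lies in $\mc C$.

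The core of the argument is then two observations about $r$. First, by the nature of composition the two pairs $\{\alpha_1,\alpha_2\}$ and $\{\beta_1,\beta_2\}$ are fused along the now-interior point $\alpha_2=\beta_1$, so $\{\alpha_1,\beta_2\}$ is a (pair) block of $r$; in particular $\alpha_1$ and $\beta_2$ are subsequent legs of a block of $r$ and still carry normalized colors $c_1$ and $c_3$. Second, in the cyclic order of $r$ the arc from $\alpha_1$ to $\beta_2$ is precisely the concatenation of $]\alpha_1,\alpha_2[_p$ and $]\beta_1,\beta_2[_q$, with all colors preserved; so using the case-distinction-free formula for $\delta$ from the proof of \cite[Lem\-ma~3.1~(b)]{MWNHO1} and additivity of $\sigma$ over disjoint point sets one obtains
\[
\delta_r(\alpha_1,\beta_2)=\delta_p(\alpha_1,\alpha_2)+\delta_q(\beta_1,\beta_2)-\tfrac12\bigl(\sigma_p(\alpha_2)+\sigma_q(\beta_1)\bigr).
\]
Because $\alpha_2$ and $\beta_1$ have opposite normalized colors, $\sigma_p(\alpha_2)=-\sigma_q(\beta_1)$, the correction term disappears, and $\delta_r(\alpha_1,\beta_2)=\delta_p(\alpha_1,\alpha_2)+\delta_q(\beta_1,\beta_2)$ lies in $K_{c_1,c_3}(\mc C)=\kappa_{c_1,c_3}$, which is the claim.

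I expect the genuinely delicate part to be purely bookkeeping: fixing the rotations so that the two arcs $]\alpha_1,\alpha_2[_p$ and $]\beta_1,\beta_2[_q$ abut to form the arc $]\alpha_1,\beta_2[_r$ rather than overlapping, and tracking the $\tfrac12$-terms so that the endpoint corrections for $\alpha_1$ and $\beta_2$ pass through untouched while those for the two fused legs cancel. It is exactly this cancellation that yields the on-the-nose equality required by Axiom~\ref{lemma:gradedsets-condition-5} — in contrast to Axioms~\ref{lemma:gradedsets-condition-3} and~\ref{lemma:gradedsets-condition-2}, there is no residual $+\sigma$. The reduction to $\Cp_{\leq 2}$ via Lem\-ma~\ref{lemma:simplification-k-x} is not a convenience but a necessity here: without it the fused block of $r$ could carry further legs strictly between $\alpha_1$ and $\beta_2$, destroying the ``subsequent legs'' hypothesis.
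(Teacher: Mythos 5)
Your proof is correct, and the underlying mechanism is the same as the paper's: join the two witness configurations at the legs of normalized colors $c_2$ and $\overline{c_2}$ so that the two blocks fuse and the color distances add on the nose, the $\tfrac{1}{2}$-corrections for the fused legs cancelling precisely because those legs carry opposite normalized colors. The implementation differs in two respects. First, the paper places $p$ and $q$ side by side via the tensor product, observes that the two legs to be fused then form a turn $T=\{\eta_2,\theta_1'\}$, and erases it; you instead rotate each partition so that these legs face each other across a one-point row and apply vertical composition. Both constructions produce the same partition $r\in\mc C$, and your explicit evaluation of $\delta_r(\alpha_1,\beta_2)$ via the case-distinction-free formula is a valid substitute for the paper's telescoping $\delta_{r}(\eta_1,\theta_2'')=\delta_{p\otimes q}(\eta_1,\eta_2)+\delta_{p\otimes q}(\eta_2,\theta_1')+\delta_{p\otimes q}(\theta_1',\theta_2')$ with vanishing middle term. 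Second, you invoke Lem\-ma~\ref{lemma:simplification-k-x} to reduce to pair blocks, which the paper does not do here; your closing claim that this reduction is a \emph{necessity} is not quite right. Since $]\alpha_1,\alpha_2[_p\cap B=\emptyset$ and $]\beta_1,\beta_2[_q\cap C=\emptyset$, any further legs of $B$ or $C$ lie in $]\alpha_2,\alpha_1[_p$ respectively $]\beta_2,\beta_1[_q$, hence outside the two arcs that concatenate to form $]\alpha_1,\beta_2[_r$; the fused block therefore still has no legs strictly between $\alpha_1$ and $\beta_2$ even for blocks with more than two legs. Using the reduction is harmless, merely unnecessary.
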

                  \begin{proof}
                    Let $c_1,c_2,c_3\in \colors$ be arbitrary and let $\eta_1$ and $\eta_2$ be distinct points of the same block $B$ of $p\in \mc C$ such that $]\eta_1,\eta_2[_p\cap B=\emptyset$ and such that $\eta_i$ has normalized color $c_i$ in $p$ for every $i\in \{1,2\}$. Furthermore, let $\theta_1$ and $\theta_2$ be distinct points of the same block $C$ of $q\in \mc C$ with $]\theta_1,\theta_2[_q\cap C=\emptyset$ such that $\theta_1$ has normalized color $\overline{c_2}$ in $q$ and $\theta_2$ normalized color $c_3$. None of these assumptions are impacted and neither $\delta_p(\eta_1,\eta_2)$ nor $\delta_q(\theta_1,\theta_2)$ altered by assuming that $\eta_2$ is the rightmost lower point of $p$ and $\theta_1$ the leftmost lower point of $q$. 
                       \begin{mycenter}[0.5em]
\begin{tikzpicture}[baseline=0.666*1cm-0.25em]
    \def\scp{0.666}
    \def\linksize{\scp*0.075cm}
    \def\pointsize{\scp*0.25cm}
    \def\dd{\scp*0.5cm}
    \def\dx{\scp*1cm}
    \def\cx{\scp*0.3cm}
    \def\txu{4*\dx}    
    \def\txl{4*\dx}
    \def\dy{\scp*1cm}
    \def\cy{\scp*0.3cm}
    \def\ty{2*\dy}
    \def\fy{\scp*0.2cm}
    \def\fx{\scp*0.2cm}
    \def\gy{\scp*0.4cm}
    \def\gx{\scp*0.4cm}      
    \tikzset{whp/.style={circle, inner sep=0pt, text width={\pointsize}, draw=black, fill=white}}
    \tikzset{blp/.style={circle, inner sep=0pt, text width={\pointsize}, draw=black, fill=black}}
    \tikzset{lk/.style={regular polygon, regular polygon sides=4, inner sep=0pt, text width={\linksize}, draw=black, fill=black}}
    \tikzset{cc/.style={cross out, minimum size={1.5*\pointsize-\pgflinewidth}, inner sep=0pt, outer sep=0pt,  draw=black}}    
    \tikzset{vp/.style={circle, inner sep=0pt, text width={1.5*\pointsize}, fill=white}}
    \tikzset{sstr/.style={shorten <= 3pt, shorten >= 3pt}}    
    \draw[dotted] ({0-\dd},{0}) -- ({\txl+\dd},{0});
    \draw[dotted] ({0-\dd},{\ty}) -- ({\txu+\dd},{\ty});
    \node[vp] (l1) at ({0+4*\dx},{0+0*\ty}) {$c_2$};
    \node[vp] (u1) at ({0+1*\dx},{0+1*\ty}) {$\overline{c_1}$};
    \draw[sstr] (l1) --++(0,{\dy})-| (u1);
    \draw[dashed] ($(l1) +(0,{\dy})$) --++(0,{0.75*\dy});
    %
    \draw [draw=gray, pattern = north east lines, pattern color = lightgray] ({0+0*\dx-\fx},{0+0*\dy-\fy}) -- ++ ({3*\dx+2*\fx},0) |- ({0+0*\dx-\fx},{0+0*\dy+\fy});
    \draw [draw=gray, pattern = north east lines, pattern color = lightgray] ({0+0*\dx-\fx},{\ty+0*\dy-\fy}) -- ++ ({0*\dx+2*\fx},0) |- ({0+0*\dx-\fx},{\ty+0*\dy+\fy});
   \draw [draw=gray, pattern = north east lines, pattern color = lightgray] ({2*\dx-\fx},{\ty-\fy}) rectangle ({4*\dx+\fx},{\ty+\fy});    
    \node [below = {\cx} of  l1] {$\eta_2$};
    \node [above ={\cx} of  u1] {$\eta_1$};        
    %
     \draw [draw=darkgray, densely dashed] ({0+0*\dx-\gx},{0+0*\dy-\gy}) --++ ({4*\dx+2*\gx},0) |- ({0+0*\dx-\gx},{0+0*\dy+\gy}); 
     \draw [draw=darkgray, densely dashed] ({0+0*\dx-\gx},{\ty+0*\dy-\gy}) --++ ({1*\dx+2*\gx},0) |- ({0+0*\dx-\gx},{\ty+0*\dy+\gy}); 

    %
     \node at ({-\dx},{0.5*\ty}) {$p$};
   \end{tikzpicture}
   \qquad$\otimes$\qquad
\begin{tikzpicture}[baseline=0.666*1cm-0.25em]
    \def\scp{0.666}
    \def\linksize{\scp*0.075cm}
    \def\pointsize{\scp*0.25cm}
    \def\dd{\scp*0.5cm}
    \def\dx{\scp*1cm}
    \def\cx{\scp*0.3cm}
    \def\txu{5*\dx}    
    \def\txl{3*\dx}
    \def\dy{\scp*1cm}
    \def\cy{\scp*0.3cm}
    \def\ty{2*\dy}
    \def\fy{\scp*0.2cm}
    \def\fx{\scp*0.2cm}
    \def\gy{\scp*0.4cm}
    \def\gx{\scp*0.4cm}      
    \tikzset{whp/.style={circle, inner sep=0pt, text width={\pointsize}, draw=black, fill=white}}
    \tikzset{blp/.style={circle, inner sep=0pt, text width={\pointsize}, draw=black, fill=black}}
    \tikzset{lk/.style={regular polygon, regular polygon sides=4, inner sep=0pt, text width={\linksize}, draw=black, fill=black}}
    \tikzset{cc/.style={cross out, minimum size={1.5*\pointsize-\pgflinewidth}, inner sep=0pt, outer sep=0pt,  draw=black}}    
    \tikzset{vp/.style={circle, inner sep=0pt, text width={1.5*\pointsize}, fill=white}}
    \tikzset{sstr/.style={shorten <= 3pt, shorten >= 3pt}}    
    \draw[dotted] ({0-\dd},{0}) -- ({\txl+\dd},{0});
    \draw[dotted] ({0-\dd},{\ty}) -- ({\txu+\dd},{\ty});
    \node[vp] (l1) at ({0+0*\dx},{0+0*\ty}) {$\overline{c_2}$};
    \node[vp] (u1) at ({0+2*\dx},{0+1*\ty}) {$\overline{c_3}$};
    \draw[sstr] (l1) --++(0,{\dy})-| (u1);
    \draw[dashed] ($(l1) +(0,{\dy})$) --++(0,{0.75*\dy});
    %
    \draw [draw=gray, fill = lightgray]    ({\txl+0*\dx+\fx},{0*\dy-\fy}) -- ++ ({-2*\dx-2*\fx},0) |- ({\txl+0*\dx+\fx},{0*\dy+\fy});
    \draw [draw=gray, fill = lightgray] ({\txu+0*\dx+\fx},{\ty+0*\dy-\fy}) -- ++ ({-2*\dx-2*\fx},0) |- ({\txu+0*\dx+\fx},{\ty+0*\dy+\fy});
   \draw [draw=gray, fill  = lightgray] ({0*\dx-\fx},{\ty-\fy}) rectangle ({1*\dx+\fx},{\ty+\fy});    
    \node [below = {\cx} of  l1] {$\theta_1$};
    \node [above ={\cx} of  u1] {$\theta_2$};        
    %
     \draw [draw=darkgray, densely dashed] ({\txl+0*\dx+\gx},{0*\dy-\gy}) --++ ({-3*\dx-2*\gx},0) |- ({\txl+0*\dx+\gx},{0*\dy+\gy});
     \draw [draw=darkgray, densely dashed] ({\txu+0*\dx+\gx},{\ty+0*\dy-\gy}) --++ ({-3*\dx-2*\gx},0) |- ({\txu+0*\dx+\gx},{\ty+0*\dy+\gy});

    %
     \node at ({\txu+\dx},{0.5*\ty}) {$q$};
   \end{tikzpicture}   
\end{mycenter}
Denote the images of the points $\theta_1$ and $\theta_2$ of $q$ in $p\otimes q\in \mc C$ by $\theta_1'$ and $\theta_2'$, respectively.
The assumptions about the normalized colors of $\eta_2$ and $\theta_1$ imply that $T\eqpd \{\eta_2,\theta_1'\}$ is a turn in $p\otimes q$, meaning in particular $\delta_{p\otimes q}(\eta_2,\theta_1')=0$.
                       \begin{mycenter}[0.5em]
\begin{tikzpicture}[baseline=0.666*1cm-0.25em]
    \def\scp{0.666}
    \def\linksize{\scp*0.075cm}
    \def\pointsize{\scp*0.25cm}
    \def\dd{\scp*0.5cm}
    \def\dx{\scp*1cm}
    \def\cx{\scp*0.3cm}
    \def\txu{10*\dx}    
    \def\txl{8*\dx}
    \def\dy{\scp*1cm}
    \def\cy{\scp*0.3cm}
    \def\ty{2*\dy}
    \def\fy{\scp*0.2cm}
    \def\fx{\scp*0.2cm}
    \def\gy{\scp*0.4cm}
    \def\gx{\scp*0.4cm}      
    \tikzset{whp/.style={circle, inner sep=0pt, text width={\pointsize}, draw=black, fill=white}}
    \tikzset{blp/.style={circle, inner sep=0pt, text width={\pointsize}, draw=black, fill=black}}
    \tikzset{lk/.style={regular polygon, regular polygon sides=4, inner sep=0pt, text width={\linksize}, draw=black, fill=black}}
    \tikzset{cc/.style={cross out, minimum size={1.5*\pointsize-\pgflinewidth}, inner sep=0pt, outer sep=0pt,  draw=black}}    
    \tikzset{vp/.style={circle, inner sep=0pt, text width={1.5*\pointsize}, fill=white}}
    \tikzset{sstr/.style={shorten <= 3pt, shorten >= 3pt}}    
    \draw[dotted] ({0-\dd},{0}) -- ({\txl+\dd},{0});
    \draw[dotted] ({0-\dd},{\ty}) -- ({\txu+\dd},{\ty});
    \node[vp] (l1) at ({0+4*\dx},{0+0*\ty}) {$c_2$};
    \node[vp] (u1) at ({0+1*\dx},{0+1*\ty}) {$\overline{c_1}$};
    \node[vp] (l2) at ({0+5*\dx},{0+0*\ty}) {$\overline{c_2}$};
    \node[vp] (u2) at ({0+7*\dx},{0+1*\ty}) {$\overline{c_3}$};
    \draw[sstr] (l1) --++(0,{\dy})-| (u1);
    \draw[dashed] ($(l1) +(0,{\dy})$) --++(0,{0.75*\dy});
    \draw[sstr] (l2) --++(0,{\dy})-| (u2);
    \draw[dashed] ($(l2) +(0,{\dy})$) --++(0,{0.75*\dy});
    %
    \draw [draw=gray, pattern = north east lines, pattern color = lightgray] ({0+0*\dx-\fx},{0+0*\dy-\fy}) -- ++ ({3*\dx+2*\fx},0) |- ({0+0*\dx-\fx},{0+0*\dy+\fy});
 \draw [draw=gray, fill = lightgray]    ({\txl+0*\dx+\fx},{0*\dy-\fy}) -- ++ ({-2*\dx-2*\fx},0) |- ({\txl+0*\dx+\fx},{0*\dy+\fy});    
    \draw [draw=gray, pattern = north east lines, pattern color = lightgray] ({0+0*\dx-\fx},{\ty+0*\dy-\fy}) -- ++ ({0*\dx+2*\fx},0) |- ({0+0*\dx-\fx},{\ty+0*\dy+\fy});
    \draw [draw=gray, fill = lightgray] ({\txu+0*\dx+\fx},{\ty+0*\dy-\fy}) -- ++ ({-2*\dx-2*\fx},0) |- ({\txu+0*\dx+\fx},{\ty+0*\dy+\fy});    
    \draw [draw=gray, pattern = north east lines, pattern color = lightgray] ({2*\dx-\fx},{\ty-\fy}) rectangle ({4*\dx+\fx},{\ty+\fy});
   \draw [draw=gray, fill  = lightgray] ({5*\dx-\fx},{\ty-\fy}) rectangle ({6*\dx+\fx},{\ty+\fy});            
    \node [below = {\cx} of  l1] {$\eta_2$};
    \node [above ={\cx} of  u1] {$\eta_1$};
    \node [below = {\cx} of  l2] {$\theta_1'$};
    \node [above ={\cx} of  u2] {$\theta_2'$};            
    %
%
    \draw [densely dotted, draw=gray] ({4*\dx-\gx},{-\gy}) rectangle ({5*\dx+\gx},{\gy});
     \node at ({4.5*\dx},{0.35*\ty}) {$T$};    
     \node at ({-1.5*\dx},{0.5*\ty}) {$p\otimes q$};
   \end{tikzpicture}
 \end{mycenter}
 \noindent
 Moreover, $\delta_{p\otimes q}(\eta_1,\eta_2)=\delta_p(\eta_1,\eta_2)$ and $\delta_{p\otimes q}(\theta_1',\theta_2')=\delta_{q}(\theta_1,\theta_2)$ by nature of the tensor product.
 \par
 Let $\theta_2''$ denote the image of $\theta_2'$ in $r\eqpd E(p\otimes q,T)\in \mc C$, the partition obtained from $p\otimes q$ by erasing the turn $T$ (see \cite[Sec\-tion~4.3]{MWNHO1}). By definition of the erasing operation, $\eta_1$ and $\theta_2''$ belong to the same block $D$ in $r$ with $]\eta_1,\theta_2''[_{r}\cap D=\emptyset$.
                       \begin{mycenter}[0.5em]
\begin{tikzpicture}[baseline=0.666*1cm-0.25em]
    \def\scp{0.666}
    \def\linksize{\scp*0.075cm}
    \def\pointsize{\scp*0.25cm}
    \def\dd{\scp*0.5cm}
    \def\dx{\scp*1cm}
    \def\cx{\scp*0.3cm}
    \def\txu{10*\dx}    
    \def\txl{6*\dx}
    \def\dy{\scp*1cm}
    \def\cy{\scp*0.3cm}
    \def\ty{2*\dy}
    \def\fy{\scp*0.2cm}
    \def\fx{\scp*0.2cm}
    \def\gy{\scp*0.4cm}
    \def\gx{\scp*0.4cm}      
    \tikzset{whp/.style={circle, inner sep=0pt, text width={\pointsize}, draw=black, fill=white}}
    \tikzset{blp/.style={circle, inner sep=0pt, text width={\pointsize}, draw=black, fill=black}}
    \tikzset{lk/.style={regular polygon, regular polygon sides=4, inner sep=0pt, text width={\linksize}, draw=black, fill=black}}
    \tikzset{cc/.style={cross out, minimum size={1.5*\pointsize-\pgflinewidth}, inner sep=0pt, outer sep=0pt,  draw=black}}    
    \tikzset{vp/.style={circle, inner sep=0pt, text width={1.5*\pointsize}, fill=white}}
    \tikzset{sstr/.style={shorten <= 3pt, shorten >= 3pt}}    
    \draw[dotted] ({0-\dd},{0}) -- ({\txl+\dd},{0});
    \draw[dotted] ({0-\dd},{\ty}) -- ({\txu+\dd},{\ty});
    \node[vp] (u1) at ({0+1*\dx},{0+1*\ty}) {$\overline{c_1}$};
    \node[vp] (u2) at ({0+7*\dx},{0+1*\ty}) {$\overline{c_3}$};
    \coordinate (l1) at ({0+4*\dx},{0+0*\ty});
    \coordinate (l2) at ({0+5*\dx},{0+0*\ty});
    \draw[sstr] (u1) --++(0,{-\dy})-| (u2);
    \draw[dashed] ($(l1) +(0,{\dy})$) --++(0,{0.75*\dy});
    \draw[dashed] ($(l2) +(0,{\dy})$) --++(0,{0.75*\dy});
    %
    \draw [draw=gray, pattern = north east lines, pattern color = lightgray] ({0+0*\dx-\fx},{0+0*\dy-\fy}) -- ++ ({3*\dx+2*\fx},0) |- ({0+0*\dx-\fx},{0+0*\dy+\fy});
 \draw [draw=gray, fill = lightgray]    ({\txl+0*\dx+\fx},{0*\dy-\fy}) -- ++ ({-2*\dx-2*\fx},0) |- ({\txl+0*\dx+\fx},{0*\dy+\fy});    
    \draw [draw=gray, pattern = north east lines, pattern color = lightgray] ({0+0*\dx-\fx},{\ty+0*\dy-\fy}) -- ++ ({0*\dx+2*\fx},0) |- ({0+0*\dx-\fx},{\ty+0*\dy+\fy});
    \draw [draw=gray, fill = lightgray] ({\txu+0*\dx+\fx},{\ty+0*\dy-\fy}) -- ++ ({-2*\dx-2*\fx},0) |- ({\txu+0*\dx+\fx},{\ty+0*\dy+\fy});    
    \draw [draw=gray, pattern = north east lines, pattern color = lightgray] ({2*\dx-\fx},{\ty-\fy}) rectangle ({4*\dx+\fx},{\ty+\fy});
   \draw [draw=gray, fill  = lightgray] ({5*\dx-\fx},{\ty-\fy}) rectangle ({6*\dx+\fx},{\ty+\fy});            
    \node [above= {\cx} of  u1] {$\eta_1$};
    \node [above ={\cx} of  u2] {$\theta_2''$};            
    %
      \draw [draw=darkgray, densely dashed] ({-\gx},{-\gy}) -- ({\txl+\gx},{-\gy})  ({\txl+\gx},{\gy}) -- ({-\gx},{\gy});
     \draw [draw=darkgray, densely dashed] ({0+0*\dx-\gx},{\ty+0*\dy-\gy}) --++ ({1*\dx+2*\gx},0) |- ({0+0*\dx-\gx},{\ty+0*\dy+\gy}); 
     \draw [draw=darkgray, densely dashed] ({\txu+0*\dx+\gx},{\ty+0*\dy-\gy}) --++ ({-3*\dx-2*\gx},0) |- ({\txu+0*\dx+\gx},{\ty+0*\dy+\gy});
%
     \node at ({-\dx},{0.5*\ty}) {$r$};
   \end{tikzpicture}
 \end{mycenter}
 \noindent
   Hence, from $\delta_{r}(\eta_1,\theta_2'')\in K_{c_1,c_3}(\mc C)=\kappa_{c_1,c_3}$ and from
                    \begin{align*}
                      \delta_{r}(\eta_1,\theta_2'')&=\delta_{p\otimes q}(\eta_1,\theta_2)-\sigma_{p\otimes q}(T)\\
                                                   &=\delta_{p\otimes q}(\eta_1,\theta_2)\\
                                                   &=\delta_{p\otimes q}(\eta_1,\eta_2)+\delta_{p\otimes q}(\eta_2,\theta_1')+\delta_{p\otimes q}(\theta_1',\theta_2')\\
                      &=\delta_{p\otimes q}(\eta_1,\eta_2)+\delta_{p\otimes q}(\theta_1',\theta_2')\\
                      &=\delta_p(\eta_1,\eta_2)+\delta_q(\theta_1,\theta_2)
                    \end{align*}
it                    follows $\delta_p(\eta_1,\eta_2)+\delta_q(\theta_1,\theta_2)\in\kappa_{c_1,c_3}$. And that is what we needed to show.
                  \end{proof}

                  \begin{lemma}
                    \label{lemma:verifying-axioms-7}
                    For every non-hy\-per\-octa\-he\-dral category $\mc C\subseteq \Cp$, the sets $\sigma\eqpd \toco(\mc C)$ and $\kappa_{c_1,c_2}\eqpd K_{c_1,c_2}(\mc C)$ and $\xi_{c_1,c_2}\eqpd X_{c_1,c_2}(\mc C)$ for $c_1,c_2\in \colors$ satisfy Axiom~\ref{lemma:gradedsets-condition-7} of~\ref{axioms:arithmetic}: For all $c_1,c_2,c_3\in\colors$,
                    \begin{IEEEeqnarray*}{rCl}
                      \kappa_{c_1,c_2}+\xi_{\overline{c_2},c_3}\subseteq \xi_{c_1,c_3}.
                    \end{IEEEeqnarray*}
                  \end{lemma}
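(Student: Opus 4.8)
The plan is to imitate closely the proof of Lemma~\ref{lemma:verifying-axioms-6}, the only genuinely new ingredient being that one must track a \emph{crossing} (rather than a single block) through the erasing operation. Fix $c_1,c_2,c_3\in\colors$. A generic element of $\kappa_{c_1,c_2}=K_{c_1,c_2}(\mc C)$ is of the form $\delta_p(\eta_1,\eta_2)$ with $\eta_1,\eta_2$ distinct legs of a block $B$ of some $p\in\mc C$ satisfying $]\eta_1,\eta_2[_p\cap B=\emptyset$ and $\eta_i$ of normalized color $c_i$; and a generic element of $\xi_{\overline{c_2},c_3}=X_{\overline{c_2},c_3}(\mc C)$ is of the form $\delta_q(\theta_1,\theta_2)$ with $B_1,B_2$ crossing blocks of some $q\in\mc C$, $\theta_1\in B_1$ of normalized color $\overline{c_2}$ and $\theta_2\in B_2$ of normalized color $c_3$. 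We must show $\delta_p(\eta_1,\eta_2)+\delta_q(\theta_1,\theta_2)\in\xi_{c_1,c_3}$.

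First I would rotate $p$ so that $\eta_2$ is its rightmost lower point and rotate $q$ so that $\theta_1$ is its leftmost lower point; this affects none of the data above. In $p\otimes q\in\mc C$ the points $\eta_2$ and the image $\theta_1'$ of $\theta_1$ are then adjacent on the lower row and, their normalized colors being $c_2$ and $\overline{c_2}$, the pair $T\eqpd\{\eta_2,\theta_1'\}$ is a turn. Put $r\eqpd E(p\otimes q,T)\in\mc C$ and let $\theta_2''$ be the image of $\theta_2$ in $r$. Exactly as in Lemma~\ref{lemma:verifying-axioms-6}, erasing $T$ merges the block $B$ of $\eta_1$ with the block $B_1$ of $\theta_1'$ into one block $D$ of $r$ containing $\eta_1$, while $B_2$ survives as a block of $r$ still containing $\theta_2''$; and since the turn has color sum $0$, the additivity of $\delta$ gives
\begin{align*}
\delta_r(\eta_1,\theta_2'')=\delta_{p\otimes q}(\eta_1,\theta_2')=\delta_{p\otimes q}(\eta_1,\eta_2)+\delta_{p\otimes q}(\eta_2,\theta_1')+\delta_{p\otimes q}(\theta_1',\theta_2')=\delta_p(\eta_1,\eta_2)+\delta_q(\theta_1,\theta_2),
\end{align*}
using $\delta_{p\otimes q}(\eta_2,\theta_1')=0$ and the invariance of $\delta$ under erasing the turn.

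The crux — and the only step not already contained in Lemma~\ref{lemma:verifying-axioms-6} — is to verify that $D$ and the image of $B_2$ still cross in $r$, for then $\delta_r(\eta_1,\theta_2'')\in X_{c_1,c_3}(\mc C)=\xi_{c_1,c_3}$ and we are done. Here I would argue directly with the cyclic order: since $B_1$ crosses $B_2$ in $q$, there are witnesses $a,c\in B_1$ and $b,d\in B_2$ with $(a,b,c,d)$ ordered in $q$; because $\theta_1$ is the leftmost lower point of $q$, the $q$-points form a contiguous arc of the cyclic order of $p\otimes q$ beginning at $\theta_1'$, and after deleting $T$ the set of former $q$-points minus $\theta_1'$ remains a contiguous arc inheriting its order from $q$ and lying entirely "after" $\eta_1$ (which sits in the complementary $p$-arc). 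If $\theta_1\notin\{a,c\}$ then $a'',c''\in D$ while $b'',d''\in B_2$, and $(a'',b'',c'',d'')$ is still ordered in $r$, exhibiting the crossing. If $\theta_1=a$ (the case $\theta_1=c$ being symmetric), deleting $\theta_1'$ from the $q$-arc leaves $b'',c'',d''$ in that order immediately after $\eta_1$, so $(\eta_1,b'',c'',d'')$ is ordered in $r$ with $\eta_1,c''\in D$ and $b'',d''\in B_2$; again $D$ crosses the image of $B_2$. This is thus just a short case check on the position of $\theta_1$ among the crossing witnesses and presents no real difficulty; alternatively one may first invoke Lemma~\ref{lemma:simplification-k-x} to assume $q\in\Cp_{\leq2}$, so that $B_1,B_2$ are the standard crossing pair and the check collapses to the two orderings $(\theta_1,\theta_2,\beta_1,\beta_2)$ and $(\theta_2,\theta_1,\beta_2,\beta_1)$.
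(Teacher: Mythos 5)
Your proposal is correct and follows essentially the same route as the paper's proof: rotate so that $\eta_2$ and $\theta_1$ become adjacent in $p\otimes q$, erase the resulting neutral turn, use additivity of $\delta$ across it, and check that the merged block still crosses the image of $B_2$. The only cosmetic difference is in certifying that last crossing: the paper chooses the crossing witnesses in $q$ adapted to $\theta_1$ from the outset (so that the alternating quadruple survives erasure directly), whereas you take arbitrary witnesses and case-split on whether $\theta_1$ is among them — both work.
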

                  \begin{proof}
                    We adapt the proof of Lem\-ma~\ref{lemma:verifying-axioms-6}.
Let $c_1,c_2\in \colors$ be arbitrary. Let $p,q\in \mc C$, let $B$ be a block in $p$, and let $C$ and $D$ be two crossing blocks in $q$. Let  $\gamma_1$ and $\gamma_2$ be two distinct points of $B$ of normalized colors $c_1$ respectively $c_2$ in $p$ with $]\gamma_1,\gamma_2[_p\cap B=\emptyset$. In $q$, let $\eta_1\in C$ have normalized color $\overline{c_2}$ and $\theta_1\in D$ normalized color $c_3$. Then, $\delta_p(\gamma_1,\gamma_2)$ is a generic element of $K_{c_1,c_2}(\mc C)=\kappa_{c_1,c_2}$ and $\delta_q(\eta_1,\theta_1)$ one of $X_{\overline{c_2},c_3}(\mc C)=\xi_{\overline{c_2},c_3}$.  No generality is lost assuming that $\gamma_2$ is the rightmost lower point of $p$ and $\eta_1$ the leftmost lower point of $q$. We find $\eta_2\in C$ and $\theta_2\in D$ such that $\eta_1\neq \eta_2$ and $\theta_1\neq \theta_2$ and such that $(\eta_1,\theta_1,\eta_2,\theta_2)$ or $(\eta_1,\theta_2,\eta_2,\theta_1)$ is ordered in $q$.
                       \begin{mycenter}[0.5em]
\begin{tikzpicture}[baseline=0.666*1cm-0.25em]
    \def\scp{0.666}
    \def\linksize{\scp*0.075cm}
    \def\pointsize{\scp*0.25cm}
    \def\dd{\scp*0.5cm}
    \def\dx{\scp*1cm}
    \def\cx{\scp*0.3cm}
    \def\txu{4*\dx}    
    \def\txl{4*\dx}
    \def\dy{\scp*1cm}
    \def\cy{\scp*0.3cm}
    \def\ty{2*\dy}
    \def\fy{\scp*0.2cm}
    \def\fx{\scp*0.2cm}
    \def\gy{\scp*0.4cm}
    \def\gx{\scp*0.4cm}      
    \tikzset{whp/.style={circle, inner sep=0pt, text width={\pointsize}, draw=black, fill=white}}
    \tikzset{blp/.style={circle, inner sep=0pt, text width={\pointsize}, draw=black, fill=black}}
    \tikzset{lk/.style={regular polygon, regular polygon sides=4, inner sep=0pt, text width={\linksize}, draw=black, fill=black}}
    \tikzset{cc/.style={cross out, minimum size={1.5*\pointsize-\pgflinewidth}, inner sep=0pt, outer sep=0pt,  draw=black}}    
    \tikzset{vp/.style={circle, inner sep=0pt, text width={1.5*\pointsize}, fill=white}}
    \tikzset{sstr/.style={shorten <= 3pt, shorten >= 3pt}}    
    \draw[dotted] ({0-\dd},{0}) -- ({\txl+\dd},{0});
    \draw[dotted] ({0-\dd},{\ty}) -- ({\txu+\dd},{\ty});

    \node[vp] (l1) at ({0+4*\dx},{0+0*\ty}) {$c_2$};
    \node[vp] (u1) at ({0+1*\dx},{0+1*\ty}) {$\overline{c_1}$};
    \draw[sstr] (l1) --++(0,{\dy}) -| (u1);
    \draw[dashed] ($(l1) +(0,{\dy})$) --++(0,{0.75*\dy});    
    %
    \draw [draw=gray, pattern = north east lines, pattern color = lightgray] ({0+0*\dx-\fx},{0+0*\dy-\fy}) -- ++ ({3*\dx+2*\fx},0) |- ({0+0*\dx-\fx},{0+0*\dy+\fy});
    \draw [draw=gray, pattern = north east lines, pattern color = lightgray] ({0+0*\dx-\fx},{\ty+0*\dy-\fy}) -- ++ ({0*\dx+2*\fx},0) |- ({0+0*\dx-\fx},{\ty+0*\dy+\fy});
   \draw [draw=gray, pattern = north east lines, pattern color = lightgray] ({2*\dx-\fx},{\ty-\fy}) rectangle ({4*\dx+\fx},{\ty+\fy});    
    \node [below = {\cx} of  l1] {$\gamma_2$};
    \node [above ={\cx} of  u1] {$\gamma_1$};        
    %
     \draw [draw=darkgray, densely dashed] ({0+0*\dx-\gx},{0+0*\dy-\gy}) --++ ({4*\dx+2*\gx},0) |- ({0+0*\dx-\gx},{0+0*\dy+\gy}); 
     \draw [draw=darkgray, densely dashed] ({0+0*\dx-\gx},{\ty+0*\dy-\gy}) --++ ({1*\dx+2*\gx},0) |- ({0+0*\dx-\gx},{\ty+0*\dy+\gy}); 

    %
     \node at ({-\dx},{0.5*\ty}) {$p$};
   \end{tikzpicture}
   \qquad$\otimes$\qquad
\begin{tikzpicture}[baseline=0.666*1.5cm-0.25em]
    \def\scp{0.666}
    \def\linksize{\scp*0.075cm}
    \def\pointsize{\scp*0.25cm}
    \def\dd{\scp*0.5cm}
    \def\dx{\scp*1cm}
    \def\cx{\scp*0.3cm}
    \def\txu{6*\dx}    
    \def\txl{5*\dx}
    \def\dy{\scp*1cm}
    \def\cy{\scp*0.3cm}
    \def\ty{3*\dy}
    \def\fy{\scp*0.2cm}
    \def\fx{\scp*0.2cm}
    \def\gy{\scp*0.4cm}
    \def\gx{\scp*0.4cm}      
    \tikzset{whp/.style={circle, inner sep=0pt, text width={\pointsize}, draw=black, fill=white}}
    \tikzset{blp/.style={circle, inner sep=0pt, text width={\pointsize}, draw=black, fill=black}}
    \tikzset{lk/.style={regular polygon, regular polygon sides=4, inner sep=0pt, text width={\linksize}, draw=black, fill=black}}
    \tikzset{cc/.style={cross out, minimum size={1.5*\pointsize-\pgflinewidth}, inner sep=0pt, outer sep=0pt,  draw=black}}    
    \tikzset{vp/.style={circle, inner sep=0pt, text width={1.5*\pointsize}, fill=white}}
    \tikzset{sstr/.style={shorten <= 3pt, shorten >= 3pt}}    
    \draw[dotted] ({0-\dd},{0}) -- ({\txl+\dd},{0});
    \draw[dotted] ({0-\dd},{\ty}) -- ({\txu+\dd},{\ty});
    \node[vp] (l1) at ({0+0*\dx},{0+0*\ty}) {$\overline{c_2}$};
    \node[vp] (l2) at ({0+3*\dx},{0+0*\ty}) {$c_3$};
    \node[cc] (u1) at ({0+2*\dx},{0+1*\ty}) {};
    \node[cc] (u2) at ({0+5*\dx},{0+1*\ty}) {};    
    \draw[sstr] (l2) --++(0,{2*\dy}) -| (u1);    
    \draw[sstr] (l1) --++(0,{1*\dy}) -| (u2);
    \draw[dashed] ($(l1) +(0,{\dy})$) --++(0,{0.75*\dy});
    \draw[dashed] ($(u2) -(0,{2*\dy})$) --++({0.75*\dx},0);
    \draw[dashed] ($(l2) +(0,{2*\dy})$) --++({0.75*\dx},0);
    \draw[dashed] ($(u1) -(0,{1*\dy})$) --++({-0.75*\dx},0);      
    %
    \draw [draw=gray, fill = lightgray]    ({\txl+0*\dx+\fx},{0*\dy-\fy}) -- ++ ({-1*\dx-2*\fx},0) |- ({\txl+0*\dx+\fx},{0*\dy+\fy});
    \draw [draw=gray, fill  = lightgray] ({1*\dx-\fx},{0*\ty-\fy}) rectangle ({2*\dx+\fx},{0*\ty+\fy});            
    \draw [draw=gray, fill = lightgray] ({\txu+0*\dx+\fx},{\ty+0*\dy-\fy}) -- ++ ({-0*\dx-2*\fx},0) |- ({\txu+0*\dx+\fx},{\ty+0*\dy+\fy});
    \draw [draw=gray, fill  = lightgray] ({3*\dx-\fx},{\ty-\fy}) rectangle ({4*\dx+\fx},{\ty+\fy});
\draw [draw=gray, fill  = lightgray] ({0*\dx-\fx},{\ty-\fy}) rectangle ({1*\dx+\fx},{\ty+\fy});        
    \node [below = {\cx} of  l1] {$\eta_1$};
    \node [below = {\cx} of  l2] {$\theta_1$};
    \node [above ={\cx} of  u1] {$\theta_2$};
    \node [above ={\cx} of  u2] {$\eta_2$};            
    %
    \draw [draw=darkgray, densely dashed] ({0*\dx-\gx},{-\gy}) rectangle ({3*\dx+\gx},{\gy});        
    %
     \node at ({\txu+\dx},{0.5*\ty}) {$q$};
   \end{tikzpicture}   
\end{mycenter}

Let $\eta_1'$, $\eta_2'$, $\theta_1'$ and $\theta_2'$ denote the images of, respectively, $\eta_1$, $\eta_2$, $\theta_1$ and $\theta_2$ in $p\otimes q\in \mc C$.  By nature of the tensor product, $B$ is a block of $p\otimes q$. Likewise, $\eta_1'$ and $\eta_2'$ belong to the same block in $p\otimes q$ and so do $\theta_1'$ and $\theta_2'$. And each involved point has the same normalized color in $p\otimes q$ as the corresponding preimage in $p$ or $q$. The set $T\eqpd \{\gamma_2,\eta_1'\}$ is a turn in $p\otimes q$.
                       \begin{mycenter}[0.5em]
\begin{tikzpicture}[baseline=0.666*1cm-0.25em]
    \def\scp{0.666}
    \def\linksize{\scp*0.075cm}
    \def\pointsize{\scp*0.25cm}
    \def\dd{\scp*0.5cm}
    \def\dx{\scp*1cm}
    \def\cx{\scp*0.3cm}
    \def\txu{11*\dx}    
    \def\txl{10*\dx}
    \def\dy{\scp*1cm}
    \def\cy{\scp*0.3cm}
    \def\ty{3*\dy}
    \def\fy{\scp*0.2cm}
    \def\fx{\scp*0.2cm}
    \def\gy{\scp*0.4cm}
    \def\gx{\scp*0.4cm}      
    \tikzset{whp/.style={circle, inner sep=0pt, text width={\pointsize}, draw=black, fill=white}}
    \tikzset{blp/.style={circle, inner sep=0pt, text width={\pointsize}, draw=black, fill=black}}
    \tikzset{lk/.style={regular polygon, regular polygon sides=4, inner sep=0pt, text width={\linksize}, draw=black, fill=black}}
    \tikzset{cc/.style={cross out, minimum size={1.5*\pointsize-\pgflinewidth}, inner sep=0pt, outer sep=0pt,  draw=black}}    
    \tikzset{vp/.style={circle, inner sep=0pt, text width={1.5*\pointsize}, fill=white}}
    \tikzset{sstr/.style={shorten <= 3pt, shorten >= 3pt}}    
    \draw[dotted] ({0-\dd},{0}) -- ({\txl+\dd},{0});
    \draw[dotted] ({0-\dd},{\ty}) -- ({\txu+\dd},{\ty});
    \node[vp] (l1) at ({0+4*\dx},{0+0*\ty}) {$c_2$};
    \node[vp] (u1) at ({0+1*\dx},{0+1*\ty}) {$\overline{c_1}$};
    \node[vp] (l2) at ({0+5*\dx},{0+0*\ty}) {$\overline{c_2}$};
    \node[vp] (l3) at ({0+8*\dx},{0+0*\ty}) {$c_3$};
    \node[cc] (u2) at ({0+7*\dx},{0+1*\ty}) {};
    \node[cc] (u3) at ({0+10*\dx},{0+1*\ty}) {};    
    \draw[sstr] (l3) --++(0,{2*\dy}) -| (u2);    
    \draw[sstr] (l2) --++(0,{1*\dy}) -| (u3);    
    \draw[sstr] (l1) --++(0,{1*\dy}) -| (u1);
    \draw[dashed] ($(l1)+(0,{\dy})$) --++ (0,{0.75*\dy});
    \draw[dashed] ($(l2)+(0,{1*\dy})$) --++ (0,{0.75*\dy});
    \draw[dashed] ($(u3)+(0,{-2*\dy})$) --++ ({0.75*\dy},0);
    \draw[dashed] ($(l3)+(0,{2*\dy})$) --++ ({0.75*\dy},0);
    \draw[dashed] ($(u2)+(0,{-1*\dy})$) --++ ({-0.75*\dy},0);    
    %
    \draw [draw=gray, pattern = north east lines, pattern color = lightgray] ({0+0*\dx-\fx},{0+0*\dy-\fy}) -- ++ ({3*\dx+2*\fx},0) |- ({0+0*\dx-\fx},{0+0*\dy+\fy});
\draw [draw=gray, fill = lightgray]    ({\txl+0*\dx+\fx},{0*\dy-\fy}) -- ++ ({-1*\dx-2*\fx},0) |- ({\txl+0*\dx+\fx},{0*\dy+\fy});    
\draw [draw=gray, fill  = lightgray] ({6*\dx-\fx},{0*\ty-\fy}) rectangle ({7*\dx+\fx},{0*\ty+\fy});            
    \draw [draw=gray, pattern = north east lines, pattern color = lightgray] ({0+0*\dx-\fx},{\ty+0*\dy-\fy}) -- ++ ({0*\dx+2*\fx},0) |- ({0+0*\dx-\fx},{\ty+0*\dy+\fy});
    \draw [draw=gray, fill = lightgray] ({\txu+0*\dx+\fx},{\ty+0*\dy-\fy}) -- ++ ({-0*\dx-2*\fx},0) |- ({\txu+0*\dx+\fx},{\ty+0*\dy+\fy});    
    \draw [draw=gray, pattern = north east lines, pattern color = lightgray] ({2*\dx-\fx},{\ty-\fy}) rectangle ({4*\dx+\fx},{\ty+\fy});
    \draw [draw=gray, fill  = lightgray] ({8*\dx-\fx},{\ty-\fy}) rectangle ({9*\dx+\fx},{\ty+\fy});
\draw [draw=gray, fill  = lightgray] ({5*\dx-\fx},{\ty-\fy}) rectangle ({6*\dx+\fx},{\ty+\fy});            
    \node [below = {\cx} of  l1] {$\gamma_2$};
    \node [above ={\cx} of  u1] {$\gamma_1$};
    \node [below = {\cx} of  l2] {$\eta_1'$};
    \node [below = {\cx} of  l3] {$\theta_1'$};
    \node [above ={\cx} of  u2] {$\theta_2'$};
    \node [above ={\cx} of  u3] {$\eta_2'$};      
    %
    %
        \draw [densely dotted, draw=gray] ({4*\dx-\gx},{-\gy}) rectangle ({5*\dx+\gx},{\gy});
     \node at ({4.5*\dx},{0.35*\ty}) {$T$};    
     \node at ({-1.5*\dx},{0.5*\ty}) {$p\otimes q$};
   \end{tikzpicture}
\end{mycenter}

If we denote by $\eta_2''$, $\theta_1''$ and $\theta_2''$ the images of $\eta_1'$, $\theta_1'$ and $\theta_2'$ in $r\eqpd E(p\otimes q,T)\in \mc C$, then $\gamma_1$ and $\eta_2''$ belong to the same block in $r$ and so do $\theta_1'$ and $\theta_2'$. Because $(\gamma_1,\gamma_2,\eta_1',\theta_i',\eta_2',\theta_{\neg i}')$ is ordered in $p\otimes q$ for some $i,\neg i\in \{1,2\}$ with $\{i,\neg i\}=\{1,2\}$, the tuple $(\gamma_1,\theta_i'',\eta_2'',\theta_{\neg i}'')$ is then ordered in $r$. Thus, the blocks of $\gamma_1$ and $\eta_2''$ and of  $\theta_1''$ and $\theta_2''$ cross in $r$.
                       \begin{mycenter}[0.5em]
\begin{tikzpicture}[baseline=0.666*1cm-0.25em]
    \def\scp{0.666}
    \def\linksize{\scp*0.075cm}
    \def\pointsize{\scp*0.25cm}
    \def\dd{\scp*0.5cm}
    \def\dx{\scp*1cm}
    \def\cx{\scp*0.3cm}
    \def\txu{11*\dx}    
    \def\txl{8*\dx}
    \def\dy{\scp*1cm}
    \def\cy{\scp*0.3cm}
    \def\ty{3*\dy}
    \def\fy{\scp*0.2cm}
    \def\fx{\scp*0.2cm}
    \def\gy{\scp*0.4cm}
    \def\gx{\scp*0.4cm}      
    \tikzset{whp/.style={circle, inner sep=0pt, text width={\pointsize}, draw=black, fill=white}}
    \tikzset{blp/.style={circle, inner sep=0pt, text width={\pointsize}, draw=black, fill=black}}
    \tikzset{lk/.style={regular polygon, regular polygon sides=4, inner sep=0pt, text width={\linksize}, draw=black, fill=black}}
    \tikzset{cc/.style={cross out, minimum size={1.5*\pointsize-\pgflinewidth}, inner sep=0pt, outer sep=0pt,  draw=black}}    
    \tikzset{vp/.style={circle, inner sep=0pt, text width={1.5*\pointsize}, fill=white}}
    \tikzset{sstr/.style={shorten <= 3pt, shorten >= 3pt}}    
    \draw[dotted] ({0-\dd},{0}) -- ({\txl+\dd},{0});
    \draw[dotted] ({0-\dd},{\ty}) -- ({\txu+\dd},{\ty});
    \node[vp] (u1) at ({0+1*\dx},{0+1*\ty}) {$\overline{c_1}$};
    \node[vp] (l3) at ({0+6*\dx},{0+0*\ty}) {$c_3$};
    \node[cc] (u2) at ({0+7*\dx},{0+1*\ty}) {};
    \node[cc] (u3) at ({0+10*\dx},{0+1*\ty}) {};    
    \draw[sstr] (l3) --++(0,{2*\dy}) -| (u2);    
    \draw[sstr] (u1) --++(0,{-2*\dy}) -| (u3);
    \draw[dashed] ($(u1)+(0,{-2*\dy})$) --++({-0.75*\dx},0);
    \draw[dashed] ($(u3)+(0,{-2*\dy})$) --++({0.75*\dx},0);
    \draw[dashed] ($(l3)+(0,{2*\dy})$) --++({-0.75*\dx},0);
    \draw[dashed] ($(u2)+(0,{-1*\dy})$) --++({0.75*\dx},0);    
    %
    \draw [draw=gray, pattern = north east lines, pattern color = lightgray] ({0+0*\dx-\fx},{0+0*\dy-\fy}) -- ++ ({3*\dx+2*\fx},0) |- ({0+0*\dx-\fx},{0+0*\dy+\fy});
\draw [draw=gray, fill = lightgray]    ({\txl+0*\dx+\fx},{0*\dy-\fy}) -- ++ ({-1*\dx-2*\fx},0) |- ({\txl+0*\dx+\fx},{0*\dy+\fy});    
\draw [draw=gray, fill  = lightgray] ({4*\dx-\fx},{0*\ty-\fy}) rectangle ({5*\dx+\fx},{0*\ty+\fy});            
    \draw [draw=gray, pattern = north east lines, pattern color = lightgray] ({0+0*\dx-\fx},{\ty+0*\dy-\fy}) -- ++ ({0*\dx+2*\fx},0) |- ({0+0*\dx-\fx},{\ty+0*\dy+\fy});
    \draw [draw=gray, fill = lightgray] ({\txu+0*\dx+\fx},{\ty+0*\dy-\fy}) -- ++ ({-0*\dx-2*\fx},0) |- ({\txu+0*\dx+\fx},{\ty+0*\dy+\fy});    
    \draw [draw=gray, pattern = north east lines, pattern color = lightgray] ({2*\dx-\fx},{\ty-\fy}) rectangle ({4*\dx+\fx},{\ty+\fy});
    \draw [draw=gray, fill  = lightgray] ({8*\dx-\fx},{\ty-\fy}) rectangle ({9*\dx+\fx},{\ty+\fy});
\draw [draw=gray, fill  = lightgray] ({5*\dx-\fx},{\ty-\fy}) rectangle ({6*\dx+\fx},{\ty+\fy});            
    \node [above ={\cx} of  u1] {$\gamma_1$};
    \node [below = {\cx} of  l3] {$\theta_1''$};
    \node [above ={\cx} of  u2] {$\theta_2''$};
    \node [above ={\cx} of  u3] {$\eta_2''$};      
    %
     \draw [draw=darkgray, densely dashed] ({0+0*\dx-\gx},{0+0*\dy-\gy}) --++ ({6*\dx+2*\gx},0) |- ({0+0*\dx-\gx},{0+0*\dy+\gy}); 
     \draw [draw=darkgray, densely dashed] ({0+0*\dx-\gx},{\ty+0*\dy-\gy}) --++ ({1*\dx+2*\gx},0) |- ({0+0*\dx-\gx},{\ty+0*\dy+\gy}); 
    %
     \node at ({-1*\dx},{0.5*\ty}) {$r$};
   \end{tikzpicture}
 \end{mycenter}
 \noindent
Consequently, from $\delta_{r}(\gamma_1,\theta_1'')\in X_{c_1,c_3}(\mc C)$ and from
                                        \begin{align*}
                      \delta_{r}(\gamma_1,\theta_1'')&=\delta_{p\otimes q}(\gamma_1,\theta_1')-\sigma_{p\otimes q}(T)\\
                                                   &=\delta_{p\otimes q}(\gamma_1,\theta_1')\\
                                                   &=\delta_{p\otimes q}(\gamma_1,\gamma_2)+\delta_{p\otimes q}(\gamma_2,\eta_1')+\delta_{p\otimes q}(\eta_1',\theta_1')\\
                      &=\delta_{p\otimes q}(\gamma_1,\gamma_2)+\delta_{p\otimes q}(\eta_1',\theta_1')\\
                      &=\delta_p(\gamma_1,\gamma_2)+\delta_q(\eta_1,\theta_1)
                    \end{align*}
it                    follows $\delta_p(\gamma_1,\gamma_2)+\delta_q(\eta_1,\theta_1)\in X_{c_1,c_3}(\mc C)= \xi_{c_1,c_3}$. And that is what we needed to see.
                  \end{proof}
                  Finally, we can give the final result of this sec\-tion.
                  
 \begin{proposition}
   \label{lemma:result-s-l-k-x}
   Let $\mc C\subseteq \Cp$ be a non-hy\-per\-octa\-he\-dral category. Then,
   \begin{gather*}
     L(\mc C)=K_{\circ\circ}(\mc C)=K_{\bullet\bullet}(\mc C),\qquad K(\mc C)=K_{\circ\bullet}(\mc C)=K_{\bullet\circ}(\mc C)
        \end{gather*}     
        and
        \begin{gather*}
X(\mc C)=X_{\circ\circ}(\mc C)=X_{\bullet\bullet}(\mc C)=X_{\circ\bullet}(\mc C)=X_{\bullet\circ}(\mc C)
   \end{gather*}      

   and there exist $u\in \{0\}\cup \pint$, $m\in \pint$, $D\subseteq \{0\}\cup\dwi{\lfloor\frac{m}{2}\rfloor}$ and $E\subseteq \{0\}\cup\pint$ such that  the tuple $(\toco,L,K,X)(\mc C)$ is one of the following:
                      \begin{align*}
                      \begin{matrix}
                        \toco(\mc C)& L(\mc C)&K(\mc C)& X(\mc C)  \\ \hline \\[-0.85em]
                        um\integers & m\integers & m\integers & \integers\backslash D_m\\
                        2um\integers & m\!+\!2m\integers & 2m\integers & \integers\backslash D_m\\
                        um\integers & \emptyset & m\integers & \integers\backslash D_m\\                        
 \{0\} & \{0\} & \{0\} & \integers\backslash E_0 \\
 \{0\} & \emptyset & \{0\} & \integers\backslash E_0
                    \end{matrix}
                    \end{align*}   
                  \end{proposition}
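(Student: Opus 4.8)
The plan is to recognize the statement as an immediate consequence of the Arithmetic Lem\-ma~\ref{lemma:arithmetic} applied to the nine sets $\sigma\eqpd \toco(\mc C)$ and $\kappa_{c_1,c_2}\eqpd K_{c_1,c_2}(\mc C)$, $\xi_{c_1,c_2}\eqpd X_{c_1,c_2}(\mc C)$ for $c_1,c_2\in \colors$. The hypotheses of that lemma, namely Axioms~\ref{axioms:arithmetic}, have already been checked one by one in the preceding lemmata: Lem\-ma~\ref{lemma:verifying-axioms-1} gives Axiom~\ref{lemma:gradedsets-condition-0}; Lem\-mata~\ref{lemma:verifying-axioms-2} and~\ref{lemma:verifying-axioms-3} give Axioms~\ref{lemma:gradedsets-condition-1}--\ref{lemma:gradedsets-condition-2} for the $\kappa$-family and for the $\xi$-family, respectively; Lem\-ma~\ref{lemma:verifying-axioms-4} gives Axiom~\ref{lemma:gradedsets-condition-6}; and Lem\-mata~\ref{lemma:verifying-axioms-5},~\ref{lemma:verifying-axioms-6} and~\ref{lemma:verifying-axioms-7} give Axioms~\ref{lemma:gradedsets-condition-4},~\ref{lemma:gradedsets-condition-5} and~\ref{lemma:gradedsets-condition-7}. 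So I would open the proof by quoting those seven lemmata to conclude that the Arithmetic Lemma is applicable.

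Invoking the Arithmetic Lem\-ma~\ref{lemma:arithmetic} then yields at once $\kappa_{\circ\circ}=\kappa_{\bullet\bullet}$, $\kappa_{\circ\bullet}=\kappa_{\bullet\circ}$ and $\xi_{\circ\circ}=\xi_{\bullet\bullet}=\xi_{\circ\bullet}=\xi_{\bullet\circ}$ --- write $\lambda$, $\kappa$, $\xi$ for the three common values, as in that lemma --- together with the assertion that $(\sigma,\lambda,\kappa,\xi)$ is one of the five tuples in the table of the Arithmetic Lemma, for suitable $u\in\{0\}\cup\pint$, $m\in\pint$, $D\subseteq\{0\}\cup\dwi{\lfloor\frac{m}{2}\rfloor}$ and $E\subseteq\{0\}\cup\pint$. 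To turn this into a statement about $L$, $K$, $X$, I would invoke the pointwise decompositions $L=\bigcup_{c_1=c_2}K_{c_1,c_2}$, $K=\bigcup_{c_1\neq c_2}K_{c_1,c_2}$ and $X=\bigcup_{c_1,c_2}X_{c_1,c_2}$ recorded right after the definition of the $K_{c_1,c_2}$ and $X_{c_1,c_2}$. Evaluated at $\mc C$ and combined with the equalities just obtained: the two summands $K_{\circ\circ}(\mc C)$, $K_{\bullet\bullet}(\mc C)$ of $L(\mc C)$ are both equal to $\lambda$, so $L(\mc C)=K_{\circ\circ}(\mc C)=K_{\bullet\bullet}(\mc C)=\lambda$; likewise $K(\mc C)=K_{\circ\bullet}(\mc C)=K_{\bullet\circ}(\mc C)=\kappa$; and the four summands of $X(\mc C)$ are all equal to $\xi$, whence $X(\mc C)=X_{\circ\circ}(\mc C)=X_{\bullet\bullet}(\mc C)=X_{\circ\bullet}(\mc C)=X_{\bullet\circ}(\mc C)=\xi$. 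These are precisely the displayed equalities in the statement.

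Finally, since $\toco(\mc C)=\sigma$, $L(\mc C)=\lambda$, $K(\mc C)=\kappa$ and $X(\mc C)=\xi$, the tuple $(\toco,L,K,X)(\mc C)$ coincides with $(\sigma,\lambda,\kappa,\xi)$, and hence it is one of the five rows furnished by the Arithmetic Lemma; those rows are verbatim the five rows of the table in the present proposition, which completes the proof. I do not expect any real difficulty at this point: all of the substance lies in the verification of the axioms in Sub\-sec\-tion~\ref{section:verifying-the-axioms} and in the Arithmetic Lemma itself. The one thing to be careful about is purely bookkeeping --- using the equalities among the $K_{c_1,c_2}(\mc C)$ and among the $X_{c_1,c_2}(\mc C)$ as \emph{equalities}, so that the pointwise unions defining $L(\mc C)$, $K(\mc C)$ and $X(\mc C)$ genuinely collapse to single sets and the table for $(\sigma,\lambda,\kappa,\xi)$ transports unchanged to $(\toco,L,K,X)(\mc C)$.
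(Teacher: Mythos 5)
Your proposal is correct and follows exactly the route the paper takes: its proof of this proposition is literally ``Follows from Lemmata~\ref{lemma:verifying-axioms-1}--\ref{lemma:verifying-axioms-7} and the Arithmetic Lemma~\ref{lemma:arithmetic},'' and you have simply spelled out the bookkeeping (axiom verification, application of the lemma, collapse of the pointwise unions) that this citation compresses.
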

                  \begin{proof}
                    Follows from Lem\-mata~\ref{lemma:verifying-axioms-1} -- \ref{lemma:verifying-axioms-7} and the \hyperref[lemma:arithmetic]{Arithmetic Lem\-ma~\ref*{lemma:arithmetic}}.
                  \end{proof}
\section{\texorpdfstring{Step~5: Special Relations between $\toco$, $L$, $K$ and $X$\\depending on $F$ and $V$}{Step~5: Special Relations between Sigma, L, K and X depending on F and V}}
\label{section:special-restrictions}
Our objective remains proving $Z(\mc C)\in\mathsf Q$ for any non-hy\-per\-oc\-ta\-he\-dral category $\mc C\subseteq \Cp$. After studying components $F$  (Sec\-tion~\ref{section:block-sizes}) and $\toco$ (Sec\-tion~\ref{section:total-color-sums}) in isolation and after in\-ves\-ti\-ga\-ting the images of the mappings  $(F,V,L)$  (Sec\-tion~\ref{section:block-color-sums}) and $(\toco,L,K,X)$ (Sec\-tion~\ref{section:color-lengths}), we have arrived at the point where we must take all six components of $Z=(F,V,\toco,L,K,X)$ into account simultaneously. Fortunately, we can capitalize on the results of Sec\-tions~\ref{section:block-sizes}--\ref{section:color-lengths} in this endeavor. In consequence, it largely suffices to understand better the behavior of $(\toco,L,K,X)$ as dependent on $(F,V)$ or, roughly, on $F$.
\par
Recall from \cite[Definition~4.1]{MWNHO1} that a category is non-hy\-per\-oc\-ta\-he\-dral if and only if it is case~$\mc O$, $\mc B$ or~$\mc S$ and that these cases are mutually exclusive.

\subsection{Special Relations in Case~$\mc S$}
\label{section:special-restrictions-s}
For case~$\mc S$ categories $\mc C\subseteq \Cp$, i.e., by Proposition~\ref{proposition:result-F} assuming $F(\mc C)=\pint$, there is just a single fact about $(\toco,L,K, X)(\mc C)$ we have to note, one about $L(\mc C)$.
\begin{proposition}
  \label{lemma:restriction-case-s}
  $0\in L(\mc C)$ for every case~$\mc S$ category $\mc C\subseteq \Cp$.
\end{proposition}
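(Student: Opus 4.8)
The plan is to produce one explicit partition in $\mc C$ that already witnesses $0\in L(\mc C)$: a partition carrying a block with two \emph{subsequent} legs of the \emph{same} normalized color whose color distance is $0$. Since $\mc C$ is case~$\mc S$, Proposition~\ref{proposition:result-F} gives $F(\mc C)=\pint$, so in particular $\PartSinglesWBTensor\in\mc C$ and $\PartFourWBWB\in\mc C$, exactly as recorded in the proof of Proposition~\hyperref[proposition:result-V-3]{\ref*{proposition:result-V}~\ref*{proposition:result-V-3}}. Starting from the four-leg block partition $\PartFourWBWB$, whose unique block has legs of normalized colors $\circ,\bullet,\circ,\bullet$ read from left to right along the lower row, I would use Lemma~\hyperref[lemma:singletons-3]{\ref*{lemma:singletons}~\ref*{lemma:singletons-3}} to disconnect its two black legs (one application of the disconnecting operation per leg). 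This yields a partition $p\in\mc C$ with four lower points colored $\circ\bullet\circ\bullet$, whose block $B\eqpd\{\alpha_1,\alpha_2\}$ consists of the two white points, while the two black points have become singletons.

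Next I would check that $\delta_p(\alpha_1,\alpha_2)$ lies in $L(\mc C)$ and equals $0$. The points $\alpha_1,\alpha_2$ are distinct and both of normalized color $\circ$, so $\sigma_p(\{\alpha_1,\alpha_2\})=\sigma_p(\alpha_1)+\sigma_p(\alpha_2)=2\sigma_p(\alpha_1)\neq 0$; moreover the interval $]\alpha_1,\alpha_2[_p$ is open, hence cannot contain either endpoint, so $]\alpha_1,\alpha_2[_p\cap B=\emptyset$ holds automatically and $(\alpha_1,\alpha_2)$ genuinely contributes to $L$. Using the closed-form expression $\delta_p(\alpha_1,\alpha_2)=\sigma_p(]\alpha_1,\alpha_2]_p)+\tfrac12(\sigma_p(\alpha_1)-\sigma_p(\alpha_2))$ recalled in the proof of Lemma~\ref{lemma:verifying-axioms-2}, the correction term $\tfrac12(\sigma_p(\alpha_1)-\sigma_p(\alpha_2))$ vanishes because $\alpha_1$ and $\alpha_2$ share their normalized color, and $]\alpha_1,\alpha_2]_p$ consists of $\alpha_2$ together with exactly one of the two black singletons, i.e.\ of one white and one black point, so $\sigma_p(]\alpha_1,\alpha_2]_p)=0$. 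Hence $\delta_p(\alpha_1,\alpha_2)=0$ and $0\in L(\{p\})\subseteq L(\mc C)$.

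The argument is short, so there is no real obstacle; the only point needing a little care is the cyclic-order bookkeeping in the interval notation appearing in the definition of $L$. This is harmless here because the two black singletons flank the pair block $B$ on opposite sides: whichever of them lands inside $]\alpha_1,\alpha_2]_p$, its color contribution is cancelled by that of the white point $\alpha_2$, so no case distinction on the orientation convention is required. As an equivalent alternative I could take $p\eqpd\PartSpecThreeCoSingleWBWB$, the partition already shown to belong to $\mc C$ in the proof of Proposition~\hyperref[proposition:result-V-3]{\ref*{proposition:result-V}~\ref*{proposition:result-V-3}}, and run the identical distance computation on its two white legs.
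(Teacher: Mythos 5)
Your proposal is correct and takes essentially the same route as the paper: the paper also invokes Lemma~\hyperref[lemma:singletons-3]{\ref*{lemma:singletons}~\ref*{lemma:singletons-3}} to disconnect the two black legs of $\PartFourWBWB\in\mc C$, obtaining $\PartCoSingleWBWB\in\mc C$, whose pair block of two white legs witnesses $0\in L(\mc C)$. The only difference is that you spell out the $\delta_p$ computation explicitly, which the paper leaves as the immediate observation $L(\{\PartCoSingleWBWB\})=\{0\}$.
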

\begin{proof}
  As $\PartSinglesWBTensor\in \mc C$, we can, by Lem\-ma~\hyperref[lemma:singletons-3]{\ref*{lemma:singletons}~\ref*{lemma:singletons-3}}, disconnect the black points in $\PartFourWBWB\in \mc C$ and obtain $\PartCoSingleWBWB\in \mc C$. It follows $\{0\}=L(\{\PartCoSingleWBWB\})\subseteq L(\mc C)$.
\end{proof}
\subsection{Special Relations in Case~$\mc O$}
\label{section:special-restrictions-o}
For case~$\mc O$ categories $\mc C\subseteq \Cp$, i.e., assuming $F(\mc C)=\{2\}$, more than what Proposition~\ref{lemma:result-s-l-k-x} is able to discern can  be said about $\toco(\mc C)$ and $X(\mc C)$. 
\subsubsection{\texorpdfstring{Relation of $\Sigma$ to $L$ and $K$ in Case~$\mc O$}{Relation of Sigma to L and K in Case~O}} First, we treat the total color sums of case~$\mc O$ categories. 
\begin{proposition}
  \label{lemma:k-v-l}  
  Let $\mc C\subseteq \Cp$ be a case~$\mc O$ category and let $m\in \pint$.
  \begin{enumerate}[label=(\alph*)]
  \item\label{lemma:k-v-l-1}   If $(L,K)(\mc C)=(\emptyset,m\integers)$, then $\toco(\mc C)=\{0\}$.
    \item\label{lemma:k-v-l-2} If $(L,K)(\mc C)=(m\integers,m\integers)$ or $(L,K)(\mc C)=(m\!+\!2m\integers,2m\integers)$, then \[\toco(\mc C)=2um\integers\] for some $u\in \{0\}\cup\pint$.
  \end{enumerate}
\end{proposition}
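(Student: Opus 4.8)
The plan is to read off almost everything from Proposition~\ref{lemma:result-s-l-k-x}, which already restricts the quadruple $(\toco,L,K,X)(\mc C)$ to one of five explicit forms, and then to add the one fact special to case~$\mc O$: since $F(\mc C)=\{2\}$ by Proposition~\ref{proposition:result-F}, every block $B$ of every $p\in\mc C$ is a pair, so $\sigma_p(B)\in\{-2,0,2\}$; consequently $\toco(p)=\sum_{B}\sigma_p(B)$ is an even integer for every $p\in\mc C$, i.e.\ $\toco(\mc C)\subseteq 2\integers$.

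For part~\ref{lemma:k-v-l-1}, the assumption $(L,K)(\mc C)=(\emptyset,m\integers)$ with $m\ge 1$ is, among the five rows of the table of Proposition~\ref{lemma:result-s-l-k-x}, compatible only with the third one, already forcing $\toco(\mc C)=u'm\integers$ for some $u'$. In fact the conclusion is immediate and sharper without that table: $L(\mc C)=\emptyset$ means that no pair block of any $p\in\mc C$ has two legs of equal normalized color, so every block has color sum $0$, whence $\toco(p)=\sum_B\sigma_p(B)=0$ for every $p\in\mc C$ and $\toco(\mc C)=\{0\}$.

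For part~\ref{lemma:k-v-l-2} I would separate the two hypotheses. If $(L,K)(\mc C)=(m\!+\!2m\integers,2m\integers)$, observe that $m\!+\!2m\integers$ is not a subgroup of $\integers$, so only the second row of the table can match; matching the $L$-entries (the set of odd multiples of the respective parameter) forces its free parameter to equal our $m$, and this gives $\toco(\mc C)=2um\integers$ outright. If instead $(L,K)(\mc C)=(m\integers,m\integers)$, only the first row matches, again with free parameter $m$, so Proposition~\ref{lemma:result-s-l-k-x} gives $\toco(\mc C)=um\integers$ for some $u\in\{0\}\cup\pint$; intersecting with $\toco(\mc C)\subseteq 2\integers$ shows $um$ is even, so for $m$ odd we get $2\mid u$ and $\toco(\mc C)=2(u/2)m\integers$, which is of the desired shape.

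The remaining case, $m$ even with $(L,K)(\mc C)=(m\integers,m\integers)$, is the main obstacle: there $\toco(\mc C)\subseteq 2\integers$ is vacuous, and one must exclude $m\in\toco(\mc C)$ directly. The plan for this is to assume, for contradiction, a $p\in\mc C$ with $\toco(p)$ an odd multiple of $m$; since $\toco(p)\neq 0$ while all summands $\sigma_p(B)$ lie in $\{-2,0,2\}$, the partition $p$ carries a same-color pair $B=\{\alpha_1,\alpha_2\}$. Using the identity $\delta_p(\alpha_1,\alpha_2)+\delta_p(\alpha_2,\alpha_1)=\toco(p)$, the fact that $\delta$ evaluated on the two legs of any block of $p$ lies in $L(\mc C)\cup K(\mc C)=m\integers$, and the additivity of $\delta$ along the cyclic order, one rotates $p$ and applies Lemma~\ref{lemma:projection} to extract from $\langle p\rangle\subseteq\mc C$ a partition witnessing a color distance lying in $(m\integers)\setminus(2m\integers)$ between two legs of one of its blocks, contradicting $L(\mc C)=m\integers$. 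Hence that last case cannot occur and the implication holds vacuously there; I expect pinning down precisely this extraction (choosing the right consecutive set to project onto) to be the most delicate point of the argument.
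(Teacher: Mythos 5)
Your part~(a) and the $(L,K)(\mc C)=(m\!+\!2m\integers,2m\integers)$ half of part~(b) are correct. For (a) you even bypass the paper's erasing-turns argument: in case~$\mc O$ all blocks are pairs, $L(\mc C)=\emptyset$ forces every pair block to be neutral, hence $\toco(p)=\sum_B\sigma_p(B)=0$ for all $p\in\mc C$; that is valid. Your parity observation $\toco(\mc C)\subseteq 2\integers$ in case~$\mc O$ is also correct and does dispose of $(L,K)(\mc C)=(m\integers,m\integers)$ when $m$ is odd, a case split the paper avoids by giving one uniform argument.

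However, the case you yourself flag as the main obstacle --- $(L,K)(\mc C)=(m\integers,m\integers)$ with $m$ even --- is a genuine gap, and the plan you sketch for it cannot work as stated. You propose to reach a contradiction by extracting a partition with a color distance in $(m\integers)\setminus(2m\integers)$ between two legs of a block; but such a value lies in $m\integers=L(\mc C)$ (respectively $K(\mc C)$) and contradicts nothing --- the hypothesis $L(\mc C)=m\integers$ asserts precisely that all of $m\integers$ occurs. Likewise the identity $\delta_p(\alpha_1,\alpha_2)+\delta_p(\alpha_2,\alpha_1)=\toco(p)$ only yields $\toco(p)\in m\integers+m\integers=m\integers$, which you already know. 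The missing idea is a global parity count rather than a single-distance contradiction: take $p\in\mc C$ with $\toco(p)=\tilde um>0$, erase all turns and rotate so that $p$ is a one-line partition; then all $\tilde um$ points have normalized color $\circ$, so $\delta_p(\lop{j},\lop{j'})=j'-j$, and $L(\mc C)=m\integers$ forces every pair block to join two points with $j'\equiv j\pmod m$. Each of the $m$ residue classes of points is therefore a union of pair blocks and has even cardinality; since each class has exactly $\tilde u$ elements, $\tilde u$ is even. Without this step (or an equivalent substitute) your proof of part~(b) is incomplete.
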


\begin{proof}
  \begin{enumerate}[wide,label=(\alph*)]
  \item By Proposition~\ref{lemma:result-s-l-k-x} there exists $\tilde u\in \{0\}\cup \pint$ such that $\toco(\mc C)=\tilde um\integers$. We suppose $\tilde u\neq 0$ and derive a contradiction. As $\mc C$ is closed under erasing turns and as erasing turns does not affect total color sum, we find $p\in\mc C$ with no turns such that $\toco(p)=\tilde um$. Because $\tilde um>0$, the partition $p$ has at least one block. As all blocks of $p$ are pairs by Proposition~\ref{proposition:result-F}, there is a block $B$ of $p$ with (necessarily subsequent) legs $\alpha,\beta\in B$ and $\alpha\neq \beta$. Since $p$ has no turns, all points of $p$ have normalized color $\circ$. In particular, $\alpha$ and $\beta$ do. That proves $L(\mc C)\neq \emptyset$, contradicting the assumption. 
  \item  Proposition~\ref{lemma:result-s-l-k-x} guarantees that $\toco(\mc C)=\tilde um\integers$ for some $\tilde u\in \{0\}\cup\pint$ and that $\tilde u$ is even if $(L,K)(\mc C)=(m\!+\!2m\integers,2m\integers)$. We want to show that $\tilde u$ is even also if $(L,K)(\mc C)=(m\integers,m\integers)$. If $\tilde u=0$, this claim is true. Hence, suppose $\tilde u>0$. As in Part~\ref{lemma:k-v-l-1}, we utilize $p\in\mc C$ with no turns such that $\toco(p)=\tilde um>0$ and, this time, also with no upper points.
    \par
    For every $i\in \pint$ with $i\leq m$ consider the set
    \begin{align*}
      S_i=\{\lop{j}\mid j\in (i+m\nnint),\, j\leq \tilde um\}
    \end{align*}
    comprising the $i$-th lower point and all its $m$-th neighbors to the right. Then, $\bigcup_{i=1}^m S_i$ comprises all points of $p$ and $|S_i|=\tilde u$ for every $i\in \pint$ with $i\leq m$.
    \par
    The sets $S_1,\ldots,S_{m}$ must all be subpartitions of $p$: Otherwise, we find  $j,j'\in \pint$ with $j<j'\leq \tilde u m$ and $j'-j\notin m\integers$ such that $\lop{j}$ and $\lop{j'}$ belong to the same block. As all of $]\lop{j},\lop{j'}]_p$ has  normalized color $\circ$,  \[\delta_p(\lop{j},\lop{j'})=\sigma_p(]\lop{j},\lop{j'}]_p)=|]\lop{j},\lop{j'}]_p|=j'-j\notin m\integers.\] That contradicts the assumption $L(\mc C)= m\integers$.
    \par
    Because all blocks of $p$ are pairs by Proposition~\ref{proposition:result-F}, subpartitions of $p$ have even cardinality. We conclude $\tilde u=|S_1|\in 2\integers$, which then proves the claim.
    \qedhere
  \end{enumerate}
\end{proof}
\subsubsection{Relation of $X$ to $L$ and $K$ in Case~$\mc O$}
When studying $X(\mc C)$ further for case~$\mc O$ categories $\mc C\subseteq \Cp$, it is best to distinguish whether $(L\cup K)(\mc C)$ contains non-zero elements or not.
\begin{proposition}
  \label{lemma:result-x-case-o-positive-w}
  Let $\mc C\subseteq \Cp$ be a case~$\mc O$ category and let $m\in \pint$.
  \begin{enumerate}[label=(\alph*)]
  \item\label{lemma:result-x-case-o-positive-w-1} If $(L,K)(\mc C)=(m\!+\!2m\integers,2m\integers)$, then $X(\mc C)=\integers$ or $X(\mc C)=\integers\backslash m\integers$.    
  \item\label{lemma:result-x-case-o-positive-w-2} If $(L,K)(\mc C)=(m\integers,m\integers)$ or $(L,K)(\mc C)=(\emptyset,m\integers)$, then $X(\mc C)=\integers$.
  \end{enumerate}
\end{proposition}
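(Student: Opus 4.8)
The statement concerns only the sixth component $X$ of the analyzer, and by Proposition~\ref{lemma:result-s-l-k-x} we already know, under either hypothesis, that $X(\mc C)=\integers\backslash D_m$ for some $D\subseteq\{0\}\cup\dwi{\lfloor\frac{m}{2}\rfloor}$. So the entire content is to pin down $D$: unwinding $D_m=(D\cup(m-D))+m\integers$ shows that part~\ref{lemma:result-x-case-o-positive-w-1} is equivalent to $D\subseteq\{0\}$ and part~\ref{lemma:result-x-case-o-positive-w-2} to $D=\emptyset$. Since $X(\mc C)=\integers\backslash D_m$ is invariant under translation by $m\integers$ (conceptually, this is also $L(\mc C)+X(\mc C)\subseteq X(\mc C)$ from Lemma~\ref{lemma:verifying-axioms-7}), membership in $X(\mc C)$ depends only on the residue modulo $m$, and $D\subseteq\{0\}$ says precisely that every \emph{nonzero} residue class modulo $m$ meets $X(\mc C)$, whereas $D=\emptyset$ says in addition that $m\integers$ meets $X(\mc C)$. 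Hence it suffices to produce, for each residue $r$ that has to be hit — every nonzero $r$ in part~\ref{lemma:result-x-case-o-positive-w-1}, every $r$ in part~\ref{lemma:result-x-case-o-positive-w-2} — a single partition in $\mc C$ carrying two crossing blocks whose colour distance is congruent to $r$ modulo $m$. In part~\ref{lemma:result-x-case-o-positive-w-1} one may assume $m\geq2$, since for $m=1$ the condition $D\subseteq\{0\}$ is vacuous.

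First I would show $X(\mc C)\neq\emptyset$. By Proposition~\ref{proposition:result-F} we have $F(\mc C)=\{2\}$, so every block is a pair. Fix $p\in\mc C$ carrying a pair block $B=\{\alpha_1,\beta_1\}$ with $\delta_p(\alpha_1,\beta_1)=m$: legs of equal normalised colour via $m\in L(\mc C)$ in part~\ref{lemma:result-x-case-o-positive-w-1} and in the subcase $(L,K)(\mc C)=(m\integers,m\integers)$ of part~\ref{lemma:result-x-case-o-positive-w-2} — for $m=1$ in that subcase take instead a same-coloured pair block of colour distance $2\in L(\mc C)$ — and legs of different normalised colour via $m\in K(\mc C)$ in the subcase $(L,K)(\mc C)=(\emptyset,m\integers)$. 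Rotating $p$ onto a single row and erasing its turns, we may assume $B$ does not consist of two extremal points, so that $I\eqpd\,]\alpha_1,\beta_1[_p$ and its complement are both nonempty. The case-distinction-free formula for the colour distance \cite[Lem\-ma~3.1~(b)]{MWNHO1} gives $\sigma_p(I)=\delta_p(\alpha_1,\beta_1)-\tfrac12\sigma_p(B)\in\{m-1,m,m+1\}$, which is nonzero in each of these situations. Suppose no block of $p$ had a leg both in and outside $I$; then $I$ is a union of whole blocks of $p$. Unless two of those cross each other — which is already the crossing we want — they form a non-crossing pairing of $I$, and a short induction on nesting depth forces $\sigma_p(I)=0$: an innermost pair is adjacent in $p$, hence of colour distance $\pm1$, which lies in $L(\mc C)$ only when $m=1$ (excluded in parts~\ref{lemma:result-x-case-o-positive-w-1} and $(L,K)(\mc C)=(m\integers,m\integers)$) and never lies in $L(\mc C)=\emptyset$; so that pair — and, since the pairs removed so far all have colour sum $0$, every successive innermost pair — must have differently coloured legs and colour sum $0$. (For $m=1$ in the subcase $(L,K)(\mc C)=(m\integers,m\integers)$ one argues instead by parity: $\sigma_p(I)=2\mp1$ is odd, so $|I|$ is odd and $I$ cannot be a union of pairs.) This contradicts $\sigma_p(I)\neq0$, so $p$ does carry a crossing.

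Realising \emph{all} the required residues is the heart of the argument and the step I expect to be the main obstacle. Starting now from pair blocks of colour distance $jm$ — with $j$ odd when $jm\in L(\mc C)=m+2m\integers$, and $j$ arbitrary when $jm\in K(\mc C)$ — and passing to clean projective representatives by Lemma~\ref{lemma:projection}, I would construct, for each target $r$, a partition in $\mc C$ in which two blocks cross with colour distance $\equiv r\pmod m$ between a chosen pair of their legs; the legal building operations are rotation, composition, erasing turns, and tensoring with the turns $\PartIdenLoWB$, $\PartIdenLoBW$ and with partitions realising elements of $\toco(\mc C)$. The structural points making this work are: in part~\ref{lemma:result-x-case-o-positive-w-1} the odd multiples $m,3m,5m,\dots$ in $L(\mc C)$, threaded through the crossing produced above, suffice to reach every \emph{nonzero} residue class, and the parity obstruction $L(\mc C)\subseteq m+2m\integers$ is exactly what is compatible with residue $0$ being absent — this is the dichotomy in the statement; whereas in part~\ref{lemma:result-x-case-o-positive-w-2} a differently-coloured pair block of colour distance in $m\integers$ (available from $K(\mc C)=m\integers$), threaded through, in addition produces a crossing of colour distance in $m\integers$, so residue $0$ is hit as well. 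The delicate part is the bookkeeping that keeps every intermediate partition inside $\mc C$ while tracking the colour distance modulo $m$; once it is carried through one obtains $D\subseteq\{0\}$ in part~\ref{lemma:result-x-case-o-positive-w-1} and $D=\emptyset$ in part~\ref{lemma:result-x-case-o-positive-w-2}, which is the claim.
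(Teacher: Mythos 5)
Your reduction is the right one and matches the paper's: by Proposition~\ref{lemma:result-s-l-k-x} the set $X(\mc C)$ is $m$-periodic and symmetric, so part~\ref{lemma:result-x-case-o-positive-w-1} amounts to showing $\dwi{m\!-\!1}\subseteq X(\mc C)$ and part~\ref{lemma:result-x-case-o-positive-w-2} to showing $\dwi{m}\subseteq X(\mc C)$. But the step you yourself flag as ``the heart of the argument'' is missing, and the mechanism you sketch for it cannot work. You propose to reach the remaining residues by ``threading'' elements of $L(\mc C)\subseteq m+2m\integers$ (or of $K(\mc C)\subseteq m\integers$) through the one crossing you have produced. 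The only tool that makes such threading legal is the inclusion $\kappa_{c_1,c_2}+\xi_{\overline{c_2},c_3}\subseteq\xi_{c_1,c_3}$ of Lemma~\ref{lemma:verifying-axioms-7}, and every element of $L(\mc C)\cup K(\mc C)$ is a multiple of $m$ in all the cases at hand; adding a multiple of $m$ does not change the residue class modulo $m$. So starting from a single element of $X(\mc C)$ you can never leave its residue class this way, and you do not obtain $\dwi{m\!-\!1}$ (let alone $\dwi{m}$). Your preliminary step $X(\mc C)\neq\emptyset$ is therefore far short of the goal, and the concluding sentence ``once it is carried through one obtains $D\subseteq\{0\}$'' is an assertion, not a proof.

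The idea that actually closes the gap produces all residues simultaneously from a \emph{single} partition rather than one residue at a time. Take $p\in\mc C$ with a pair block $\{\alpha,\beta\}$ realizing the minimal positive color distance: $\delta_p(\alpha,\beta)=m$ with both legs of normalized color $\circ$ (from $m\in L(\mc C)$, part~\ref{lemma:result-x-case-o-positive-w-1} and the first subcase of~\ref{lemma:result-x-case-o-positive-w-2}) or with legs of colors $\circ$ and $\bullet$ (from $m\in K(\mc C)$, second subcase). Erase all turns contained in $]\alpha,\beta[_p$, so that this interval is uniformly colored; the value $\delta_p(\alpha,\beta)=m>0$ then forces the uniform color to be $\circ$ and fixes $|\,]\alpha,\beta[_p|$ to be $m-1$ (same-colored case) or $m$ (differently colored case). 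After replacing $p$ by $P(p,[\alpha,\beta]_p)$ via Lemma~\ref{lemma:projection}, no two of the interior points can lie in a common block, since their color distance would be an element of $L(\mc C)$ strictly between $0$ and $m$, contradicting $L(\mc C)\subseteq m\integers$ resp.\ $L(\mc C)\subseteq m+2m\integers$ resp.\ $L(\mc C)=\emptyset$. Hence each interior point lies in a through block crossing the block of $\alpha$, at color distance $1,2,\dots,m-1$ (resp.\ $1,\dots,m$) from $\alpha$. This yields $\dwi{m\!-\!1}\subseteq X(\mc C)$ in part~\ref{lemma:result-x-case-o-positive-w-1} and $\dwi{m}\subseteq X(\mc C)$ in part~\ref{lemma:result-x-case-o-positive-w-2}, which is exactly what your reduction requires. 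Without this (or an equivalent) construction your argument does not establish the proposition.
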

\begin{proof} No matter which of the three values $(L,K)(\mc C)$ takes, by Proposition~\ref{lemma:result-s-l-k-x} the set $X(\mc C)$ is $m$-periodic. Therefore, showing $\dwi{m}\subseteq X(\mc C)$ already implies $X(\mc C)=\integers$. Likewise, provided $m\geq 2$,  establishing $\dwi{m\!-\!1}\subseteq X(\mc C)$ forces the conclusion that $X(\mc C)=\integers\backslash m\integers$ or $X(\mc C)=\integers$. 
  \begin{enumerate}[wide, label=(\alph*)]
  \item  First, let $(L,K)(\mc C)=(m\!+\!2m\integers,2m\integers)$. If $m=1$, the $1$-periodicity of $X(\mc C)$ immediately implies $X(\mc C)=\emptyset$ or $X(\mc C)=\integers$. Hence, we can suppose $m\geq 2$ and only need to prove $\dwi{m\!-\!1}\subseteq X(\mc C)$ by the initial remark. 
  \par
  Proposition~\ref{lemma:result-s-l-k-x} lets us infer $K_{\circ\circ}(\mc C)=m\!+\!2m\integers$. Hence, we find a partition $p\in \mc C\subseteq \Cpp$, therein a block $\{\alpha,\beta\}$ with $\alpha$ and $\beta$ both of normalized color $\circ$, with $\alpha\neq \beta$ and with $\delta_p(\alpha,\beta)=m$. Without infringing on any of these assumptions we can additionally suppose that there are no turns $T$ in $p$ such that $T\subseteq ]\alpha,\beta[_p$ (otherwise we erase them). Then, all of $]\alpha,\beta[_p$ has the same normalized color $c\in \colors$.
   \begin{mycenter}[0.5em]
\begin{tikzpicture}[baseline=0.666*1cm-0.25em]
    \def\scp{0.666}
    \def\linksize{\scp*0.075cm}
    \def\pointsize{\scp*0.25cm}
    \def\dd{\scp*0.5cm}
    \def\dx{\scp*1cm}
    \def\cx{\scp*0.3cm}
    \def\txu{5*\dx}    
    \def\txl{4*\dx}
    \def\dy{\scp*1cm}
    \def\cy{\scp*0.3cm}
    \def\ty{2*\dy}
    \def\fy{\scp*0.2cm}
    \def\fx{\scp*0.2cm}
    \def\gy{\scp*0.4cm}
    \def\gx{\scp*0.4cm}      
    \tikzset{whp/.style={circle, inner sep=0pt, text width={\pointsize}, draw=black, fill=white}}
    \tikzset{blp/.style={circle, inner sep=0pt, text width={\pointsize}, draw=black, fill=black}}
    \tikzset{lk/.style={regular polygon, regular polygon sides=4, inner sep=0pt, text width={\linksize}, draw=black, fill=black}}
    \tikzset{vp/.style={circle, inner sep=0pt, text width={1.5*\pointsize}, fill=white}}
    \tikzset{sstr/.style={shorten <= 5pt, shorten >= 5pt}}    
    \draw[dotted] ({0-\dd},{0}) -- ({\txl+\dd},{0});
    \draw[dotted] ({0-\dd},{\ty}) -- ({\txu+\dd},{\ty});
    \node[whp] (l1) at ({0+2*\dx},{0+0*\ty}) {};    
    \node[blp] (u1) at ({0+3*\dx},{0+1*\ty}) {};
    \draw (l1) -- ++(0,{\dy}) -| (u1);
    %
    \draw [draw=white, fill=white] ({0+0*\dx-\fx},{0+0*\dy-\fy}) -- ++ ({1*\dx+2*\fx},0) |- ({0+0*\dx-\fx},{0+0*\dy+\fy});
    \draw [draw=gray, pattern = north east lines, pattern color = lightgray]    ({\txl+0*\dx+\fx},{0*\dy-\fy}) -- ++ ({-1*\dx-2*\fx},0) |- ({\txl+0*\dx+\fx},{0*\dy+\fy});
    \draw [draw=white, fill=white] ({0+0*\dx-\fx},{\ty+0*\dy-\fy}) -- ++ ({2*\dx+2*\fx},0) |- ({0+0*\dx-\fx},{\ty+0*\dy+\fy});
    \draw [draw=gray, pattern = north east lines, pattern color = lightgray] ({\txu+0*\dx+\fx},{\ty+0*\dy-\fy}) -- ++ ({-1*\dx-2*\fx},0) |- ({\txu+0*\dx+\fx},{\ty+0*\dy+\fy});
    %
    \node (y1) at ({0*\dx},{\ty}) {$\overline c$};
    \node (y2) at ({2*\dx},{\ty}) {$\overline c$};
    \path (y1) -- node[pos=0.5] {\raisebox{-0.25cm}{$\hdots\,\hdots$}} (y2);
    \node (x1) at ({0*\dx},{0}) {$c$};
    \node (x2) at ({1*\dx},{0}) {$c$};
    \path (x1) -- node[pos=0.5] {\raisebox{-0.25cm}{$\hdots$}} (x2);
    \node [below ={\cx} of  l1] {$\beta$};
    \node [above ={\cx} of  u1] {$\alpha$};        
    %
     \draw [draw=darkgray, densely dotted] ({0+0*\dx-\gx},{0+0*\dy-\gy}) --++ ({3*\dx+2*\gx},0) |- ({0+0*\dx-\gx},{0+0*\dy+\gy}); 
     \draw [draw=darkgray, densely dotted] ({0+0*\dx-\gx},{\ty+0*\dy-\gy}) --++ ({3*\dx+2*\gx},0) |- ({0+0*\dx-\gx},{\ty+0*\dy+\gy}); 
({\txl+0*\dx+\fx},{0*\dy-\fy}) -- ++ ({-1*\dx-2*\fx},0) |- ({\txl+0*\dx+\fx},{0*\dy+\fy});
    %
    \node at ({\txu+\dx},{0.5*\ty}) {$p$};
    \node at (-1*\dx,{0.5*\ty}) {$\delta_p(\alpha,\beta)=m$};
  \end{tikzpicture}
  \qquad$\to$\qquad
  \begin{tikzpicture}[baseline=0.666*0.5cm-0.25em]
    \def\scp{0.666}
    \def\linksize{\scp*0.075cm}
    \def\pointsize{\scp*0.25cm}
    \def\dd{\scp*0.5cm}
    \def\dx{\scp*1cm}
    \def\cx{\scp*0.3cm}
    \def\txu{5*\dx}    
    \def\txl{4*\dx}
    \def\dy{\scp*1cm}
    \def\cy{\scp*0.3cm}
    \def\ty{2*\dy}
    \def\fy{\scp*0.2cm}
    \def\fx{\scp*0.2cm}
    \def\gy{\scp*0.4cm}
    \def\gx{\scp*0.4cm}      
    \tikzset{whp/.style={circle, inner sep=0pt, text width={\pointsize}, draw=black, fill=white}}
    \tikzset{blp/.style={circle, inner sep=0pt, text width={\pointsize}, draw=black, fill=black}}
    \tikzset{lk/.style={regular polygon, regular polygon sides=4, inner sep=0pt, text width={\linksize}, draw=black, fill=black}}
    \tikzset{vp/.style={circle, inner sep=0pt, text width={1.5*\pointsize}, fill=white}}
    \tikzset{sstr/.style={shorten <= 5pt, shorten >= 5pt}}    
    \draw[dotted] ({0-\dd},{0}) -- ({\txl+\dd},{0});
    %
    \node[whp] (l1) at ({0+0*\dx},{0+0*\ty}) {};    
    \node[whp] (l2) at ({0+1*\dx},{0+0*\ty}) {};
    \node[whp] (l3) at ({0+3*\dx},{0+0*\ty}) {};
    \node[whp] (l4) at ({0+4*\dx},{0+0*\ty}) {};
    \draw (l1) -- ++ (0,{\dy}) -| (l4);
    \draw (l2) -- ++ (0, {1.5*\dy});
    \draw (l3) -- ++ (0, {1.5*\dy});
    %
    %
    \path (l2) -- node[pos=0.5, yshift=0.35cm] {$\ldots$} (l3);
    \node [below ={\cx} of  l1] {$\lop{1}$};
    \node [xshift=0.5em, below ={\cx} of  l4] {$\lop{(m\!+\!1)}$};        
    %
({\txl+0*\dx+\fx},{0*\dy-\fy}) -- ++ ({-1*\dx-2*\fx},0) |- ({\txl+0*\dx+\fx},{0*\dy+\fy});
    %
    \node at ({\txu+\dx},{0.5*\ty}) {$P(p,[\alpha,\beta]_p)$};
    \draw[densely dotted] ($(l2)+({-\cx},{-\cy})$) --++ (0,{-2*\cy}) -| node[pos=0.25, below] {$m-1$}  ($(l3)+({\cx},{-\cy})$);
  \end{tikzpicture}
    \end{mycenter}
  Because $\alpha$ and $\beta$ also identically have normalized color $\circ$,
  \begin{align*}
    m=\delta_p(\alpha,\beta)=\sigma_p(]\alpha,\beta]_p)=
    \begin{cases}
      |]\alpha,\beta]_p| & \text{if }c=\circ,\\
      -|]\alpha,\beta]_p| &\text{otherwise}.
    \end{cases}
  \end{align*}
  As $m>0$, the only option is $c=\circ$. That means $[\alpha,\beta]_p$ consists of $m+1$ points of normalized color $\circ$.
  \par
  By definition of the projection operation and by Lem\-ma~\ref{lemma:projection}, it is possible to further add the premise $p=P(p,[\alpha,\beta]_p)$ without impacting any of the previous assumptions. Now, $p$ is also projective and $[\alpha,\beta]_p=[\lop{1},\lop{(m+1)}]_p$ is its lower row.
  \par
  For every $j\in \pint$ with $1<j<m+1$ the point $\lop{j}$ belongs to a through block: Assuming otherwise, forces us to accept the existence of $j,j'\in \pint$ with $1<j<j'<m+1$ such that $\lop{j}$ and $\lop{j'}$ belong to the same block. But then, the uniform color $\circ$ of $[\alpha,\beta]_p$ implies \[1\leq \delta_p(\lop{j},\lop{j'})=j'-j\leq m-2\leq m-1\] and thus $L(\mc C)\cap \{1,\ldots,m-1\}\neq \emptyset$, contradicting $L(\mc C)\subseteq m\integers$.
  \par
  Thus we have shown that $\alpha=\lop{1}$ and $\lop{j}$ belong to crossing blocks for every $j\in \pint$ with $1<j<m+1$. Because $\delta_p(\alpha,\lop{j})=j-1$ for every such $j$, this proves $\dwi{m\!-\!1}\subseteq X(\mc C)$. And that is what we needed to show.
\item Let $(L,K)(\mc C)$ be given by $(m\integers,m\integers)$ or $(\emptyset,m\integers)$.  We adapt the proof of Part~\ref{lemma:result-x-case-o-positive-w-1}. However, this time, we do \emph{not} yet impose any restriction on $m$.
  \par
  Proposition~\ref{lemma:result-s-l-k-x} assures us that $K_{\circ\bullet}(\mc C)=K(\mc C)=m\integers$. Hence, we again find $p\in \mc C$, a block $B$ of $p$ and legs $\alpha,\beta\in B$ with $\alpha\neq \beta$, with $]\alpha,\beta[_p\cap B=\emptyset$ and with $\delta_p(\alpha,\beta)=m$, but this time, such that $\alpha$ is of normalized color $\circ$ and $\beta$ of normalized color $\bullet$. By the same argument as before we can assume that all points of $]\alpha,\beta[_p$ share the same  normalized color. Then, the deviating assumption on the colors of $\alpha$ and $\beta$ implies $m=\delta_p(\alpha,\beta)=\sigma_p(]\alpha,\beta[_p)=|]\alpha,\beta[_p|$, which forces $[\alpha,\beta]_p$ to consist of exactly $m+2$ points (rather than $m+1$ as in Part~\ref{lemma:result-x-case-o-positive-w-1}), the first $m+1$ of which have normalized color $\circ$.  Once more, we can assume $p=P(p,[\alpha,\beta]_p)$.
     \begin{mycenter}[0.5em]
\begin{tikzpicture}[baseline=0.666*1cm-0.25em]
    \def\scp{0.666}
    \def\linksize{\scp*0.075cm}
    \def\pointsize{\scp*0.25cm}
    \def\dd{\scp*0.5cm}
    \def\dx{\scp*1cm}
    \def\cx{\scp*0.3cm}
    \def\txu{5*\dx}    
    \def\txl{4*\dx}
    \def\dy{\scp*1cm}
    \def\cy{\scp*0.3cm}
    \def\ty{2*\dy}
    \def\fy{\scp*0.2cm}
    \def\fx{\scp*0.2cm}
    \def\gy{\scp*0.4cm}
    \def\gx{\scp*0.4cm}      
    \tikzset{whp/.style={circle, inner sep=0pt, text width={\pointsize}, draw=black, fill=white}}
    \tikzset{blp/.style={circle, inner sep=0pt, text width={\pointsize}, draw=black, fill=black}}
    \tikzset{lk/.style={regular polygon, regular polygon sides=4, inner sep=0pt, text width={\linksize}, draw=black, fill=black}}
    \tikzset{vp/.style={circle, inner sep=0pt, text width={1.5*\pointsize}, fill=white}}
    \tikzset{sstr/.style={shorten <= 5pt, shorten >= 5pt}}    
    \draw[dotted] ({0-\dd},{0}) -- ({\txl+\dd},{0});
    \draw[dotted] ({0-\dd},{\ty}) -- ({\txu+\dd},{\ty});
    \node[blp] (l1) at ({0+2*\dx},{0+0*\ty}) {};    
    \node[blp] (u1) at ({0+3*\dx},{0+1*\ty}) {};
    \draw (l1) -- ++(0,{1*\dy}) -| (u1);
    %
    \draw [draw=white, fill=white] ({0+0*\dx-\fx},{0+0*\dy-\fy}) -- ++ ({1*\dx+2*\fx},0) |- ({0+0*\dx-\fx},{0+0*\dy+\fy});
    \draw [draw=gray, pattern = north east lines, pattern color = lightgray]    ({\txl+0*\dx+\fx},{0*\dy-\fy}) -- ++ ({-1*\dx-2*\fx},0) |- ({\txl+0*\dx+\fx},{0*\dy+\fy});
    \draw [draw=white, fill=white] ({0+0*\dx-\fx},{\ty+0*\dy-\fy}) -- ++ ({2*\dx+2*\fx},0) |- ({0+0*\dx-\fx},{\ty+0*\dy+\fy});
    \draw [draw=gray, pattern = north east lines, pattern color = lightgray] ({\txu+0*\dx+\fx},{\ty+0*\dy-\fy}) -- ++ ({-1*\dx-2*\fx},0) |- ({\txu+0*\dx+\fx},{\ty+0*\dy+\fy});
    %
    \node (y1) at ({0*\dx},{\ty}) {$\overline c$};
    \node (y2) at ({2*\dx},{\ty}) {$\overline c$};
    \path (y1) -- node[pos=0.5] {\raisebox{-0.25cm}{$\hdots\,\hdots$}} (y2);
    \node (x1) at ({0*\dx},{0}) {$c$};
    \node (x2) at ({1*\dx},{0}) {$c$};
    \path (x1) -- node[pos=0.5] {\raisebox{-0.25cm}{$\hdots$}} (x2);
    \node [below ={\cx} of  l1] {$\beta$};
    \node [above ={\cx} of  u1] {$\alpha$};        
    %
     \draw [draw=darkgray, densely dotted] ({0+0*\dx-\gx},{0+0*\dy-\gy}) --++ ({3*\dx+2*\gx},0) |- ({0+0*\dx-\gx},{0+0*\dy+\gy}); 
     \draw [draw=darkgray, densely dotted] ({0+0*\dx-\gx},{\ty+0*\dy-\gy}) --++ ({3*\dx+2*\gx},0) |- ({0+0*\dx-\gx},{\ty+0*\dy+\gy}); 
({\txl+0*\dx+\fx},{0*\dy-\fy}) -- ++ ({-1*\dx-2*\fx},0) |- ({\txl+0*\dx+\fx},{0*\dy+\fy});
    %
    \node at ({\txu+\dx},{0.5*\ty}) {$p$};
    \node at (-1*\dx,{0.5*\ty}) {$\delta_p(\alpha,\beta)=m$};
  \end{tikzpicture}
  \qquad$\to$\qquad
  \begin{tikzpicture}[baseline=0.666*0.5cm-0.25em]
    \def\scp{0.666}
    \def\linksize{\scp*0.075cm}
    \def\pointsize{\scp*0.25cm}
    \def\dd{\scp*0.5cm}
    \def\dx{\scp*1cm}
    \def\cx{\scp*0.3cm}
    \def\txu{5*\dx}    
    \def\txl{4*\dx}
    \def\dy{\scp*1cm}
    \def\cy{\scp*0.3cm}
    \def\ty{2*\dy}
    \def\fy{\scp*0.2cm}
    \def\fx{\scp*0.2cm}
    \def\gy{\scp*0.4cm}
    \def\gx{\scp*0.4cm}      
    \tikzset{whp/.style={circle, inner sep=0pt, text width={\pointsize}, draw=black, fill=white}}
    \tikzset{blp/.style={circle, inner sep=0pt, text width={\pointsize}, draw=black, fill=black}}
    \tikzset{lk/.style={regular polygon, regular polygon sides=4, inner sep=0pt, text width={\linksize}, draw=black, fill=black}}
    \tikzset{vp/.style={circle, inner sep=0pt, text width={1.5*\pointsize}, fill=white}}
    \tikzset{sstr/.style={shorten <= 5pt, shorten >= 5pt}}    
    \draw[dotted] ({0-\dd},{0}) -- ({\txl+\dd},{0});
    %
    \node[whp] (l1) at ({0+0*\dx},{0+0*\ty}) {};    
    \node[whp] (l2) at ({0+1*\dx},{0+0*\ty}) {};
    \node[whp] (l3) at ({0+3*\dx},{0+0*\ty}) {};
    \node[blp] (l4) at ({0+4*\dx},{0+0*\ty}) {};
    \draw (l1) -- ++ (0,{\dy}) -| (l4);
    \draw (l2) -- ++ (0, {1.5*\dy});
    \draw (l3) -- ++ (0, {1.5*\dy});
    %
    %
    \path (l2) -- node[pos=0.5, yshift=0.35cm] {$\ldots$} (l3);
    \node [below ={\cx} of  l1] {$\lop{1}$};
    \node [xshift=0.65em, below ={\cx} of  l4] {$\lop{(m\!+\!2)}$};        
    %
({\txl+0*\dx+\fx},{0*\dy-\fy}) -- ++ ({-1*\dx-2*\fx},0) |- ({\txl+0*\dx+\fx},{0*\dy+\fy});
    %
    \node at ({\txu+\dx},{0.5*\ty}) {$P(p,[\alpha,\beta]_p)$};
    \draw[densely dotted] ($(l2)+({-\cx},{-\cy})$) --++ (0,{-2*\cy}) -| node[pos=0.25, below] {$m$}  ($(l3)+({\cx},{-\cy})$);
  \end{tikzpicture}
    \end{mycenter}
  \par
  If $m=1$, then $F(\{p\})=\{2\}$ requires the unique point $\lop{2}\in ]\lop{1},\lop{3}[_p$ to belong to a through block, proving $1\in X(\mc C)$ and thus $X(\mc C)=\integers$ as claimed. Hence, suppose $m\geq 2$ in the following.
  \par
  We prove that only through blocks intersect $]\lop{1},\lop{(m\!+\!2)}[_p$: Supposing that $\lop{j}$ and $\lop{j'}$, where $j,j'\in \pint$ and $1<j<j'<m+2$, belong to the same block requires us to believe, as both $\lop{j}$ and $\lop{j'}$ are $\circ$-colored, that \[1\leq \delta_p(\lop j,\lop j')=j'-j\leq (m+1)-2=m-1\] and thus $L(\mc C)\cap \{1,\ldots,m-1\}\neq \emptyset$. As this would contradict the assumption $L(\mc C)\subseteq m\integers$, this cannot be the case.
  \par
  Now, the conclusion that the blocks of $\lop{1}$ and of $\lop{j}$ cross for every $j\in \pint$ with $1<j<m+2$ and the fact $\delta_p(\lop{1},\lop{j})=j-1$ let us deduce $\dwi{m}\subseteq X(\mc C)$, which is what needed to see.\qedhere
    \end{enumerate}
\end{proof}
                  
                  \begin{proposition}
                    \label{lemma:result-x-subsemigroup}
                    Let  $\mc C\subseteq \Cp$ be a case~$\mc O$ category.
                    \begin{enumerate}[label=(\alph*)]
                    \item\label{lemma:result-x-subsemigroup-1} If $(L,K)(\mc C)=(\{0\},\{0\})$, then  $X(\mc C)=\integers\backslash N_0$ for a sub\-se\-mi\-group $N$ of $(\pint,+)$.
                    \item\label{lemma:result-x-subsemigroup-2}  If $(L,K)(\mc C)=(\emptyset,\{0\})$, then  there exists a sub\-se\-mi\-group $N$ of $(\pint,+)$ such that $X(\mc C)=\integers\backslash N_0$ or $X(\mc C)=\integers\backslash N_0'$.
                      \end{enumerate}
                  \end{proposition}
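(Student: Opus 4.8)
The plan is to bootstrap from Proposition~\ref{lemma:result-s-l-k-x}, which already fixes the coarse shape of $X(\mc C)$, and then supply the missing semigroup property by a cut-and-project argument splitting a realized crossing distance $a+b$ into parts of sizes $a$ and $b$. In both part~(a) and part~(b), Proposition~\ref{lemma:result-s-l-k-x} gives $\toco(\mc C)=\{0\}$ and $X(\mc C)=\integers\backslash E_0$ with $E=(\{0\}\cup\pint)\backslash X(\mc C)$; in particular $X(\mc C)=-X(\mc C)$. Put $N\eqpd E\cap\pint=\pint\backslash X(\mc C)$, so that $N_0=\pm N$ and $N_0'=\pm N\cup\{0\}$; then $X(\mc C)=\integers\backslash N_0$ exactly when $0\in X(\mc C)$, and $X(\mc C)=\integers\backslash N_0'$ exactly when $0\notin X(\mc C)$, irrespective of any further structure of $N$. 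So it suffices to show (i) that $N$ is a subsemigroup of $(\pint,+)$, in both cases, and (ii) that $0\in X(\mc C)$ in part~(a). For (ii) I would use $L(\mc C)=\{0\}\neq\emptyset$: pick $p\in\mc C$ with a pair block whose two legs share a normalized color and have color distance $0$, and by a rotation and Lemma~\ref{lemma:projection} arrange that $p$ is projective with this block the outer block of its row. The color sum strictly between those two legs is then forced to be odd, so the interior of the block cannot consist only of nested pair blocks; hence some block crosses it, and a little care together with the additivity of $\delta_p$ along the row pins down such a crossing at distance exactly $0$.

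For (i) I would prove the contrapositive: if $a,b\in\pint$ and $a+b\in X(\mc C)$, then $a\in X(\mc C)$ or $b\in X(\mc C)$. Take $p\in\mc C$ --- so $p\in\Cpp$ by Proposition~\ref{proposition:result-F} --- with crossing pair blocks $B_1=\{\alpha_1,\beta_1\}$, $B_2=\{\alpha_2,\beta_2\}$ and $\delta_p(\alpha_1,\alpha_2)=a+b$; using $\toco(p)=0$, \cite[Lemma~2.1~(b)]{MWNHO1} and a verticolor reflection we may assume $(\alpha_1,\alpha_2,\beta_1,\beta_2)$ is ordered, and (erasing turns) that $p$ has the fewest points among all such realizations. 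As $\gamma$ runs over $[\alpha_1,\alpha_2]_p$, the value $\delta_p(\alpha_1,\gamma)$ changes by $-1$, $0$ or $1$ from each point to the next, running from $0$ up to $a+b$, hence equals $a$ at some first $\gamma_0\in\,]\alpha_1,\alpha_2[_p$, and then $\delta_p(\gamma_0,\alpha_2)=b$. Let $C=\{\gamma_0,\gamma_0'\}$ be the block of $\gamma_0$. A short inspection of the position of $\gamma_0'$ relative to the cyclic order $\alpha_1<\gamma_0<\alpha_2<\beta_1<\beta_2$ shows that whenever $\gamma_0'\notin\,]\alpha_1,\alpha_2[_p$ the block $C$ crosses $B_1$ or $B_2$, so that $a=\delta_p(\alpha_1,\gamma_0)\in X(\mc C)$ or $b=\delta_p(\gamma_0,\alpha_2)\in X(\mc C)$; and the choice of $\gamma_0$ as first forces $\gamma_0'\in\,]\gamma_0,\alpha_2[_p$ in the sole remaining case $C\subseteq\,]\alpha_1,\alpha_2[_p$.

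It thus remains to rule out $C\subseteq\,]\alpha_1,\alpha_2[_p$. If $\gamma_0$ and $\gamma_0'$ carry opposite normalized colors --- the only possibility in part~(b), where $L(\mc C)=\emptyset$ --- then $\delta_p(\gamma_0,\gamma_0')\in K(\mc C)=\{0\}$ forces $\sigma_p(]\gamma_0,\gamma_0'[_p)=0$, and projecting $p$ (Lemma~\ref{lemma:projection}) onto the consecutive set obtained by deleting $]\gamma_0,\gamma_0'[_p$ preserves both the crossing of $B_1$ and $B_2$ and the distance $a+b$ while turning $C$ into a turn, which we then erase; the result realizes distance $a+b$ with fewer points, contradicting minimality. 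Hence part~(b) is complete, and in part~(a) the only survivor is that $\gamma_0$ and $\gamma_0'$ have the same normalized color, so $\delta_p(\gamma_0,\gamma_0')\in L(\mc C)=\{0\}$ and $\sigma_p(]\gamma_0,\gamma_0'[_p)=\pm1$. \emph{This equal-normalized-color obstruction in part~(a) is the main difficulty:} deleting $]\gamma_0,\gamma_0'[_p$ now shifts the crossing distance by $\pm1$ and $C$ is not a turn, so the minimality trick no longer applies. I would dispose of it by invoking $0\in X(\mc C)$ from (ii): a distance-$0$ crossing can be composed into $p$ right next to $\gamma_0$ and used to re-route $\gamma_0$ across $B_1$ without changing $\delta_p(\alpha_1,\gamma_0)=a$, yielding $a\in X(\mc C)$; alternatively one reruns the whole analysis inductively on $P(p,[\alpha_1,\gamma_0']_p)$. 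Everything apart from this one case is routine bookkeeping with rotations, projections, turn-erasing and the additivity of $\delta_p$.
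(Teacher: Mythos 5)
Your reduction to two claims --- that $N\eqpd\pint\backslash X(\mc C)$ is a subsemigroup and that $0\in X(\mc C)$ when $L(\mc C)=\{0\}$ --- is sound, and your argument for $0\in X(\mc C)$ is essentially the paper's (Step~2 of its proof). But your route to the subsemigroup property is genuinely different from the paper's and, as written, has a real gap. The paper does not prove the splitting property from scratch at all: it shows by a double projection that every nonzero $|z|\in X(\mc C)$ is already realized in the subcategory $\mc C\cap\mc S_0$ of partitions with neutral blocks and vanishing block distances, and then cites the classification of categories inside $\mc S_0$ from \cite{MaWe18b} (Theorem~8.3 together with Lemmata~8.1~(b) and~7.16~(c)), which is where the semigroup structure actually comes from. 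You are in effect trying to re-derive that imported result directly, which is more ambitious than what this paper needs to do.

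The concrete gap is the case you yourself flag as the main difficulty: in part~(a), when the intermediate pair block $C=\{\gamma_0,\gamma_0'\}$ lies entirely in $]\alpha_1,\alpha_2[_p$ and its two legs have the same normalized color, you offer only ``compose a distance-$0$ crossing next to $\gamma_0$ and re-route it across $B_1$'' or ``rerun the analysis inductively on $P(p,[\alpha_1,\gamma_0']_p)$.'' Neither is a defined operation or a completed induction; inserting a crossing and re-routing a leg is not among the category operations, and the second alternative relies on your earlier unjustified claim that minimality forces $\gamma_0'\in\,]\gamma_0,\alpha_2[_p$. Since part~(a) is exactly the case $L(\mc C)=\{0\}$ where same-colored pair blocks exist, this case cannot be wished away, and without it the subsemigroup property in (a) is unproven. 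A secondary but genuine flaw is your induction measure: $P(p,S)$ is a projective partition on $S$ \emph{and} its mirror image, so it has $2|S|$ points and ``fewest points among all realizations'' need not decrease under your deletion step (e.g.\ when $]\gamma_0,\gamma_0'[_p$ is a single point). This one is fixable by inducting on $|]\alpha_1,\alpha_2[_p|$ instead, which does strictly drop; with that repair your argument for part~(b), where $L(\mc C)=\emptyset$ excludes the same-color case, looks salvageable.
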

                  \begin{proof} Let $(L,K)(\mc C)$ be given by $(\{0\},\{0\})$ or $(\emptyset,\{0\})$. We show the two claims jointly in two  steps:
                    \par
                    \textbf{Step~1:} First, we prove that there exists a sub\-se\-mi\-group $N$ of $(\pint,+)$ such that $X(\mc C)=\integers\backslash N_0$ or  $X(\mc C)=\integers\backslash N_0'$. That in itself requires two steps as well.
                    \par
                    \textbf{Step~1.1:} Recall from \cite[Definition~4.1]{MaWe18a} that by $\mc S_0$ we denote the set of all  $p\in \Cpp$ with $\sigma_p(B)=0$ and  $\delta_p(\alpha,\beta)=0$ for all blocks $B$ of $p$ and all $\alpha,\beta\in B$. We justify that it suffices to prove
                    \begin{align}
                      \label{eq:result-x-subsemigroup}
                      \{|z|\mid z\in X(\mc C)\}\backslash\{0\}\overset{!}{\subseteq}\{|z|\mid z\in X(\mc C\cap \mc S_0)\}\tag{$\ast$}
                    \end{align}
                    in order to verify the assertion of Step~1.
                    \par
                    Indeed, in \cite[Theorem~8.3, Lem\-mata~8.1~(b) and~7.16~(c)]{MaWe18b} it was shown that for every category $\mc I\subseteq \mc S_0$ there exists a sub\-se\-mi\-group $N$ of $(\pint,+)$ such that
                    \begin{align*}
                      \{|z|\mid z\in X(\mc I)\}\backslash \{0\}=\pint\backslash N.
                    \end{align*}
                    The set $\mc S_0$ is a category by \cite[Proposition~5.3]{MaWe18a}, which means that so is $\mc C\cap \mc S_0$. Thus, we find a corresponding sub\-se\-mi\-group $N$ for the special case $\mc I= \mc C\cap \mc S_0$. If we now suppose \eqref{eq:result-x-subsemigroup}, which can immediately be sharpened to
                    \begin{align*}
                      \{|z|\mid z\in X(\mc C)\}\backslash\{0\}=\{|z|\mid z\in X(\mc C\cap \mc S_0)\}\backslash \{0\},
                    \end{align*}
                    that implies
                    \begin{align*}
                      \{|z|\mid z\in X(\mc C)\}\backslash \{0\}=\pint\backslash N.
                    \end{align*}
                    As we know $X(\mc C)=-X(\mc C)$ by Proposition~\ref{lemma:result-s-l-k-x}, this is equivalent to
                    \begin{align*}
                      X(\mc C)\backslash \{0\}=\integers\backslash N_0'
                    \end{align*}
and thus the claim of Step~1. Hence, it is indeed sufficient to show \eqref{eq:result-x-subsemigroup}.
\par
                      \textbf{Step~1.2:} We prove \eqref{eq:result-x-subsemigroup}.
                    As $\mc C\subseteq \Cpp$ by Proposition~\ref{proposition:result-F}, we are assured by Lem\-ma~\ref{lemma:simplification-k-x} and Proposition~\ref{lemma:result-s-l-k-x} that $X(\mc C)=X_{c_1,c_2}(\mc C\cap \Cpp)$ for all $c_1,c_2\in \colors$. Now, let $z\in X(\mc C)\backslash \{0\}$ be arbitrary. By definition we find $p\in \mc C\cap \Cpp$ and therein crossing blocks $B_1$ and $B_2$ as well as points $\alpha_1\in B_1$ and $\alpha_2\in B_2$ such that $\delta_p(\alpha_1,\alpha_2)=z$. Then, there exist points $\beta_1\in B_1$ and $\beta_2\in B_2$ such that $\alpha_1\neq \beta_1$ and $\alpha_2\neq \beta_2$ and such that either $(\alpha_1,\alpha_2,\beta_1,\beta_2)$ or $(\alpha_2,\alpha_1,\beta_2,\beta_1)$ is ordered in $p$. As $\toco(\mc C)= \{0\}$ by Proposition~\ref{lemma:result-s-l-k-x} and thus $\toco(p)=0$, we know $\delta_p(\alpha_2,\alpha_1)=-\delta_p(\alpha_1,\alpha_2)$ by \cite[Lem\-ma~2.1]{MWNHO1}. Hence, by renaming $B_1$ and $B_2$ if necessary we can, at the cost of weakening $\delta_p(\alpha_1,\alpha_2)=z$ to $|\delta_p(\alpha_1,\alpha_2)|=|z|$, assume that $(\alpha_1,\alpha_2,\beta_1,\beta_2)$ is ordered.
                    \par As $\mc C\cap \Cpp$ is closed under erasing turns and as $(B_1\cup B_2)\cap ]\alpha_1,\alpha_2[_p=\emptyset$ we can further suppose that no turns $T$ exist in $p$ with $T\subseteq ]\alpha_1,\alpha_2[_p$. In other words, there is $c\in \colors$ such that every point in $]\alpha_1,\alpha_2[_p$ has normalized color $c$.
                  \begin{mycenter}[0.5em]
                    \begin{tikzpicture}[baseline=0.666*1cm-0.25em]
    \def\scp{0.666}
    \def\linksize{\scp*0.075cm}
    \def\pointsize{\scp*0.25cm}
    \def\dd{\scp*0.5cm}
    \def\dx{\scp*1cm}
    \def\cx{\scp*0.3cm}
    \def\txu{8*\dx}    
    \def\txl{9*\dx}
    \def\dy{\scp*1cm}
    \def\cy{\scp*0.3cm}
    \def\ty{3*\dy}
    \def\fy{\scp*0.2cm}
    \def\fx{\scp*0.2cm}
    \def\gy{\scp*0.4cm}
    \def\gx{\scp*0.4cm}      
    \tikzset{whp/.style={circle, inner sep=0pt, text width={\pointsize}, draw=black, fill=white}}
    \tikzset{blp/.style={circle, inner sep=0pt, text width={\pointsize}, draw=black, fill=black}}
    \tikzset{lk/.style={regular polygon, regular polygon sides=4, inner sep=0pt, text width={\linksize}, draw=black, fill=black}}
    \tikzset{cc/.style={cross out, minimum size={1.5*\pointsize-\pgflinewidth}, inner sep=0pt, outer sep=0pt,  draw=black}}    
    \tikzset{vp/.style={circle, inner sep=0pt, text width={1.5*\pointsize}, fill=white}}
    \tikzset{sstr/.style={shorten <= 3pt, shorten >= 3pt}}    
    \draw[dotted] ({0-\dd},{0}) -- ({\txl+\dd},{0});
    \draw[dotted] ({0-\dd},{\ty}) -- ({\txu+\dd},{\ty});
    \draw [draw=white, fill=white] ({7*\dx-\fx},{-\fy}) rectangle ({9*\dx+\fx},{\fy});
    \draw [draw=white, fill=white] ({6*\dx-\fx},{\ty-\fy}) rectangle ({8*\dx+\fx},{\ty+\fy});
    \node[cc] (l1) at ({0+2*\dx},{0+0*\ty}) {};
    \node[cc] (l2) at ({0+6*\dx},{0+0*\ty}) {};
    \node[cc] (u1) at ({0+2*\dx},{0+1*\ty}) {};
    \node[cc] (u2) at ({0+5*\dx},{0+1*\ty}) {};
    \node[vp] (d1) at ({0+7*\dx},{0+0*\ty}) {$c$};
    \node[vp] (d2) at ({0+9*\dx},{0+0*\ty}) {$c$};
    \node[vp] (e1) at ({0+6*\dx},{0+1*\ty}) {$\overline c$};
    \node[vp] (e2) at ({0+8*\dx},{0+1*\ty}) {$\overline c$};                
    \draw[sstr] (l2) -- ++ (0,{2*\dy}) -| (u1);
    \draw[sstr] (l1) -- ++ (0,{1*\dy}) -| (u2);
    %
    \draw [draw=gray, pattern = north east lines, pattern color = lightgray] ({0+0*\dx-\fx},{0+0*\dy-\fy}) -- ++ ({1*\dx+2*\fx},0) |- ({0+0*\dx-\fx},{0+0*\dy+\fy});
    \draw [draw=gray, pattern = north east lines, pattern color = lightgray] ({3*\dx-\fx},{-\fy}) rectangle ({5*\dx+\fx},{\fy});
    \draw [draw=gray, pattern = north east lines, pattern color = lightgray] ({0+0*\dx-\fx},{\ty+0*\dy-\fy}) -- ++ ({1*\dx+2*\fx},0) |- ({0+0*\dx-\fx},{\ty+0*\dy+\fy});
    \draw [draw=gray, pattern = north east lines, pattern color = lightgray] ({3*\dx-\fx},{\ty-\fy}) rectangle ({4*\dx+\fx},{\ty+\fy});
    \node [above = {\cx} of  u1] {$\beta_1$};
    \node [above = {\cx} of  u2] {$\alpha_2$};
    \node [below ={\cx} of  l1] {$\beta_2$};
    \node [below ={\cx} of  l2] {$\alpha_1$};
    %
     \draw [draw=darkgray, densely dashed] ({\txl+0*\dx+\gx},{0*\dy-\gy}) --++ ({-3*\dx-2*\gx},0) |- ({\txl+0*\dx+\gx},{0*\dy+\gy});
      \draw [draw=darkgray, densely dashed] ({\txu+0*\dx+\gx},{\ty+0*\dy-\gy}) --++ ({-3*\dx-2*\gx},0) |- ({\txu+0*\dx+\gx},{\ty+0*\dy+\gy});
     %
     \path (d1) -- node[pos=0.5] {\raisebox{-0.25cm}{$\hdots\,\hdots$}} (d2);
     \path (e1) -- node[pos=0.5] {\raisebox{-0.25cm}{$\hdots\,\hdots$}} (e2);
    \node at (-\dx,{0.5*\ty}) {$p$};
    \node at ({\txl+2*\dx},{0.5*\ty}) {$|\delta_p(\alpha_1,\alpha_2)|=|z|$};
  \end{tikzpicture}
\end{mycenter}
                    Even further, by   Lem\-ma~\ref{lemma:projection} none of the previous assumptions are violated by assuming that $p=P(p,[\alpha_1,\beta_1]_p)$. Then, $\beta_2$ is the counterpart of $\alpha_2$ on the upper row, $\alpha_1\in [\beta_2,\alpha_2]_p$ and $\beta_1\notin [\beta_2,\alpha_2]_p$. If we let $\epsilon$ be the predecessor of $\alpha_1$, i.e., if $\epsilon$ is the leftmost upper point of $p$, then $(\beta_2,\epsilon,\alpha_1,\alpha_2,\beta_1)$ is ordered.

                  \begin{mycenter}[0.5em]
                    \begin{tikzpicture}[baseline=0.666*1cm-0.25em]
    \def\scp{0.666}
    \def\linksize{\scp*0.075cm}
    \def\pointsize{\scp*0.25cm}
    \def\dd{\scp*0.5cm}
    \def\dx{\scp*1cm}
    \def\cx{\scp*0.3cm}
    \def\txu{11*\dx}    
    \def\txl{11*\dx}
    \def\dy{\scp*1cm}
    \def\cy{\scp*0.3cm}
    \def\ty{5*\dy}
    \def\fy{\scp*0.2cm}
    \def\fx{\scp*0.2cm}
    \def\gy{\scp*0.4cm}
    \def\gx{\scp*0.4cm}      
    \tikzset{whp/.style={circle, inner sep=0pt, text width={\pointsize}, draw=black, fill=white}}
    \tikzset{blp/.style={circle, inner sep=0pt, text width={\pointsize}, draw=black, fill=black}}
    \tikzset{lk/.style={regular polygon, regular polygon sides=4, inner sep=0pt, text width={\linksize}, draw=black, fill=black}}
    \tikzset{cc/.style={cross out, minimum size={1.5*\pointsize-\pgflinewidth}, inner sep=0pt, outer sep=0pt,  draw=black}}    
    \tikzset{vp/.style={circle, inner sep=0pt, text width={1.5*\pointsize}, fill=white}}
    \tikzset{sstr/.style={shorten <= 3pt, shorten >= 3pt}}    
    \draw[dotted] ({0-\dd},{0}) -- ({\txl+\dd},{0});
    \draw[dotted] ({0-\dd},{\ty}) -- ({\txu+\dd},{\ty});
    \draw [draw=white, fill=white] ({1*\dx-\fx},{-\fy}) rectangle ({7*\dx+\fx},{\fy});
    \draw [draw=white, fill=white] ({1*\dx-\fx},{\ty-\fy}) rectangle ({7*\dx+\fx},{\ty+\fy});
    \node[cc] (l1) at ({0+0*\dx},{0+0*\ty}) {};
    \node[cc] (l2) at ({0+8*\dx},{0+0*\ty}) {};
    \node[cc] (l3) at ({0+11*\dx},{0+0*\ty}) {};
    \node[cc] (u1) at ({0+0*\dx},{0+1*\ty}) {};
    \node[cc] (u2) at ({0+8*\dx},{0+1*\ty}) {};
    \node[cc] (u3) at ({0+11*\dx},{0+1*\ty}) {};    
    \node[vp] (d1) at ({0+1*\dx},{0+0*\ty}) {$c$};
    \node[vp] (d2) at ({0+3*\dx},{0+0*\ty}) {$c$};
    \node[vp] (d3) at ({0+5*\dx},{0+0*\ty}) {$c$};
    \node[vp] (d4) at ({0+7*\dx},{0+0*\ty}) {$c$};
    \node[vp] (e1) at ({0+1*\dx},{0+1*\ty}) {$c$};
    \node[vp] (e2) at ({0+3*\dx},{0+1*\ty}) {$c$};
    \node[vp] (e3) at ({0+5*\dx},{0+1*\ty}) {$c$};
    \node[vp] (e4) at ({0+7*\dx},{0+1*\ty}) {$c$};    
    \draw (l1) --++(0,{2*\dy}) -| (l3);
    \draw (u1) --++(0,{-2*\dy}) -| (u3);
    \draw[gray] (d2) -- (e2);
    \draw[gray, shorten >= {\fy}] (d3) --++(0,{\dy}) -| ({9.5*\dx},0);
    \draw[gray, shorten >= {\fy}] (e3) --++(0,{-\dy}) -| ({9.5*\dx},{\ty});     
    \draw (l2) to (u2);    
    %
    \draw [draw=gray, pattern = north east lines, pattern color = lightgray] ({9*\dx-\fx},{-\fy}) rectangle ({10*\dx+\fx},{\fy});
     \draw [draw=gray, pattern = north east lines, pattern color = lightgray] ({9*\dx-\fx},{\ty-\fy}) rectangle ({10*\dx+\fx},{\ty+\fy});
    \node [below = {\cx} of  l1] {$\alpha_1$};
    \node [below = {\cx} of  l2] {$\alpha_2$};
    \node [below ={\cx} of  l3] {$\beta_1$};
    \node [above ={\cx} of  u2] {$\beta_2$};
    \node [above ={\cx} of  u1] {$\epsilon$};    
    %
     %
     \path (d1) -- node[pos=0.5] {\raisebox{-0.25cm}{$\hdots\,\hdots$}} (d2)-- node[pos=0.5] {\raisebox{-0.25cm}{$\hdots\,\hdots$}} (d3)-- node[pos=0.5] {\raisebox{-0.25cm}{$\hdots\,\hdots$}} (d4);
     \path (e1) -- node[pos=0.5] {\raisebox{-0.25cm}{$\hdots\,\hdots$}} (e2) -- node[pos=0.5] {\raisebox{-0.25cm}{$\hdots\,\hdots$}} (e3) -- node[pos=0.5] {\raisebox{-0.25cm}{$\hdots\,\hdots$}} (e4);
    \node at ({-\dx},{0.5*\ty}) {$p$};
  \end{tikzpicture}
\end{mycenter}                    
Recall that there are no turns $T$ in $p$ with $T\subseteq ]\alpha_1,\alpha_2[_p$. As $p=p^\ast$, there are none with $T\subseteq ]\beta_2,\epsilon[_p$ either. That means every point in $]\alpha_1,\alpha_2[_p$ has  normalized color $c$ and every point in $]\beta_2,\epsilon[_p$  normalized color $\overline{c}$. We can also say a lot about the blocks of $p$ which intersect $[\beta_2,\alpha_2]_p$: If a point $\theta_1\in ]\alpha_1,\alpha_2[_p$ belongs to a through block it must be connected to its counterpart on the upper row because $p\in\Cpp$ is projective. If $\theta_1$ belongs to a non-through block instead, then the partner $\theta_2$ of $\theta_1$ must lie  outside $[\beta_2,\alpha_2]_p$: Supposing otherwise, i.e., $\theta_2\in ]\alpha_1,\alpha_2[_p$, produces a contradiction: If $(\alpha_1,\theta_i,\theta_{\neg i},\beta_2)$ with $i,\neg i\in \{1,2\}$ and $\{i,\neg i\}=\{1,2\}$ is ordered, then, as all points in $[\theta_i,\theta_{\neg i}]_p$ are $c$-colored, the consequence $|\delta_p(\theta_i,\theta_{\neg i})|=|]\theta_i,\theta_{\neg i}]_p|>0$ violates $L(\mc C)\subseteq \{0\}$, which follows from $K(\mc C)=\{0\}$ by Proposition~\ref{lemma:result-s-l-k-x}.
\par
Define $p'\eqpd P(p,[\beta_2,\alpha_2]_p)\in \mc C\cap \Cpp$ and denote by $\beta_2'$, $\epsilon'$, $\alpha_1'$ and $\alpha_2'$ the images in $p'$ of $\beta_2$, $\epsilon$, $\alpha_1$ and $\alpha_2$, respectively. In $p'$ the leftmost lower point $\beta_2'$ and the rightmost lower point $\alpha_2'$ form a block. The points $\epsilon',\alpha_1'\in [\beta_2',\alpha_2']$ are each paired with their respective counterpart on the upper row.   In particular the blocks of $\alpha_1'$ and $\alpha_2'$ cross in $p'$.
                  \begin{mycenter}[0.5em]
                    \begin{tikzpicture}[baseline=0.666*1cm-0.25em]
    \def\scp{0.666}
    \def\linksize{\scp*0.075cm}
    \def\pointsize{\scp*0.25cm}
    \def\dd{\scp*0.5cm}
    \def\dx{\scp*1cm}
    \def\cx{\scp*0.3cm}
    \def\txu{17*\dx}    
    \def\txl{17*\dx}
    \def\dy{\scp*1cm}
    \def\cy{\scp*0.3cm}
    \def\ty{5*\dy}
    \def\fy{\scp*0.2cm}
    \def\fx{\scp*0.2cm}
    \def\gy{\scp*0.4cm}
    \def\gx{\scp*0.4cm}      
    \tikzset{whp/.style={circle, inner sep=0pt, text width={\pointsize}, draw=black, fill=white}}
    \tikzset{blp/.style={circle, inner sep=0pt, text width={\pointsize}, draw=black, fill=black}}
    \tikzset{lk/.style={regular polygon, regular polygon sides=4, inner sep=0pt, text width={\linksize}, draw=black, fill=black}}
    \tikzset{cc/.style={cross out, minimum size={1.5*\pointsize-\pgflinewidth}, inner sep=0pt, outer sep=0pt,  draw=black}}    
    \tikzset{vp/.style={circle, inner sep=0pt, text width={1.5*\pointsize}, fill=white}}
    \tikzset{sstr/.style={shorten <= 3pt, shorten >= 3pt}}    
    \draw[dotted] ({0-\dd},{0}) -- ({\txl+\dd},{0});
    \draw[dotted] ({0-\dd},{\ty}) -- ({\txu+\dd},{\ty});
    \draw [draw=white, fill=white] ({1*\dx-\fx},{-\fy}) rectangle ({7*\dx+\fx},{\fy});
    \draw [draw=white, fill=white] ({1*\dx-\fx},{\ty-\fy}) rectangle ({7*\dx+\fx},{\ty+\fy});
    \draw [draw=white, fill=white] ({10*\dx-\fx},{-\fy}) rectangle ({16*\dx+\fx},{\fy});
    \draw [draw=white, fill=white] ({10*\dx-\fx},{\ty-\fy}) rectangle ({16*\dx+\fx},{\ty+\fy});
    \node[cc] (l1) at ({0+0*\dx},{0+0*\ty}) {};
    \node[cc] (l2) at ({0+8*\dx},{0+0*\ty}) {};
    \node[cc] (l3) at ({0+9*\dx},{0+0*\ty}) {};
    \node[cc] (l4) at ({0+17*\dx},{0+0*\ty}) {};    
    \node[cc] (u1) at ({0+0*\dx},{0+1*\ty}) {};
    \node[cc] (u2) at ({0+8*\dx},{0+1*\ty}) {};
    \node[cc] (u3) at ({0+9*\dx},{0+1*\ty}) {};
    \node[cc] (u4) at ({0+17*\dx},{0+1*\ty}) {};        
    \node[vp] (d1) at ({0+1*\dx},{0+0*\ty}) {$\overline c$};
    \node[vp] (d2) at ({0+3*\dx},{0+0*\ty}) {$\overline c$};
    \node[vp] (d3) at ({0+5*\dx},{0+0*\ty}) {$\overline c$};
    \node[vp] (d4) at ({0+7*\dx},{0+0*\ty}) {$\overline c$};
    \node[vp] (e1) at ({0+1*\dx},{0+1*\ty}) {$\overline c$};
    \node[vp] (e2) at ({0+3*\dx},{0+1*\ty}) {$\overline c$};
    \node[vp] (e3) at ({0+5*\dx},{0+1*\ty}) {$\overline c$};
    \node[vp] (e4) at ({0+7*\dx},{0+1*\ty}) {$\overline c$};
    \node[vp] (d5) at ({0+10*\dx},{0+0*\ty}) {$c$};
    \node[vp] (d6) at ({0+12*\dx},{0+0*\ty}) {$c$};
    \node[vp] (d7) at ({0+14*\dx},{0+0*\ty}) {$c$};
    \node[vp] (d8) at ({0+16*\dx},{0+0*\ty}) {$c$};
    \node[vp] (e5) at ({0+10*\dx},{0+1*\ty}) {$c$};
    \node[vp] (e6) at ({0+12*\dx},{0+1*\ty}) {$c$};
    \node[vp] (e7) at ({0+14*\dx},{0+1*\ty}) {$c$};
    \node[vp] (e8) at ({0+16*\dx},{0+1*\ty}) {$c$};        
    \draw (l1) --++(0,{2*\dy}) -| (l4);
    \draw (u1) --++(0,{-2*\dy}) -| (u4);
    \draw (l2) to (u2);
    \draw (l3) to (u3);            
    \draw[gray] (d2) -- (e2);
    \draw[gray] (d7) -- (e7);    
    \draw[gray] (d3) --++(0,{\dy}) -| (d6);
    \draw[gray] (e3) --++(0,{-\dy}) -| (e6);

    %
    %
    \node [below = {\cx} of  l1] {$\beta_2'$};
    \node [below = {\cx} of  l2] {$\epsilon'$};
    \node [below ={\cx} of  l3] {$\alpha_1'$};
    \node [below ={\cx} of  l4] {$\alpha_2'$}; 
    %
     %
     \path (d1) -- node[pos=0.5] {\raisebox{-0.25cm}{$\hdots\,\hdots$}} (d2)-- node[pos=0.5] {\raisebox{-0.25cm}{$\hdots\,\hdots$}} (d3)-- node[pos=0.5] {\raisebox{-0.25cm}{$\hdots\,\hdots$}} (d4);
     \path (e1) -- node[pos=0.5] {\raisebox{-0.25cm}{$\hdots\,\hdots$}} (e2) -- node[pos=0.5] {\raisebox{-0.25cm}{$\hdots\,\hdots$}} (e3) -- node[pos=0.5] {\raisebox{-0.25cm}{$\hdots\,\hdots$}} (e4);
     \path (d5) -- node[pos=0.5] {\raisebox{-0.25cm}{$\hdots\,\hdots$}} (d6)-- node[pos=0.5] {\raisebox{-0.25cm}{$\hdots\,\hdots$}} (d7)-- node[pos=0.5] {\raisebox{-0.25cm}{$\hdots\,\hdots$}} (d8);
     \path (e5) -- node[pos=0.5] {\raisebox{-0.25cm}{$\hdots\,\hdots$}} (e6) -- node[pos=0.5] {\raisebox{-0.25cm}{$\hdots\,\hdots$}} (e7) -- node[pos=0.5] {\raisebox{-0.25cm}{$\hdots\,\hdots$}} (e8);     
    \node at ({-1*\dx},{0.5*\ty}) {$p'$};
  \end{tikzpicture}
\end{mycenter}                    
Our knowledge about the blocks of $p$ intersecting $[\beta_2,\alpha_2]_p$ lets us draw the following conclusions about the blocks of $p'$: A point in $]\alpha_1',\alpha_2'[_p$ is either partnered with its reflection at the center $[\epsilon',\alpha_1']_{p'}$ of the lower row of $p'$ in $]\beta_2',\epsilon'[_{p'}$ or, as $p'$ is projective, it is partnered with its counterpart on the opposite row. As  $]\beta_2',\epsilon'[_{p'}$ is uniformly $\overline c$-colored and $]\alpha_1',\alpha_2'[_{p'}$ uniformly $c$-colored, that means that all blocks emanating from $]\beta_2',\epsilon'[_{p'}\cup ]\alpha_1',\alpha_2'[_p$ are neutral. But then, all blocks of $p'$ are neutral. Due to $L(\mc C)\subseteq \{0\}$ and $K(\mc C)=\{0\}$, this is already enough to know $p'\in \mc S_0$. Because $\delta_{p'}(\alpha_1',\alpha_2')=\delta_p(\alpha_1,\alpha_2)$, that proves $|z|=|\delta_p(\alpha_1,\alpha_2)|=|\delta_{p'}(\alpha_1',\alpha_2)|\in \{|z|\mid z\in X(\mc C\cap \mc S_0)\}$. As $z$ was arbitrary, \eqref{eq:result-x-subsemigroup} holds true and Part~\ref{lemma:result-x-subsemigroup-2} has been proven.  
                    \par
                    \textbf{Step~2:} In order to prove Part~\ref{lemma:result-x-subsemigroup-1} it remains to show $0\in X(\mc C)$ provided $L(\mc C)=\{0\}$. Under this latter assumption, by Proposition~\ref{lemma:result-s-l-k-x} we infer $K_{\circ\circ}(\mc C)=\{0\}$. Hence, we find $p\in \mc C$, therein a block $B$ and legs $\alpha,\beta\in B$ of normalized color $\circ$ with $\alpha\neq \beta$, with $]\alpha,\beta[_p\cap B=\emptyset$ and with $\delta_p(\alpha,\beta)=0$. As in the proof of Proposition~\ref{lemma:result-x-case-o-positive-w} we can assume that there are no turns $T$ in $p$ such that $T\subseteq]\alpha,\beta[_p$, i.e.\ that all points in $]\alpha,\beta[_p$ have the same normalized color $c\in \colors$. From
                    \begin{align*}
                      0=\delta_p(\alpha,\beta)=\sigma_p(]\alpha,\beta]_p)=\sigma_p(]\alpha,\beta[_p)+\sigma_p(\{\beta\})=
                      \begin{cases}
                        |]\alpha,\beta[_p|+1&\text{if }c=\circ,\\
                        -|]\alpha,\beta[_p|+1&\text{otherwise}
                      \end{cases}
                    \end{align*}
                    and from $|]\alpha,\beta[_p|\geq 0$ it follows that $c=\bullet$ and that $]\alpha,\beta[_p$ is a singleton set. Emulating the proof of Proposition~\ref{lemma:result-x-case-o-positive-w} further, we can assume $p=P(p,[\alpha,\beta]_p)$.
                       \begin{mycenter}[0.5em]
\begin{tikzpicture}[baseline=0.666*1cm-0.25em]
    \def\scp{0.666}
    \def\linksize{\scp*0.075cm}
    \def\pointsize{\scp*0.25cm}
    \def\dd{\scp*0.5cm}
    \def\dx{\scp*1cm}
    \def\cx{\scp*0.3cm}
    \def\txu{5*\dx}    
    \def\txl{4*\dx}
    \def\dy{\scp*1cm}
    \def\cy{\scp*0.3cm}
    \def\ty{2*\dy}
    \def\fy{\scp*0.2cm}
    \def\fx{\scp*0.2cm}
    \def\gy{\scp*0.4cm}
    \def\gx{\scp*0.4cm}      
    \tikzset{whp/.style={circle, inner sep=0pt, text width={\pointsize}, draw=black, fill=white}}
    \tikzset{blp/.style={circle, inner sep=0pt, text width={\pointsize}, draw=black, fill=black}}
    \tikzset{lk/.style={regular polygon, regular polygon sides=4, inner sep=0pt, text width={\linksize}, draw=black, fill=black}}
    \tikzset{vp/.style={circle, inner sep=0pt, text width={1.5*\pointsize}, fill=white}}
    \tikzset{sstr/.style={shorten <= 5pt, shorten >= 5pt}}    
    \draw[dotted] ({0-\dd},{0}) -- ({\txl+\dd},{0});
    \draw[dotted] ({0-\dd},{\ty}) -- ({\txu+\dd},{\ty});
    \node[whp] (l1) at ({0+2*\dx},{0+0*\ty}) {};    
    \node[blp] (u1) at ({0+3*\dx},{0+1*\ty}) {};
    \draw (l1) --++(0,{1*\dy}) -| (u1);
    %
    \draw [draw=white, fill=white] ({0+0*\dx-\fx},{0+0*\dy-\fy}) -- ++ ({1*\dx+2*\fx},0) |- ({0+0*\dx-\fx},{0+0*\dy+\fy});
    \draw [draw=gray, pattern = north east lines, pattern color = lightgray]    ({\txl+0*\dx+\fx},{0*\dy-\fy}) -- ++ ({-1*\dx-2*\fx},0) |- ({\txl+0*\dx+\fx},{0*\dy+\fy});
    \draw [draw=white, fill=white] ({0+0*\dx-\fx},{\ty+0*\dy-\fy}) -- ++ ({2*\dx+2*\fx},0) |- ({0+0*\dx-\fx},{\ty+0*\dy+\fy});
    \draw [draw=gray, pattern = north east lines, pattern color = lightgray] ({\txu+0*\dx+\fx},{\ty+0*\dy-\fy}) -- ++ ({-1*\dx-2*\fx},0) |- ({\txu+0*\dx+\fx},{\ty+0*\dy+\fy});
    %
    \node (y1) at ({0*\dx},{\ty}) {$\overline c$};
    \node (y2) at ({2*\dx},{\ty}) {$\overline c$};
    \path (y1) -- node[pos=0.5] {\raisebox{-0.25cm}{$\hdots\,\hdots$}} (y2);
    \node (x1) at ({0*\dx},{0}) {$c$};
    \node (x2) at ({1*\dx},{0}) {$c$};
    \path (x1) -- node[pos=0.5] {\raisebox{-0.25cm}{$\hdots$}} (x2);
    \node [below ={\cx} of  l1] {$\beta$};
    \node [above ={\cx} of  u1] {$\alpha$};        
    %
     \draw [draw=darkgray, densely dotted] ({0+0*\dx-\gx},{0+0*\dy-\gy}) --++ ({3*\dx+2*\gx},0) |- ({0+0*\dx-\gx},{0+0*\dy+\gy}); 
     \draw [draw=darkgray, densely dotted] ({0+0*\dx-\gx},{\ty+0*\dy-\gy}) --++ ({3*\dx+2*\gx},0) |- ({0+0*\dx-\gx},{\ty+0*\dy+\gy}); 
({\txl+0*\dx+\fx},{0*\dy-\fy}) -- ++ ({-1*\dx-2*\fx},0) |- ({\txl+0*\dx+\fx},{0*\dy+\fy});
    %
    \node at ({\txu+\dx},{0.5*\ty}) {$p$};
    \node at (-1*\dx,{0.5*\ty}) {$\delta_p(\alpha,\beta)=0$};
  \end{tikzpicture}
  \qquad$\to$\qquad
  \begin{tikzpicture}[baseline=0.666*0.5cm-0.25em]
    \def\scp{0.666}
    \def\linksize{\scp*0.075cm}
    \def\pointsize{\scp*0.25cm}
    \def\dd{\scp*0.5cm}
    \def\dx{\scp*1cm}
    \def\cx{\scp*0.3cm}
    \def\txu{2*\dx}    
    \def\txl{2*\dx}
    \def\dy{\scp*1cm}
    \def\cy{\scp*0.3cm}
    \def\ty{2*\dy}
    \def\fy{\scp*0.2cm}
    \def\fx{\scp*0.2cm}
    \def\gy{\scp*0.4cm}
    \def\gx{\scp*0.4cm}      
    \tikzset{whp/.style={circle, inner sep=0pt, text width={\pointsize}, draw=black, fill=white}}
    \tikzset{blp/.style={circle, inner sep=0pt, text width={\pointsize}, draw=black, fill=black}}
    \tikzset{lk/.style={regular polygon, regular polygon sides=4, inner sep=0pt, text width={\linksize}, draw=black, fill=black}}
    \tikzset{vp/.style={circle, inner sep=0pt, text width={1.5*\pointsize}, fill=white}}
    \tikzset{sstr/.style={shorten <= 5pt, shorten >= 5pt}}    
    \draw[dotted] ({0-\dd},{0}) -- ({\txl+\dd},{0});
    %
    \node[whp] (l1) at ({0+0*\dx},{0+0*\ty}) {};    
    \node[blp] (l2) at ({0+1*\dx},{0+0*\ty}) {};
    \node[whp] (l3) at ({0+2*\dx},{0+0*\ty}) {};
    \draw (l1) -- ++ (0,{\dy}) -| (l3);
    \draw (l2) -- ++ (0, {1.5*\dy});
    %
    %
    %
    \node [below ={\cx} of  l1] {$\lop{1}$};
    \node [below ={\cx} of  l3] {$\lop{3}$};        
    %
    %
    \node at ({\txu+2*\dx},{0.5*\ty}) {$P(p,[\alpha,\beta]_p)$};
  \end{tikzpicture}
    \end{mycenter}
\noindent
                    Then, the lower row $[\alpha,\beta]_p=[\lop{1},\lop{3}]_p$ of $p$ has coloration $\circ\bullet\circ$. As $p\in \Cpp$ and as $p$ is projective, the block of $\lop{2}$ is the pair $\{\lop 2, \upp2\}$. That means the blocks of $\alpha=\lop{1}$ and $\lop 2$ cross, implying $0=\delta_p(\lop{1},\lop 2)\in X(\mc C)$. That concludes the proof.
                  \end{proof}
                  \section{Step~6: Synthesis}
                  Combining the results from Sec\-tions~\ref{section:block-sizes}--\ref{section:special-restrictions}, we are able to show the main theorem.
                  \begin{theorem}
\label{theorem:main}
                    $Z(\mc C)\in \mathsf Q$ for every non-hy\-per\-octa\-he\-dral category $\mc C\subseteq \Cp$.
                  \end{theorem}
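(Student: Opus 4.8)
The plan is to split along the trichotomy of non-hyperoctahedral categories into the mutually exclusive and exhaustive cases $\mc O$, $\mc B$ and $\mc S$ (see \cite[Definition~4.1]{MWNHO1}), and in each case to assemble the value $Z(\mc C)=(F,V,\toco,L,K,X)(\mc C)$ from the partial results of Sections~\ref{section:block-sizes}--\ref{section:special-restrictions} until it can be read off the table in Definition~\ref{definition:Q}. The common backbone is: Proposition~\ref{proposition:result-F} determines $F(\mc C)$ (namely $\{2\}$, $\{1,2\}$ or $\pint$); Proposition~\ref{proposition:result-V} determines $V(\mc C)$ in terms of whether $L(\mc C)$ is empty; and Proposition~\ref{lemma:result-s-l-k-x} (resting on Propositions~\ref{proposition:result-S} and~\ref{proposition:result-F} and the Arithmetic Lemma) reduces $(\toco,L,K,X)(\mc C)$ to one of five explicit tuples in parameters $u\in\{0\}\cup\pint$, $m\in\pint$, $D\subseteq\{0\}\cup\dwi{\lfloor\frac{m}{2}\rfloor}$, $E\subseteq\{0\}\cup\pint$. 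Thus everything comes down to verifying that, after feeding in $F$, $V$ and the case-specific refinements of Section~\ref{section:special-restrictions}, each surviving combination is exactly a row of that table.

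For case~$\mc S$ this is nearly immediate. One has $F(\mc C)=\pint$, and Proposition~\ref{proposition:result-V}~\ref{proposition:result-V-3} gives $V(\mc C)=\integers$ together with $L(\mc C)\neq\emptyset$, which kills the two candidates of Proposition~\ref{lemma:result-s-l-k-x} having $L=\emptyset$; since moreover $0\in L(\mc C)$ by Proposition~\ref{lemma:restriction-case-s} while $m\!+\!2m\integers$ never contains $0$, the candidate with $L=m\!+\!2m\integers$ is also eliminated, leaving precisely the last two rows of the table. Case~$\mc B$ needs only a little more: here $F(\mc C)=\{1,2\}$, and by Proposition~\ref{proposition:result-V}~\ref{proposition:result-V-2} the value $V(\mc C)$ equals $\pm\{0,1,2\}$ on the three candidates of Proposition~\ref{lemma:result-s-l-k-x} with $L\neq\emptyset$ and $\pm\{0,1\}$ on the two with $L=\emptyset$; combining these observations reproduces exactly the five rows of the table whose first entry is $\{1,2\}$, with no further restriction required.

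The real work is case~$\mc O$, where $F(\mc C)=\{2\}$ and the bare output of Proposition~\ref{lemma:result-s-l-k-x} still has to be tightened. I would treat its five candidates in turn. On the three candidates whose $(L,K)$-value contains a nonzero element --- that is, $(m\integers,m\integers)$, $(m\!+\!2m\integers,2m\integers)$ or $(\emptyset,m\integers)$ --- Proposition~\ref{lemma:k-v-l} promotes $\toco(\mc C)$ to the shape prescribed by the table ($2um\integers$, or $\{0\}$ in the third case), Proposition~\ref{proposition:result-V}~\ref{proposition:result-V-1} fixes $V(\mc C)$, and Proposition~\ref{lemma:result-x-case-o-positive-w} forces $X(\mc C)=\integers$ --- or, in the $(m\!+\!2m\integers,2m\integers)$-case, possibly $X(\mc C)=\integers\backslash m\integers$; taking $D=\emptyset$ (respectively $D=\{0\}$, so that $D_m=m\integers$) then matches the first four rows of the table. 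On the remaining two candidates, with $(L,K)$ equal to $(\{0\},\{0\})$ or $(\emptyset,\{0\})$, Proposition~\ref{lemma:result-x-subsemigroup} provides a subsemigroup $N$ of $(\pint,+)$ with $X(\mc C)=\integers\backslash N_0$ --- and, in the $L=\emptyset$ subcase, possibly $\integers\backslash N_0'$ --- which, combined with $V(\mc C)$ as given by Proposition~\ref{proposition:result-V}~\ref{proposition:result-V-1}, yields rows five through seven.

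I expect the main --- though essentially clerical --- obstacle to be exactly this exhaustive matching in case~$\mc O$: one must reconcile the possibly differing parameters $u$ and $m$ appearing in the various propositions (harmless, since membership in $\mathsf Q$ only asks for the existence of suitable parameters), pick $D$ explicitly as $\emptyset$ or $\{0\}$ so that $\integers\backslash D_m$ takes the required form, and check the degenerate instances $m=1$ separately. Beyond that, the proof is a direct assembly of Propositions~\ref{proposition:result-F}, \ref{proposition:result-V}, \ref{lemma:result-s-l-k-x}, \ref{lemma:restriction-case-s}, \ref{lemma:k-v-l}, \ref{lemma:result-x-case-o-positive-w} and~\ref{lemma:result-x-subsemigroup}.
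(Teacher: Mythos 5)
Your proposal is correct and follows essentially the same route as the paper: split into cases $\mc O$, $\mc B$, $\mc S$, feed Propositions~\ref{proposition:result-F} and~\ref{proposition:result-V} into the five-row output of Proposition~\ref{lemma:result-s-l-k-x}, and in case~$\mc O$ refine row by row via Propositions~\ref{lemma:k-v-l}, \ref{lemma:result-x-case-o-positive-w} and~\ref{lemma:result-x-subsemigroup} (and in case~$\mc S$ discard rows via Proposition~\ref{lemma:restriction-case-s}). The remaining work you flag is indeed only the clerical matching against Definition~\ref{definition:Q}, exactly as carried out in the paper.
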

                  \begin{proof}
                    By Lem\-ma~\ref{lemma:result-s-l-k-x} there exist  $u\in \{0\}\cup \pint$, $m\in \pint$, $D\subseteq \{0\}\cup\dwi{\lfloor\frac{m}{2}\rfloor}$ and $E\subseteq \{0\}\cup\pint$ such that the tuple $(\toco,L,K,X)(\mc C)$ is given by one of the following:
                    \begin{align}
                      \label{eq:main-table-1}
                      \begin{matrix}
                        \toco& L&K& X  \\ \hline \\[-0.85em]
                        um\integers & m\integers & m\integers & \integers\backslash D_m\\
                        2um\integers & m\!+\!2m\integers & 2m\integers & \integers\backslash D_m\\
                        um\integers & \emptyset & m\integers & \integers\backslash D_m\\                        
 \{0\} & \{0\} & \{0\} & \integers\backslash E_0 \\
 \{0\} & \emptyset & \{0\} & \integers\backslash E_0
                    \end{matrix}\tag{$\ast$}
                    \end{align}   
                    We treat the three cases $\mc O$, $\mc B$ and $\mc S$ individually. The formulaic presentation will mirror that of Definition~\ref{definition:Q} exactly, to facilitate cross-checking.
                    \par
                    \textbf{Case~$\mc B$:} First, let $\mc C$ be case~$\mc B$. Proposition~\hyperref[proposition:result-F-2]{\ref*{proposition:result-F}~\ref*{proposition:result-F-2}} implies $F(\mc C)=\{1,2\}$. So,  we can immediately add the column for $F(\mc C)$ to table \eqref{eq:main-table-1}. Further, Proposition~\hyperref[proposition:result-V-2]{\ref*{proposition:result-V}~\ref*{proposition:result-V-2}} shows $V(\mc C)=\pm\{0,1,2\}$ if and only if $L(\mc C)\neq \emptyset$ and $V(\mc C)=\pm\{0,1\}$ otherwise. That allows us to fill in the column for $V(\mc C)$ as well. The result is that $Z(\mc C)$ concurs with a row of the  table
                                        \begin{align*}
                      \begin{matrix}
                        F&V&\toco&L&K& X  \\ \hline \\[-0.85em]
                         \{1,2\}&\pm\{0, 1, 2\} & um\integers & m\integers & m\integers & \integers\backslash D_m\\
                                                 \{1,2\}&\pm\{0, 1, 2\} & 2um\integers & m\hspace{-2.5pt}+\hspace{-2.5pt}2m\integers & 2m\integers & \integers\backslash D_m\\
                         \{1,2\}&\pm \{0, 1\} & um\integers & \emptyset & m\integers & \integers\backslash D_m \\
                        \{1,2\}&\pm\{0, 1, 2\} & \{0\} & \{0\} & \{0\} & \integers\backslash E_0 \\
                      \{1,2\}&\pm\{0,  1\} & \{0\} & \emptyset & \{0\} & \integers\backslash E_0\\
                    \end{matrix}
                                        \end{align*}
                                        for some $u\in \{0\}\cup \pint$, $m\in \pint$,  $D\subseteq \{0\}\cup\dwi{\lfloor\frac{m}{2}\rfloor}$ and $E\subseteq \{0\}\cup\pint$.
                                        Hence, by Definition~\ref{definition:Q}, we have shown $Z(\mc C)\in \mathsf Q$ if $\mc C$ is case~$\mc B$.
                    \par
                    \textbf{Case~$\mc S$:} Next, let $\mc C$ be case~$\mc S$. Propositions~\hyperref[proposition:result-F-3]{\ref*{proposition:result-F}~\ref*{proposition:result-F-3}} and~\hyperref[proposition:result-V-3]{\ref*{proposition:result-V}~\ref*{proposition:result-V-3}} guarantee $F(\mc C)=\pint$ and $V(\mc C)=\integers$. Hence, we can fill in the columns for $F$ and $V$ in \eqref{eq:main-table-1} once more. Moreover, $0\in L(\mc C)$ by Proposition~\ref{lemma:restriction-case-s}. Thus, we can exclude that $(\toco,L,K,X)(\mc C)$ is given by the second, third or fifth rows of \eqref{eq:main-table-1}.
In other words, there are  $u\in \{0\}\cup \pint$, $m\in \pint$,  $D\subseteq \{0\}\cup\dwi{\lfloor\frac{m}{2}\rfloor}$ and $E\subseteq \{0\}\cup\pint$ such that $Z(\mc C)$ is given by one of the rows of the following table:
                    \begin{align*}
                      \begin{matrix}
                        F&V&\toco&L&K& X  \\ \hline \\[-0.85em]
                        \pint & \integers & um\integers & m\integers & m\integers & \integers\backslash D_m\\
                        \pint & \integers & \{0\} & \{0\} & \{0\} & \integers\backslash E_0\\
                    \end{matrix}
                    \end{align*}
                    And, by Definition~\ref{definition:Q}, this means $Z(\mc C)\in\mathsf Q$ for $\mc C$ in case~$\mc S$.
                    \par
                    \textbf{Case~$\mc O$:} Lastly, let $\mc C$ be case~$\mc O$. Once more, Propositions~\hyperref[proposition:result-F-1]{\ref*{proposition:result-F}~\ref*{proposition:result-F-1}} and~\hyperref[proposition:result-V-1]{\ref*{proposition:result-V}~\ref*{proposition:result-V-1}} give, on the one hand, $F(\mc C)=\{2\}$ and, on the other hand, $V(\mc C)=\pm\{0,2\}$ if $L(\mc C)\neq \emptyset$ and $V(\mc C)=\{0\}$ otherwise. That enables us to fill in the columns for $F(\mc C)$ and $V(\mc C)$ in \eqref{eq:main-table-1}:
                    \begin{align}
                      \label{eq:main-table-2}
                      \begin{matrix}
                                                F&V&\toco&L&K& X  \\ \hline \\[-0.85em]
                        \{2\} & \pm\{0, 2\} & um\integers & m\integers & m\integers & \integers\backslash D_m\\
                      \{2\} & \pm\{0, 2\} & 2um\integers & m\hspace{-2.5pt}+\hspace{-2.5pt}2m\integers & 2m\integers & \integers\backslash D_m \\
                      \{2\} & \{0\} & um\integers & \emptyset & m\integers & \integers\backslash D_m\\
                      \{2\} & \pm\{0, 2\} & \{0\} & \{0\} & \{0\} &  \integers\backslash E_0 \\
                      \{2\} & \{0\} & \{0\} & \emptyset & \{0\} & \integers\backslash E_0 \\
                    \end{matrix}\tag{$\ast\ast$}
                    \end{align}
                    This is not yet what we claim as this range is not contained in $\mathsf Q$. We need to exclude certain values for $u$, $D$ and $E$ by taking into account the results of Sec\-tion~\ref{section:special-restrictions-o}. This we shall do on a row-by-row basis.
                    \par
\textbf{Case~$\mc O$.1:} First, suppose $(L,K)(\mc C)=(m\integers,m\integers)$ for some $m\in \pint$, as in the first row of Table~\eqref{eq:main-table-2}. Then $\toco(\mc C)\subseteq 2m\integers$ (corresponding to parameters $u\in 2\integers$) according to Pro\-po\-si\-tion~\hyperref[lemma:k-v-l-2]{\ref*{lemma:k-v-l}~\ref*{lemma:k-v-l-2}}. Moreover, $X(\mc C)=\integers$ (corresponding to $D=\emptyset$) as seen in 
Pro\-po\-si\-tion~\hyperref[lemma:result-x-case-o-positive-w-2]{\ref*{lemma:result-x-case-o-positive-w}~\ref*{lemma:result-x-case-o-positive-w-2}}. Hence, we can replace the first row of Table~\eqref{eq:main-table-2} by 
                    \begin{align*}
                      \begin{matrix}
                                                F&V&\toco&L&K& X  \\ \hline \\[-0.85em]
                        \{2\} & \pm\{0, 2\} & 2um\integers & m\integers & m\integers & \integers
                    \end{matrix}
                    \end{align*}
still for parameters $u\in \{0\}\cup  \pint$ and $m\in \pint$ exactly as before.
\par
\textbf{Case~$\mc O$.2:} Now, proceeding to the second row of Table~\eqref{eq:main-table-2}, let $(L,K)(\mc C)=(m\!+\!2m\integers,2m\integers)$ for some $m\in \pint$. By Proposition~\hyperref[lemma:result-x-case-o-positive-w-1]{\ref*{lemma:result-x-case-o-positive-w}~\ref*{lemma:result-x-case-o-positive-w-1}} the only two values $X(\mc C)$ can possibly take are $\integers$ and $\integers\backslash m\integers$ (corresponding to $D=\emptyset$ and $D=\{0\}$, respectively). Thus, we can delete the second row of Table~\eqref{eq:main-table-2} and insert the two new rows
                    \begin{align*}
                      \begin{matrix}
                                                F&V&\toco&L&K& X  \\ \hline \\[-0.85em]
                      \{2\} & \pm\{0, 2\} & 2um\integers & m\hspace{-2.5pt}+\hspace{-2.5pt}2m\integers & 2m\integers & \integers \\
                      \{2\} & \pm\{0, 2\} & 2um\integers & m\hspace{-2.5pt}+\hspace{-2.5pt}2m\integers & 2m\integers & \integers\backslash m\integers \\
                    \end{matrix}
                    \end{align*}
                    in its stead, still for parameters $m\in \pint$ and $u\in \{0\}\cup \pint$.
                    \par
                    \textbf{Case~$\mc O$.3:} Next, assume $(L,K)(\mc C)=(\emptyset,m\integers)$ for some $m\in\pint$ as in row three of Table~\eqref{eq:main-table-2}. Then, in fact, $\toco(\mc C)=\{0\}$ as seen in Proposition~\hyperref[lemma:k-v-l-1]{\ref*{lemma:k-v-l}~\ref*{lemma:k-v-l-1}}. Furthermore, $X(\mc C)=\integers$ by Proposition~\hyperref[lemma:result-x-case-o-positive-w-2]{\ref*{lemma:result-x-case-o-positive-w}~\ref*{lemma:result-x-case-o-positive-w-2}}. Hence, we rewrite the third row of \eqref{eq:main-table-2} as
                    \begin{align*}
                      \begin{matrix}
                                                F&V&\toco&L&K& X  \\ \hline \\[-0.85em]
                      \{2\} & \{0\} & \{0\} & \emptyset & m\integers & \integers
                    \end{matrix}
                    \end{align*}
                    depending only on the parameter $m\in \pint$.
                    \par
                    \textbf{Case~$\mc O$.4:} Let $(L,K)(\mc C)=(\{0\},\{0\})$, i.e., consider the fourth row of Table~\eqref{eq:main-table-2}. Then, $X(\mc C)=\integers\backslash N_0$ for some sub\-se\-mi\-group of $(\pint,+)$ by Proposition~\hyperref[lemma:result-x-subsemigroup-1]{\ref*{lemma:result-x-subsemigroup}~\ref*{lemma:result-x-subsemigroup-1}} (corresponding to $E=N$ being a sub\-se\-mi\-group). Accordingly, we can replace the fourth row of Table~\eqref{eq:main-table-2} by
                                        \begin{align*}
                      \begin{matrix}
                                                F&V&\toco&L&K& X  \\ \hline \\[-0.85em]
                      \{2\} & \pm\{0,2\} & \{0\} & \emptyset & \{0\} & \integers\backslash N_0
                    \end{matrix}                      
                    \end{align*}
                    for a new table parameter $N$, running through all sub\-se\-mi\-groups of $(\pint,+)$.
                    \par
                    \textbf{Case~$\mc O$.5:} Lastly, suppose $(L,K)(\mc C)=(\emptyset,\{0\})$ as in the fifth row of Table~\eqref{eq:main-table-2}. In Proposition~\hyperref[lemma:result-x-subsemigroup-1]{\ref*{lemma:result-x-subsemigroup}~\ref*{lemma:result-x-subsemigroup-1}} we showed $X(\mc C)$ is of the form $\integers\backslash N_0$ or $\integers\backslash N_0'$ for some sub\-se\-mi\-group $N$ of $(\pint,+)$ (corresponding to $E=N$ and $E=\{0\}\cup N$, respectively). Thus, strike the last row of Table~\eqref{eq:main-table-2} and append the two rows
                    \begin{align*}
                      \begin{matrix}
                                                F&V&\toco&L&K& X  \\ \hline \\[-0.85em]
                                                \{2\} & \{0\} & \{0\} & \emptyset & \{0\} & \integers\backslash N_0\\
                      \{2\} & \{0\} & \{0\} & \emptyset & \{0\} & \integers\backslash N_0'                                                
                    \end{matrix}                      
                    \end{align*}
                    to the table, with $N$ being a sub\-se\-mi\-group of $(\pint,+)$.
                    \par
                    \textbf{Synthesis in case~$\mc O$:} If we combine the results of Cases~1--5, then we can say that there exist $m\in \pint$, $u\in \{0\}\cup \pint$ and a sub\-se\-mi\-group $N$ of $(\pint,+)$ such that $Z(\mc C)$ is given by one of the rows of the following table:
                  \begin{align*}
                      \begin{matrix}                    
                        F&V&S&L&K& X  \\ \hline \\[-0.85em]
                        \{2\} & \pm\{0, 2\} & 2um\integers & m\integers & m\integers & \integers\\
                      \{2\} & \pm\{0, 2\} & 2um\integers & m\hspace{-2.5pt}+\hspace{-2.5pt}2m\integers & 2m\integers & \integers \\
                        \{2\} & \pm \{0, 2\} & 2um\integers & m\hspace{-2.5pt}+\hspace{-2.5pt}2m\integers & 2m\integers & \integers\backslash m\integers \\
                      \{2\} & \{0\} & \{0\} & \emptyset & m\integers & \integers\\
                      \{2\} & \pm\{0, 2\} & \{0\} & \{0\} & \{0\} &  \integers\backslash N_0 \\
                      \{2\} & \{0\} & \{0\} & \emptyset & \{0\} & \integers\backslash N_0 \\
                      \{2\} & \{0\} & \{0\} & \emptyset & \{0\} &\integers\backslash N_0' \\
                    \end{matrix}                          
                  \end{align*}
                  Definition~\ref{definition:Q} thus yields $Z(\mc C)\in \mathsf Q$ if $\mc C$ is case~$\mc O$. Hence, the overall claim is true.
                  \end{proof}
   
\printbibliography

\end{document}